\definecolor{gray}{rgb}{.5, .5, .5}
\newcommand*{\centerfloat}{%
  \parindent \z@
  \leftskip \z@ \@plus 1fil \@minus \textwidth
  \rightskip\leftskip
  \parfillskip \z@skip}
\newcounter{ctr}
\theoremstyle{plain}
\newtheorem{theorem}{Theorem}[section]
\newtheorem{lemma}[theorem]{Lemma}
\newtheorem{corollary}[theorem]{Corollary}
\newtheorem{proposition}[theorem]{Proposition}
\newtheorem{conjecture}[theorem]{Conjecture}
\theoremstyle{definition}
\newtheorem{definition}[theorem]{Definition}
\newtheorem{remark}[theorem]{Remark}
\newtheorem{example}[theorem]{Example}
\newcommand{\ignore}[1]{}
\newcommand\drawcell[2]{
\draw (0+#1,0+#2)--(1+#1,0+#2)--(1+#1,1+#2)--(0+#1,1+#2)--(0+#1,0+#2);
}
\renewcommand{\a}{\ensuremath{\mathfrak{a}}}
\newcommand{\A}{{\ensuremath{\mathcal{A}}}}
\newcommand{\R}{\ensuremath{\mathscr{R}}}
\newcommand{\sgn}{\text{\rm sgn}}
\newcommand{\U}{\mathcal{U}}
\newcommand{\ZZ}{\ensuremath{\mathbb{Z}}}
\newcommand{\be}{\begin{equation}}
\newcommand{\ee}{\end{equation}}
\renewcommand{\S}{\ensuremath{\mathcal{S}}}
\newcommand{\creading}{\text{\rm colword}}
\newcommand{\sh}{\text{\rm sh}}
\newcommand{\ngb}{\ensuremath{\tiny \hspace{-.16mm}-\hspace{-.5mm}}}
\def\Tiny{\fontsize{6pt}{6pt}\selectfont}
\newcommand{\overbar}[1]{\mkern 1.1mu\overline{\mkern-1.1mu#1\mkern-1.1mu}\mkern 1.1mu}
\renewcommand{\underbar}[1]{\mkern 1.2mu\underline{\mkern-1.2mu#1\mkern-1.2mu}\mkern 1.2mu}
\newcommand{\crc}[1]{\ensuremath{\overbar{#1}}\vphantom{\underline{\overline{#1}}}}
\newcommand{\crcempty}{{\crc{\phantom{{}_\circ}}}}
\newcommand{\convert}[2]{\ensuremath{{#2}(#1)}}
\newcommand{\stand}{\ensuremath{\text{\rm st}}} 
\newcommand{\plainr}{\ensuremath{{\text{\rm brgt}}}}
\newcommand\mybox[1]{
\vcenter{
\let\\=\cr
\baselineskip=-16000pt \lineskiplimit=16000pt \lineskip=0pt
\halign{&\boxcell{##}\cr\vline#1\vline\crcr}}}
\newcommand{\boxcell}[1]{{%
\unitlength=\cellsizeCol
\begin{picture}(1,1)
\put(0,0){\makebox(1,1){$#1$}}
\put(0,0){\line(1,0){1}}
\put(0,1){\line(1,0){1}}
\end{picture}%
}}
\newcommand\pad[1]{
\vtop{
\let\\=\cr
\baselineskip=-16000pt
\lineskiplimit=16000pt
\lineskip=0pt
\halign{& \inviscell{##} \cr #1 \crcr} }
\hspace{-.73ex}}
\newcommand{\inviscell}[1]{{%
\unitlength=\cellsizeCol
\begin{picture}(1,1)
\put(0,0){\makebox(1,1){$#1$}}
\end{picture}%
}}
\newlength{\cellsize}
\newcommand\tableau[1]{
\vcenter{
\let\\=\cr
\baselineskip=-16000pt \lineskiplimit=16000pt \lineskip=0pt
\halign{&\tableaucell{##}\cr#1\crcr}}}
\newcommand{\tableaucell}[1]{{%
\def \arg{#1}\def \void{}%
\ifx \void \arg
\vbox to \cellsize{\vfil \hrule width \cellsize height 0pt}%
\else \unitlength=\cellsize
\begin{picture}(1,1)
\put(0,0){\makebox(1,1){$#1\vphantom{\crc{#1}}$}}
\put(0,0){\line(1,0){1}}
\put(0,1){\line(1,0){1}}
\put(0,0){\line(0,1){1}}
\put(1,0){\line(0,1){1}}
\end{picture}%
\fi}}
\newcommand\boldtableau[1]{
\vcenter{
\let\\=\cr
\baselineskip=-16000pt \lineskiplimit=16000pt \lineskip=0pt
\halign{&\boldtableaucell{##}\cr#1\crcr}}}
\newcommand{\boldtableaucell}[1]{{%
\def \arg{#1}\def \void{}%
\ifx \void \arg
\vbox to \cellsize{\vfil \hrule width \cellsize height 0pt}%
\else \unitlength=\cellsize
\begin{picture}(1,1)
\put(0,0){\makebox(1,1){$\mathbf{#1\vphantom{\crc{#1}}}$}}
\put(0,0){\line(1,0){1}}
\put(0,1){\line(1,0){1}}
\put(0,0){\line(0,1){1}}
\put(1,0){\line(0,1){1}}
\end{picture}%
\fi}}
\newlength{\colskip}
\newlength{\dwidth}
\newcommand{\partition}[1]{{\setlength{\cellsize}{1ex} \tiny \tableau{#1}}}
\newcommand{\lerow}{\leq_{\text{row\,}}}
\newcommand{\lecol}{\leq_{\text{col\,}}}
\newcommand{\gecol}{\geq_{\text{col\,}}}
\newcommand{\gedotcol}{\mathbin{\underbar{\gtrdot}_{\text{col\,}}}}
\newcommand{\ledotrow}{\mathbin{\underbar{\lessdot}_{\text{row\,}}}}
\newcommand{\Ikron}{\ensuremath{{{I_\text{\rm Kron}}}}} 
\newcommand{\Ikronknuth}{\ensuremath{{{I_\text{\rm Kron-K}}}}} 
\newcommand{\Iplacord}[1]{\ensuremath{{{I_\text{\rm plac}^{#1}}}}}
\newcommand{\Irkstp}[1]{\ensuremath{{{I_{\text{\rm KR}, #1}^\stand}}}}
\newcommand{\Des}{\ensuremath{{\text{\rm Des}}}}
\newcommand{\e}{\mathsf}
\newcommand{\noe}{} 
\newcommand{\sqread}{\text{\rm arwread}}
\newcommand{\sqreadLLT}{\text{\rm sqread}}
\newcommand{\mydownarrow}{{\hspace{0mm} \tiny \downharpoonleft}}
\newcommand{\myDownarrow}{{\hspace{0mm} \tiny \downarrow}}
\newcommand{\CT}{\text{\rm CT}}
\newcommand{\CYW}{\text{\rm CYW}}
\newcommand{\CYT}{\text{\rm CYT}}
\newcommand{\SYT}{\text{\rm SYT}}
\newcommand{\RCT}{\text{\rm RCT}}
\newcommand{\Tab}{\text{\rm Tab}}
\newcommand{\SSYT}{\text{\rm SSYT}}
\newcommand{\ver}{\text{\rm Vert}}
\newcommand{\nearr}{{\scalebox{.75}{$\nearrow$}}}
\newcommand{\searr}{{\scalebox{.75}{$\searrow$}}}
\newcommand{\nwarr}{{\scalebox{.75}{$\nwarrow$}}}
\newcommand{\nearrsub}{{\scalebox{.5}{$\nearrow$}}}
\newcommand{\searrsub}{{\scalebox{.5}{$\searrow$}}}
\newcommand{\Jnot}[1]{\ensuremath{{\hspace{1pt}\text{\rm:}~\raisebox{1pt}{$\e{#1}$}~\text{\rm:}\hspace{1pt}}}}
\newcommand{\Jnotb}[2]{\ensuremath{{{}_{#1}{}^{\e{#2}}}}}
\title{Kronecker coefficients and noncommutative super Schur functions}
\keywords{Kronecker coefficients, noncommutative Schur functions, super Schur functions, colored tableaux}
\begin{document}

\author{Jonah Blasiak}
\address{Department of Mathematics, Drexel University, Philadelphia, PA 19104}
\email{jblasiak@gmail.com}
\thanks{J. Blasiak was supported by NSF Grant DMS-14071174.}
\author{Ricky Ini Liu}
\address{Department of Mathematics, North Carolina State University, Raleigh, NC 27695}
\email{riliu@ncsu.edu}

\begin{abstract}
The theory of noncommutative Schur functions can be used to obtain positive combinatorial formulae for the Schur expansion of various classes of symmetric functions, as shown by Fomin and Greene \cite{FG}.
We develop a theory of noncommutative super Schur functions and use it to prove a positive combinatorial rule for the Kronecker coefficients $g_{\lambda\mu\nu}$ where one of the partitions is a hook, recovering previous results of the two authors \cite{BHook, Ricky}.  This method also gives a precise connection between this rule and a heuristic for Kronecker coefficients first investigated by Lascoux \cite{Lascoux}.
\end{abstract}
\maketitle



\section{Introduction}
Let $M_\lambda$ be the irreducible representation of the symmetric group $\S_n$ corresponding to the partition $\lambda$. Given three partitions $\lambda$, $\mu$, and $\nu$ of  $n$,
the \emph{Kronecker coefficient $g_{\lambda \mu \nu}$} is the multiplicity of
$M_\nu$ in the tensor product $M_\lambda \otimes M_\mu$.  A longstanding open problem in algebraic combinatorics, called the \emph{Kronecker problem}, is to find a positive combinatorial formula for these coefficients.
See \cite{BO,BHook, BMSGCT4,  BOR,HayashiKronecker,  Ricky, Lascoux,Remmel,  RemmelHookTwoRow,RW, Rosas} for some known special cases.

Our story begins with the work of Lascoux \cite{Lascoux}, wherein he gave a formula for the Kronecker coefficients
 $g_{\lambda \mu \nu}$ when two of the partitions are hooks by considering products of permutations in certain Knuth equivalence classes. 
Though this rule no longer holds outside the hook-hook case, it seems to approximate Kronecker coefficients amazingly well for \emph{any three partitions}
and therefore gives a useful heuristic.

Several years ago, the first author \cite{BHook} gave a rule for
Kronecker coefficients when one of the partitions is a hook.
This rule was discovered using Lascoux's heuristic,
but it was left as an open problem to give a precise statement relating it to the heuristic.  
Recently, the second author \cite{Ricky} gave a simplified description and proof of this rule.



We develop a theory of noncommutative super Schur functions based on work of Fomin-Greene \cite{FG} and the first author \cite{BLamLLT}.
Using this we
\begin{itemize}
\item reprove and strengthen the rule from \cite{Ricky},
\item establish a precise connection between this rule and the Lascoux heuristic, and
\item uncover a surprising parallel between this rule and combinatorics underlying
transformed Macdonald polynomials indexed by a 3-column shape, as described in \cite{Haglund, BLamLLT}.
\end{itemize}

\section{Main results}\label{s main results}

In this section, we state our main theorem on noncommutative super Schur functions (Theorem~\ref{t J intro}),
and show how it can be used to recover the rule from \cite{Ricky} for Kronecker coefficients where one of the partitions is a hook. Proofs will be deferred to later sections.

\subsection{Colored words and the algebra $\U$}
Let $\A_\varnothing = \{1,2, \dots, N\}$ denote the alphabet of unbarred letters and $\A_\crcempty = \{\crc{1},\crc{2}, \dots, \crc{N}\}$ the alphabet of barred letters.
A \emph{colored word} is a word in the total alphabet $\A = \A_\varnothing \sqcup \A_\crcempty$.

We will consider total orders $\lessdot$ on $\A$ such that $1 \lessdot 2 \lessdot \cdots \lessdot N$ and $\crc{1} \lessdot \crc{2} \lessdot \cdots \lessdot \crc{N}$; we call such orders \emph{shuffle orders}.
Two shuffle orders we will work with frequently are
\[
\begin{tabular}{lll}
the \emph{natural order} $<$ given by &$1 < \crc{1} < 2 < \crc{2} < \cdots < N < \crc{N},$& and\\[1.4mm]
the \emph{big bar order} $\prec$ given by & $1 \prec 2 \prec \cdots \prec N \prec \crc{1} \prec \crc{2} \prec \cdots \prec \crc{N}.$&
\end{tabular}
\]

Let  $\U$ be the free associative  $\ZZ$-algebra in the noncommuting variables $u_x$,  $x \in \A$.
Equivalently,  $\U$ is the tensor algebra of  $\ZZ \A$, the  $\ZZ$-module freely spanned by the elements of  $\A$.
We identify the monomials of $\U$ with colored words and frequently write  $\e{x}$ for the variable $u_x$ and $\e{w} = \e{w_1\cdots w_t} = u_{w_1}\cdots u_{w_t}$ for a colored word/monomial.

For the natural order  $<$, it is useful to have a notation for ``going down by one.''  Accordingly, for any $\noe{x} \in \A$, define
\begin{align}\label{e myDownarrow}
\noe{x} \myDownarrow =
\begin{cases}
\noe{\crc{a-1}} &\text{ if $\noe{x = a}$,  $a \in \{2,\ldots, N\}$}, \\
\noe{a} &\text{ if $\noe{x = \crc{a}}$, $a \in \{1,\ldots,N\}$}.
\end{cases}
\end{align}


\subsection{Quotients of  $\U$}
\label{ss quotients of U}
Often we will be interested in performing computations in a quotient of $\U$. One important such quotient is the
\emph{$<$-colored plactic algebra}, denoted $\U/\Iplacord{<}$, which is the quotient of  $\U$ by the relations
\begin{alignat}{3}
&\e{xzy} = \e{zxy} \qquad &&\text{for } x,y,z \in \A,\, x< y< z,\label{e plac nat rel knuth1} \\
&\e{yxz} = \e{yzx} \qquad &&\text{for } x,y,z \in \A,\, x< y< z,\label{e plac nat rel knuth2} \\
&\e{yyx} = \e{yxy} \qquad &&\text{for } x \in \A,\, y \in \A_{\varnothing},\, x< y, \label{e plac nat rel knuth3} \\
&\e{zyy} = \e{yzy} \qquad &&\text{for } z \in \A,\, y \in \A_{\varnothing},\, y< z, \label{e plac nat rel knuth3b} \\
&\e{yyz} = \e{yzy} \qquad &&\text{for } z \in \A,\, y \in \A_{\crcempty},\, y< z, \label{e plac nat rel knuth4}\\
&\e{xyy} = \e{yxy} \qquad &&\text{for } x \in \A,\, y \in \A_{\crcempty},\, x< y; \label{e plac nat rel knuth4b}
\end{alignat}
let $\Iplacord{<}$ denote the corresponding two-sided ideal of $\U$.
The $<$-colored plactic algebra behaves much like the ordinary plactic algebra.

For applications to Kronecker coefficients, we need the following variant of the  $<$-colored plactic algebra.
Let $\U/\Ikron$ denote the quotient of $\U$ by the relations
\eqref{e plac nat rel knuth3}, \eqref{e plac nat rel knuth3b}, \eqref{e plac nat rel knuth4}, \eqref{e plac nat rel knuth4b}, and
\begin{alignat}{3}
\e{(xz-zx )y} &= \e{y(xz-zx)} \qquad &&\text{for }  x,y,z \in \A,\, x  = y \myDownarrow = z \myDownarrow\myDownarrow, \label{e kron rel rotate12} \\
\e{xz} &= \e{zx} \qquad &&\text{for }  x,z \in \A,\, x<z \myDownarrow \myDownarrow; \label{e kron rel far commute}
\end{alignat}
let $\Ikron$ denote the corresponding two-sided ideal of $\U$.
We refer to \eqref{e kron rel far commute} as the \emph{far commutation relations}.

\subsection{Main theorem} 
\label{ss main theorem}
For any two letters $x, y \in \A$, write $y \gecol x$ to mean either $y > x$, or $y$ and $x$ are equal barred letters.
The \emph{noncommutative super elementary symmetric functions} are defined by
\[
e_k(\mathbf{u})=\sum_{\substack{z_1 \gecol z_2 \gecol \cdots \gecol z_k \\ z_1,\dots,z_k \in \A}} u_{z_1} u_{z_2} \cdots u_{z_k} \ \in \U
\]
for any positive integer $k$; set $e_0(\mathbf{u})=1$ and $e_k(\mathbf{u}) = 0$ for $k<0$.

We will prove that $e_k(\mathbf u)$ and $e_l(\mathbf u)$ commute for all $k$ and $l$ in both $\U/\Ikron$ and $\U/\Iplacord{<}$ in Propositions \ref{p es commute kron} and \ref{p es commute plac}.
Similar results are proved in \cite{LS,FG,LamRibbon,BD0graph,Kirillov-Notes}.

Using these elementary functions, we can define the noncommutative super Schur functions as follows.

\begin{definition}
\label{d noncommutative Schur functions}
Let $\nu=(\nu_1,\nu_2,\dots)$ be a partition.
Let $\nu'$ be the conjugate partition, which has $t=\nu_1$ parts.
The \emph{noncommutative super Schur function}
$\mathfrak{J}_\nu(\mathbf{u})$ is given by the following
noncommutative version of the Kostka-Naegelsbach/Jacobi-Trudi formula:
\[
\mathfrak{J}_\nu(\mathbf{u}) = \sum_{\pi\in \S_{t}}
\sgn(\pi) \, e_{\nu'_1+\pi(1)-1}(\mathbf{u})
e_{\nu'_2+\pi(2)-2}(\mathbf{u}) \cdots e_{\nu'_{t}+\pi(t)-t}(\mathbf{u}) \ \in \U.
\]
\end{definition}

As explained in \cite{FG} and as we will see below, establishing monomial positivity of $\mathfrak{J}_\nu(\mathbf{u})$ in various quotients of $\U$
has important consequences for proving manifestly positive combinatorial formulae for structure coefficients.

It follows easily from \cite[Lemma 3.2]{FG} that  $\mathfrak{J}_\nu(\mathbf{u})$ is equal to a positive sum of monomials in  $\U/\Iplacord{<}$ (Theorem \ref{t J plac}).
We conjecture that this can be strengthened to monomial positivity of $\mathfrak{J}_\nu(\mathbf{u})$  in an algebra  $\U/\Ikronknuth$ (see \textsection\ref{ss switchboards})
 which has both $\U/\Iplacord{<}$ and $\U/\Ikron$
as quotients. 
Our main result is a weaker version of this,
a proof that  $\mathfrak{J}_\nu(\mathbf{u})$  is monomial positive in   $\U/ \Ikron$.
(We believe that $\U/\Ikronknuth$ is the natural algebra for the applications described below, but we work with
$\U/\Ikron$ instead since this makes statements easier to prove.) 
 We now give the precise statement of our main result.

A \emph{$<$-colored tableau} is a tableau with entries in $\A$ such that each row and column is weakly increasing with respect to the natural order $<$, while the unbarred letters in each column and the barred letters in each row are strictly increasing. Let $\CT_{\nu}^<$ denote the set of  $<$-colored tableaux of shape $\nu$.

\begin{definition}\label{d sqread}
For a  $<$-colored  tableau $T$, define the colored word $\sqread(T)$ as follows: let $D^1,D^2,\dots,D^k$ be the diagonals of $T$, starting from the southwest.
Let $\e{w^i}$ be the result of reading the unbarred entries of $D^i$ in the direction $\nwarr$, followed by the barred entries of $D^i$ in the direction $\searr$.
Set $\sqread(T)=\e{w^1}\e{w^2}\cdots \e{w^k}$.
\end{definition}

The word $\sqread(T)$ is a particular choice of an \emph{arrow respecting reading word} of  $T$, which will be defined in \textsection\ref{ss Arrow respecting reading words}.

For example, for  $\nu = (5,4,4)$,
\begin{align*}
T  &= { \tiny \tableau{1 & 1 & \crc{3} & \crc{4} & 6\\  \crc{2} & 3 & 4 & \crc{4} \\ 3 & \crc{3} & \crc{4} & 5}}
 \in \CT_\nu^<, \\[2mm]
\sqread(T) &= \e{3\, \crc{2}\, \crc{3}\, 3\, 1\, \crc{4}\, 5\, 4\, 1\, \crc{3}\, \crc{4}\, \crc{4}\, 6}.
\end{align*}

We now state our main theorem. Section~\ref{s proof of theorem} is devoted to its proof.
\begin{theorem}\label{t J intro}
In the algebra  $\U/\Ikron$, the noncommutative super Schur function  $\mathfrak{J}_{\nu}(\mathbf{u})$ is equal to the following positive sum of monomials:
\begin{align}
\mathfrak{J}_{\nu}(\mathbf{u}) =
\sum_{ T \in \CT_{\nu}^< }\sqread(T)
 \qquad \text{in \hspace{-.6mm} $\U/\Ikron$}.
\end{align}
\end{theorem}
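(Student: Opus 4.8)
The plan is to follow the Fomin--Greene strategy, adapted to the super setting. The starting point is the observation (to be proved separately in Proposition~\ref{p es commute kron}) that the $e_k(\mathbf u)$ pairwise commute in $\U/\Ikron$, so that $\mathfrak{J}_\nu(\mathbf u)$ is a well-defined ``noncommutative Schur function'' and the usual symmetric-function manipulations (Jacobi--Trudi, the straightening of products of $e_k$'s) make sense modulo $\Ikron$. The heart of the matter, however, is a combinatorial insertion algorithm: I would define a \emph{super RSK}-type correspondence that takes a colored word $\e{w}$ and inserts its letters one at a time into a $<$-colored tableau, producing an insertion tableau $P(\e w) \in \CT^<$, in such a way that (i) the reading word $\sqread$ of the output is congruent to $\e w$ modulo $\Ikron$, and (ii) the fibers of $\e w \mapsto P(\e w)$ are exactly the $\Ikron$-congruence classes that arise, so that summing over a fiber collapses to the single monomial $\sqread(P(\e w))$. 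This is where the specific shape of the relations \eqref{e kron rel rotate12} and \eqref{e kron rel far commute}, and the peculiar diagonal reading order in Definition~\ref{d sqread}, get used: the ``$\myDownarrow$'' operation and the far commutation relations are precisely what is needed for column/diagonal insertion of barred versus unbarred letters to be compatible.

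Concretely, the main steps are: (1) expand $\mathfrak{J}_\nu(\mathbf u)$ using the definition and reduce, via the commutativity of the $e_k$'s in $\U/\Ikron$ and a sign-reversing involution on non-tableau terms (the standard Lindström--Gessel--Viennot / Fomin--Greene ``straightening'' argument), to a sum over column-strict-like fillings; (2) set up the insertion algorithm on colored words and prove its basic well-definedness and shape properties; (3) prove the \emph{congruence invariance}: if $\e w \equiv \e{w'} \pmod{\Ikron}$ then $P(\e w) = P(\e{w'})$, and conversely $\sqread(T) \equiv \sqread(T') \pmod{\Ikron}$ forces enough rigidity; (4) identify the image of $\sqread$ on $\CT^<_\nu$ with the tableau terms surviving step (1), and conclude that in $\U/\Ikron$ the alternating sum telescopes to $\sum_{T \in \CT^<_\nu}\sqread(T)$. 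Steps (1) and (4) are essentially bookkeeping once the right definitions are in place; I would lean on \cite[Lemma 3.2]{FG} and the already-cited analogue for $\U/\Iplacord{<}$ (Theorem~\ref{t J plac}) as a template and as a sanity check.

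The main obstacle is step (3), and within it the verification that a single application of each generating relation of $\Ikron$ — in particular the three-letter ``rotation'' relation \eqref{e kron rel rotate12} with $x = y\myDownarrow = z\myDownarrow\myDownarrow$ and the far commutation relations \eqref{e kron rel far commute} — does not change the insertion tableau. Unlike the classical plactic case, the letters involved in \eqref{e kron rel rotate12} are \emph{not} adjacent in the alphabet in the naive sense; they are related by the $\myDownarrow$ shift, which pairs an unbarred letter $a$ with the barred letter $\crc{a-1}$. So the bumping behavior has to be analyzed case by case according to whether each of $x,y,z$ is barred or unbarred, and one must check that the diagonal-by-diagonal reading word $\sqread$ — which reads unbarred entries up-left and barred entries down-right along each diagonal — interacts correctly with these bumps. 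I expect this to require a careful local analysis of how a barred letter and the unbarred letter one below it sit on adjacent diagonals of a $<$-colored tableau, and this is where most of the real work lies. Once congruence invariance is established, monomial positivity is immediate because every term in the final sum is a genuine monomial $\sqread(T)$ with coefficient $+1$.
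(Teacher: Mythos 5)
Your step (3) is the fatal gap. You propose a super-RSK insertion $\e{w}\mapsto P(\e w)$ whose fibers are ``exactly the $\Ikron$-congruence classes,'' and then to collapse each fiber to the monomial $\sqread(P(\e w))$. But $\Ikron$ is not generated by monomial-equals-monomial relations: the rotation relation \eqref{e kron rel rotate12} is the four-term linear relation $\e{xzy}-\e{zxy}-\e{yxz}+\e{yzx}=0$, so $\U/\Ikron$ is not a monoid algebra and colored words are \emph{not} partitioned into congruence classes in the sense your argument requires. Two monomials appearing in an element of $\Ikron$ need not be individually congruent, so the statement ``$\e w\equiv\e{w'}\pmod{\Ikron}$ implies $P(\e w)=P(\e{w'})$'' is not even well-posed as a rewriting-invariance claim, and $(\Ikron)^\perp$ is described by the switchboard formalism rather than by unions of classes. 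Indeed the paper explicitly records the existence of such a partition or modified insertion algorithm as an open problem (see the end of the discussion of the Lascoux heuristic), and remarks in \textsection\ref{ss conversion mysterious} that the noncommutative Schur function machinery ``does not easily lend itself to bijective results.'' This is precisely why the $\Iplacord{<}$ case (Theorem \ref{t J plac}) can be done by standardization and insertion while the $\Ikron$ case cannot.

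The paper's actual proof avoids insertion entirely. It introduces column-flagged super Schur functions $J_\alpha^{\mathbf n}$ and augmented versions $J_\alpha^{\mathbf n}(\Jnotb{j}{w})$ carrying a partial reading word, and proves a technical strengthening (Theorem \ref{t flag Schur k3 technical}) by induction, peeling entries off the shape one diagonal at a time using the expansion $e_k(\A_{\le x})=\e{x}\,e_{k-1}(\A_{\le x\mydownarrow})+e_k(\A_{\le x\myDownarrow})$ together with the straightening identities \eqref{e elem sym Jswap}--\eqref{e elem sym J0}; the key algebraic inputs are Lemma \ref{l decreasing}, the commutation Lemma \ref{l lambda1 eq lambda2 commute}, and Theorem \ref{t arrow respecting connected} (all arrow-respecting reading words of an RCT agree mod $\Ikron$), which handles the base case. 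The words $\sqread(T)$ emerge from the order in which the induction consumes diagonals, not from an insertion algorithm. If you want to salvage your outline you would have to replace step (3) by some such direct algebraic induction; as written, the proposal rests on a structure ($\Ikron$-equivalence classes of words in bijection with tableaux) that does not exist.
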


In \cite{BLamLLT}, the first author used the theory of noncommutative Schur functions to prove a positive combinatorial formula for
the Schur
expansion of LLT polynomials indexed by 3-tuples of skew shapes and  thereby settled a conjecture of
Haglund \cite{Haglund} on transformed Macdonald polynomials indexed by 3-column shapes.
The noncommutative Schur function computation required for this work (\cite[Theorem 1.1]{BLamLLT}) is quite similar to Theorem \ref{t J intro}
(though it seems neither result can be obtained from the other; see \textsection\ref{ss comparison with results}).
It is quite surprising that LLT polynomials for 3-tuples of skew shapes and
Kronecker coefficients for one hook shape have such similar underlying combinatorics.

%
\subsection{Applications}
The machinery of noncommutative Schur functions as developed in \cite{FG,BF}
can be used to study symmetric functions in the usual
sense, i.e., formal power series in infinitely many (commuting) variables
$\mathbf{x}=(x_1,x_2,\dots)$ which are symmetric under permutations of these variables.

Let  $\e{w} = \e{w_1 \cdots w_t}$ be a colored word.
For any shuffle order $\lessdot$ on $\A$, define the \emph{$\lessdot$-descent set of $\e{w}$}, denoted $\Des_\lessdot(\e{w})$,
by
\[\Des_\lessdot(\e{w}) := \big\{i \in [t-1] \mid \e{w_i} \gtrdot \e{w_{i+1}} \text{ or ($\e{w_i}$ and  $\e{w_{i+1}}$ are equal barred letters)}\big\}.\]
The associated \emph{fundamental quasisymmetric function} (Gessel \cite{GesselPPartition}) is given by
\[Q_{\Des_\lessdot(\e{w})}(\mathbf{x}) = \sum_{\substack{1 \le i_1 \le \, \cdots \, \le i_t\\j \in \Des_\lessdot(\e{w}) \implies i_j < i_{j+1} }} x_{i_1}\cdots x_{i_t}.\]

\begin{definition}
\label{d F gamma}
For any $\gamma = \sum_\e{w} \gamma_\e{w} \e{w} \in \U$ (where the sum is over colored words $\e{w}$, and $\gamma_\e{w} \in \ZZ$), define
\begin{equation*}
F^\lessdot_\gamma(\mathbf{x}) = \sum_{\e{w}} \gamma_\e{w} \, Q_{\Des_\lessdot(\e{w})}(\mathbf{x}) \in \ZZ[[x_1, x_2, \dots]],
\end{equation*}
and for a set of colored words $W$, define
$F^\lessdot_W(\mathbf{x})=F^\lessdot_\gamma(\mathbf{x})$ where $\gamma=\sum_{\e{w} \in W} \e{w}$.
\end{definition}

Let $\langle\cdot ,\cdot  \rangle$ be the symmetric bilinear form on $\U$ for which the monomials (colored words) form an orthonormal basis.
Note that any element of $\U/I$ has a well-defined pairing with any element of $I^\perp$ for any two-sided ideal  $I$ of  $\U$.

The next result is a straightforward adaptation of the theory of noncommutative Schur functions \cite{FG,BF} to the super setting.
It also requires the fact mentioned above that the noncommutative super elementary symmetric functions commute in $\U/\Ikron$.
(A full proof is given by Theorem \ref{t basics full} and Proposition \ref{p es commute kron}.)
\begin{theorem} \label{t intro basics}
For any $\gamma \in (\Ikron)^\perp$, the function  $F^<_\gamma(\mathbf{x})$ is symmetric and
\[
F^<_\gamma(\mathbf{x})
 = \sum_{\nu} s_\nu(\mathbf x) \langle \mathfrak{J}_{\nu}(\mathbf{u}), \gamma \rangle. \]
\end{theorem}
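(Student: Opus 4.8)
The plan is to follow the Fomin--Greene formalism \cite{FG,BF}, the only genuinely new point being to verify that the bar/unbar bookkeeping stays consistent. Everything is driven by a single generating series. Introduce the noncommutative super \emph{complete homogeneous} functions
\[
h_k(\mathbf u)=\sum_{z_1\lerow z_2\lerow\cdots\lerow z_k}u_{z_1}u_{z_2}\cdots u_{z_k},
\]
where $y\lerow x$ means $y<x$ or ($y=x$ and $y\in\A_\varnothing$); thus the $h_k$-words are exactly the colored words with no $<$-descent. Put $H(x)=\sum_{k\ge0}h_k(\mathbf u)x^k$ and $E(x)=\sum_{k\ge0}e_k(\mathbf u)x^k$. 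Step one is the ``super Cauchy identity''
\[
\Theta(\mathbf x):=\sum_{\e{w}}Q_{\Des_<(\e{w})}(\mathbf x)\,\e{w}\;=\;\prod_{i\ge 1}^{\;\rightarrow}H(x_i),
\]
with the product taken left to right in $i$ (in a suitable completion of $\U\otimes\ZZ[[x_1,x_2,\dots]]$). Expanding the right-hand side, a monomial cuts $\e{w}$ into consecutive blocks, the $i$-th taken from $h_{k_i}$ (possibly empty), and weights it by $x_1^{k_1}x_2^{k_2}\cdots$; the resulting data is precisely a colored word $\e{w}$ together with a weakly increasing index word $i_1\le\cdots\le i_t$ that strictly increases across every $<$-descent of $\e{w}$, i.e.\ exactly the set summed over in $Q_{\Des_<(\e{w})}(\mathbf x)$. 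Pairing in the $\U$-factor against $\gamma=\sum_{\e{w}}\gamma_{\e{w}}\e{w}$ (a finite sum, so there is no convergence issue) gives $\langle\gamma,\Theta(\mathbf x)\rangle=F^<_\gamma(\mathbf x)$.

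Step two transports commutativity into the picture. Proposition~\ref{p es commute kron} says the $e_k(\mathbf u)$ pairwise commute in $\U/\Ikron$, and in $\U$ itself one has the identity $H(x)E(-x)=1$: write $H$ and $E$ as ordered products of the one-letter factors $(1-xu_a)^{-1}$, $1+xu_{\crc{a}}$ and $1+xu_a$, $(1-xu_{\crc{a}})^{-1}$ respectively, taken in opposite orders along the alphabet, and telescope. Hence each $h_k(\mathbf u)$ is a noncommutative polynomial in $e_1(\mathbf u),\dots,e_k(\mathbf u)$, so modulo $\Ikron$ the elements $\bar e_k(\mathbf u)$ and $\bar h_k(\mathbf u)$ all lie in one commutative subring $R\subseteq\U/\Ikron$, and there is a ring homomorphism $\varphi\colon\text{Sym}=\ZZ[e_1,e_2,\dots]\to R$ with $e_k\mapsto\bar e_k(\mathbf u)$, hence $h_k\mapsto\bar h_k(\mathbf u)$, and (because the $\bar e_k(\mathbf u)$ commute, the ordered product defining $\J_\nu$ is the Jacobi--Trudi determinant $\det(e_{\nu'_i-i+j})$, so) $s_\nu\mapsto\J_\nu(\mathbf u)\bmod\Ikron$. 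Applying $\varphi$ degree by degree to the ordinary Cauchy identity $\prod_i\sum_k h_k(\mathbf y)x_i^k=\sum_\nu s_\nu(\mathbf x)s_\nu(\mathbf y)$ — all sums being finite since $\gamma$ has bounded $\U$-degree — sends the left side to $\prod_i^{\rightarrow}H(x_i)$ (the ordered product collapses to the unordered one in the commutative ring $R$) and the right side to $\sum_\nu\J_\nu(\mathbf u)s_\nu(\mathbf x)$; thus $\Theta(\mathbf x)\equiv\sum_\nu\J_\nu(\mathbf u)\,s_\nu(\mathbf x)\pmod{\Ikron}$. Since $\gamma\in(\Ikron)^\perp$, the pairing $\langle\gamma,-\rangle$ descends to $\U/\Ikron$, and therefore
\[
F^<_\gamma(\mathbf x)=\langle\gamma,\Theta(\mathbf x)\rangle=\sum_\nu s_\nu(\mathbf x)\,\langle\J_\nu(\mathbf u),\gamma\rangle,
\]
which is visibly symmetric.

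The one place super-ness intervenes, and the point I expect to need the most care, is the choice of $h_k(\mathbf u)$: it must at the same time be the family of $<$-non-descent words (so that the super Cauchy identity of step one holds verbatim) and be $e$-dual in the sense $H(x)E(-x)=1$ (so that the commutativity of the $e_k(\mathbf u)$ passes to the $h_k(\mathbf u)$ in step two). Both requirements are met because ``no $<$-descent'' unwinds to ``weakly increasing, with equalities only among unbarred letters,'' which is exactly the alphabet condition complementary to the $\gecol$-condition defining $e_k(\mathbf u)$; checking this unwinding, together with the telescoping $H(x)E(-x)=1$, is the bulk of the work. The remainder is the standard Fomin--Greene argument, and the fully detailed version is recorded as Theorem~\ref{t basics full} together with Proposition~\ref{p es commute kron}.
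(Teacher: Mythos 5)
Your proposal is correct and is essentially the argument the paper itself gives: it is the Fomin--Greene noncommutative Cauchy-product formalism, matching Theorem~\ref{t basics full}(i)--(iii) combined with Proposition~\ref{p es commute kron} (your $\prod_i^{\rightarrow}H(x_i)$ is the paper's $\Omega^<(\mathbf{x},\mathbf{u})$, and your identity $H(x)E(-x)=1$ is the same telescoping of one-letter factors that underlies the paper's part (ii)). The only, purely cosmetic, difference is that you pass through an explicit ring homomorphism from $\mathrm{Sym}$ and the classical Cauchy identity where the paper says the $h^<_k(\mathbf{u})$ generate a quotient of $\mathrm{Sym}$ in which ``all the usual identities hold.''
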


Theorems \ref{t intro basics} and \ref{t J intro} then immediately yield the following.
\begin{corollary}\label{c Ikron perp}
For any set of colored words $W$ such that $\sum_{\e{w} \in W} \e{w} \in (\Ikron)^\perp$,
\[
\Big(\text{the coefficient of  $s_\nu(\mathbf{x})$ in  $F^<_W(\mathbf{x})$}\Big)
=
\big|\big\{T \in \CT_\nu^< \mid \sqread(T) \in W \big\}\big|.
\]
\end{corollary}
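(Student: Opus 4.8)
The proof of Corollary~\ref{c Ikron perp} is essentially a bookkeeping exercise combining the two theorems that immediately precede it, so the plan is to spell out that combination carefully. First I would fix a set $W$ of colored words with $\gamma := \sum_{\e{w}\in W}\e{w} \in (\Ikron)^\perp$, so that $F^<_W(\mathbf{x}) = F^<_\gamma(\mathbf{x})$ by Definition~\ref{d F gamma}. By Theorem~\ref{t intro basics}, $F^<_\gamma(\mathbf{x})$ is symmetric and expands in the Schur basis as $\sum_\nu s_\nu(\mathbf{x})\,\langle \mathfrak{J}_\nu(\mathbf{u}),\gamma\rangle$, so the coefficient of $s_\nu(\mathbf{x})$ is exactly $\langle \mathfrak{J}_\nu(\mathbf{u}),\gamma\rangle$. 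Here I should note that this pairing is well-defined even though $\mathfrak{J}_\nu(\mathbf{u})$ a priori lives in $\U$ (not $\U/\Ikron$): the point, as remarked before Theorem~\ref{t intro basics}, is that $\gamma \in (\Ikron)^\perp$, so $\langle \cdot,\gamma\rangle$ descends to a well-defined functional on $\U/\Ikron$, and hence we may replace $\mathfrak{J}_\nu(\mathbf{u})$ by any representative of its class in $\U/\Ikron$.

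Next I would invoke Theorem~\ref{t J intro}, which gives the identity $\mathfrak{J}_\nu(\mathbf{u}) = \sum_{T\in\CT_\nu^<}\sqread(T)$ in $\U/\Ikron$. Substituting this representative into the pairing and using bilinearity,
\[
\langle \mathfrak{J}_\nu(\mathbf{u}),\gamma\rangle = \Big\langle \sum_{T\in\CT_\nu^<}\sqread(T),\ \sum_{\e{w}\in W}\e{w}\Big\rangle = \sum_{T\in\CT_\nu^<}\ \sum_{\e{w}\in W}\langle \sqread(T),\e{w}\rangle.
\]
Since the colored words form an orthonormal basis of $\U$ under $\langle\cdot,\cdot\rangle$, the inner pairing $\langle \sqread(T),\e{w}\rangle$ equals $1$ when $\sqread(T) = \e{w}$ and $0$ otherwise. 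Therefore the double sum counts precisely the pairs $(T,\e{w})$ with $T\in\CT_\nu^<$, $\e{w}\in W$, and $\sqread(T)=\e{w}$; equivalently, it counts the tableaux $T\in\CT_\nu^<$ with $\sqread(T)\in W$. This yields $\langle \mathfrak{J}_\nu(\mathbf{u}),\gamma\rangle = \big|\{T\in\CT_\nu^< \mid \sqread(T)\in W\}\big|$, which is the claimed formula.

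There is no real obstacle here — the corollary is labeled as following "immediately" from Theorems~\ref{t intro basics} and~\ref{t J intro}, and indeed it does. The only two points requiring a word of care, which I would make explicit rather than skip, are: (i) the legitimacy of evaluating $\langle \mathfrak{J}_\nu(\mathbf{u}),\gamma\rangle$ by using a representative of $\mathfrak{J}_\nu(\mathbf{u})$ in $\U/\Ikron$, which relies on $\gamma\in(\Ikron)^\perp$; and (ii) the fact that $F^<_W = F^<_\gamma$ by definition, so that Theorem~\ref{t intro basics} applies directly to $W$. Everything else is orthonormal-basis bookkeeping. If desired, one can also remark that this shows the coefficient in question is manifestly nonnegative, recovering the positivity that motivates the whole framework, but that is a corollary of the corollary rather than part of its proof.
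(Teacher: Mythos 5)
Your proposal is correct and is exactly the argument the paper intends: the corollary is stated as following immediately from Theorems~\ref{t intro basics} and~\ref{t J intro}, and your careful combination of them (using that $\gamma\in(\Ikron)^\perp$ makes the pairing with $\mathfrak{J}_\nu(\mathbf{u})$ well-defined modulo $\Ikron$, then applying orthonormality of colored words) is precisely that deduction. No gaps.
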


We will soon see how Corollary~\ref{c Ikron perp} can be used to prove a positive combinatorial formula for the Kronecker coefficients indexed by one hook shape and two arbitrary
shapes, recovering results of \cite{Ricky, BHook}.  It is more powerful, however, than this main application.
In \textsection\ref{ss switchboards}, we give a way of visualizing the implications of Corollary~\ref{c Ikron perp} and give examples to better indicate its full strength.

\subsection{Relation to Kronecker coefficients}
\label{ss kronecker}
To apply Corollary~\ref{c Ikron perp} to the Kronecker problem,
we define a set $W$ of colored Yamanouchi words such that the coefficient of $s_\nu(\mathbf x)$ in $F^<_W(\mathbf x)$ is given in terms of Kronecker coefficients. The proofs of the results in this subsection are deferred to \textsection\ref{s some proofs}.

\begin{definition}[Colored Yamanouchi words]
Let  $\e{w}$ be a colored word.
Define $\e{w}^\plainr$ to be the ordinary word formed from $\e{w}$ by shuffling the barred letters to the right, reversing this subword of barred letters and removing their bars.
We say that $\e{w}$ is \emph{Yamanouchi} of content $\lambda$ if $\e{w}^\plainr$ is Yamanouchi of content $\lambda$.
Define $\CYW_{\lambda, d}$ to be the set of colored Yamanouchi words of content $\lambda$ having exactly  $d$ barred
letters.\footnote{Warning: the colored Yamanouchi words defined here are not the same as those defined in \cite{BHook}.}
\end{definition}

For example, if $\e{w} = \e{2 \crc{1} \crc{2} 1 \crc{3} \crc{1} 2 1}$, then  $\e{w}^\plainr =  2 1 2 1 1 3 2 1$,  and these are Yamanouchi of content  $(4,3,1)$.
See Figure \ref{f CYW 32}.

For  $0 \le d \le n-1$, let $\mu(d)$ denote the hook partition $(n-d,1^d)$.
The following is in some sense well known (see \textsection\ref{ss proof1} for a proof).

\begin{proposition} \label{p intro sum of kron}
For any partitions  $\lambda, \nu$ of  $n$ and  $d \leq n$,
\begin{align*}
g_{\lambda\, \mu(d)\, \nu} + g_{\lambda\, \mu(d-1)\, \nu} = \Big( \text{the coefficient of $s_\nu(\mathbf{x})$ in } F^\prec_{\CYW_{\lambda,d}}(\mathbf{x})\Big).
\end{align*}
(By convention, set $g_{\lambda \, \mu(n) \, \nu}  = g_{\lambda \, \mu(-1) \, \nu} = 0$.)
\end{proposition}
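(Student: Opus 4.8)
\textbf{Proof proposal for Proposition~\ref{p intro sum of kron}.}

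The plan is to reduce the statement to a known decomposition of the induced sign-twisted module and a standard manipulation of fundamental quasisymmetric functions under the big bar order $\prec$. First I would recall the representation-theoretic meaning of a hook Kronecker coefficient: the multiplicity $g_{\lambda\,\mu(d)\,\nu}$ equals the multiplicity of $M_\nu$ in $M_\lambda \otimes M_{\mu(d)}$, and since $M_{\mu(d)} = M_{(n-d,1^d)}$ is the $d$-th exterior power appearing in the decomposition of the regular-like module, the sum $g_{\lambda\,\mu(d)\,\nu} + g_{\lambda\,\mu(d-1)\,\nu}$ telescopes to something cleaner. Concretely, I would use the classical fact (essentially due to the branching between $\bigoplus_d M_{(n-d,1^d)}$ and the $GL$-side) that $\sum_d g_{\lambda\,\mu(d)\,\nu}\, q^d$, or rather the partial sums, count certain pairs of tableaux; the combinatorial incarnation is that $g_{\lambda\,\mu(d)\,\nu}+g_{\lambda\,\mu(d-1)\,\nu}$ is the number of standard Young tableaux of shape $\nu$ whose descent composition, read appropriately, is compatible with a Yamanouchi word of content $\lambda$ having exactly $d$ descents landing in barred positions — this is precisely what $\CYW_{\lambda,d}$ is designed to encode.

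The key steps, in order, would be: (1) Express the right-hand side using the definition of $F^\prec$: expand $F^\prec_{\CYW_{\lambda,d}}(\mathbf x) = \sum_{\e{w}\in \CYW_{\lambda,d}} Q_{\Des_\prec(\e{w})}(\mathbf x)$, and then expand each quasisymmetric function in the Schur basis via the standard rule that the coefficient of $s_\nu$ in $\sum_{\e{w}\in W} Q_{\Des(\e{w})}$ counts standard Young tableaux $P$ of shape $\nu$ together with words $\e{w}\in W$ whose descent set matches $\Des(P)$ (under the RSK/Gessel correspondence). (2) Identify the set of pairs $(\e{w}, P)$ with $\e{w}\in \CYW_{\lambda,d}$ and $\Des_\prec(\e{w}) = \Des(P)$: using the operation $\e{w}\mapsto \e{w}^\plainr$ which shuffles barred letters to the right and reverses/unbars them, show this is a bijection onto pairs $(\text{Yamanouchi word }v\text{ of content }\lambda, P\in \SYT_\nu)$ such that the number of barred letters is $d$ and the descent structure is preserved — the point is that the big bar order $\prec$ was chosen exactly so that $\Des_\prec(\e{w})$ transforms predictably under $\plainr$. (3) Recognize the resulting count of triples $(v, \text{bar-placement}, P)$ as the coefficient extracted from $M_\lambda \otimes (M_{(n-d,1^d)}\oplus M_{(n-d+1,1^{d-1})})$, i.e., as $g_{\lambda\,\mu(d)\,\nu}+g_{\lambda\,\mu(d-1)\,\nu}$, using the known character of the hook representations $M_{(n-d,1^d)}$ on the relevant conjugacy-class data (or equivalently Littlewood's formula expressing $s_\lambda[\mathbf x \mathbf y]$ restricted to hook $\mathbf y$-shapes).

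The main obstacle I expect is step (2)–(3): making precise and bijective the claim that summing over $\CYW_{\lambda,d}$ with the $\prec$-descent statistic reproduces exactly the two consecutive hook Kronecker coefficients, rather than some other linear combination or a single one. The subtlety is the "$+1$" shift (why $d$ and $d-1$ both appear): this comes from the fact that a colored Yamanouchi word with $d$ bars does not have a canonically determined "last barred letter" status contributing to a descent, so the bar in the final position may or may not create a $\prec$-descent, and this dichotomy is what splits the count between $\mu(d)$ and $\mu(d-1)$. I would handle this by carefully tracking whether position $t$ (or the relevant boundary) lies in $\Des_\prec(\e{w})$, organizing the sum accordingly, and matching against the two hook shapes; alternatively, one can cite the analogous bookkeeping in \cite{BHook} or \cite{Ricky} and verify it survives the passage to the present (slightly different) notion of colored Yamanouchi word, which is why the footnote warning about the differing definitions is relevant. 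Since the proposition is described as "in some sense well known," I anticipate the cleanest route is to deduce it from the identity $\sum_{d\ge 0} g_{\lambda\,\mu(d)\,\nu}\,t^d = \langle \text{something}\rangle$ together with the telescoping $(1+t)\sum_d(\cdots)t^d$ matching the generating function $\sum_d |\CYW_{\lambda,d}\cap(\text{descent-}\nu)|\,t^d$, so that comparing coefficients of $t^d$ gives exactly the stated two-term sum.
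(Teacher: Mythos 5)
Your overall strategy---reduce to a known two-term identity for hook Kronecker coefficients and exploit the big bar order---points in the right direction, but your step (1) rests on a false lemma. The coefficient of $s_\nu(\mathbf{x})$ in $\sum_{\e{w}\in W} Q_{\Des(\e{w})}(\mathbf{x})$ is \emph{not}, for a general set $W$, the number of pairs $(\e{w},P)$ with $\e{w}\in W$, $P\in\SYT_\nu$ and $\Des(\e{w})=\Des(P)$: writing $F_W=\sum_\rho c_\rho s_\rho$ and using $s_\rho=\sum_{P\in\SYT_\rho}Q_{\Des(P)}$, that pair count equals $\sum_\rho c_\rho\sum_S |\{P\in\SYT_\rho:\Des(P)=S\}|\cdot|\{P\in\SYT_\nu:\Des(P)=S\}|$, a positive combination of \emph{all} the $c_\rho$ rather than $c_\nu$ alone. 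The valid extraction rule requires knowing that $W$ is a union of ($\prec$-colored) Knuth classes, so that each class contributes exactly one Schur function $s_{\sh}$. Verifying this closure for $\CYW_{\lambda,d}$ under the $\prec$-colored plactic relations is a genuine step---it is precisely the content of the proof of Corollary~\ref{c main plac}, which checks that $\sum_{\e{w}\in\CYW_{\lambda,d}}\e{w}\in(\Iplacord{\prec})^\perp$---and your proposal never supplies it. This is not a cosmetic omission: for the natural order $<$ the analogous closure \emph{fails} (see the discussion after Corollary~\ref{c intro main}), so something specific to $\prec$ must be proved.

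Once that is repaired, your argument collapses onto the paper's own proof. Corollary~\ref{c main plac} converts the Schur coefficient into $|\{T\in\CT^\prec_\nu\mid \sqread(T)\in\CYW_{\lambda,d}\}|$; Proposition~\ref{p CYT} identifies these $T$ with the $\prec$-colored Yamanouchi tableaux $\CYT^\prec_{\lambda,d}(\nu)$ via the jeu-de-taquin construction $T\mapsto T^\plainr$---this is the correct, tableau-level version of your proposed descent-preserving bijection $\e{w}\mapsto\e{w}^\plainr$, which as stated does not literally preserve descent data since shuffling the barred letters changes positions; and the identity $g_{\lambda\,\mu(d)\,\nu}+g_{\lambda\,\mu(d-1)\,\nu}=|\CYT^\prec_{\lambda,d}(\nu)|$ is imported from \cite[Proposition 3.1]{BHook}. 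Your step (3) defers the same key identity to essentially the same source, and your account of why both $\mu(d)$ and $\mu(d-1)$ appear (whether the last letter is barred) matches the paper's later splitting into $\CYW_{\lambda,d}^{\pm}$. So the substantive defect is the incorrect coefficient-extraction rule and the missing plactic-closure argument that must replace it.
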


We use a trick called word conversion to convert from the big bar order $\prec$ to the natural order $<$ (see \textsection\ref{ss proof2}).
\begin{proposition}\label{p intro word conversion QDes}
For any partition $\lambda$ of  $n$ and  $d \leq n$, $F^\prec_{\CYW_{\lambda,d}}(\mathbf{x}) = F^<_{\CYW_{\lambda,d}}(\mathbf{x})$.
\end{proposition}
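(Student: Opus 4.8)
The plan is to exhibit, for each fixed content $\lambda$ and each fixed number $d$ of barred letters, a content-preserving bijection on the set $\CYW_{\lambda,d}$ of colored Yamanouchi words that intertwines the two descent statistics, i.e.\ a bijection $\phi\colon \CYW_{\lambda,d} \to \CYW_{\lambda,d}$ with $\Des_\prec(\e{w}) = \Des_<(\phi(\e{w}))$ for all $\e{w}$. Since $F^\lessdot_W(\mathbf x) = \sum_{\e{w}\in W} Q_{\Des_\lessdot(\e{w})}(\mathbf x)$ depends only on the multiset of descent sets appearing among the words of $W$, such a bijection immediately gives $F^\prec_{\CYW_{\lambda,d}}(\mathbf x) = F^<_{\CYW_{\lambda,d}}(\mathbf x)$. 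The natural candidate for $\phi$ is \emph{word conversion} itself, which the paper has already advertised in \textsection\ref{ss proof2} as the tool for passing between the big bar order $\prec$ and the natural order $<$; so the real content is checking that conversion (a) preserves the Yamanouchi condition and the content, (b) preserves the number of barred letters, and (c) has the required effect on descent sets.

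First I would recall/set up the conversion map precisely: conversion acts on a colored word by repeatedly applying local swaps on adjacent pairs that ``straddle'' the gap between the two orders — concretely, a pair $\e{a\,\crc{a}}$ (equal letters, unbarred then barred) is a descent in one order but not the other, and conversion toggles such configurations while leaving all other relative orders of letters alone. The key structural observation is that $\prec$ and $<$ differ \emph{only} in how an unbarred $a$ compares with a barred $\crc{a}$ of the same underlying value (under $<$ we have $a < \crc{a} < a{+}1$, under $\prec$ all unbarred letters precede all barred ones), so conversion only ever needs to interchange adjacent occurrences of $a$ and $\crc{a}$. Because such a swap does not change the underlying unbarred/barred pattern in a way that affects $\e{w}^\plainr$ — shuffling all barred letters to the right, reversing them, and erasing bars produces the same ordinary word before and after swapping an adjacent $a,\crc{a}$ pair, since $a$ and $\crc{a}$ have the same underlying value — the Yamanouchi content and the barred-letter count are manifestly preserved. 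I would state this compatibility as a short lemma (or cite the general conversion machinery from \textsection\ref{ss proof2} once it is in place) and verify it by inspecting a single elementary swap.

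Next I would verify the descent-set identity. By definition $i \in \Des_\lessdot(\e{w})$ iff $\e{w_i} \gtrdot \e{w_{i+1}}$ or $\e{w_i},\e{w_{i+1}}$ are equal barred letters; the second clause is order-independent, so only the first clause can change between $\prec$ and $<$, and it changes exactly at adjacent pairs $\{a,\crc a\}$. A single conversion swap at position $i$ replaces $\cdots \e{w_i}\e{w_{i+1}}\cdots$ (an adjacent $a,\crc a$ or $\crc a, a$ configuration that is ``wrong'' for one order) by its transpose, and one checks directly from the two orderings that the descent set of the new word with respect to $<$ equals the descent set of the old word with respect to $\prec$, both at position $i$ and at the neighboring positions $i-1$ and $i+1$ (which is where one must be careful, since changing $\e{w_i}$ or $\e{w_{i+1}}$ could in principle affect the comparisons $\e{w_{i-1}}$ vs.\ $\e{w_i}$ and $\e{w_{i+1}}$ vs.\ $\e{w_{i+2}}$ — but because the swapped letters $a$ and $\crc a$ have the same underlying value, these neighboring comparisons are unaffected). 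Iterating over all swaps needed to fully convert $\e{w}$ from $\prec$-sorted-straddles to $<$-sorted-straddles yields $\Des_\prec(\e{w}) = \Des_<(\phi(\e{w}))$.

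The main obstacle I anticipate is purely bookkeeping rather than conceptual: making the conversion map well-defined (confluence/termination of the sequence of elementary swaps, so that $\phi$ does not depend on the order in which straddling pairs are processed) and carefully controlling the effect of each swap on the \emph{neighboring} positions of the descent set. Both points hinge on the single clean fact that conversion only ever transposes adjacent letters with equal underlying value, so I would isolate that fact early and lean on it throughout; once it is in hand, invertibility of $\phi$ (run the swaps in reverse) and the preservation of $\CYW_{\lambda,d}$ are routine, and the equality of the two quasisymmetric functions follows. If the general word-conversion lemma in \textsection\ref{ss proof2} is stated in enough generality, much of this reduces to citing it and checking that the hypothesis ``$\prec$ and $<$ agree except on pairs $\{a,\crc a\}$'' is met.
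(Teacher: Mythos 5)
Your overall strategy (a descent-preserving bijection on $\CYW_{\lambda,d}$, built from conversion) is the right one and is what the paper does, but your ``key structural observation'' is false, and the argument collapses there. The orders $\prec$ and $<$ do \emph{not} differ only on pairs $\{a,\crc{a}\}$ of equal underlying value: they disagree on every pair $\{b,\crc{a}\}$ with $a<b$ (e.g.\ $\crc{1}<2$ but $2\prec\crc{1}$). Consequently, for a fixed word $\e{w}$ the sets $\Des_\prec(\e{w})$ and $\Des_<(\e{w})$ can differ at a position $i$ where $\{\e{w_i},\e{w_{i+1}}\}=\{b,\crc{a}\}$ with $a<b$, and your map $\phi$ --- which only transposes adjacent letters of equal underlying value --- never touches such positions. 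A minimal counterexample: $\CYW_{(1,1),1}=\{\e{2\crc{1}},\,\e{\crc{1}2}\}$. Your $\phi$ fixes both words (there are no adjacent $a,\crc{a}$ pairs), yet $\Des_\prec(\e{2\crc{1}})=\varnothing\neq\{1\}=\Des_<(\e{2\crc{1}})$; the only descent-intertwining bijection here must exchange the two words, i.e.\ transpose the adjacent letters $2$ and $\crc{1}$, which have \emph{different} underlying values. Your claim that such swaps leave $\e{w}^\plainr$ unchanged is also only true because $\e{w}^\plainr$ ignores the relative order of barred versus unbarred letters entirely (shuffle-closedness), not because the swapped letters have equal value.

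The paper's proof repairs exactly this point. It interpolates from $\prec$ to $<$ through a chain of shuffle orders, each pair differing by a single covering relation $b\lessdot\crc{a}\ \to\ \crc{a}\lessdot' b$ (here typically $a<b$, e.g.\ moving $\crc{1}$ past $N,N-1,\dots,2$). For one such step, the word-conversion operator is not an adjacent transposition at all: it cyclically rotates each maximal run of $\crc{a}$'s and $b$'s one step to the right. Within such a run, a position is a $\lessdot$-descent iff the \emph{left} letter is $\crc{a}$ and a $\lessdot'$-descent iff the \emph{right} letter is $\crc{a}$, which is why a cyclic shift (rather than local swaps) converts one descent set into the other; one then checks the run boundaries, notes that the operator is a bijection of any shuffle-closed set onto itself, and sums $Q_{\Des}$ over $W$ at each step. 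If you want to salvage your write-up, replace the ``equal underlying value'' premise by this one-covering-relation-at-a-time decomposition and the block-rotation operator; the rest of your outline (shuffle-closedness of $\CYW_{\lambda,d}$ giving invariance of the set, plus the descent lemma) then goes through as in \textsection\ref{ss proof2}.
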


By showing that $\sum_{\e{w} \in \CYW_{\lambda, d}} \e{w} \in (\Ikron)^\perp$ (see \textsection\ref{ss proof3}) and applying Propositions \ref{p intro sum of kron} and \ref{p intro word conversion QDes} and Corollary~\ref{c Ikron perp}, we obtain the following result, first proved in \cite{Ricky} using the conversion operation on colored tableaux (see Lemma 3.1 and Remark 3.3 of \cite{Ricky}).

\begin{corollary} \label{c intro main}
For any partitions  $\lambda, \nu$ of  $n$ and  $d \leq n$,
\begin{align*}
g_{\lambda\, \mu(d)\, \nu} + g_{\lambda\, \mu(d-1)\, \nu}
& = \Big(\text{the coefficient of  $s_\nu(\mathbf{x})$ in  $F^<_{\CYW_{\lambda,d}}(\mathbf{x})$}\Big)\\
& = \big|\big\{T \in \CT_\nu^< \mid \sqread(T) \in \CYW_{\lambda,d}\big\}\big|.
\end{align*}
\end{corollary}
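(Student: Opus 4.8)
The plan is to obtain the corollary by combining three facts already in hand --- Proposition~\ref{p intro sum of kron}, Proposition~\ref{p intro word conversion QDes}, and Corollary~\ref{c Ikron perp} --- with the single missing input that $\gamma := \sum_{\e w \in \CYW_{\lambda,d}} \e w$ lies in $(\Ikron)^\perp$. Granting this last point, Proposition~\ref{p intro sum of kron} identifies the left-hand side with the coefficient of $s_\nu$ in $F^\prec_{\CYW_{\lambda,d}}$, Proposition~\ref{p intro word conversion QDes} replaces $\prec$ by $<$, and Corollary~\ref{c Ikron perp} (whose hypothesis is exactly ``$\gamma \in (\Ikron)^\perp$'') rewrites that coefficient as the desired tableau count:
\begin{align*}
g_{\lambda\,\mu(d)\,\nu} + g_{\lambda\,\mu(d-1)\,\nu}
&= \big(\text{the coefficient of }s_\nu\text{ in }F^\prec_{\CYW_{\lambda,d}}(\mathbf x)\big)\\
&= \big(\text{the coefficient of }s_\nu\text{ in }F^<_{\CYW_{\lambda,d}}(\mathbf x)\big)\\
&= \big|\{T \in \CT_\nu^< \mid \sqread(T) \in \CYW_{\lambda,d}\}\big|.
\end{align*}
So the whole problem reduces to proving $\gamma \in (\Ikron)^\perp$.

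Since $\Ikron$ is generated as a two-sided ideal by the differences of the two sides of \eqref{e plac nat rel knuth3}, \eqref{e plac nat rel knuth3b}, \eqref{e plac nat rel knuth4}, \eqref{e plac nat rel knuth4b}, \eqref{e kron rel rotate12}, \eqref{e kron rel far commute}, it is enough to check, for each such relation-difference $r$ and all colored words $\e p,\e q$, that $\langle \gamma,\, \e p\, r\, \e q\rangle = 0$ --- that is, that the signed number of monomials $\e p\,(\text{term of }r)\,\e q$ belonging to $\CYW_{\lambda,d}$ is zero. Each of these relations merely rearranges a fixed multiset of letters, so it leaves unchanged both the number of barred letters of $\e w$ and the content of $\e w^\plainr$; the only thing that can change is whether $\e w^\plainr$ is Yamanouchi. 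I would record this using the standard fact that an ordinary word $v$ is Yamanouchi of content $\lambda$ iff its RSK insertion tableau equals the superstandard tableau $Y_\lambda$ of shape $\lambda$ (row $i$ filled with $i$'s) --- equivalently, the Yamanouchi words of content $\lambda$ form a single plactic class. Writing $\e w^\plainr = U\,R$, where $U$ is the unbarred subword of $\e w$ and $R$ is the reverse of the de-barred barred subword, membership $\e w \in \CYW_{\lambda,d}$ then amounts to: $\e w$ has $d$ bars and the insertion tableau of $U R$ is $Y_\lambda$.

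For the Knuth-type relations \eqref{e plac nat rel knuth3}--\eqref{e plac nat rel knuth4b}, a short case check on the bar status of the non-repeated letter shows that each induces on $\e w^\plainr$ either the identity (when that letter sits on the opposite side of the bar from the repeated one, so neither $U$ nor $R$ moves) or a single Knuth move applied to $U$ or to $R$; since plactic equivalence is a two-sided congruence, $U R$ and its image have the same insertion tableau, so $\CYW_{\lambda,d}$-membership is preserved and the check is automatic. For the rotation relation \eqref{e kron rel rotate12}, the condition $x = y\myDownarrow = z\myDownarrow\myDownarrow$ forces $x$ and $z$ to have the same bar status and $y$ the opposite one, so moving $y$ past the pair $\{x,z\}$ leaves $U R$ unchanged; the four monomials $\e p\e{xzy}\e q$, $\e p\e{zxy}\e q$, $\e p\e{yxz}\e q$, $\e p\e{yzx}\e q$ then split into two pairs ($\{1\text{st},3\text{rd}\}$ and $\{2\text{nd},4\text{th}\}$) with identical $\e w^\plainr$, and the alternating sum cancels pair by pair.

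The real content is in the far-commutation relations \eqref{e kron rel far commute}. If $x$ and $z$ differ in bar status, interchanging them alters neither $U$ nor $R$, so $\e w^\plainr$ is unchanged and there is nothing to prove. If $x$ and $z$ have the same bar status, the hypothesis $x < z\myDownarrow\myDownarrow$ --- rather than merely $x < z$ --- says exactly that the two underlying integers (after erasing bars) differ by at least $2$, so the swap transforms $\e w^\plainr$ by transposing two \emph{adjacent} entries whose values differ by at least $2$. The problem thus reduces to the combinatorial lemma: transposing two adjacent entries of a word whose values differ by at least $2$ sends Yamanouchi words of content $\lambda$ to Yamanouchi words of content $\lambda$. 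I would prove this directly from the (reverse) lattice-word definition: such a transposition changes the content of exactly one suffix --- the one beginning immediately after the transposed pair --- so, writing $c$ for the content of the suffix beginning immediately after that pair, $\{p,q\}$ for the two transposed values with $p<q$ and $q-p\ge 2$, and $\varepsilon_j$ for the $j$th standard basis vector, everything comes down to: if $c$ is a partition and $c+\varepsilon_p+\varepsilon_q$ is a partition (both forced once the original word is Yamanouchi, using the suffix starting at the pair), then $c+\varepsilon_p$ and $c+\varepsilon_q$ are partitions --- which holds precisely because the gap $q-p\ge 2$ keeps the two added boxes from interacting. Hence the transposed word is Yamanouchi iff the original one is. Assembling the cases yields $\gamma\in(\Ikron)^\perp$, and then the displayed identities give the corollary. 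I expect the bar-status bookkeeping to be tedious but routine; the adjacent-transposition lemma is the one genuinely substantive step, and it is exactly where the ``far'' in ``far commutation'' is used.
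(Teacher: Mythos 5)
Your proposal is correct and follows essentially the same route as the paper: the corollary is reduced to Propositions~\ref{p intro sum of kron} and \ref{p intro word conversion QDes} and Corollary~\ref{c Ikron perp} together with the single remaining fact that $\sum_{\e{w} \in \CYW_{\lambda,d}} \e{w} \in (\Ikron)^\perp$, which is exactly the paper's Proposition~\ref{p CYW in kron perp}, and your verification of that orthogonality rests on the same three observations the paper uses (membership in $\CYW_{\lambda,d}$ depends only on $\e{w}^\plainr$ and the bar count, Knuth moves preserve Yamanouchi-ness, and far transpositions preserve Yamanouchi-ness). The only differences are organizational: the paper absorbs the rotation relation \eqref{e kron rel rotate12} into an auxiliary ideal $J$ containing the full shuffle relations, where you instead cancel its four monomials in signed pairs directly, and you supply a proof of the adjacent-transposition lemma that the paper states without proof; both of these are fine.
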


This result demonstrates the importance of $\Ikron$ since it is not always the case that $\sum_{\e{w} \in \CYW_{\lambda, d}} \e{w} \in (\Iplacord{<})^\perp$. For example, $\sum_{\e{w} \in \CYW_{\lambda, d}} \e{w} \notin (\Iplacord{<})^\perp$ for $\lambda = (2,2)$, $d = 2$ since $\e{\crc{1}\crc{2}21} \in \CYW_{\lambda, d}$, but $\e{ \crc{2}\crc{1}21 }\notin \CYW_{\lambda, d}$, whereas
$\e{\crc{1}\crc{2}21 \equiv \crc{2}\crc{1}21} \ \bmod{\Iplacord{<}}.$

Corollary~\ref{c intro main} easily implies an explicit combinatorial formula for $g_{\lambda \mu(d) \nu}$. Partition $\CYW_{\lambda,d}$ into sets $\CYW_{\lambda, d}^-$ and $\CYW_{\lambda, d}^+$ consisting of the words ending in an unbarred letter or barred letter, respectively. Since
there is a bijection from $\CYW_{\lambda, d+1}^+$ to $\CYW_{\lambda, d}^-$ given by removing the bar from the last letter, and this bijection respects the set of words of the form $\sqread(T)$, this partition separates the two Kronecker coefficients appearing in Corollary~\ref{c intro main}, giving the following result. (We omit the complete proof here; see \cite[\textsection3.3]{BHook}, \cite[\textsection4]{Ricky} for more details.)

\begin{corollary}[\cite{Ricky}]
\label{c kronecker ricky}
For any partitions $\lambda, \nu$ of $n$ and $d \leq n-1$,
\[g_{\lambda\mu(d)\nu} = \big|\big\{T \in \CT_\nu^< \mid \sqread(T) \in \CYW_{\lambda,d}^-\big\}\big|.\]
\end{corollary}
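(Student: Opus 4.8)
The plan is to read off the formula for $g_{\lambda\mu(d)\nu}$ from the recursion in Corollary~\ref{c intro main} by refining the tableau count according to whether the reading word ends in a barred or an unbarred letter. For each $d$ put
\[
c_d = \big|\{T \in \CT_\nu^< : \sqread(T) \in \CYW_{\lambda,d}^-\}\big|, \qquad c_d^+ = \big|\{T \in \CT_\nu^< : \sqread(T) \in \CYW_{\lambda,d}^+\}\big|,
\]
so that Corollary~\ref{c intro main} reads $g_{\lambda\mu(d)\nu} + g_{\lambda\mu(d-1)\nu} = c_d + c_d^+$ for $0 \le d \le n$. It then suffices to establish two facts: (i) $c_d^+ = c_{d-1}$, and (ii) the base case $g_{\lambda\mu(0)\nu} = c_0$. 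Granting these, an immediate induction on $d$ gives $g_{\lambda\mu(d)\nu} = (c_d + c_{d-1}) - g_{\lambda\mu(d-1)\nu} = c_d$, which is the assertion. Fact (ii) is just a boundary convention: $g_{\lambda\mu(-1)\nu} = 0$ by convention and $\CYW_{\lambda,-1} = \varnothing$, so the $d=0$ instance of the recursion collapses to $g_{\lambda\mu(0)\nu} = c_0$.

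For (i) I would exhibit an explicit bijection on tableaux that realizes the ``remove the bar from the last letter'' map $\CYW_{\lambda,d}^+ \to \CYW_{\lambda,d-1}^-$. The key geometric point is that the northeasternmost diagonal of a straight shape $\nu$ is the single corner cell $(1,\nu_1)$, so by Definition~\ref{d sqread} the last letter of $\sqread(T)$ is exactly the entry of that cell, and toggling the bar on that one cell changes nothing else in $\sqread(T)$. One checks that bar-toggling on this corner cell carries $\CT_\nu^<$ to itself --- the relevant row and column inequalities survive because $a$ and $\crc a$ are consecutive in the natural order and the corner cell has no neighbor above it or to its right (here the condition that unbarred entries strictly increase down columns is what is needed in the delicate cases) --- and that for a colored word $\e{w}$ ending in a barred letter, $\e{w}^\plainr$ is unchanged when that final bar is deleted. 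The latter is a direct unwinding of the definition of $\e{w}^\plainr$: the final barred letter of $\e{w}$ becomes the first letter of the reversed barred subword, which after debarring is precisely the position occupied by the now-unbarred final letter. Putting these together, bar-toggling on the corner cell restricts to a bijection $\{T : \sqread(T)\in\CYW_{\lambda,d}^+\} \xrightarrow{\ \sim\ } \{T : \sqread(T) \in \CYW_{\lambda,d-1}^-\}$, which gives $c_d^+ = c_{d-1}$.

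The only step with genuine content is the verification inside (i) that the corner-cell bar toggle is simultaneously compatible with membership in $\CT_\nu^<$, with $\sqread$, and with the map $\e{w}\mapsto\e{w}^\plainr$ defining colored Yamanouchi words; this is a short but somewhat fiddly case analysis on the natural order, and it is carried out in detail in \cite[\textsection3.3]{BHook} and \cite[\textsection4]{Ricky}. Everything else --- the base case and the final telescoping induction --- is purely formal.
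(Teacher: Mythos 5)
Your argument is correct and is exactly the route the paper sketches: partition $\CYW_{\lambda,d}$ by the bar status of the last letter, use the bar-removal bijection $\CYW_{\lambda,d}^+\to\CYW_{\lambda,d-1}^-$ (realized on tableaux as toggling the corner cell $(1,\nu_1)$, which is the last letter of $\sqread(T)$), and telescope the recursion of Corollary~\ref{c intro main}. You have simply filled in the details that the paper defers to \cite[\textsection3.3]{BHook} and \cite[\textsection4]{Ricky}, and your verifications (the toggle preserves $\CT_\nu^<$, fixes $\e{w}^\plainr$, and the $d=0$ base case) all check out.
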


\subsection{The Lascoux heuristic}

We now describe a heuristic for computing Kronecker coefficients first investigated by Lascoux in \cite{Lascoux} and relate it to Corollary~\ref{c kronecker ricky}.

For any partition $\lambda$, let $Z_\lambda^\stand$ be the \emph{(standardized) superstandard tableau} of shape $\lambda$ whose boxes are labeled in order across rows from top to bottom. Let $\Gamma_\lambda$ be the set of permutations whose insertion tableau is $Z_\lambda^\stand$. For any two partitions $\lambda$ and $\mu$, consider the multiset of permutations
\be \label{e lambda circ mu}
\Gamma_\lambda \circ \Gamma_\mu = \big\{ u \circ v \mid u \in \Gamma_\lambda, v \in \Gamma_\mu \big\},
\ee
where $\circ$ denotes ordinary composition of permutations.

For example, take
\[
\begin{array}{lll}
\lambda = (3,1), &Z_\lambda^\stand = \tiny \tableau{1&2&3\\4},&\Gamma_\lambda = \{4123,1423,1243\};\\[5mm]
\mu = (2,1,1), &Z_\mu^\stand = \tiny\tableau{1&2\\3\\4}, &\Gamma_\mu = \{4312, 4132, 1432\}.
\end{array}
\]
Then $\Gamma_\lambda \circ \Gamma_\mu$ consists of the nine products in the multiplication table:
\[
\begin{array}{c|ccc}
\circ&4312&4132&1432\\\hline
4123&3241&3421&4321\\
1423&3214&3124&1324\\
1243&3412&3142&1342
\end{array}
\]
In this case, $\Gamma_\lambda \circ \Gamma_\mu$ is a union of four Knuth equivalence classes with insertion tableaux
\[\tiny \tableau{1&4\\2\\3}\,,\quad \tiny \tableau{1&2\\3&4}\,,\quad \tiny \tableau{1&2&4\\3}\,, \quad \normalsize\text{and} \quad \tiny \tableau{1\\2\\3\\4}\,.\]
Moreover, $g_{\lambda\mu\nu} = 1$ if $\nu = (2,1,1)$, $(2,2)$, $(3,1)$, or $(1,1,1,1)$, and $g_{\lambda\mu\nu} = 0$ otherwise, so the classes present in $\Gamma_\lambda \circ \Gamma_\mu$ precisely describe the Kronecker coefficients $g_{\lambda\mu\nu}$.

Lascoux \cite{Lascoux} showed that if $\lambda$ and $\mu$ are both hooks, then this phenomenon occurs in general.
\begin{theorem}[Lascoux's Kronecker Rule \cite{Lascoux}] If $\lambda$ and $\mu$ are hook shapes, then $\Gamma_\lambda \circ \Gamma_\mu$ is a union of Knuth equivalence classes, and $g_{\lambda\mu\nu}$ is the number of these classes with insertion tableau of shape $\nu$.
\end{theorem}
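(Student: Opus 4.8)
The plan is to reduce the statement to two ingredients, one general and one special to hooks. The \emph{general} ingredient is the identity
\begin{equation}\label{e lascoux general}
\sum_{u\in\Gamma_\lambda,\ v\in\Gamma_\mu}Q_{\Des(u\circ v)}(\mathbf x)\;=\;\sum_\nu g_{\lambda\mu\nu}\,s_\nu(\mathbf x),
\end{equation}
valid for \emph{all} partitions $\lambda,\mu$ of $n$, whose right-hand side is the Frobenius characteristic of $M_\lambda\otimes M_\mu$. The \emph{hook-specific} ingredient is that, for $\lambda,\mu$ hooks, the map $\Gamma_\lambda\times\Gamma_\mu\to\S_n$, $(u,v)\mapsto u\circ v$, is injective with image a union of Knuth equivalence classes. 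Granting both, write $\Gamma_\lambda\circ\Gamma_\mu=\bigsqcup_{i=1}^m C_i$ with the $C_i$ Knuth classes; using the classical facts $\Des(w)=\Des(Q(w))$ and $\sum_{w\in C}Q_{\Des(w)}(\mathbf x)=s_{\sh(C)}(\mathbf x)$ for a Knuth class $C$ (the latter by combining the former with the quasisymmetric expansion $s_\nu=\sum_Q Q_{\Des(Q)}$), injectivity gives $\sum_{u,v}Q_{\Des(u\circ v)}(\mathbf x)=\sum_{i=1}^m s_{\sh(C_i)}(\mathbf x)$, and comparing with \eqref{e lascoux general} and invoking linear independence of Schur functions yields $\#\{i:\sh(C_i)=\nu\}=g_{\lambda\mu\nu}$, which is the theorem.

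Both ingredients are cleaner with an explicit model of $\Gamma_\lambda$. For $\lambda=(n-p,1^p)$ the word $n\,(n{-}1)\cdots(n{-}p{+}1)\,1\,2\cdots(n{-}p)$ inserts to $Z_\lambda^\stand$, so $\Gamma_\lambda$ is its Knuth class; comparing cardinalities with $f^\lambda=\binom{n-1}{p}$ then identifies $\Gamma_\lambda$ with the permutations obtained by shuffling the decreasing word $n,\dots,n{-}p{+}1$ into the increasing word $1,\dots,n{-}p$ with last letter $n-p$ --- equivalently, the permutations whose inverse is unimodal with maximum $n$ in position $n-p$ --- so that $u\in\Gamma_\lambda$ is encoded by the set of positions of the ``large'' letters. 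Identity \eqref{e lascoux general} uses no hook hypothesis: since $Q(u\circ v)=P((u\circ v)^{-1})$, $\Gamma_\lambda=\{\RSK^{-1}(Z_\lambda^\stand,S):S\in\SYT(\lambda)\}$, and $\Des(u\circ v)=\Des(Q(u\circ v))$, it is equivalent --- via $s_\nu=\sum_Q Q_{\Des(Q)}$ --- to the statement that for every standard tableau $R$ the number of pairs $(u,v)\in\Gamma_\lambda\times\Gamma_\mu$ with $Q(u\circ v)=R$ depends only on $\sh(R)$ and equals $g_{\lambda\mu\,\sh(R)}$. This is the familiar RSK realization of $\mathrm{ch}(M_\lambda\otimes M_\mu)$, provable by a direct insertion argument (or extracted from the bijective proof of $\prod_{i,j,k}(1-x_iy_jz_k)^{-1}=\sum_{\lambda,\mu,\nu}g_{\lambda\mu\nu}s_\lambda s_\mu s_\nu$).

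The hook-specific structural claim is the heart of the matter, and I expect it to be the main obstacle. Injectivity should follow by analyzing an equality $u\circ v=u'\circ v'$ directly through the shuffle/unimodal descriptions of the four factors. Closure under Knuth equivalence is the delicate point: one must show that an elementary Knuth move applied to $u\circ v$ produces another product $u'\circ v'$ with $u'\in\Gamma_\lambda$, $v'\in\Gamma_\mu$. The strategy is to locate the three adjacent positions of the move within $u$ and within $v$ and realize it by a Knuth move \emph{internal} to $u$ or to $v$ --- harmless since $\Gamma_\lambda,\Gamma_\mu$ are themselves Knuth classes --- using crucially that the large letters of $u$ form a single decreasing run and the small ones a single increasing run, which forces the way the one-line notation of $v$ rearranges that of $u$ to be highly constrained. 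Since RSK and composition of permutations interact poorly in general, it is precisely this rigidity of hooks that should make the lifting go through. (This is also the point at which the promised connection to the Lascoux heuristic can be made precise: once the union-of-classes structure is in hand for $\lambda$ a hook, one can match the Knuth classes of shape $\nu$ in $\Gamma_\lambda\circ\Gamma_\mu$ with the colored tableaux counted in Corollary~\ref{c kronecker ricky}, giving an independent derivation of the count $g_{\lambda\mu\nu}$.)
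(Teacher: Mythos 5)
First, a point of reference: the paper contains no proof of this statement. It is quoted from \cite{Lascoux} as an external result (Garsia--Remmel \cite{GarsiaRemmel} also treat it), and none of the paper's machinery is used to establish it; the paper's own contributions concern the case where only \emph{one} of the partitions is a hook. So your proposal must stand entirely on its own, and as written it does not: both of its pillars are unestablished.

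The larger gap is that the entire combinatorial content of Lascoux's theorem --- injectivity of $(u,v)\mapsto u\circ v$ on $\Gamma_\lambda\times\Gamma_\mu$ and closure of the image under Knuth moves --- is exactly what you call ``the heart of the matter'' and leave as an unexecuted strategy. The lifting you propose is not routine: since $(u\circ v)(i)=u(v(i))$, a Knuth move at positions $i-1,i,i+1$ of $u\circ v$ compares the values $u(v(i-1)),u(v(i)),u(v(i+1))$; these sit at consecutive positions of $v$ but at the non-adjacent positions $v(i-1),v(i),v(i+1)$ of $u$, and the relative order after composing with $u$ need not match the relative order of $v(i-1),v(i),v(i+1)$, so the move cannot simply be declared ``internal to $u$ or to $v$.'' Nothing replaces this missing argument, and with it the theorem.

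The second gap is in your ``general ingredient.'' Your proposed route reduces it to the claim that $\#\{(u,v)\in\Gamma_\lambda\times\Gamma_\mu : Q(u\circ v)=R\}$ depends only on $\sh(R)$ and equals $g_{\lambda\mu\,\sh(R)}$. That refined claim is false in general: for $\lambda=\mu=(2,2)$ one has $\Gamma_\lambda=\{3412,3142\}$ and $\Gamma_\lambda\circ\Gamma_\mu=\{1234,1324,4231,4321\}$, so the recording tableau $Q(1324)$, of shape $(3,1)$, is attained once even though $g_{(2,2)(2,2)(3,1)}=0$ (the quasisymmetric identity survives in this example only because the descent sets $\{2\}$ and $\{1,3\}$ of the two ``wrong-shape'' recording tableaux happen to reassemble into $s_{(2,2)}$). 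The claimed equivalence is also only one implication. Your alternative --- extracting the identity from a bijective proof of $\prod_{i,j,k}(1-x_iy_jz_k)^{-1}=\sum g_{\lambda\mu\nu}s_\lambda s_\mu s_\nu$ --- is not available either: a bijectivization of the Schur-expanded form of that identity is essentially the open Kronecker problem. The quasisymmetric identity you need is established in this paper only when one of the two shapes is a hook, via the chain Proposition~\ref{p intro sum of kron}, Proposition~\ref{p intro word conversion QDes}, and \eqref{e cyw stand}, which rests on \cite[Proposition 3.1]{BHook}; restricting to the hook--hook case you could import that chain for this ingredient, but the Knuth-closure and injectivity claims above would still be unproved.
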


Lascoux \cite{Lascoux} and Garsia-Remmel \cite[\textsection6--7]{GarsiaRemmel} both investigate the extent to which this rule generalizes to other shapes. They give examples showing that it does not extend beyond the hook-hook case. Nevertheless, computations suggest that this rule is often close to holding even when it fails, and therefore it provides a valuable heuristic (see \cite[\textsection1]{BHook}).

For example, when only $\mu$ is a hook, $\Gamma_\lambda \circ \Gamma_\mu$ may not be a union of Knuth equivalence classes, but the corresponding quasisymmetric function $F^<_{\Gamma_\lambda \circ \Gamma_\mu}(\mathbf x)$ is still symmetric and equal to $\sum_{\nu} g_{\lambda\mu\nu} s_\nu(\mathbf x)$.
(This follows from Propositions~\ref{p intro sum of kron} and \ref{p intro word conversion QDes} and \eqref{e cyw stand} below.)

We can rephrase Lascoux's rule using colored words and standardization.
For a colored word $\e{w}$, the \emph{standardization of $\e{w}$}, denoted $\e{w}^{\stand}$,
is the permutation obtained from $\e{w}$ by first relabeling, from left to right (resp. right to left), the occurrences of the smallest letter (with respect to $<$) in $\e{w}$ by  $1,\ldots,k$
if this letter is unbarred (resp. barred),
then relabeling the occurrences of the next smallest letter of $\e{w}$ by $k+1,\ldots,k+k'$,
and so on.  For example,  $(\e{2 \crc{1} \crc{2} 1 \crc{2} \crc{1} 2 1})^\stand = 548 17 362$. (For more about standardization, see \textsection \ref{ss standardization}.)

For any colored word $\e{w} \in \CYW_{\lambda,d}$, $\e{w}^\stand = (\e{w}^\plainr)^\stand \circ \e{v}^\stand$, where $\e{v}$ is the colored word obtained from $\e{w}$ by replacing all unbarred letters with $1$ and all barred letters with $\crc 1$. Since $\e{w}^\plainr$ is Yamanouchi, $(\e{w}^\plainr)^\stand \in \Gamma_\lambda$. Similarly, $\e{v}^\stand \in \Gamma_{\mu(d)}$ if $\e{w}$ ends with an unbarred letter, and $\e{v}^\stand \in \Gamma_{\mu(d-1)}$ if $\e{w}$ ends with a barred letter.

Hence the set of standardizations of words in $\CYW_{\lambda, d}$ is
\begin{align}\label{e cyw stand}
(\CYW_{\lambda, d})^\stand = (\CYW_{\lambda, d}^-)^\stand \sqcup (\CYW_{\lambda, d}^+)^\stand = (\Gamma_\lambda \circ \Gamma_{\mu(d)}) \sqcup (\Gamma_\lambda \circ \Gamma_{\mu(d-1)}).
\end{align}
Lascoux's rule then implies that when $\lambda$ is a hook, $(\CYW_{\lambda, d})^\stand$ is a union of Knuth equivalence classes and hence $\CYW_{\lambda, d}$ is a union of $<$-colored plactic equivalence classes; that is, $\sum_{\e{w} \in \CYW_{\lambda, d}} \e{w} \in (\Iplacord{<})^\perp$. These equivalence classes give an explicit partition of $\CYW_{\lambda, d}$ from which one can directly obtain the coefficients $g_{\lambda\mu(d)\nu}$.

Unfortunately, this does not typically hold when $\lambda$ is not a hook (for example, see the discussion after Corollary~\ref{c intro main}). However, in proving Corollary~\ref{c intro main} we show that $\sum_{\e{w} \in \CYW_{\lambda, d}} \e{w} \in (\Ikron)^\perp$ for all $\lambda$. Although we cannot explicitly partition $\CYW_{\lambda, d}$ into classes to find the Schur expansion of $F^<_{\CYW_{\lambda, d}}(\mathbf x)$, we still get an explicit description of its Schur expansion from Corollary~\ref{c Ikron perp} using the subset of words of the form $\sqread(T)$. It would be interesting to determine if there exists an explicit partition (say, by way of a modified insertion algorithm) that gives this Schur expansion as in the case $\lambda$ is a hook.

\subsection{Organization}
The remainder of this paper is organized as follows. Section~\ref{s switchboards etc} supports the main results above by giving examples of Corollaries \ref{c Ikron perp} and \ref{c intro main};
we also prove two strengthenings of Corollary \ref{c intro main} and investigate a connection between noncommutative super Schur functions and conversion (\textsection\ref{ss conversion mysterious}).
Section~\ref{s some proofs} fills in the missing proofs from \textsection\ref{ss kronecker} and shows how the colored plactic algebra and ordinary plactic algebra are related via standardization.
Section~\ref{s reading} develops tableau combinatorics for the proof of the main theorem, and Section~\ref{s proof of theorem} is devoted to its proof.
In Section~\ref{s generalizations}, we conjecture a strengthening of the main theorem and compare it to results about LLT polynomials from \cite{BLamLLT}.
Finally, Section~\ref{s commuting super elementary symmetric functions} lays out the basic theory of noncommutative super Schur functions, including two results about when elementary functions commute.

\section{Switchboards and conversion}
\label{s switchboards etc}
This section supports the results of the previous section with examples, further context, and strengthenings.
We introduce certain graphs called $\Ikronknuth$-switchboards and use them to illustrate
Corollaries~\ref{c Ikron perp} and \ref{c intro main} (\textsection\ref{ss switchboards}),
put the results of the previous section  in the context of similar, easier results for colored plactic algebras (\textsection\ref{ss super Schur colored plactic}), and
prove two strengthenings of Corollary \ref{c intro main}, which hint at a mysterious connection between noncommutative super Schur functions and conversion (\textsection\ref{ss conversion mysterious}).

\subsection{$\Ikronknuth$-switchboards}
\label{ss switchboards}
Here we introduce certain graphs to give an intuitive understanding of Corollaries \ref{c Ikron perp} and \ref{c intro main}
and to give examples of these corollaries.

Let $\Ikronknuth$ be the two-sided ideal of $\U$ corresponding to the relations
\eqref{e plac nat rel knuth3},
\eqref{e plac nat rel knuth3b},
\eqref{e plac nat rel knuth4},
\eqref{e plac nat rel knuth4b},
\eqref{e kron rel rotate12},
 and
\begin{alignat}{3}
&\e{xzy} = \e{zxy} \qquad\text{for } && x,y,z \in \A,\, x<y<z,\, x<z \myDownarrow \myDownarrow, \label{e kronknuth rel knuth1} \\
&\e{yxz} = \e{yzx} \qquad\text{for } && x,y,z \in \A,\, x<y<z,\, x<z \myDownarrow \myDownarrow. \label{e kronknuth rel knuth2}
\end{alignat}
We conjecture that Theorem \ref{t J intro} (and hence Corollary~\ref{c Ikron perp}) holds with  $\Ikronknuth$ in place of  $\Ikron$ (see Conjecture \ref{cj kronknuth}).
The graphs we introduce below are better suited to studying these strengthenings than  the original statements.

\begin{definition}
\label{d CKR graph}
Let $\e{w}=\e{w_1\cdots w_n}$ and $\e{w'=w_1'\cdots w_n'}$
be two colored words of the same length~$n$ in the
alphabet $\A$.
We say that $\e{w}$ and $\e{w'}$  are related by a \emph{switch} in
position~$i$
if 
\renewcommand{\labelitemii}{\textbf{-}}
\begin{itemize}
\item
$\e{w}_j=\e{w}_j'$ for any $j\notin\{i-1,i,i+1\}$;
\item
the unordered pair 
$\{\e{w_{i-1}w_iw_{i+1}},
\e{w'_{i-1}w'_iw'_{i+1}}\}$
fits one of the following patterns:
\begin{list}{\ \ \ (\alph{ctr}) \ }{\usecounter{ctr} \setlength{\itemsep}{2pt} \setlength{\topsep}{3pt}}
\item
$\{\e{xzy}, \e{zxy}\}$ or $\{\e{yxz}, \e{yzx}\}$ for $x < y < z$, or 
\item
$\{\e{yxz}, \e{xzy}\}$ or $\{\e{yzx}, \e{zxy}\}$ for $x  = y \myDownarrow = z \myDownarrow\myDownarrow$, or
\item
$\{\e{yyx}, \e{yxy}\}$  for  $y \in \A_\varnothing$, $x < y$, or
\item
$\{\e{zyy}, \e{yzy}\}$ for  $y \in \A_\varnothing$, $y< z$, or
\item
$\{\e{yyz}, \e{yzy}\}$ for  $y \in \A_\crcempty$,  $y < z$, or
\item
$\{\e{xyy}, \e{yxy}\}$ for $y \in \A_\crcempty$,  $x <y$.
\end{list}
\end{itemize}
We refer to the switches in (b)
as \emph{rotation switches} and the
other switches as \emph{Knuth switches}.
\end{definition}

The next definition is an adaptation of the switchboards of \cite{BF} (which are based on the D graphs of Assaf \cite{SamiOct13})
to the super setting and the ideal  $\Ikronknuth$.

\begin{definition}
\label{def-switchboard}
An  $\Ikronknuth$-\emph{switchboard}
is an edge-labeled graph $\Gamma$ on a vertex set of colored words of fixed length~$n$
in the alphabet $\A$
with edge labels from the set \mbox{$\{2,3,\dots,n-1\}$} such that
each edge labeled~$i$ corresponds to a switch in position~$i$,
and each vertex in~$\Gamma$ 
which has exactly
one  $<$-descent in positions $i-1$ and~$i$
belongs to exactly one $i$-edge. 
\end{definition}

Note that there can be more than one $\Ikronknuth$-switchboard on
a given vertex set since the conditions in (a) and (b)
of Definition~\ref{d CKR graph} are not mutually exclusive---see Figure \ref{f not unique}.
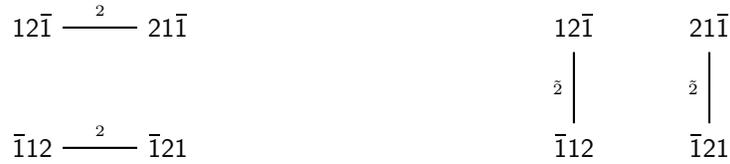
\begin{figure}
\begin{tikzpicture}[xscale = 1.8,yscale = 1.6]
\tikzstyle{vertex}=[inner sep=0pt, outer sep=4pt]
\tikzstyle{aedge} = [draw, thin, ->,black]
\tikzstyle{edge} = [draw, thick, -,black]
\tikzstyle{dashededge} = [draw, very thick, dashed, black]
\tikzstyle{LabelStyleH} = [text=black, anchor=south]
\tikzstyle{LabelStyleV} = [text=black, anchor=east]
\tikzstyle{LabelStyleH2} = [text=black, anchor=north]
\tikzstyle{doubleedge} = [draw, thick, double distance=1pt, -,black]
\tikzstyle{hiddenedge} = [draw=none, thick, double distance=1pt, -,black]

\begin{scope} [xshift = -2cm]
 \node[vertex] (v1) at (0,0){\footnotesize$\e{\crc{1}12}$};
 \node[vertex] (v2) at (1,0){\footnotesize$\e{\crc{1}21}$};
 \node[vertex] (v3) at (0,1){\footnotesize$\e{12\crc{1}}$};
 \node[vertex] (v4) at (1,1){\footnotesize$\e{21\crc{1}}$};
\end{scope}
 \draw[edge] (v1) to node[LabelStyleH]{{\Tiny 2}} (v2);
 \draw[edge] (v3) to node[LabelStyleH]{{\Tiny 2}} (v4);

\begin{scope} [xshift = 2cm]
\node[vertex] (v1) at (0,0){\footnotesize$\e{\crc{1}12}$};
 \node[vertex] (v2) at (1,0){\footnotesize$\e{\crc{1}21}$};
 \node[vertex] (v3) at (0,1){\footnotesize$\e{12\crc{1}}$};
 \node[vertex] (v4) at (1,1){\footnotesize$\e{21\crc{1}}$};
\end{scope}
 \draw[edge] (v1) to node[LabelStyleV]{{\Tiny $\tilde{2}$}} (v3);
 \draw[edge] (v2) to node[LabelStyleV]{{\Tiny $\tilde{2}$}} (v4);

 \end{tikzpicture}
\caption{\label{f not unique} On the left, an $\Ikronknuth$-switchboard whose edges are Knuth switches;
on the right,  an $\Ikronknuth$-switchboard on the same vertex set whose edges are rotation switches.
Knuth (resp. rotation) switches in position~$i$
are labeled~$i$ (resp.~$\tilde{i}$).}
\end{figure}

The $\Ikronknuth$-switchboards give a convenient and intuitive understanding of
the condition that a  $(0,1)$-vector  $\gamma \in \U$
lies in  $\Ikronknuth^\perp$.
\begin{proposition}
\label{p 01=switchboard}
For a set of colored words $W$ of the same length, the following are
equivalent:
\begin{itemize}
\item
$\sum_{\e{w} \in W} \e{w} \in \Ikronknuth^\perp$;
\item
$W$ is the vertex set of an  $\Ikronknuth$-switchboard.
\end{itemize}
\end{proposition}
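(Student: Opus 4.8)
The plan is to reduce everything to a local analysis at a single position $i$. The starting point is the usual description of the orthogonal complement: $\Ikronknuth$ is spanned by the elements $\e{a}\,g\,\e{b}$, where $\e{a},\e{b}$ are colored words and $g$ runs over the seven generating relations \eqref{e plac nat rel knuth3}--\eqref{e plac nat rel knuth4b}, \eqref{e kron rel rotate12}, \eqref{e kronknuth rel knuth1}, \eqref{e kronknuth rel knuth2}, so that $\gamma=\sum_{\e{w}}\gamma_{\e{w}}\e{w}\in\U$ lies in $\Ikronknuth^\perp$ if and only if $\langle\gamma,\,\e{a}\,g\,\e{b}\rangle=0$ for all such $\e{a},g,\e{b}$. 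Fix such an $\e{a},\e{b}$ and put $i=|\e{a}|+2$. Since the monomials of any single generating relation $g$ share a common multiset of three letters, the equation $\langle\gamma,\,\e{a}\,g\,\e{b}\rangle=0$ only involves the coefficients $\gamma_{\e{w}}$ for $\e{w}$ in one \emph{$i$-block}, by which I mean the set $\{\e{a}\,\e{p}\,\e{b}\}$ as $\e{p}$ ranges over the rearrangements of a fixed three-letter multiset placed in positions $i-1,i,i+1$. Moreover, whether $i-1$ or $i$ is a $<$-descent of $\e{w}$ depends only on the window $\e{w_{i-1}w_iw_{i+1}}$, so the switchboard axiom is block-local too; thus the whole statement reduces to understanding a single $i$-block.

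First I would establish the lemma that a colored word is an endpoint of a switch in position $i$ (Definition~\ref{d CKR graph}) \emph{only if} it has exactly one $<$-descent in positions $i-1$ and $i$; this is a short check of the six patterns (a)--(f). Then comes the routine but lengthy enumeration of an $i$-block according to its multiset of window letters. \textbf{(1)} Three distinct letters $x<y<z$ that are \emph{far apart}, $x<z\myDownarrow\myDownarrow$: the block words with exactly one $<$-descent in positions $i-1,i$ are the four words $A,B,C,D$ with windows $\e{xzy},\e{zxy},\e{yxz},\e{yzx}$; the only switches among block words are the Knuth switches $\{A,B\}$ and $\{C,D\}$ (type (a)); and the only generators localized in this block are \eqref{e kronknuth rel knuth1} and \eqref{e kronknuth rel knuth2}, which impose $\gamma_A=\gamma_B$ and $\gamma_C=\gamma_D$. \textbf{(2)} Three distinct \emph{consecutive} letters, $x=y\myDownarrow=z\myDownarrow\myDownarrow$ (the complementary case, since $x<y<z$ always forces $x\le z\myDownarrow\myDownarrow$): the one-descent words are again $A,B,C,D$, but now the switches among block words are $\{A,B\},\{C,D\}$ (type (a)) together with $\{A,C\},\{B,D\}$ (type (b)), which form a $4$-cycle on $\{A,B,C,D\}$, and the only generator localized here is \eqref{e kron rel rotate12}, which imposes $\gamma_A-\gamma_B-\gamma_C+\gamma_D=0$. \textbf{(3)} Exactly two equal window letters: exactly two block words $P,Q$ have one $<$-descent in positions $i-1,i$, there is a single switch $\{P,Q\}$ between them (of type (c), (d), (e), or (f) depending on whether the repeated letter is unbarred or barred and how it compares with the third letter), and the one generator localized here, among \eqref{e plac nat rel knuth3}--\eqref{e plac nat rel knuth4b}, imposes $\gamma_P=\gamma_Q$. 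In every case the remaining block words (including an all-equal window) have zero or two $<$-descents in positions $i-1,i$; by the lemma they lie on no switch in position $i$, and a direct check shows they occur in no localized generator, so they are unconstrained on both sides of the claimed equivalence.

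Granting (1)--(3), both directions follow from the elementary observation that a subset $S$ of the vertices of the $4$-cycle in case (2) admits a perfect matching using only the cycle's edges exactly when its indicator vector satisfies $\gamma_A-\gamma_B-\gamma_C+\gamma_D=0$ -- equivalently, $S\in\{\varnothing,\{A,B\},\{C,D\},\{A,C\},\{B,D\},\{A,B,C,D\}\}$ -- together with the trivial analogues for the two disjoint edges in (1) and the single edge in (3). For the implication ``switchboard $\Rightarrow$ perp'': if $W$ is the vertex set of an $\Ikronknuth$-switchboard $\Gamma$, then inside each block the $i$-edges of $\Gamma$ give every one-descent word of $W$ degree exactly one and (by the lemma and the enumeration) use only the listed switches, so $W$ restricted to a block's one-descent words is a matchable subset; this forces the coefficient identities of (1)--(3), i.e.\ $\langle\sum_{\e{w}\in W}\e{w},\,\e{a}\,g\,\e{b}\rangle=0$ for every localized generator, hence $\sum_{\e{w}\in W}\e{w}\in\Ikronknuth^\perp$. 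For the converse: if $\sum_{\e{w}\in W}\e{w}\in\Ikronknuth^\perp$, those identities hold in every block, so $W$ restricted to each block's one-descent words is matchable by the available switches; picking such a matching for every $i$ and every block gives an $\Ikronknuth$-switchboard on vertex set $W$. (In case (2), when all of $A,B,C,D$ lie in $W$ one may take either the two Knuth switches or the two rotation switches -- this is exactly the non-uniqueness of Figure~\ref{f not unique}.)

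I expect the main obstacle to be the bookkeeping in step (3) and the ``far versus consecutive'' dichotomy in steps (1)--(2): one must verify that the switches actually available between words of a block coincide with what the generating relations see -- in particular that a consecutive distinct triple is governed by the single four-term rotation relation and a $4$-cycle of switches, while a far triple is governed by two separate Knuth relations and two disjoint switches. Once this local dictionary is in place, the only real idea needed is the $4$-cycle matching fact above.
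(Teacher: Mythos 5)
Your proof is correct. Note that the paper deliberately omits a proof of Proposition \ref{p 01=switchboard}, pointing instead to the analogous \cite[Proposition-Definition 3.2]{BD0graph}; your argument is precisely the intended one. The reduction to a single three-letter window, the preliminary check that every endpoint of a switch in position $i$ has exactly one $<$-descent in positions $i-1,i$ (so the $i$-edges of a switchboard are forced to be a perfect matching on those vertices, block by block), and the three-way case split all check out: for a far triple the two disjoint Knuth edges match the two binomial generators \eqref{e kronknuth rel knuth1}--\eqref{e kronknuth rel knuth2}; for a consecutive triple the $4$-cycle of switches matches the single four-term generator \eqref{e kron rel rotate12}, the matchable vertex subsets of a $4$-cycle being exactly the $(0,1)$-solutions of $\gamma_A-\gamma_B-\gamma_C+\gamma_D=0$; and for a repeated letter the unique edge matches the unique applicable relation among \eqref{e plac nat rel knuth3}--\eqref{e plac nat rel knuth4b}, while the zero- and two-descent words of each block occur in no switch and no generator.
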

We omit the proof of Proposition \ref{p 01=switchboard}, which is not difficult; a similar result is proved in {\cite[Proposition-Definition 3.2]{BD0graph}}.

\begin{proposition}\label{p CYW switchboard}
There is an $\Ikronknuth$-switchboard with vertex set
$\CYW_{\lambda, d}$ for any partition $\lambda$ of  $n$ and  $d \leq n$.
Moreover, there is a unique $\Ikronknuth$-switchboard with this vertex set in which every  $i$-edge
$\{\e{w},\e{w}'\}$ such that $\{\e{w_{i-1},w_i,w_{i+1}}\} = \{\e{x,y,z}\}$ with $x  = y \myDownarrow = z \myDownarrow\myDownarrow$
is a rotation switch.
\end{proposition}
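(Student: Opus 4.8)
The plan is to combine the characterization in Proposition~\ref{p 01=switchboard} with the membership fact $\sum_{\e{w}\in\CYW_{\lambda,d}}\e{w}\in(\Ikronknuth)^\perp$, which follows from the argument used to prove Corollary~\ref{c intro main} (the commented-out proof sketch verifies $\sum_{\e{w}\in\CYW_{\lambda,d}}\e{w}\in J^\perp$ for a larger ideal $J$; here one checks directly that $\CYW_{\lambda,d}$ is closed under each of the switches (a)--(f), using that membership in $\CYW_{\lambda,d}$ depends only on $\e{w}^\plainr$ and the number of bars, that Knuth moves preserve the Yamanouchi property, and that far/rotation moves on letters of the same bar-type do not change $\e{w}^\plainr$). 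By Proposition~\ref{p 01=switchboard} this immediately gives \emph{some} $\Ikronknuth$-switchboard with vertex set $\CYW_{\lambda,d}$, establishing the first assertion.

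For the uniqueness assertion, the point is that when we insist every $i$-edge of ``pattern-(b) type'' (that is, $\{\e{w_{i-1},w_i,w_{i+1}}\}=\{x,y,z\}$ with $x=y\myDownarrow=z\myDownarrow\myDownarrow$) be a \emph{rotation} switch, the edge set is forced. First I would observe that in such a triple the three letters are pairwise distinct and, crucially, lie in at most two consecutive ``ranks'' of the natural order, so the only other way two of them could participate in a Knuth switch (pattern (a)) would require a strict inequality $x<y<z$ with $x<z\myDownarrow\myDownarrow$ — but the constraint $x=z\myDownarrow\myDownarrow$ rules this out. Hence for a pattern-(b) triple the \emph{only} admissible switch in Definition~\ref{d CKR graph} is the rotation switch (b); there is genuinely no choice. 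For every other configuration of $\{\e{w_{i-1},w_i,w_{i+1}}\}$ that admits a switch, exactly one of patterns (a)--(f) applies and it is a Knuth switch, so again the edge is determined. Therefore the condition in Definition~\ref{def-switchboard} — each vertex with exactly one $<$-descent in positions $i-1,i$ lies on exactly one $i$-edge — pins down, for each such vertex, precisely which neighbor it is joined to: namely the unique word obtained by performing the forced switch. Thus the $i$-edges are uniquely determined, and the switchboard is unique.

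Concretely I would argue as follows: fix $\e{w}\in\CYW_{\lambda,d}$ and a position $i$ with exactly one $<$-descent among positions $i-1,i$. The switchboard axiom says $\e{w}$ lies on exactly one $i$-edge; I claim its other endpoint $\e{w}'$ is determined. The triple $\e{w_{i-1}w_iw_{i+1}}$ must be the left-hand side of exactly one of the patterns (a)--(f) (this is a finite case-check on which of the three letters is largest/smallest and whether the small-large pair is ``far'' in the $\myDownarrow\myDownarrow$ sense or consists of equal repeated unbarred/barred letters), and each pattern prescribes a unique $\e{w}'$. The only ambiguity allowed by Definition~\ref{d CKR graph} in the abstract is that a triple could simultaneously match a Knuth pattern \emph{and} pattern (b): this is exactly the case excluded by the hypothesis that pattern-(b) triples give rotation switches, so for $\CYW_{\lambda,d}$ with this convention the choice is forced. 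Running over all vertices and all positions $i$ shows the edge set is unique, completing the proof.

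The main obstacle I anticipate is the first assertion, i.e.\ verifying $\CYW_{\lambda,d}$ is closed under all six switch types — in particular that the rotation switches (b) preserve membership. Since a rotation switch $\{\e{yxz},\e{xzy}\}$ (with $x=y\myDownarrow=z\myDownarrow\myDownarrow$) can involve letters of mixed bar-type, one must check that it does not alter $\e{w}^\plainr$ up to Knuth equivalence, nor the bar count; this is the one place where the structure of $\CYW_{\lambda,d}$ (rather than a purely formal argument) is used. The uniqueness half is then essentially a bookkeeping argument about which patterns in Definition~\ref{d CKR graph} can overlap, and should be routine once the non-overlap of Knuth patterns with pattern (b) under the stated constraints is spelled out.
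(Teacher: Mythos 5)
Your overall strategy (Proposition~\ref{p 01=switchboard} plus orthogonality for existence; forced edges under the rotation convention for uniqueness) is the paper's, but two of your concrete claims are wrong, and one of them would sink the verification you propose. First, $\CYW_{\lambda,d}$ is \emph{not} closed under all six switch types of Definition~\ref{d CKR graph}: closure fails for Knuth switches of type (a) applied to a pattern-(b) triple. The paper's own example after Corollary~\ref{c intro main} is exactly such a failure: $\e{\crc{1}\crc{2}21}\in\CYW_{(2,2),2}$ while $\e{\crc{2}\crc{1}21}\notin\CYW_{(2,2),2}$, yet these two words are related by a type (a) switch in position $2$ (take $x=\crc{1}$, $y=2$, $z=\crc{2}$, so $x<y<z$ and the pair is $\{\e{xzy},\e{zxy}\}$). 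So you cannot establish the first assertion by checking closure under (a)--(f). The paper instead invokes Proposition~\ref{p CYW in kron perp}, i.e.\ $\sum_{\e{w}\in\CYW_{\lambda,d}}\e{w}\in\Ikron^\perp$, which only requires orthogonality to the four-term relation \eqref{e kron rel rotate12} rather than closure under either of its two-term pairings, and then uses $\Ikron^\perp\subseteq\Ikronknuth^\perp$ together with Proposition~\ref{p 01=switchboard}.

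Second, your claim that for a pattern-(b) triple ``the only admissible switch in Definition~\ref{d CKR graph} is the rotation switch; there is genuinely no choice'' is false: pattern (a) requires only $x<y<z$, with no far condition $x<z\myDownarrow\myDownarrow$ (you are conflating the switch patterns with the relations \eqref{e kronknuth rel knuth1}--\eqref{e kronknuth rel knuth2}), and every pattern-(b) triple satisfies $x<y<z$. This overlap is exactly what Figure~\ref{f not unique} illustrates; if your claim were true, the extra hypothesis in the uniqueness statement would be vacuous. Your later paragraph does identify the (a)/(b) overlap correctly and notes that the hypothesis of the proposition is what excludes the Knuth option, so the uniqueness argument can be repaired along the lines you sketch there. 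What still must be supplied for the existence of this particular switchboard is that the rotation partner of each rotation-eligible word of $\CYW_{\lambda,d}$ actually lies in $\CYW_{\lambda,d}$; this is immediate from shuffle-closedness (a rotation switch merely moves a barred letter past unbarred letters, or vice versa), and that one-line observation is what the paper's proof of the second statement rests on.
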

\begin{proof}
By Proposition \ref{p CYW in kron perp},
$\sum_{\e{w} \in \CYW_{\lambda, d}} \e{w} \in \Ikron^\perp \subset \Ikronknuth^\perp$.
Hence the first statement follows from Proposition \ref{p 01=switchboard}.

Since the set $\CYW_{\lambda,d}$ is closed under shuffling barred letters past unbarred letters,
if $\e{w} \in \CYW_{\lambda,d}$ and $\{\e{w_{i-1}w_iw_{i+1}}\}$ fits one of the patterns
$\e{yxz}, \e{xzy}, \e{yzx}, \e{zxy}$ with $x  = y \myDownarrow = z \myDownarrow\myDownarrow$,
then the unique  $\e{w'}$ such that  $\e{w}$ and $\e{w'}$ are related by a rotation switch in position  $i$ belongs to  $\CYW_{\lambda, d}$.
The second statement follows.
\end{proof}


\begin{figure}
        \centerfloat
\begin{tikzpicture}[xscale = 2.54, yscale = 1.36]
\tikzstyle{vertex}=[inner sep=0pt, outer sep=4pt]
\tikzstyle{framedvertex}=[inner sep=2.4pt, outer sep=4pt, draw=gray]
\tikzstyle{aedge} = [draw, thin, ->,black]
\tikzstyle{edge} = [draw, thick, -,black]
\tikzstyle{doubleedge} = [draw, thick, double distance=1pt, -,black]
\tikzstyle{hiddenedge} = [draw=none, thick, double distance=1pt, -,black]
\tikzstyle{dashededge} = [draw, very thick, dashed, black]
\tikzstyle{LabelStyleH} = [text=black, anchor=south]
\tikzstyle{LabelStyleHn} = [text=black, anchor=north]
\tikzstyle{LabelStyleV} = [text=black, anchor=east]
\tikzstyle{LabelStyleVn} = [text=black, anchor=west]

\node[framedvertex] (v1) at (0,9){\footnotesize$\e{\crc{1}\crc{1}122} $};
\node[framedvertex] (v2) at (0,8){\footnotesize$\e{\crc{1}1\crc{1}22} $};
\node[vertex] (v3) at (0,7){\footnotesize$\e{\crc{1}12\crc{1}2} $};
\node[vertex] (v4) at (0,6){\footnotesize$\e{\crc{1}122\crc{1}} $};
\node[vertex] (v5) at (0,0){\footnotesize$\e{1\crc{1}\crc{1}22} $};
\node[vertex] (v6) at (0,1){\footnotesize$\e{1\crc{1}2\crc{1}2} $};
\node[vertex] (v7) at (0,2){\footnotesize$\e{1\crc{1}22\crc{1}} $};
\node[vertex] (v8) at (0,3){\footnotesize$\e{12\crc{1}\crc{1}2} $};
\node[vertex] (v9) at (0,4){\footnotesize$\e{12\crc{1}2\crc{1}} $};
\node[vertex] (v10) at (0,5){\footnotesize$\e{122\crc{1}\crc{1}} $};
\node[vertex] (v11) at (3,9){\footnotesize$\e{\crc{1}\crc{2}121} $};
\node[vertex] (v12) at (3,8){\footnotesize$\e{\crc{1}1\crc{2}21} $};
\node[vertex] (v13) at (3,7){\footnotesize$\e{\crc{1}12\crc{2}1} $};
\node[vertex] (v14) at (3,6){\footnotesize$\e{\crc{1}121\crc{2}} $};
\node[vertex] (v15) at (3,0){\footnotesize$\e{1\crc{1}\crc{2}21} $};
\node[vertex] (v16) at (3,1){\footnotesize$\e{1\crc{1}2\crc{2}1} $};
\node[vertex] (v17) at (3,2){\footnotesize$\e{1\crc{1}21\crc{2}} $};
\node[vertex] (v18) at (3,3){\footnotesize$\e{12\crc{1}\crc{2}1} $};
\node[vertex] (v19) at (3,4){\footnotesize$\e{12\crc{1}1\crc{2}} $};
\node[vertex] (v20) at (3,5){\footnotesize$\e{121\crc{1}\crc{2}} $};
\node[vertex] (v21) at (4,9){\footnotesize$\e{\crc{1}\crc{2}211} $};
\node[vertex] (v22) at (4,8){\footnotesize$\e{\crc{1}2\crc{2}11} $};
\node[vertex] (v23) at (4,7){\footnotesize$\e{\crc{1}21\crc{2}1} $};
\node[framedvertex] (v24) at (4,6){\footnotesize$\e{\crc{1}211\crc{2}} $};
\node[vertex] (v25) at (4,0){\footnotesize$\e{2\crc{1}\crc{2}11} $};
\node[framedvertex] (v26) at (4,1){\footnotesize$\e{2\crc{1}1\crc{2}1} $};
\node[framedvertex] (v27) at (4,2){\footnotesize$\e{2\crc{1}11\crc{2}} $};
\node[vertex] (v28) at (4,3){\footnotesize$\e{21\crc{1}\crc{2}1} $};
\node[vertex] (v29) at (4,4){\footnotesize$\e{21\crc{1}1\crc{2}} $};
\node[framedvertex] (v30) at (4,5){\footnotesize$\e{211\crc{1}\crc{2}} $};
\node[vertex] (v31) at (1,9){\footnotesize$\e{\crc{1}\crc{1}212} $};
\node[vertex] (v32) at (1,8){\footnotesize$\e{\crc{1}2\crc{1}12} $};
\node[framedvertex] (v33) at (1,7){\footnotesize$\e{\crc{1}21\crc{1}2} $};
\node[vertex] (v34) at (1,6){\footnotesize$\e{\crc{1}212\crc{1}} $};
\node[framedvertex] (v35) at (1,0){\footnotesize$\e{2\crc{1}\crc{1}12} $};
\node[framedvertex] (v36) at (1,1){\footnotesize$\e{2\crc{1}1\crc{1}2} $};
\node[vertex] (v37) at (1,2){\footnotesize$\e{2\crc{1}12\crc{1}} $};
\node[vertex] (v38) at (1,3){\footnotesize$\e{21\crc{1}\crc{1}2} $};
\node[vertex] (v39) at (1,4){\footnotesize$\e{21\crc{1}2\crc{1}} $};
\node[vertex] (v40) at (1,5){\footnotesize$\e{212\crc{1}\crc{1}} $};
\node[vertex] (v41) at (2,9){\footnotesize$\e{\crc{1}\crc{1}221} $};
\node[vertex] (v42) at (2,8){\footnotesize$\e{\crc{1}2\crc{1}21} $};
\node[vertex] (v43) at (2,7){\footnotesize$\e{\crc{1}22\crc{1}1} $};
\node[vertex] (v44) at (2,6){\footnotesize$\e{\crc{1}221\crc{1}} $};
\node[vertex] (v45) at (2,0){\footnotesize$\e{2\crc{1}\crc{1}21} $};
\node[vertex] (v46) at (2,1){\footnotesize$\e{2\crc{1}2\crc{1}1} $};
\node[framedvertex] (v47) at (2,2){\footnotesize$\e{2\crc{1}21\crc{1}} $};
\node[vertex] (v48) at (2,3){\footnotesize$\e{22\crc{1}\crc{1}1} $};
\node[vertex] (v49) at (2,4){\footnotesize$\e{22\crc{1}1\crc{1}} $};
\node[vertex] (v50) at (2,5){\footnotesize$\e{221\crc{1}\crc{1}} $};
\draw[edge, bend left=25] (v1) to node[LabelStyleVn]{\Tiny$\tilde{3} $} (v3);
\draw[edge, bend right=20] (v2) to node[LabelStyleV]{\Tiny$2 $} (v5);
\draw[edge, bend right=25] (v3) to node[LabelStyleV]{\Tiny$\tilde{2} $} (v8);
\draw[edge] (v3) to node[LabelStyleVn]{\Tiny$4 $} (v4);
\draw[edge, bend right=25] (v4) to node[LabelStyleV]{\Tiny$\tilde{2} $} (v9);
\draw[edge] (v5) to node[LabelStyleVn]{\Tiny$3 $} (v6);
\draw[edge] (v6) to node[LabelStyleVn]{\Tiny$4 $} (v7);
\draw[edge] (v8) to node[LabelStyleVn]{\Tiny$4 $} (v9);
\draw[edge] (v9) to node[LabelStyleVn]{\Tiny$3 $} (v10);
\draw[edge] (v11) to node[LabelStyleH]{\Tiny$4 $} (v21);
\draw[hiddenedge] (v11) to node[LabelStyleVn]{\Tiny$2 $} (v12);
\draw[doubleedge] (v11) to node[LabelStyleV]{\Tiny$3 $} (v12);
\draw[edge] (v13) to node[LabelStyleVn]{\Tiny$4 $} (v14);
\draw[edge, bend right=25] (v13) to node[LabelStyleV]{\Tiny$\tilde{2} $} (v18);
\draw[edge] (v14) to node[LabelStyleH]{\Tiny$3 $} (v24);
\draw[edge, bend right=25] (v14) to node[LabelStyleV]{\Tiny$\tilde{2} $} (v19);
\draw[edge, bend left=25] (v15) to node[LabelStyleV]{\Tiny$\tilde{3} $} (v18);
\draw[edge] (v16) to node[LabelStyleVn]{\Tiny$4 $} (v17);
\draw[edge, bend right=25] (v17) to node[LabelStyleVn]{\Tiny$\tilde{3} $} (v20);
\draw[edge] (v18) to node[LabelStyleVn]{\Tiny$4 $} (v19);
\draw[edge] (v20) to node[LabelStyleH]{\Tiny$2 $} (v30);
\draw[edge, bend left=19] (v21) to node[LabelStyleVn]{\Tiny$\tilde{2} $} (v25);
\draw[hiddenedge] (v22) to node[LabelStyleV]{\Tiny$4 $} (v23);
\draw[doubleedge] (v22) to node[LabelStyleVn]{\Tiny$3 $} (v23);
\draw[edge, bend left=25] (v23) to node[LabelStyleVn]{\Tiny$\tilde{2} $} (v28);
\draw[edge, bend left=25] (v24) to node[LabelStyleVn]{\Tiny$\tilde{2} $} (v29);
\draw[hiddenedge] (v25) to node[LabelStyleV]{\Tiny$3 $} (v26);
\draw[doubleedge] (v25) to node[LabelStyleVn]{\Tiny$4 $} (v26);
\draw[edge, bend left=25] (v27) to node[LabelStyleV]{\Tiny$3 $} (v29);
\draw[edge] (v28) to node[LabelStyleVn]{\Tiny$4 $} (v29);
\draw[edge] (v31) to node[LabelStyleVn]{\Tiny$2 $} (v32);
\draw[edge, bend right=25] (v31) to node[LabelStyleV]{\Tiny$\tilde{3} $} (v33);
\draw[edge] (v31) to node[LabelStyleH]{\Tiny$4 $} (v41);
\draw[edge, bend left=25] (v32) to node[LabelStyleVn]{\Tiny$\tilde{4} $} (v34);
\draw[edge, bend right=25] (v33) to node[LabelStyleV]{\Tiny$\tilde{2} $} (v38);
\draw[edge, bend right=25] (v34) to node[LabelStyleV]{\Tiny$\tilde{2} $} (v39);
\draw[edge] (v34) to node[LabelStyleH]{\Tiny$3 $} (v44);
\draw[edge, bend right=25] (v35) to node[LabelStyleVn]{\Tiny$\tilde{4} $} (v37);
\draw[edge, bend left=25] (v36) to node[LabelStyleV]{\Tiny$3 $} (v38);
\draw[edge, bend right=25] (v37) to node[LabelStyleVn]{\Tiny$\tilde{3} $} (v40);
\draw[edge] (v38) to node[LabelStyleVn]{\Tiny$4 $} (v39);
\draw[edge] (v40) to node[LabelStyleH]{\Tiny$2 $} (v50);
\draw[edge] (v41) to node[LabelStyleV]{\Tiny$2 $} (v42);
\draw[edge, bend left=25] (v42) to node[LabelStyleVn]{\Tiny$\tilde{4} $} (v44);
\draw[edge] (v42) to node[LabelStyleV]{\Tiny$3 $} (v43);
\draw[edge] (v45) to node[LabelStyleV]{\Tiny$3 $} (v46);
\draw[edge, bend right=25] (v45) to node[LabelStyleVn]{\Tiny$\tilde{4} $} (v47);
\draw[edge, bend left=25] (v46) to node[LabelStyleV]{\Tiny$2 $} (v48);
\draw[edge, bend right=28] (v47) to node[LabelStyleV]{\Tiny$2 $} (v49);
\draw[edge, bend right=30] (v47) to node[LabelStyleV]{\Tiny$\tilde{3} $} (v50);
\draw[edge] (v49) to node[LabelStyleV]{\Tiny$4 $} (v50);

\end{tikzpicture}
\vspace{-2pt}
\caption{\label{f CYW 32}
An $\Ikronknuth$-switchboard on the vertex set $\CYW_{(3,2), 2}$.
The Schur expansion of the symmetric function  $F^<_{\CYW_{(3,2), 2}}(\mathbf{x})$ can be read off from the outlined words; see Example~\ref{ex CYW 32}.}
\end{figure}
\vspace{-4mm}
\begin{example}\label{ex CYW 32}
Let $\lambda = (3,2)$,  $d=2$.
The set  $\CYW_{\lambda,d}$ is shown in Figure \ref{f CYW 32}, along with the
$\Ikronknuth$-switchboard on this vertex set that is described in Proposition \ref{p CYW switchboard}.
This $\Ikronknuth$-switchboard gives a way to ``see'' that $\sum_{\e{w} \in \CYW_{\lambda, d}} \e{w} \in \Ikronknuth^\perp$.
Since only letters $\leq \crc{2}$ appear in  $\CYW_{\lambda,d}$, this also implies $\sum_{\e{w} \in \CYW_{\lambda, d}} \e{w} \in \Ikron$.

To compute the symmetric function $F^<_{\CYW_{\lambda, d}}(\mathbf{x})$, i.e., the sum of quasisymmetric functions associated to the
 $<$-descent sets of  $\CYW_{\lambda,d}$, we apply Corollary~\ref{c Ikron perp}, which
says that the coefficient of  $s_\nu(\mathbf{x})$ in $F^<_{\CYW_{\lambda, d}}(\mathbf{x})$ is equal to the
number of  $<$-colored tableaux  $T$ of shape  $\nu$ such that  $\sqread(T) \in \CYW_{\lambda, d}$.
The next line gives the $<$-colored tableau $T$ such that $\sqread(T) \in \CYW_{\lambda,d}$,
and below each tableau $T$ is the colored word $\sqread(T)$ (these are the outlined
words in Figure \ref{f CYW 32}).
\[
\begin{array}{ccccccccccccccccccccccccccc}
{\tiny \tableau{1 & 1 & \crc{1} & \crc{2} \\ 2}}
&{\tiny \tableau{1 & \crc{1} & 2 & 2 \\ \crc{1}}}
&{\tiny \tableau{1 & 1 & \crc{2} \\ \crc{1} & 2}}
&{\tiny \tableau{1 & \crc{1} & 2 \\ \crc{1} & 2}}
&{\tiny \tableau{1 & 1 & \crc{2} \\ \crc{1} \\ 2}}
&{\tiny \tableau{1 & \crc{1} & 2 \\ \crc{1} \\ 2}}
&{\tiny \tableau{1 & 2 & 2 \\ \crc{1} \\ \crc{1}}}
&{\tiny \tableau{1 & \crc{1} \\ \crc{1} & 2 \\ 2}}
&{\tiny \tableau{1 & 1 \\ \crc{1} & \crc{2} \\ 2}}
&{\tiny \tableau{1 & 2 \\ \crc{1} \\ \crc{1} \\ 2}} \\[7mm]
\e{2 1 1 \crc{1} \crc{2}}
&\e{\crc{1} 1 \crc{1} 2 2}
&\e{\crc{1} 2 1 1 \crc{2}}
&\e{\crc{1} 2 1 \crc{1} 2}
&\e{2 \crc{1} 1 1 \crc{2}}
&\e{2 \crc{1} 1 \crc{1} 2}
&\e{\crc{1} \crc{1} 1 2 2}
&\e{2 \crc{1} 2 1 \crc{1}}
&\e{2 \crc{1} 1 \crc{2} 1}
&\e{2 \crc{1} \crc{1} 1 2}
\end{array}
\]
Hence
$F^<_{\CYW_{\lambda, d}} = 2s_{41} + 2s_{32} + 3s_{311} + 2s_{221} + s_{2111}.$
\end{example}

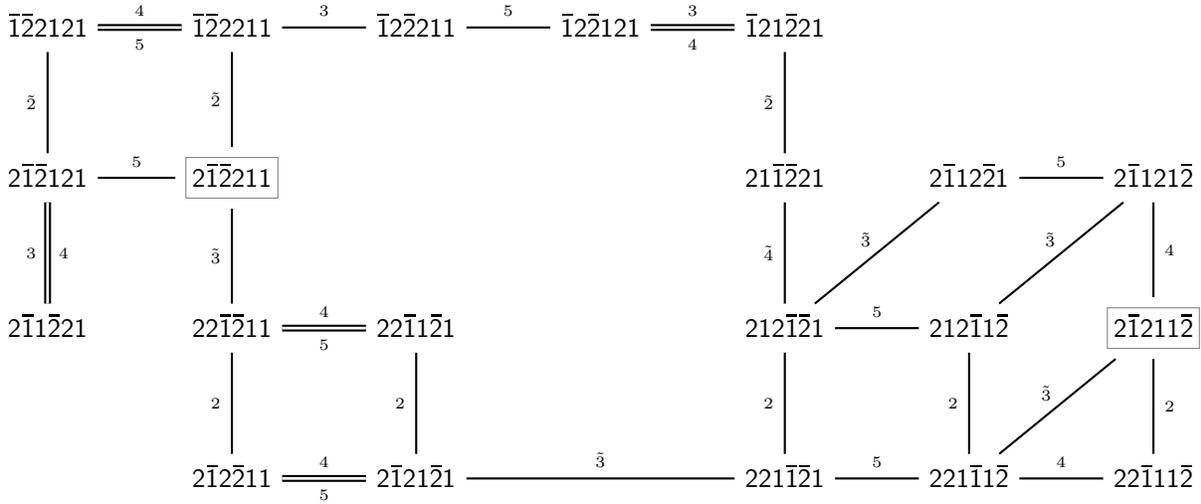
\begin{figure}
        \centerfloat
\begin{tikzpicture}[xscale = 2.45, yscale = 2]
\tikzstyle{vertex}=[inner sep=0pt, outer sep=4pt]
\tikzstyle{framedvertex}=[inner sep=2.4pt, outer sep=4pt, draw=gray]
\tikzstyle{aedge} = [draw, thin, ->,black]
\tikzstyle{edge} = [draw, thick, -,black]
\tikzstyle{doubleedge} = [draw, thick, double distance=1pt, -,black]
\tikzstyle{hiddenedge} = [draw=none, thick, double distance=1pt, -,black]
\tikzstyle{dashededge} = [draw, very thick, dashed, black]
\tikzstyle{LabelStyleH} = [text=black, anchor=south]
\tikzstyle{LabelStyleHn} = [text=black, anchor=north]
\tikzstyle{LabelStyleNE} = [text=black, anchor=south west, inner sep=2pt]
\tikzstyle{LabelStyleV} = [text=black, anchor=east]
\tikzstyle{LabelStyleVn} = [text=black, anchor=west]
\tikzstyle{LabelStyleNW} = [text=black, anchor=south east, inner sep=2pt]

\node[vertex] (v1) at (0,3){\footnotesize$\e{\crc{1}\crc{2}2121} $};
\node[vertex] (v2) at (3,3){\footnotesize$\e{\crc{1}2\crc{2}121} $};
\node[vertex] (v3) at (4,3){\footnotesize$\e{\crc{1}21\crc{2}21} $};
\node[vertex] (v4) at (0,2){\footnotesize$\e{2\crc{1}\crc{2}121} $};
\node[vertex] (v5) at (0,1){\footnotesize$\e{2\crc{1}1\crc{2}21} $};
\node[vertex] (v6) at (5,2){\footnotesize$\e{2\crc{1}12\crc{2}1} $};
\node[vertex] (v7) at (6,2){\footnotesize$\e{2\crc{1}121\crc{2}} $};
\node[vertex] (v8) at (4,2){\footnotesize$\e{21\crc{1}\crc{2}21} $};
\node[vertex] (v9) at (4,1){\footnotesize$\e{212\crc{1}\crc{2}1} $};
\node[vertex] (v10) at (5,1){\footnotesize$\e{212\crc{1}1\crc{2}} $};
\node[vertex] (v11) at (1,3){\footnotesize$\e{\crc{1}\crc{2}2211} $};
\node[vertex] (v12) at (2,3){\footnotesize$\e{\crc{1}2\crc{2}211} $};
\node[framedvertex] (v13) at (1,2){\footnotesize$\e{2\crc{1}\crc{2}211} $};
\node[vertex] (v14) at (1,0){\footnotesize$\e{2\crc{1}2\crc{2}11} $};
\node[vertex] (v15) at (2,0){\footnotesize$\e{2\crc{1}21\crc{2}1} $};
\node[framedvertex] (v16) at (6,1){\footnotesize$\e{2\crc{1}211\crc{2}} $};
\node[vertex] (v17) at (1,1){\footnotesize$\e{22\crc{1}\crc{2}11} $};
\node[vertex] (v18) at (2,1){\footnotesize$\e{22\crc{1}1\crc{2}1} $};
\node[vertex] (v19) at (6,0){\footnotesize$\e{22\crc{1}11\crc{2}} $};
\node[vertex] (v20) at (4,0){\footnotesize$\e{221\crc{1}\crc{2}1} $};
\node[vertex] (v21) at (5,0){\footnotesize$\e{221\crc{1}1\crc{2}} $};
\draw[doubleedge] (v1) to node[LabelStyleHn]{\Tiny$5 $} (v11);
\draw[edge] (v1) to node[LabelStyleV]{\Tiny$\tilde{2} $} (v4);
\draw[hiddenedge] (v1) to node[LabelStyleH]{\Tiny$4 $} (v11);
\draw[hiddenedge] (v2) to node[LabelStyleH]{\Tiny$3 $} (v3);
\draw[edge] (v2) to node[LabelStyleH]{\Tiny$5 $} (v12);
\draw[doubleedge] (v2) to node[LabelStyleHn]{\Tiny$4 $} (v3);
\draw[edge] (v3) to node[LabelStyleV]{\Tiny$\tilde{2} $} (v8);
\draw[edge] (v4) to node[LabelStyleH]{\Tiny$5 $} (v13);
\draw[hiddenedge] (v4) to node[LabelStyleV]{\Tiny$3 $} (v5);
\draw[doubleedge] (v4) to node[LabelStyleVn]{\Tiny$4 $} (v5);
\draw[edge] (v6) to node[LabelStyleNW]{\Tiny$\tilde{3} $} (v9);
\draw[edge] (v6) to node[LabelStyleH]{\Tiny$5 $} (v7);
\draw[edge] (v7) to node[LabelStyleNW]{\Tiny$\tilde{3} $} (v10);
\draw[edge] (v7) to node[LabelStyleVn]{\Tiny$4 $} (v16);
\draw[edge] (v8) to node[LabelStyleV]{\Tiny$\tilde{4} $} (v9);
\draw[edge] (v9) to node[LabelStyleV]{\Tiny$2 $} (v20);
\draw[edge] (v9) to node[LabelStyleH]{\Tiny$5 $} (v10);
\draw[edge] (v10) to node[LabelStyleV]{\Tiny$2 $} (v21);
\draw[edge] (v11) to node[LabelStyleH]{\Tiny$3 $} (v12);
\draw[edge] (v11) to node[LabelStyleV]{\Tiny$\tilde{2} $} (v13);
\draw[edge] (v13) to node[LabelStyleV]{\Tiny$\tilde{3} $} (v17);
\draw[edge] (v14) to node[LabelStyleV]{\Tiny$2 $} (v17);
\draw[hiddenedge] (v14) to node[LabelStyleHn]{\Tiny$5 $} (v15);
\draw[doubleedge] (v14) to node[LabelStyleH]{\Tiny$4 $} (v15);
\draw[edge] (v15) to node[LabelStyleH]{\Tiny$\tilde{3} $} (v20);
\draw[edge] (v15) to node[LabelStyleV]{\Tiny$2 $} (v18);
\draw[edge] (v16) to node[LabelStyleVn]{\Tiny$2 $} (v19);
\draw[edge] (v16) to node[LabelStyleNW]{\Tiny$\tilde{3} $} (v21);
\draw[hiddenedge] (v17) to node[LabelStyleH]{\Tiny$4 $} (v18);
\draw[doubleedge] (v17) to node[LabelStyleHn]{\Tiny$5 $} (v18);
\draw[edge] (v19) to node[LabelStyleH]{\Tiny$4 $} (v21);
\draw[edge] (v20) to node[LabelStyleH]{\Tiny$5 $} (v21);

\end{tikzpicture}
\caption{\label{f some CYW 33 2}
The $\Ikronknuth$-switchboard  $\Gamma_1$ from Example~\ref{ex CYW 33}.
As explained in the example, its symmetric function  $F^<_{\ver(\Gamma_1)} = s_{321}+s_{222}$
can be computed from the outlined words.}
\end{figure}

\begin{definition}\label{d insertion}
For a colored word $\e{w}$ and shuffle order $\lessdot$ on  $\A$, \emph{the  $\lessdot$-insertion tableau of $\e{w}$}, denoted $P^\lessdot(\e{w})$, is defined using the usual Schensted insertion algorithm
using the order $\lessdot$ except that when a barred letter  $\crc{a}$ is inserted into a row, it bumps the smallest letter  $\underbar{\gtrdot} \, \crc{a}$
(an unbarred letter  $a$ bumps the smallest letter  $\gtrdot \, a$ as usual).
The resulting tableau is indeed a  $\lessdot$-colored tableau (see \cite[\textsection3]{BereleRemmel} for details).
%
\end{definition}

\begin{remark}
It is easy to show that a colored word  $\e{w}$ is of the form $\sqread(T)$ for  $T \in \CT^<_\nu$ if and only  if $\e{w} = \sqread(P^<(\e{w}))$.
This gives a good way in practice to compute the subset of words of the form  $\sqread(T)$ of a given set of colored words.
 \end{remark}

\begin{example}\label{ex CYW 33}
Let $\lambda = (3,3)$,  $d=2$.  The $\Ikronknuth$-switchboard
with vertex set $\CYW_{\lambda,d}$ that is described in Proposition \ref{p CYW switchboard}
has four components $\Gamma_1, \Gamma_2, \Gamma_3, \Gamma_4$.
The component $\Gamma_1$ is depicted in Figure \ref{f some CYW 33 2}.
It follows from Proposition \ref{p 01=switchboard} and the fact that $\CYW_{\lambda, d}$ only involves letters  $\le \crc{2}$,
that $\sum_{\e{w} \in \ver(\Gamma_i)} \e{w} \in \Ikron^\perp \subseteq \Ikronknuth^\perp$.
Hence by Corollary~\ref{c Ikron perp},
the symmetric function $F^<_{\ver(\Gamma_i)}(\mathbf{x})$ is computed by finding all  $<$-colored tableaux  $T$ such that
$\sqread(T) \in \ver(\Gamma_i)$.

In the table below, we give the symmetric functions  $F^<_{\ver(\Gamma_i)}(\mathbf{x})$, the sets
\[\{\text{$<$-colored tableau } T \mid \sqread(T) \in \ver(\Gamma_i)\},\]
and the words  $\sqread(T)$ for each tableau $T$ in these sets.
\[
\begin{tabular}{cc@{~~}c|@{\quad}c@{\quad}|@{\quad}cc@{~~}c|@{\quad}cc}
\multicolumn{2}{c}{$\Gamma_1$} & & $\Gamma_2$ & \multicolumn{2}{c}{$\Gamma_3$} & & \multicolumn{2}{c}{$\Gamma_4$}\\[2mm]\hline
\multicolumn{2}{c}{$s_{321}+s_{222}$}
&&$s_{42}$
&\multicolumn{2}{c}{$s_{42}+s_{411}$}
&&\multicolumn{2}{c}{$s_{321}+s_{3111}$}\\[2mm]
${\tiny \tableau{1 & 1 & \crc{2} \\ \crc{1} & 2\\ 2}}$
&${\tiny \tableau{1 & 1 \\ \crc{1} & 2\\ 2 & \crc{2}}}$
&&${\tiny \tableau{1 & 1 & \crc{1} & \crc{2}\\ 2 & 2}}$
&${\tiny \tableau{1 & \crc{1} & 2 & 2\\ \crc{1} & 2}}$
&${\tiny \tableau{1 & \crc{1} & 2 & 2\\ \crc{1}\\ 2}}$
&&${\tiny \tableau{1 & \crc{1} & 2\\ \crc{1} &  2\\ 2}}$
&${\tiny \tableau{1 & 2 & 2\\ \crc{1}\\ \crc{1}\\ 2}}$\\[8mm]
$\e{2 \crc{1} 2 1 1 \crc{2}}$
&$\e{2 \crc{1} \crc{2} 2 1 1}$
&&$\e{2 2 1 1 \crc{1} \crc{2}}$
&$\e{\crc{1} 2 1 \crc{1} 2 2}$
&$\e{2 \crc{1} 1 \crc{1} 2 2}$
&&$\e{2 \crc{1} 2 1 \crc{1} 2}$
&$\e{2 \crc{1} \crc{1} 1 2 2}$
\end{tabular}
\]

This yields an expression for  $F^<_{\CYW_{\lambda,d}}$ as a sum of four Schur positive expressions:
$F^<_{\CYW_{\lambda,d}} = \sum_i F^<_{\ver(\Gamma_i)}$.
This shows that Corollary~\ref{c Ikron perp} is stronger than Corollary~\ref{c intro main}.
\end{example}

\subsection{Noncommutative super Schur functions in colored plactic algebras}
\label{ss super Schur colored plactic}
Here we compare Theorem \ref{t J intro} and Corollaries \ref{c Ikron perp} and \ref{c intro main}
to analogous, but easier results for colored plactic algebras for any shuffle order (defined below).

We first extend several definitions involving the natural order (from \textsection\ref{ss quotients of U}--\ref{ss main theorem}) to an arbitrary shuffle order on  $\A$.
Throughout this subsection, fix a shuffle order  $\lessdot$ on  $\A$.

The \emph{$\lessdot$-colored plactic algebra}, denoted $\U/\Iplacord{\lessdot}$, is the quotient of $\U$ by the relations
\begin{alignat}{3}
&\e{xzy} = \e{zxy} \qquad &&\text{for } x,y,z \in \A,\, x\lessdot y\lessdot z,\label{e plac rel knuth1} \\
&\e{yxz} = \e{yzx} \qquad &&\text{for } x,y,z \in \A,\, x\lessdot y\lessdot z,\label{e plac rel knuth2} \\
&\e{yyx} = \e{yxy} \qquad &&\text{for } x \in \A,\, y \in \A_{\varnothing},\, x\lessdot y, \label{e plac rel knuth3} \\
&\e{zyy} = \e{yzy} \qquad &&\text{for } z \in \A,\, y \in \A_{\varnothing},\, y\lessdot z, \label{e plac rel knuth3b} \\
&\e{yyz} = \e{yzy} \qquad &&\text{for } z \in \A,\, y \in \A_{\crcempty},\, y\lessdot z, \label{e plac rel knuth4}\\
&\e{xyy} = \e{yxy} \qquad &&\text{for } x \in \A,\, y \in \A_{\crcempty},\, x\lessdot y \label{e plac rel knuth4b};
\end{alignat}
let $\Iplacord{\lessdot}$ denote the corresponding two-sided ideal of $\U$.
The relations \eqref{e plac rel knuth1}--\eqref{e plac rel knuth4b} are the same as \eqref{e plac nat rel knuth1}--\eqref{e plac nat rel knuth4b} with  $\lessdot$ in place of the natural order $<$.

\begin{definition}[\emph{Noncommutative super Schur functions for shuffle orders}]
\label{d noncommutative Schur order}
For any two letters $x, y \in \A$, write $\noe{y} \gedotcol \noe{x}$ to mean either $\noe{y \gtrdot x}$, or $\noe y$ and $\noe x$ are equal barred letters.
Define the following generalization of the  $e_k(\mathbf{u})$:
\[
e^\lessdot_k(\mathbf{u})=\sum_{\substack{z_1 \gedotcol z_2 \gedotcol \cdots \gedotcol z_k \\ z_1,\dots,z_k \in \A}} u_{z_1} u_{z_2} \cdots u_{z_k} \ \in \U
\]
for any positive integer $k$; set $e^\lessdot_0(\mathbf{u})=1$ and $e^\lessdot_k(\mathbf{u}) = 0$ for $k<0$.
Now define the noncommutative super Schur functions  $\mathfrak{J}^\lessdot_\nu(\mathbf{u})$ exactly as in Definition~\ref{d noncommutative Schur functions}, except
with  $e^\lessdot_k(\mathbf{u})$ in place of  $e_k(\mathbf{u})$.
%
\end{definition}

A \emph{$\lessdot$-colored tableau} is a tableau with entries in $\A$ such that each row and column is weakly increasing with respect to the order $\lessdot$, while the unbarred letters in each column and the barred letters in each row are strictly increasing.
Let $\CT_{\nu}^\lessdot$ denote the set of  $\lessdot$-colored tableaux of shape $\nu$.

For a  $\lessdot$-colored tableaux  $T$, the \emph{column reading word} of $T$, denoted $\creading(T)$, is the colored word obtained by concatenating the columns of  $T$ (reading each column bottom to top), starting with the leftmost column.
For example,
\[\creading\Bigg( \, { \tiny \tableau{1 & 1 & \crc{3} & \crc{4} & 6\\  \crc{2} & 3 & 4 & \crc{4} \\ 3 & \crc{3} & \crc{4} & 5}} \, \Bigg)
= \e{3\, \crc{2}\, 1\, \crc{3}\, 3\, 1\, \crc{4}\, 4\, \crc{3}\, 5\, \crc{4}\, \crc{4}\, 6}.\]
Also, for any  $\lessdot$-colored tableaux  $T$, we define the colored word $\sqread(T)$ exactly as in Definition~\ref{d sqread}.

The next theorem is an analog of Theorem \ref{t J intro} for the  $\lessdot$-colored plactic algebra. 

\begin{theorem}\label{t J plac}
For any partition $\nu$,
\begin{align*}
\mathfrak{J}^\lessdot_\nu(\mathbf{u}) =
\sum_{ T \in \CT_{\nu}^{\lessdot}} \creading(T) \qquad \text{in \, $\U/\Iplacord{\lessdot}$}.
\end{align*}
\end{theorem}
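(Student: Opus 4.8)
The plan is to run the Fomin--Greene argument of \cite[\textsection3]{FG} in the $\lessdot$-colored setting; the one ingredient that is not purely formal is the theory of $\lessdot$-colored Schensted insertion $P^\lessdot$ (Definition~\ref{d insertion}, \cite{BereleRemmel}). Work throughout in $\U/\Iplacord{\lessdot}$. By Proposition~\ref{p es commute plac} the $e^\lessdot_k(\mathbf u)$ commute there, so $\mathfrak{J}^\lessdot_\nu(\mathbf u)$ does not depend on the order of the factors in Definition~\ref{d noncommutative Schur order}. First I would record the two standard properties of $\lessdot$-insertion: two colored words are congruent modulo $\Iplacord{\lessdot}$ if and only if they have the same insertion tableau $P^\lessdot$ (i.e.\ the relations \eqref{e plac rel knuth1}--\eqref{e plac rel knuth4b} are exactly the $\lessdot$-Knuth relations), and $P^\lessdot(\creading(T)) = T$ for every $T \in \CT^\lessdot$. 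Together these say that $\{\creading(T) : T \in \CT^\lessdot\}$ is a $\ZZ$-basis of $\U/\Iplacord{\lessdot}$, so it suffices to compute the coefficient of each $\creading(T)$ in $\mathfrak{J}^\lessdot_\nu(\mathbf u)$.

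\emph{The combinatorial heart} (the $\lessdot$-colored version of \cite[Lemma 3.2]{FG}). For a composition $\beta = (\beta_1,\dots,\beta_t)$, let $W^\lessdot_\beta$ be the set of colored words that factor into consecutive blocks of lengths $\beta_1,\dots,\beta_t$, the $j$-th block being of the form $\e{z_1 z_2 \cdots z_{\beta_j}}$ with $z_1 \gedotcol z_2 \gedotcol \cdots \gedotcol z_{\beta_j}$. Directly from the definition of $e^\lessdot_k$,
\[
e^\lessdot_{\beta_1}(\mathbf u) \cdots e^\lessdot_{\beta_t}(\mathbf u) = \sum_{\e{w} \in W^\lessdot_\beta} \e{w} \quad \text{in } \U,
\]
so in $\U/\Iplacord{\lessdot}$ this equals $\sum_{T \in \CT^\lessdot} n_{T,\beta}\, \creading(T)$, where $n_{T,\beta} = \#\{\e{w} \in W^\lessdot_\beta : P^\lessdot(\e{w}) = T\}$. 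The crucial point is that $n_{T,\beta}$ depends only on $\sh(T)$ and $\beta$. This is where $\lessdot$-RSK enters: membership $\e{w} \in W^\lessdot_\beta$ is equivalent to $\Des_\lessdot(\e{w}) \supseteq \{1,\dots,|\beta|-1\} \setminus \{\beta_1,\, \beta_1+\beta_2,\, \dots,\, \beta_1+\cdots+\beta_{t-1}\}$, and under the $\lessdot$-insertion correspondence $\e{w} \leftrightarrow (P^\lessdot(\e{w}), Q)$ with $Q$ a standard Young tableau of shape $\sh(P^\lessdot(\e{w}))$ one has $\Des_\lessdot(\e{w}) = \Des(Q)$; hence $n_{T,\beta} = m_{\sh(T),\beta}$, where $m_{\lambda,\beta}$ is the number of standard Young tableaux of shape $\lambda$ whose descent set contains $\{1,\dots,|\beta|-1\} \setminus \{\beta_1, \dots, \beta_1+\cdots+\beta_{t-1}\}$.

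\emph{Conclusion.} Put $\beta^\pi = (\nu'_1+\pi(1)-1,\, \dots,\, \nu'_t+\pi(t)-t)$ for $\pi \in \S_t$ (factors $e^\lessdot_k$ with $k < 0$ are zero, killing terms with a negative part). Then
\[
\mathfrak{J}^\lessdot_\nu(\mathbf u) = \sum_{\pi \in \S_t} \sgn(\pi) \prod_{j=1}^{t} e^\lessdot_{\nu'_j+\pi(j)-j}(\mathbf u) = \sum_{T \in \CT^\lessdot} \Big( \sum_{\pi \in \S_t} \sgn(\pi)\, m_{\sh(T),\,\beta^\pi} \Big) \creading(T).
\]
Running the identical word count in the ordinary ring of symmetric functions gives $e_{\beta_1}(\mathbf x)\cdots e_{\beta_t}(\mathbf x) = \sum_\lambda m_{\lambda,\beta}\, s_\lambda(\mathbf x)$; combined with the dual Jacobi--Trudi (Kostka--Naegelsbach) identity $\sum_{\pi \in \S_t} \sgn(\pi) \prod_{j=1}^{t} e_{\nu'_j+\pi(j)-j}(\mathbf x) = s_\nu(\mathbf x)$ and linear independence of the Schur functions, this forces $\sum_{\pi \in \S_t} \sgn(\pi)\, m_{\lambda,\beta^\pi} = \delta_{\lambda\nu}$. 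Hence $\mathfrak{J}^\lessdot_\nu(\mathbf u) = \sum_{T \in \CT^\lessdot,\ \sh(T)=\nu} \creading(T) = \sum_{T \in \CT^\lessdot_\nu} \creading(T)$, as claimed.

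\emph{Main obstacle.} Everything is formal Fomin--Greene bookkeeping except the claim that $n_{T,\beta}$ depends only on $\sh(T)$ and $\beta$, which rests on $\lessdot$-colored insertion being a genuine RSK correspondence with standard, descent-compatible recording tableaux, together with the identification of \eqref{e plac rel knuth1}--\eqref{e plac rel knuth4b} as the $\lessdot$-Knuth relations. For the ordinary plactic algebra these facts are classical and are the substance of \cite[Lemma 3.2]{FG}; in the colored/super setting they follow from the analysis of $\lessdot$-colored insertion in \cite{BereleRemmel}, but one does need to check (routinely, relation by relation) that $\equiv_\lessdot$-classes coincide with the fibers of $P^\lessdot$ for an arbitrary shuffle order $\lessdot$.
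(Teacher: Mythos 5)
Your proposal is correct, but it takes a genuinely different route from the paper. The paper does not rerun the Fomin--Greene counting argument in the colored setting; instead it quotes the uncolored result (\cite[Lemma 3.2]{FG}, restated as Theorem~\ref{t J FG}) as a black box and transports it to the super setting by standardization: it decomposes $\U/\Iplacord{\lessdot}$ by colored content $\beta$, shows (Proposition~\ref{p plac stand}) that standardization gives a $\ZZ$-module isomorphism $(\U/\Iplacord{\lessdot})_\beta \cong (\U_\varnothing/\Iplacord{\varnothing})[\beta]$ carrying $(\mathfrak{J}^\lessdot_\nu(\mathbf{u}))_\beta$ to the restriction $\mathfrak{J}^\varnothing_\nu[\beta]$, and then uses Proposition~\ref{p stand tab} to recognize which ordinary reading words destandardize to reading words of $\lessdot$-colored tableaux. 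You instead redo the Fomin--Greene word count directly for colored words, with the load-bearing input being that Berele--Remmel $\lessdot$-insertion is a genuine RSK correspondence whose fibers are the $\lessdot$-plactic classes and whose standard recording tableau satisfies $\Des(Q)=\Des_\lessdot(\e{w})$. Both arguments are sound, and yours has the side benefit of making the descent-set compatibility explicit (which is also what underlies Theorem~\ref{t basics 2} and Corollary~\ref{c Iplac perp}). The trade-off is where the nontrivial verification lives: the paper needs only uncolored RSK plus two purely combinatorial standardization bijections, whereas you must establish the colored-RSK facts for an arbitrary shuffle order. Be aware that the paper itself (Remark~\ref{r insertion words equivalent}) notes that the identification of $\lessdot$-plactic classes with fibers of $P^\lessdot$ is most easily proved by exactly the standardization argument of Proposition~\ref{p plac stand} --- so if you fill in your "routine" checks honestly, you will likely end up rebuilding the paper's standardization machinery as a lemma inside your proof rather than as its main skeleton.
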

This theorem follows from a similar result for the ordinary plactic algebra and a standardization argument.
So as not to interrupt the discussion, we postpone the proof to \textsection\ref{ss standardization}.

\begin{remark} \label{r insertion words equivalent}
The word $\creading(T)$ in Theorem \ref{t J plac} can be replaced by any colored word that has  $\lessdot$-insertion tableau $T$   (see Definition~\ref{d insertion}).
In particular, it can be replaced by $\sqread(T)$. Here we are using that
$\lessdot$-colored plactic equivalence classes are the same as
sets of colored words with a fixed $\lessdot$-insertion tableau;
this can be shown by a standardization argument similar to that in Proposition \ref{p plac stand}.
\end{remark}

Similar to Theorem \ref{t intro basics}, the next result is by Proposition \ref{p es commute plac} and Theorem \ref{t basics full}
(it is also a straightforward adaptation of the setup of \cite{FG}).
Recall from Definition~\ref{d F gamma} the notation $F^\lessdot_\gamma(\mathbf{x}) = \sum_{\e{w}} \gamma_\e{w} Q_{\Des_{\lessdot}(\e{w})}(\mathbf{x})$ for
 $\gamma = \sum_{\e{w}}\gamma_\e{w}\e{w}$, and $F^\lessdot_W(\mathbf{x}) = \sum_{\e{w} \in W} Q_{\Des_{\lessdot}(\e{w})}(\mathbf{x})$ for a set of colored words  $W$.
\begin{theorem} \label{t basics 2}
For any $\gamma \in (\Iplacord{\lessdot})^\perp$, the function  $F^\lessdot_\gamma(\mathbf{x})$ is symmetric and
\[
F^\lessdot_\gamma(\mathbf{x})
 = \sum_{\nu} s_\nu(\mathbf x) \langle \mathfrak{J}^\lessdot_{\nu}(\mathbf{u}), \gamma \rangle. \]
\end{theorem}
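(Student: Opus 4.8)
This statement is the instance at the ideal $\Iplacord{\lessdot}$ of the general Fomin--Greene-type machinery (what is called Theorem~\ref{t basics full} above), so the plan is to run that machinery in the colored/super setting; the only ingredient genuinely special to the shuffle order $\lessdot$ is that the noncommutative super elementary symmetric functions $e^\lessdot_k(\mathbf{u})$ pairwise commute in $\U/\Iplacord{\lessdot}$, which is Proposition~\ref{p es commute plac}. Granting this, the assignment $e_k\mapsto e^\lessdot_k(\mathbf{u})$ extends to a ring homomorphism $\phi\colon\Lambda\to\U/\Iplacord{\lessdot}$ from the ring $\Lambda$ of symmetric functions, and comparing the Kostka--Naegelsbach formula $s_\nu=\det(e_{\nu'_i+j-i})$ with Definition~\ref{d noncommutative Schur order} gives $\phi(s_\nu)=\mathfrak{J}^\lessdot_\nu(\mathbf{u})$.

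Next I would introduce the noncommutative super complete homogeneous functions $h^\lessdot_k(\mathbf{u})=\sum_{z_1\ledotrow\cdots\ledotrow z_k}u_{z_1}\cdots u_{z_k}$, the ``row-run'' analogue of $e^\lessdot_k(\mathbf{u})$ (here $z\ledotrow z'$ means $z\lessdot z'$ or $z,z'$ are equal unbarred). The key combinatorial fact is the pairwise dichotomy: for any letters $z,z'$ exactly one of $z\ledotrow z'$ and $z\gedotcol z'$ holds. The standard sign-reversing involution that moves the junction letter between an $e^\lessdot$-run and an $h^\lessdot$-run then proves $\sum_{j=0}^{m}(-1)^j e^\lessdot_j(\mathbf{u})\,h^\lessdot_{m-j}(\mathbf{u})=\delta_{m,0}$ for $m\ge1$, already in $\U$. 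Inducting on $k$ via this Wronski relation shows $h^\lessdot_k(\mathbf{u})=\phi(h_k)$ in $\U/\Iplacord{\lessdot}$; in particular the $h^\lessdot_k(\mathbf{u})$ commute modulo $\Iplacord{\lessdot}$, and $\phi(s_\nu)=\mathfrak{J}^\lessdot_\nu(\mathbf{u})$ is equally the Jacobi--Trudi determinant in the $h^\lessdot_k(\mathbf{u})$.

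The heart is then a noncommutative Cauchy identity. First, a purely combinatorial identity in $\U[[\mathbf{x}]]$ (checked degree by degree in word length, where all sums are finite):
\[
\sum_{\e{w}} Q_{\Des_\lessdot(\e{w})}(\mathbf{x})\,\e{w}\;=\;\prod_{i\ge 1}^{\rightarrow}\Bigl(\textstyle\sum_{k\ge 0}h^\lessdot_k(\mathbf{u})\,x_i^{\,k}\Bigr),
\]
with the product taken in order of increasing $i$; this holds because the condition ``$j\in\Des_\lessdot(\e{w})\Rightarrow i_j<i_{j+1}$'' in the definition of $Q_{\Des_\lessdot(\e{w})}$ says exactly that the maximal constant-index stretches of a pair $(\e{w},\,i_1\le\cdots\le i_t)$ are $\ledotrow$-increasing, so such pairs biject with sequences of row runs $\e{r}^{(1)},\e{r}^{(2)},\dots$ (with $\e{w}=\e{r}^{(1)}\e{r}^{(2)}\cdots$ and $\mathbf{x}$-weight $\prod_i x_i^{|\e{r}^{(i)}|}$). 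Second, applying $\phi$ in the $\mathbf{u}$-slot to the classical Cauchy identity $\prod_{i\ge1}\bigl(\sum_k h_k x_i^k\bigr)=\sum_\lambda s_\lambda(\mathbf{x})\,s_\lambda$, and using that the $h^\lessdot_k(\mathbf{u})$ commute modulo $\Iplacord{\lessdot}$ so that the order of the product is immaterial there, gives $\prod_{i\ge1}^{\rightarrow}\bigl(\sum_k h^\lessdot_k(\mathbf{u})x_i^k\bigr)\equiv\sum_\lambda s_\lambda(\mathbf{x})\,\mathfrak{J}^\lessdot_\lambda(\mathbf{u})\pmod{\Iplacord{\lessdot}}$. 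Combining the two displays and pairing both sides against $\gamma\in(\Iplacord{\lessdot})^\perp$ (legitimate coefficientwise in $\mathbf{x}$, since $\gamma$ annihilates $\Iplacord{\lessdot}$) turns the left side into $\sum_{\e{w}}\gamma_{\e{w}}Q_{\Des_\lessdot(\e{w})}(\mathbf{x})=F^\lessdot_\gamma(\mathbf{x})$ and the right side into $\sum_\lambda s_\lambda(\mathbf{x})\langle\mathfrak{J}^\lessdot_\lambda(\mathbf{u}),\gamma\rangle$; symmetry of $F^\lessdot_\gamma(\mathbf{x})$ is then automatic from the right-hand side.

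Alternatively one can bypass the $h$-functions and use Theorem~\ref{t J plac} directly: write $\gamma_{\e{w}}=c\bigl(P^\lessdot(\e{w})\bigr)$ (valid since the $\Iplacord{\lessdot}$-classes are the fibers of $P^\lessdot$, Remark~\ref{r insertion words equivalent}), group the sum defining $F^\lessdot_\gamma$ by insertion tableau, invoke the classical descent-preserving ($\lessdot$-colored) RSK identity $\sum_{\e{w}:\,P^\lessdot(\e{w})=T}Q_{\Des_\lessdot(\e{w})}(\mathbf{x})=s_{\sh T}(\mathbf{x})$, and note $\langle\mathfrak{J}^\lessdot_\lambda(\mathbf{u}),\gamma\rangle=\sum_{T\in\CT^\lessdot_\lambda}c(T)$ by Theorem~\ref{t J plac}. \textbf{Main obstacle.} There is essentially no deep step: the real content is the commutation of the $e^\lessdot_k(\mathbf u)$, already provided by Proposition~\ref{p es commute plac}. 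What needs care is the bookkeeping peculiar to the super setting---the pairwise dichotomy ``exactly one of $\ledotrow,\gedotcol$'' that drives both the $e$--$h$ Wronski relation and the row-run decomposition of the $Q$-generating function---together with confirming that the formal-power-series manipulations in infinitely many $\mathbf{x}$-variables are legitimate, which they are once one works degree by degree.
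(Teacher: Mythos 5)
Your proposal is correct and takes essentially the same route as the paper, which obtains Theorem~\ref{t basics 2} by combining Proposition~\ref{p es commute plac} with the general noncommutative Cauchy-product machinery of Theorem~\ref{t basics full} (the two expansions of $\Omega^\lessdot(\mathbf{x},\mathbf{u})$, the equivalence of $e$- and $h$-commutation, and the resulting Schur expansion). Your Wronski-relation/sign-reversing-involution treatment of the passage from $e^\lessdot_k(\mathbf{u})$ to $h^\lessdot_k(\mathbf{u})$ is just a combinatorial rephrasing of the paper's generating-function argument with the $f_z$ and $g_z$ and their inverses, so no further comparison is needed.
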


Theorems \ref{t basics 2} and \ref{t J plac} and Remark \ref{r insertion words equivalent}
then have the following consequence, which is an analog of Corollary~\ref{c Ikron perp} for the ideal $\Iplacord{\lessdot}$.
\begin{corollary}\label{c Iplac perp}
For any set of colored words $W$ such that $\sum_{\e{w} \in W} \e{w} \in (\Iplacord{\lessdot})^\perp$,
\begin{align*}
\Big(\text{the coefficient of  $s_\nu(\mathbf{x})$ in  $F^\lessdot_W(\mathbf{x})$}\Big)
=
\big|\big\{T \in \CT^\lessdot_\nu \mid \sqread(T) \in W \big\}\big|.
\end{align*}
\end{corollary}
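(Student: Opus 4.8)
The plan is to obtain Corollary~\ref{c Iplac perp} as a formal consequence of Theorem~\ref{t basics 2}, Theorem~\ref{t J plac}, and Remark~\ref{r insertion words equivalent}, exactly parallel to the way Corollary~\ref{c Ikron perp} follows from Theorems~\ref{t intro basics} and~\ref{t J intro}. Write $\gamma = \sum_{\e{w}\in W}\e{w}$; by hypothesis $\gamma \in (\Iplacord{\lessdot})^\perp$, and by definition $F^\lessdot_W(\mathbf{x}) = F^\lessdot_\gamma(\mathbf{x})$. First I would apply Theorem~\ref{t basics 2}, which gives that $F^\lessdot_W(\mathbf{x})$ is symmetric with $F^\lessdot_W(\mathbf{x}) = \sum_\nu s_\nu(\mathbf{x})\,\langle \mathfrak{J}^\lessdot_\nu(\mathbf{u}),\gamma\rangle$, so that the coefficient of $s_\nu(\mathbf{x})$ is exactly the pairing $\langle \mathfrak{J}^\lessdot_\nu(\mathbf{u}),\gamma\rangle$ (this pairing is legitimate since $\mathfrak{J}^\lessdot_\nu(\mathbf{u})$ represents a class in $\U/\Iplacord{\lessdot}$ and $\gamma \perp \Iplacord{\lessdot}$, as noted before Theorem~\ref{t intro basics}).

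The remaining task is to evaluate this pairing. By Theorem~\ref{t J plac}, $\mathfrak{J}^\lessdot_\nu(\mathbf{u}) \equiv \sum_{T\in\CT_\nu^\lessdot}\creading(T) \pmod{\Iplacord{\lessdot}}$, and by Remark~\ref{r insertion words equivalent} each $\creading(T)$ may be replaced by $\sqread(T)$, since both are colored words with $\lessdot$-insertion tableau $T$ and $\lessdot$-colored plactic classes are precisely the fibers of $P^\lessdot$. Hence $\mathfrak{J}^\lessdot_\nu(\mathbf{u}) \equiv \sum_{T\in\CT_\nu^\lessdot}\sqread(T) \pmod{\Iplacord{\lessdot}}$. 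Because $\gamma$ annihilates $\Iplacord{\lessdot}$, the pairing depends only on this class, so $\langle \mathfrak{J}^\lessdot_\nu(\mathbf{u}),\gamma\rangle = \big\langle \sum_{T\in\CT_\nu^\lessdot}\sqread(T),\ \sum_{\e{w}\in W}\e{w}\big\rangle$. Since the colored words form an orthonormal basis for $\langle\cdot,\cdot\rangle$, the right-hand side counts pairs $(T,\e{w})$ with $T\in\CT_\nu^\lessdot$, $\e{w}\in W$, and $\sqread(T)=\e{w}$, that is, it equals $\big|\{T\in\CT_\nu^\lessdot \mid \sqread(T)\in W\}\big|$, which is the claim.

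There is essentially no hard step here: the substance is in Theorem~\ref{t basics 2} and Theorem~\ref{t J plac}, and the argument above is pure bookkeeping with the bilinear form. The one point worth spelling out is the legitimacy of trading $\creading(T)$ for $\sqread(T)$, together with the observation that $T\mapsto\sqread(T)$ is injective on $\CT_\nu^\lessdot$ (so that $\sum_T\sqread(T)$ is a genuine $0/1$ monomial representative of the class of $\mathfrak{J}^\lessdot_\nu(\mathbf{u})$); both follow from $P^\lessdot(\creading(T)) = P^\lessdot(\sqread(T)) = T$ and the identification of $\lessdot$-colored plactic classes with $P^\lessdot$-fibers used in Remark~\ref{r insertion words equivalent}. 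Even without injectivity the quantity $\sum_{T\in\CT_\nu^\lessdot}[\sqread(T)\in W]$ computes the pairing correctly, so the statement holds regardless, but phrasing it via injectivity is cleanest and matches the treatment of $\CT_\nu^<$ in the remark preceding Example~\ref{ex CYW 33}.
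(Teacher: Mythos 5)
Your proposal is correct and is exactly the derivation the paper intends: the paper states that Corollary~\ref{c Iplac perp} is a direct consequence of Theorem~\ref{t basics 2}, Theorem~\ref{t J plac}, and Remark~\ref{r insertion words equivalent}, and your bookkeeping with the bilinear form (including the observation that injectivity of $T\mapsto\sqread(T)$ is not actually needed to count $\{T \mid \sqread(T)\in W\}$) fills in precisely those steps.
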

Corollary~\ref{c Iplac perp} can also be proven directly from the fact \cite{GesselPPartition} that
$s_\nu(\mathbf{x}) = F^<_{C}(\mathbf{x})$ for any ordinary Knuth equivalence class  $C$ with insertion tableau of shape  $\nu$,
and a standardization argument;
however, we have presented it this way to parallel Corollary~\ref{c Ikron perp}  and to give a sense of the comparative strengths of Theorems \ref{t J intro} and \ref{t J plac}.

We also have the (partial) analog of Corollary~\ref{c intro main} for the  $\prec$-colored plactic algebra.  
\begin{corollary} \label{c main plac}
For any partitions $\lambda, \nu$ of $n$ and $d \leq n$,
\begin{align*}
\Big(\text{the coefficient of  $s_\nu(\mathbf{x})$ in  $F^\prec_{\CYW_{\lambda, d}}(\mathbf{x})$}\Big)
=
\big|\big\{T \in \CT^\prec_\nu \mid \sqread(T) \in \CYW_{\lambda,d} \big\}\big|.
\end{align*}
\end{corollary}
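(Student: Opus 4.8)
The plan is to deduce Corollary~\ref{c main plac} from Corollary~\ref{c Iplac perp} applied with $\lessdot = \prec$ and $W = \CYW_{\lambda,d}$. Corollary~\ref{c Iplac perp} is already stated in full generality for an arbitrary shuffle order, so the only thing that must be checked is the hypothesis, namely that $\sum_{\e{w} \in \CYW_{\lambda, d}} \e{w} \in (\Iplacord{\prec})^\perp$. Once this is known, Corollary~\ref{c Iplac perp} immediately gives
\[
\Big(\text{coeff.\ of $s_\nu(\mathbf{x})$ in $F^\prec_{\CYW_{\lambda, d}}(\mathbf{x})$}\Big)
= \big|\big\{T \in \CT^\prec_\nu \mid \sqread(T) \in \CYW_{\lambda,d}\big\}\big|,
\]
which is exactly the claim.

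So the heart of the matter is: $\CYW_{\lambda,d}$ is a union of $\prec$-colored plactic equivalence classes. Since $\U/\Iplacord{\prec}$ is a monoid algebra and $\Iplacord{\prec}$ is homogeneous, $(\Iplacord{\prec})^\perp$ has $\ZZ$-basis given by the sums $\sum_{\e{w} \in C} \e{w}$ over $\prec$-colored plactic classes $C$; hence it suffices to show that if $\e{v}$ and $\e{v'}$ differ by a single application of one of the relations \eqref{e plac rel knuth1}--\eqref{e plac rel knuth4b} with order $\prec$, then $\e{v} \in \CYW_{\lambda,d} \iff \e{v'} \in \CYW_{\lambda,d}$. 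Whether a colored word lies in $\CYW_{\lambda,d}$ depends only on $\e{w}^\plainr$ and on the number $d$ of barred letters, both of which are preserved by all six relations. The key observation is that in the big bar order $\prec$, every unbarred letter is $\prec$ every barred letter, so in any instance of a relation \eqref{e plac rel knuth1}--\eqref{e plac rel knuth4b} the letters involved are \emph{either all unbarred or all barred}. In the all-unbarred case the relation is an ordinary Knuth transformation acting on the unbarred subword, which commutes with the extraction of $\e{w}^\plainr$ and preserves the Yamanouchi property of ordinary words. In the all-barred case, the relation is a Knuth transformation on the barred subword; since $\e{w}^\plainr$ is formed by reversing the barred subword and removing bars, a Knuth transformation on the barred subword becomes a (reverse-)Knuth transformation on the corresponding segment of $\e{w}^\plainr$, which again preserves Yamanouchi-ness (the set of Yamanouchi words is closed under Knuth equivalence in either reading direction). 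In both cases $d$ is unchanged, so $\CYW_{\lambda,d}$ is closed under $\prec$-colored plactic equivalence.

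I expect the main obstacle to be purely bookkeeping: carefully matching up a barred Knuth move on $\e{w}$ with the corresponding move on $\e{w}^\plainr$, getting the reversal and de-barring to interact correctly with the direction of the Knuth relation, and confirming that the relative order of barred versus unbarred letters really does force the "all same type" dichotomy for $\prec$ (this is where $\prec$, rather than $<$, is essential — the analogous statement fails for $<$, as the paper notes right after Corollary~\ref{c intro main}). Everything else is a direct citation of Corollary~\ref{c Iplac perp}. An alternative, essentially equivalent route is to cite that $\sum_{\e{w}\in\CYW_{\lambda,d}}\e{w}\in(\Ikron)^\perp$ is \emph{not} enough here (since $\Ikron$ and $\Iplacord{\prec}$ are incomparable), and instead go through the standardization identity \eqref{e cyw stand}: $(\CYW_{\lambda,d})^\stand = (\Gamma_\lambda\circ\Gamma_{\mu(d)})\sqcup(\Gamma_\lambda\circ\Gamma_{\mu(d-1)})$ is a union of ordinary Knuth classes because each $\Gamma_\kappa$ is a single Knuth class and composition on one side permutes Knuth classes; then a standardization argument (as in Proposition~\ref{p plac stand}) transfers this to $\prec$-colored plactic classes. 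Either way, the content is the closure of $\CYW_{\lambda,d}$ under $\prec$-plactic equivalence, after which Corollary~\ref{c Iplac perp} finishes the proof.
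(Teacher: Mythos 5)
Your overall strategy --- reduce to Corollary~\ref{c Iplac perp} with $W = \CYW_{\lambda,d}$ and verify that $\CYW_{\lambda,d}$ is a union of $\prec$-colored plactic classes --- is exactly the paper's, but your verification has a genuine error. You claim that because every unbarred letter is $\prec$ every barred letter, each instance of \eqref{e plac rel knuth1}--\eqref{e plac rel knuth4b} involves letters that are all unbarred or all barred. That dichotomy is false: the relations constrain only the relative $\prec$-order of the letters, not their types. For example $1 \prec 2 \prec \crc{1}$, so \eqref{e plac rel knuth1} with $(x,y,z)=(1,2,\crc{1})$ gives the legitimate mixed instance $\e{1\,\crc{1}\,2}=\e{\crc{1}\,1\,2}$; likewise \eqref{e plac rel knuth3b} with $y$ unbarred, $z$ barred, and \eqref{e plac rel knuth4b} with $x$ unbarred, $y$ barred, are mixed. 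So the case you dismiss is nonempty, and it is precisely where the choice of order matters. The paper handles it by observing that in any mixed instance under $\prec$ the smallest letter is unbarred and the largest is barred, so the relation merely transposes an adjacent unbarred/barred pair; since membership in $\CYW_{\lambda,d}$ depends only on $\e{w}^\plainr$ and the number of barred letters, both invariant under such shuffles, membership is preserved. Your diagnosis of where $\prec$ is essential is correspondingly off: for the natural order $<$ the problem is not that mixed relations exist, but that a mixed relation such as $\e{\crc{1}\crc{2}2}=\e{\crc{2}\crc{1}2}$ can transpose two \emph{barred} letters past each other (with an unbarred witness), changing $\e{w}^\plainr$ by a non-Knuth move --- this is the paper's counterexample after Corollary~\ref{c intro main}.

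The all-barred and all-unbarred cases you treat are fine. Your proposed alternative route through \eqref{e cyw stand} does not work, however: the assertion that composition on one side permutes Knuth classes is exactly what fails outside the hook-hook case (the example $\lambda=(2,2)$, $d=2$ shows $(\CYW_{\lambda,d})^\stand$ need not be a union of Knuth classes), and \eqref{e cyw stand} uses the $<$-standardization, so even when it does hold the transfer via Proposition~\ref{p plac stand} would land in $(\Iplacord{<})^\perp$ rather than $(\Iplacord{\prec})^\perp$.
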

\begin{proof}
This follows from Corollary~\ref{c Iplac perp} with  $W = \CYW_{\lambda, d}$, provided we verify $\sum_{\e{w} \in W}\e{w} \in  (\Iplacord{\prec})^\perp$.
We must check that if colored words $\e{v}$ and  $\e{w}$ differ by a single application of one of the relations
\eqref{e plac rel knuth1}--\eqref{e plac rel knuth4b},
then neither or both belong to  $\CYW_{\lambda,d}$.
This holds in the case when $\e{v}$ and $\e{w}$ differ by a  relation involving three barred letters or three unbarred letters because
the set of Yamanouchi words of content  $\lambda$ is an ordinary Knuth equivalence class.
On the other hand, if $\e{v}$ and $\e{w}$ differ by a  relation involving three letters not all barred nor all unbarred,
then the smallest letter (for $\prec$) must be unbarred and the largest must be barred, hence swapping these two letters
preserves the property of being in $\CYW_{\lambda, d}$.
\end{proof}

Note that the order  $\prec$ is important here since it is not always true that $\sum_{\e{w} \in \CYW_{\lambda, d}} \e{w} \in (\Iplacord{\lessdot})^\perp$ for shuffle orders $\lessdot$ other than $\prec$ (see the discussion after Corollary~\ref{c intro main}).


\subsection{Conversion}\label{ss conversion mysterious}
Conversion is an operation on colored tableaux that
gives a bijection between the sets $CT^<_\nu$ and  $CT^\prec_\nu$
(see Definition~\ref{d conversion} below).
We first state a
strengthening of Corollary~\ref{c intro main}
 and then relate this  to conversion.
We believe that the results here are the first glimpses of a
deep connection between conversion and the $\mathfrak{J}^\lessdot_\nu(\mathbf{u})$ for different orders  $\lessdot$.

A set of colored words  $W$ is \emph{shuffle closed}
 if for every $\e{w} \in W$, any word obtained from  $\e{w}$ by swapping a barred letter with an adjacent unbarred letter also lies in  $W$.

\begin{corollary} \label{c mysterious}
For any shuffle closed set of colored words $W$ such that \mbox{$\sum_{\e{w} \in W} \e{w} \in (\Ikron)^\perp$},
there holds
\begin{align}
& \big|\big\{T \in \CT_\nu^< \mid \sqread(T) \in W\big\}\big| \label{e c myst1}\\
& = \text{the coefficient of  $s_\nu(\mathbf{x})$ in  $F^<_W(\mathbf{x})$} \\ 
& = \text{the coefficient of  $s_\nu(\mathbf{x})$ in  $F^\prec_W(\mathbf{x})$} \\ 
& = \big|\big\{T \in \CT_\nu^\prec \mid \sqread(T) \in W\big\}\big|.  \label{e c myst4}
\end{align}
\end{corollary}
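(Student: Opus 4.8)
The plan is to read the four quantities \eqref{e c myst1}--\eqref{e c myst4} from top to bottom and prove the equality of each consecutive pair. The equality of \eqref{e c myst1} with the coefficient of $s_\nu(\mathbf{x})$ in $F^<_W(\mathbf{x})$ is immediate from Corollary~\ref{c Ikron perp}, whose hypothesis $\sum_{\e{w} \in W} \e{w} \in (\Ikron)^\perp$ is exactly what we assume. For the equality $F^<_W(\mathbf{x}) = F^\prec_W(\mathbf{x})$ of the second and third quantities, I would use that a shuffle closed set $W$ is a disjoint union of shuffle classes (orbits under transposing an adjacent barred/unbarred pair of letters), together with the fact that on each such class $S$ the word conversion argument of \textsection\ref{ss proof2} --- which is how Proposition~\ref{p intro word conversion QDes} is proved, and which uses nothing about $\CYW_{\lambda,d}$ beyond its being shuffle closed --- gives a bijection $S \to S$ matching $\prec$-descent sets with $<$-descent sets, hence $\sum_{\e{w}\in S}Q_{\Des_<(\e{w})}(\mathbf{x}) = \sum_{\e{w}\in S}Q_{\Des_\prec(\e{w})}(\mathbf{x})$. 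Summing over the shuffle classes comprising $W$ then gives the claim.

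The remaining equality, between the coefficient of $s_\nu(\mathbf{x})$ in $F^\prec_W(\mathbf{x})$ and \eqref{e c myst4}, will follow from Corollary~\ref{c Iplac perp} applied with $\lessdot = \prec$, provided we verify its hypothesis $\sum_{\e{w} \in W} \e{w} \in (\Iplacord{\prec})^\perp$; equivalently, we must show $W$ is closed under each of the relations \eqref{e plac rel knuth1}--\eqref{e plac rel knuth4b} taken with $\prec$ in place of $\lessdot$. I would argue in cases, much as in the proof of Corollary~\ref{c main plac}. If such a relation involves letters not all of the same color, then, since $\prec$ orders all unbarred letters below all barred ones, its two sides differ by transposing an adjacent pair consisting of the unbarred $\prec$-minimum and the barred $\prec$-maximum among the letters involved, so $W$ is closed under it because it is shuffle closed. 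If the relation involves letters all of the same color and two of the three letters coincide, then it is literally one of \eqref{e plac nat rel knuth3}--\eqref{e plac nat rel knuth4b}, which are among the defining relations of $\Ikron$, so $W$ is closed under it because $\sum_{\e{w} \in W} \e{w} \in (\Ikron)^\perp$. Finally, if the relation involves three distinct letters $x \prec y \prec z$ of the same color, then $x$ and $z$ differ by at least two steps, so $x < z \myDownarrow \myDownarrow$ and the far commutation relation \eqref{e kron rel far commute} gives $\e{xz} \equiv \e{zx} \pmod{\Ikron}$; multiplying by $\e{y}$ on the right or left yields $\e{xzy} \equiv \e{zxy}$ and $\e{yxz} \equiv \e{yzx}$ modulo $\Ikron$, so $W$ is again closed, using $\sum_{\e{w} \in W} \e{w} \in (\Ikron)^\perp$. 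This exhausts the cases and establishes $\sum_{\e{w} \in W} \e{w} \in (\Iplacord{\prec})^\perp$, completing the proof.

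The one delicate point is this last step: one must check that being shuffle closed, \emph{together} with lying in $(\Ikron)^\perp$, is exactly enough to force membership in $(\Iplacord{\prec})^\perp$. This works out because the far commutation relations of $\Ikron$ absorb the ``distant'' $\prec$-Knuth relations on monochromatic triples, and because $\prec$ is arranged so that every remaining $\prec$-Knuth relation on a mixed-color triple is realized by a single shuffle transposition; it fails for most other shuffle orders, as the discussion after Corollary~\ref{c intro main} shows. By contrast, the first two equalities are essentially bookkeeping built on Corollary~\ref{c Ikron perp} and the word conversion of \textsection\ref{ss proof2}.
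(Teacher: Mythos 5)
Your proposal is correct and follows essentially the same route as the paper: Corollary~\ref{c Ikron perp} for the first equality, the word conversion argument (which the paper cites directly as Proposition~\ref{p word conversion QDes}, already stated for arbitrary shuffle closed $W$) for the second, and Corollary~\ref{c Iplac perp} for the third after verifying $\sum_{\e{w}\in W}\e{w}\in(\Iplacord{\prec})^\perp$. Your explicit case analysis for that last verification is exactly what the paper compresses into Remark~\ref{r shuffle closed} together with the containment $\Iplacord{\prec}\subseteq J$, so the two arguments coincide in substance.
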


\begin{remark}\label{r shuffle closed}
Here is a useful way to rephrase the condition on $W$ in Corollary~\ref{c mysterious}.

Let $\U/J$ be the quotient of  $\U$ by the relations \eqref{e plac nat rel knuth3}--\eqref{e plac nat rel knuth4b}, \eqref{e kron rel far commute}, and
\begin{alignat}{3}
&\e{xz = zx}, \qquad && x \in \A_\varnothing,z \in \A_\crcempty.  \label{e rel shuffle closed 0}
\end{alignat}
(We do not require  $x < z$ in \eqref{e rel shuffle closed 0}.)
Note that $\U/J$ is a monoid algebra and  $J$ is homogeneous,  hence
the space  $J^\perp$ has  $\ZZ$-basis given by  $\sum_{\e{w} \in C} \e{w}$, as  $C$ ranges over equivalence classes of colored words  modulo $J$.
It is easy to show that
\begin{align*}
{\small \big(\text{$W$ is shuffle closed and $\textstyle \sum_{\e{w} \in W} \e{w} \in (\Ikron)^\perp$}\big) \iff \text{$W$ is a union of equivalence classes mod $J$.}}
\end{align*}
\end{remark}

\begin{proof}[Proof of Corollary~\ref{c mysterious}]
First note that by Remark \ref{r shuffle closed} and the fact $\Iplacord{\prec} \subseteq J$,  it follows that \mbox{$\sum_{\e{w} \in W} \e{w} \in (\Iplacord{\prec})^\perp$} as well.
Hence the third equality in the statement is by Corollary~\ref{c Iplac perp}.  The first equality is by Corollary~\ref{c Ikron perp} and the second is by
Proposition~\ref{p word conversion QDes}.
\end{proof}


In light of Corollary~\ref{c mysterious}, it is natural to conjecture that the sets  of tableaux appearing in \eqref{e c myst1} and \eqref{e c myst4}
can be obtained from each other by conversion.  Theorem \ref{cj mysterious} shows that this is indeed the case;
this is a generalization of \cite[Lemma 3.1]{Ricky} (which is Corollary~\ref{c intro main}) and its proof uses conversion and is similar to that of \cite[Lemma 3.1]{Ricky}.
We do not know how to prove this using noncommutative super Schur function machinery since this machinery does not easily lend itself
to bijective results.
We suspect that there is something deeper lurking here that would connect the $\mathfrak{J}^\lessdot_\nu(\mathbf{u})$ for different orders  $\lessdot$ (considered in different algebras depending on  $\lessdot$) via something like conversion.

\begin{definition}[Conversion]
\label{d conversion}
Suppose $\lessdot$ and $\lessdot'$ are shuffle orders on $\A$ that are identical except for the order of $\noe{b}$ and $\noe{\crc{a}}$, say $\noe{b} \lessdot \noe{\crc{a}}$ but $\noe{\crc{a}} \lessdot' \noe{b}$. Then there is a natural bijection between  $\lessdot$-colored tableaux and  $\lessdot'$-colored tableaux via a process called \emph{conversion}, introduced by Haiman \cite{Hmixed} (see also \cite{BSSswitching}, \cite[\textsection2.3]{Ricky}, \cite[\textsection2.6]{BHook}).

Let $T_\lessdot$ be a  $\lessdot$-colored tableau, so that the boxes containing $\noe{b}$ or $\noe{\crc{a}}$ form a ribbon. For each component of the ribbon, there is a unique way to refill it with the same number of $\noe{b}$'s and $\noe{\crc{a}}$'s in a way that is compatible with $\noe{\crc{a}}\lessdot' \noe{b}$. (If the northeast corner contains $\noe{b}$, then shift each $\noe{b}$ to the bottom of its column; if the northeast corner contains $\noe{\crc{a}}$, shift each $\noe{b}$ once to the right within its row. See Figure~\ref{fig-ribbon}.)
Replacing each component in this manner yields a  $\lessdot'$-colored tableau denoted $\convert{\lessdot \to \lessdot'}{T_\lessdot}$.

\newcommand{\yts}{\ytableaushort}
\ytableausetup{boxsize=1.3em}
\begin{figure}
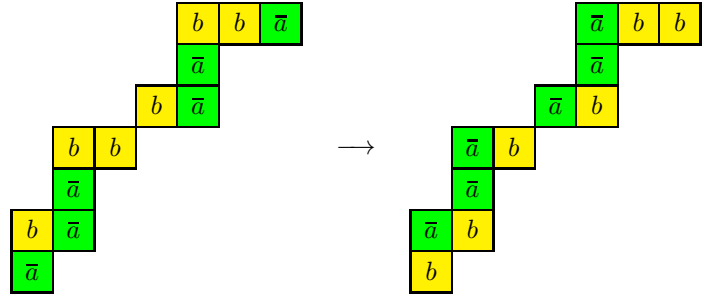

\[{\footnotesize
	\yts{\none\none\none\none {*(yellow)\noe{b}}{*(yellow)\noe{b}}{*(green)\noe{\crc{a}}},
		\none\none\none\none {*(green)\noe{\crc{a}}},
		\none\none\none {*(yellow)\noe{b}} {*(green)\noe{\crc{a}}},
		\none {*(yellow)\noe{b}}{*(yellow)\noe{b}},
		\none {*(green)\noe{\crc{a}}},
		{*(yellow)\noe{b}}{*(green)\noe{\crc{a}}},
		{*(green)\noe{\crc{a}}}}\quad  \xrightarrow{\hspace{3mm}} \quad
	\yts{\none\none\none\none {*(green)\noe{\crc{a}}}{*(yellow)\noe{b}}{*(yellow)\noe{b}},
		\none\none\none\none {*(green)\noe{\crc{a}}},
		\none\none\none {*(green)\noe{\crc{a}}}{*(yellow)\noe{b}},
		\none {*(green)\noe{\crc{a}}}{*(yellow)\noe{b}},
		\none {*(green)\noe{\crc{a}}},
		{*(green)\noe{\crc{a}}}{*(yellow)\noe{b}},
		{*(yellow)\noe{b}}}
}	
\]
	\caption{\label{fig-ribbon}Conversion from $\noe{b} \lessdot \noe{\protect\crc{a}}$ to $\noe{\protect\crc{a}} \lessdot' \noe{b}$. The ribbon above has two connected components with six boxes each.}
\end{figure}

Given any two shuffle orders $\lessdot$ and $\lessdot'$ on $\A$, one can be obtained from the other by repeatedly swapping the order of a consecutive unbarred letter and barred letter. Therefore, one can iterate the process above to convert any  $\lessdot$-colored tableau $T_\lessdot$ to a  $\lessdot'$-colored tableau denoted $\convert{\lessdot \to \lessdot'}{T_\lessdot}$.
The resulting tableau is well defined, i.e., it does not depend on the sequence of swaps
used to go from $\lessdot$ to $\lessdot'$ (see \cite{BSSswitching}).
\end{definition}

\begin{theorem}
\label{cj mysterious}
Suppose that a set of colored words $W$ is shuffle closed and
satisfies $\sum_{\e{w} \in W} \e{w} \in (\Ikron)^\perp$.
Let $\CT_\nu^\prec(W)$ and $\CT_\nu^<(W)$ denote the
sets of colored tableaux \newline
$\{T \in \CT_\nu^\prec \mid \sqread(T) \in W\}$
and $\{T \in \CT_\nu^< \mid \sqread(T) \in W\}$,
respectively.
Then
\begin{align}
\{\convert{\prec \to <}{T}\mid T \in \CT_\nu^\prec(W)\} = \CT_\nu^<(W).
\end{align}
\end{theorem}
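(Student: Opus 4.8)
The plan is to prove that the conversion bijection $\convert{\prec\to<}{\cdot}\colon\CT_\nu^\prec\to\CT_\nu^<$ of Definition~\ref{d conversion} restricts to a bijection between $\CT_\nu^\prec(W)$ and $\CT_\nu^<(W)$. Since $\convert{\prec\to<}{\cdot}$ is already a bijection on the full sets of shape-$\nu$ colored tableaux, and Corollary~\ref{c mysterious} already supplies the matching cardinality $|\CT_\nu^\prec(W)|=|\CT_\nu^<(W)|$, it is enough to show that it maps $\CT_\nu^\prec(W)$ \emph{into} $\CT_\nu^<(W)$; the inclusion is then forced to be an equality by injectivity. So everything reduces to the implication: for $T\in\CT_\nu^\prec$, if $\sqread(T)\in W$ then $\sqread(\convert{\prec\to<}{T})\in W$.

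Next I would isolate what is used about $W$. By Remark~\ref{r shuffle closed}, the hypotheses that $W$ is shuffle closed and $\sum_{\e{w}\in W}\e{w}\in(\Ikron)^\perp$ together say precisely that $W$ is a union of equivalence classes modulo the ideal $J$ of that remark; since $\Iplacord{\prec}\subseteq J$ and $J$ also contains the shuffle commutations \eqref{e rel shuffle closed 0}, the set $W$ is in particular closed under $\prec$-colored plactic moves and under transposing an adjacent barred/unbarred pair of letters. Thus it suffices to link $\sqread(T)$ to $\sqread(\convert{\prec\to<}{T})$ by a chain of such moves. I would reach this by peeling conversion apart: fix a chain of shuffle orders $\prec=\lessdot_0,\lessdot_1,\dots,\lessdot_m={<}$ in which each $\lessdot_{j+1}$ differs from $\lessdot_j$ by swapping one unbarred letter $b$ with one adjacent barred letter $\crc{a}$, use well-definedness of iterated conversion (\cite{BSSswitching}) to write $\convert{\prec\to<}{\cdot}$ as the composite of the one-step maps $\convert{\lessdot_j\to\lessdot_{j+1}}{\cdot}$, and prove the one-step statement: \emph{for a single adjacent swap $\lessdot\to\lessdot'$ of an unbarred $b$ and a barred $\crc{a}$, and any $\lessdot$-colored tableau $T$, the words $\sqread(T)$ and $\sqread(\convert{\lessdot\to\lessdot'}{T})$ are equivalent modulo $J$}. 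This is a coherent plan because both $J$ and the purely geometric recipe defining $\sqread$ -- diagonals taken from the southwest, in each diagonal the unbarred entries read $\nwarr$ then the barred entries read $\searr$ -- are insensitive to the order, so each one-step claim concerns the \emph{fixed} ideal $J$, and the claims compose to $\sqread(T)\equiv\sqread(\convert{\prec\to<}{T})\pmod J$.

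The one-step claim is the heart of the proof, and I would establish it by a direct analysis of the ribbon $R$ of boxes of $T$ holding $b$ or $\crc{a}$. Conversion treats each connected component $C$ of $R$ separately, sliding every $b$ to the bottom of its column within $C$ when the northeast corner of $C$ holds $b$, and sliding every $b$ one step to the right within its row when that corner holds $\crc{a}$; no box outside $R$ is altered. Because the boxes of $C$ are spread over several diagonals, a single such slide turns, in the reading word, one $b$ into a $\crc{a}$ in one diagonal's block and the symmetric $\crc{a}$ into a $b$ in another, nearby diagonal's block, while the intermediate diagonals are left unchanged; the work is to show that this net rearrangement can be effected by moves in $J$ -- shuffle commutations \eqref{e rel shuffle closed 0} between the displaced letter and barred letters it passes, far-commutations \eqref{e kron rel far commute}, and the repeated-letter relations \eqref{e plac nat rel knuth3}--\eqref{e plac nat rel knuth4b} -- and, crucially, that no two $b$'s and no two $\crc{a}$'s are ever transposed. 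Carrying out this bookkeeping -- matching the $\sqread$-order along a ribbon component before and after the slide, identifying exactly which entries the displaced letters sweep past (here the no-$2\times2$ structure of ribbons and the inequalities forced by $T$ being a colored tableau are what make the needed $J$-relations applicable), and handling the two cases -- is the step I expect to be the main obstacle. It is precisely the computation carried out, for $W=\CYW_{\lambda,d}$, in \cite[Lemma~3.1 and Remark~3.3]{Ricky}, recast in terms of $\sqread$ and an arbitrary shuffle-closed $W$.

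Putting it together: for $T\in\CT_\nu^\prec(W)$, the chain of one-step claims gives $\sqread(\convert{\prec\to<}{T})\equiv\sqread(T)\pmod J$, and since $W$ is a union of $J$-classes containing $\sqread(T)$, also $\sqread(\convert{\prec\to<}{T})\in W$, i.e.\ $\convert{\prec\to<}{T}\in\CT_\nu^<(W)$. Combined with the cardinality equality of Corollary~\ref{c mysterious} and the injectivity of conversion, this yields $\{\convert{\prec\to<}{T}\mid T\in\CT_\nu^\prec(W)\}=\CT_\nu^<(W)$, as claimed.
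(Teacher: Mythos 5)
Your overall architecture is the same as the paper's: reduce to one covering-relation swap $\lessdot\to\lessdot'$ at a time, show the reading words before and after that single conversion step are congruent modulo an ideal whose classes $W$ is a union of, and conclude that membership in $W$ is preserved; the set equality then follows from injectivity of conversion plus the cardinality match. (Your appeal to Corollary~\ref{c mysterious} to upgrade the inclusion to an equality is legitimate and non-circular, since that corollary is proved independently.) A small difference in framing is that the paper works modulo $\Ikron$ rather than modulo $J$: it first uses shuffle-closedness of $W$ to replace $\sqread(\convert{\prec\to\lessdot}{T})$ by the separated word $\e{w_\varnothing^\lessdot}\e{w_\crcempty^\lessdot}$, and then only has to prove the two congruences $\e{w_\varnothing^\prec}\equiv\e{w_\varnothing^<}$ and $\e{w_\crcempty^\prec}\equiv\e{w_\crcempty^<}$. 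Since $J$ contains the shuffle relations \eqref{e rel shuffle closed 0}, your mod-$J$ formulation reduces to the same two congruences, so the framing is sound.

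The genuine gap is that the one-step claim---which is essentially the entire content of the theorem---is described but not proved. You flag it yourself as ``the main obstacle'' and then defer it to bookkeeping and to \cite[Lemma~3.1]{Ricky}; but that reference is the special case $W=\CYW_{\lambda,d}$ (it is Corollary~\ref{c intro main}), proved by converting tableaux rather than by establishing the word congruence you need, and the present theorem is stated precisely as a generalization of it. Concretely, what must be verified is this: after separating barred from unbarred letters, the unbarred subword of $\sqread(\convert{\prec\to\lessdot}{T})$ is a product over diagonals of decreasing words $\e d^i$ or $\e{\hat d}^i$ (the diagonal word with or without the letter $\e b$), and a single conversion step moves certain $\e b$'s either one box to the right within a row or to the bottom of a column; one must show the two resulting products of diagonal words are congruent. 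This requires (a) that $\e b$ commutes with a decreasing word containing $\e b$ modulo the ideal, (b) the congruences $\e d^i\equiv\e{\hat d}^i\e b$ and $\e d^j\equiv\e b\,\e{\hat d}^j$, which rest on the tableau inequalities forcing $\e{b-1}$ and $\e{b+1}$ to be absent from the diagonals at the appropriate ends of the ribbon component (so that the far commutation relations \eqref{e kron rel far commute} apply), and (c) a separate treatment of the ``shift right'' and ``shift down'' configurations, the latter involving a chain of several such rewritings across the indices $i\le j<k\le l$. None of this is routine shuffling, and until it is written out the proof is incomplete.
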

\begin{proof}
Throughout the proof, we write $f\equiv g$ to mean that $f$ and $g$ are congruent  modulo $\Ikron$ (for $f,g \in \U$).

Fix some $T \in \CT_\nu^\prec(W)$, and for any shuffle order $\lessdot$ on $\A$, let $\e{w}^\lessdot = \sqread(\convert{\prec \to \lessdot}{T})$.
Let $\e{w_\varnothing^\lessdot}$ (resp. $\e{w_\crcempty^\lessdot}$) be the subsequence of $\e{w}^\lessdot$ consisting of
the unbarred (resp. barred) letters of $\e{w}^\lessdot$.
We need to show that  $\e{w}^<$ lies in  $W$.
Since $W$ is shuffle closed, we can instead show
$\e{w_\varnothing^<} \e{w_\crcempty^<} \in W$, and we know that
$\e{w_\varnothing^\prec} \e{w_\crcempty^\prec} \in W$.
Hence the result follows if we show
$\e{w_\varnothing^\prec} \equiv \e{w_\varnothing^<}$ and $\e{w_\crcempty^\prec} \equiv \e{w_\crcempty^<}$.  We prove the first congruence below; the other is similar.

We can convert from $\prec$ to $<$ by swapping the order of $\noe{\crc{1}}$ with $\noe{N}, \noe{N-1}, \dots, \noe{2}$,
then swapping $\noe{\crc{2}}$ with $\noe{N}, \noe{N-1}, \dots, \noe{3}$, and so forth. Each step performs a switch between consecutive letters of the form
\[\cdots \lessdot \noe{b-1} \lessdot \noe{b} \lessdot \noe{\crc a} \lessdot \noe{b+1} \lessdot \cdots \quad \to \quad \cdots \lessdot' \noe{b-1} \lessdot'  \noe{\crc a} \lessdot' \noe{b} \lessdot' \noe{b+1} \lessdot' \cdots.\]
Denote by $\e{d}^i$ the decreasing word consisting of the unbarred letters in the  $i$-th diagonal
of  $\convert{\prec \to \lessdot}{T}$,
along with $\e{b}$ (if it does not already appear);
denote by $\e{\hat{d}}^i$ the word obtained from $\e{d}^i$ by removing $\e{b}$. Thus $\e w_\varnothing^\lessdot$ and
$\e w_\varnothing^{\lessdot'}$ are products over all $i$ of either $\e{d}^i$ or $\e{\hat{d}}^i$.
An easy calculation shows that since $\e d^i$ contains $\e b$ and is decreasing, $\e b$ and $\e d^i$ commute in the (ordinary) plactic algebra
(defined in \textsection\ref{ss standardization});
hence they commute in $\U/\Ikron$ as well.

In the conversion of   $\convert{\prec \to \lessdot}{T}$ to
$\convert{\prec \to \lessdot'}{T}$, first consider the occurrences of $\e b$ that get shifted one box to the right.
We can handle such $\e b$ one row at a time from top to bottom, and each step looks as follows:
\newcommand{\yts}{\ytableaushort}
\ytableausetup{boxsize=1.3em}
\[{\tiny
	\yts{{*(yellow)\noe{b}}{*(yellow)\noe{b}}{*(yellow)\noe{b}}{*(yellow)\noe{b}} {*(green)\noe{\crc{a}}}} \quad \xrightarrow{\hspace{3mm}} \quad
	\yts{{*(green)\noe{\crc{a}}}{*(yellow)\noe{b}}{*(yellow)\noe{b}}{*(yellow)\noe{b}}{*(yellow)\noe{b}}}
}	
\]
We need to show that $\e d^i \e d^{i+1} \cdots \e d^{j-1} \e{\hat{d}}^j \equiv \e{\hat d}^i \e d^{i+1} \cdots \e d^{j-1} \e{d}^j$, where the boxes shown above lie in diagonals $i$ through $j$. Since the $\e b$ appearing in $\e d^i$ is the leftmost $\e b$ in its row, $\e d^i$ cannot contain $\e{b-1}$, so the
far commutation relations \eqref{e kron rel far commute} imply $\e d^i \equiv \e{\hat d}^i \e b$. Similarly, $\e d^j$ cannot contain $\e{b+1}$, so $\e d^j \equiv \e b \e{\hat d}^j$. Thus
\[\e d^i \e d^{i+1} \cdots \e d^{j-1} \e{\hat{d}}^j \equiv \e{\hat d}^i \e b \e d^{i+1} \cdots \e d^{j-1} \e{\hat d}^j  \equiv \e{\hat d}^i \e d^{i+1} \cdots \e d^{j-1}\e b \e{\hat d}^j \equiv  \e{\hat d}^i \e d^{i+1} \cdots \e d^{j-1} \e{d}^j.\]

Now consider, in the conversion of   $\convert{\prec \to \lessdot}{T}$ to
$\convert{\prec \to \lessdot'}{T}$, the occurrences of $\e b$ that get shifted down.
If we handle such $\e{b}$ one at a time from top to bottom, then each step looks as follows:
\ytableausetup{boxsize=1.3em}
\[{\tiny
	\yts{\none\none\none\none {*(yellow)\noe{b}}{*(yellow)\noe{b}}{*(yellow)\noe{b}}{*(yellow)\noe{b}}{*(yellow)\noe{b}},
		\none\none\none\none {*(green)\noe{\crc{a}}},
		\none\none\none\none {*(green){{\raisebox{-0.9mm}{$\vdots$}}}},
		\none\none\none\none {*(green)\noe{\crc{a}}},
		{*(yellow)\noe{b}}{*(yellow)\noe{b}}{*(yellow)\noe{b}}{*(yellow)\noe{b}} {*(green)\noe{\crc{a}}}} \quad \xrightarrow{\hspace{3mm}} \quad
	\yts{\none\none\none\none {*(green)\noe{\crc{a}}}{*(yellow)\noe{b}}{*(yellow)\noe{b}}{*(yellow)\noe{b}}{*(yellow)\noe{b}},
		\none\none\none\none {*(green)\noe{\crc{a}}},
		\none\none\none\none {*(green){{\raisebox{-0.9mm}{$\vdots$}}}},
		\none\none\none\none {*(green)\noe{\crc{a}}},
		{*(yellow)\noe{b}}{*(yellow)\noe{b}}{*(yellow)\noe{b}}{*(yellow)\noe{b}}{*(yellow)\noe{b}}}
}	
\]
Let $i \le j < k \le l$ be the indices such that the boxes shown above lie in diagonals $i$ through  $l$, and  $j$ and  $k$ are the diagonals of the
boxes whose entries change upon conversion.
We need to show that
\[\e d^i\cdots \e d^{j-1} \e{\hat d}^j\cdots \e{\hat d}^{k-1} \e d^k \cdots \e d^\ell \equiv
\e d^i\cdots \e{d}^j \e{\hat d}^{j+1}\cdots \e{\hat d}^k \e d^{k+1}\cdots \e d^\ell.\]
As in the previous case, $\e d^i \equiv \e{\hat d}^i \e b$, $\e d^j \equiv \e b \e{\hat d}^j$, $\e d^k \equiv \e{\hat d}^k \e b$, and $\e d^\ell \equiv \e b \e{\hat d}^\ell$. Moreover, the diagonals $j+1, \dots, k-1$ cannot contain $\e{b-1}$ or $\e{b+1}$, so $\e b$ commutes with $\e{\hat d}^{j+1}, \dots, \e{\hat d}^{k-1}$ in  $\U/\Ikron$.
Thus, assuming $i < j < k < l$ (the other cases are similar and easier), we have
\begin{align*}
\e d^i\cdots \e d^{j-1} \e{\hat d}^j\cdots \e{\hat d}^{k-1} \e d^k \cdots \e d^\ell &\equiv
\e {\hat d}^i\e b \e d^{i+1}\cdots \e d^{j-1} \e{\hat d}^j\cdots \e{\hat d}^{k-1} \e d^k \cdots \e d^{\ell-1} \e b\e{\hat d}^\ell\\
&\equiv \e{\hat d}^i\e d^{i+1}\cdots \e d^{j-1} \e b\e{\hat d}^j \cdots \e{\hat d}^{k-1} \e b\e d^k \cdots \e d^{\ell-1} \e{\hat d}^\ell\\
&\equiv \e{\hat d}^i\e d^{i+1}\cdots \e d^{j-1} \e{d}^j \e{\hat d^{j+1}}\cdots \e{\hat d}^{k-1} \e b\e{\hat d}^k \e b \e d^{k+1} \cdots \e d^{\ell-1} \e{\hat d}^\ell\\
&\equiv \e{\hat d}^i\e b\e d^{i+1}\cdots\e{d}^j \e{\hat d^{j+1}}\cdots \e{\hat d}^k \e d^{k+1} \cdots \e d^{\ell-1} \e b\e{\hat d}^\ell\\
&\equiv \e d^i\cdots \e{d}^j \e{\hat d^{j+1}}\cdots \e{\hat d}^k  \e d^{k+1} \cdots\e{d}^\ell,
\end{align*}
as desired.
\end{proof}

\begin{example}
As mentioned above, Theorem \ref{cj mysterious} is a generalization of Corollary~\ref{c intro main};
this is clear since the sets  $\CYW_{\lambda, d}$ are shuffle closed and satisfy  $\sum_{\e{w} \in \CYW_{\lambda, d}} \e{w} \in \Ikron^\perp$ (Proposition \ref{p CYW in kron perp}).
To see that Theorem \ref{cj mysterious} is indeed an interesting generalization,
consider the set  $\CYW_{(3,2), 2}$ depicted in Figure \ref{f CYW 32}.
The subset  $W$ of colored words in the first column is shuffle closed and satisfies $\sum_{\e{w} \in W} \e{w} \in \Ikron^\perp$;
the same goes for the colored words in the second and third columns, as well as the colored
words in the fourth and fifth columns.
None of these subsets is of the form  $\CYW_{\lambda,d}$.
\end{example}



\section{Some proofs} 
\label{s some proofs}
We now fill in the proofs of Proposition \ref{p intro sum of kron}, Proposition \ref{p intro word conversion QDes}, Corollary~\ref{c intro main}, and Theorem \ref{t J plac} in
\textsection\ref{ss proof1}, \textsection\ref{ss proof2}, \textsection\ref{ss proof3}, and \textsection\ref{ss standardization}, respectively.
The only genuinely new content here is the word conversion trick used to prove Proposition \ref{p intro word conversion QDes}.

\subsection{Proof of Proposition \ref{p intro sum of kron}}
\label{ss proof1}

Let  $T$ be a $\prec$-colored tableau and let $T_\varnothing$ (resp.  $T_\crcempty$) denote the subtableau of unbarred (resp. barred) letters of $T$;
let  $T_\crcempty^*$ denote the transpose of  $T_\crcempty$ with all bars removed.
Define  $T^\plainr$ to be the SSYT obtained by placing  $T_\crcempty^*$ above and to the right of  $T_\varnothing$ and then performing jeu de taquin slides to obtain a straight-shape tableau.
For example,
\[\text{if }\, T=
{\tiny\tableau{
1 & 1 & 2 & \crc{2}\\
2& \crc{1} \\
3& \crc{2}\\
\crc{1}
}},\text{ then }\quad
T_\varnothing = {\tiny\tableau{
1 & 1 & 2\\
2 \\
3
}},~
 T_\crcempty^* = {\tiny\tableau{
 &  &  & 1\\
 & 1 & 2\\
\\
2}}
,  \text{ and }\,
T^\plainr = {\tiny\tableau{
1 & 1 & 1 & 1 & 2\\
2 &2 & 2\\
3}}.
\]

Recall that a colored word is Yamanouchi if  $\e{w}^\plainr$ is Yamanouchi,
where $\e{w}^\plainr$ is the ordinary word formed from $\e{w}$ by shuffling the barred letters to the right, reversing this subword of barred letters and removing their bars.

\begin{proposition}\label{p CYT}
For a  $\prec$-colored tableau  $T$, the following are equivalent:
\begin{itemize}
\item $\sqread(T)$ is Yamanouchi,
\item  $T^\plainr$ is superstandard.
\end{itemize}
If these conditions hold, we say that  $T$ is a \emph{$\prec$-colored Yamanouchi tableau}.
\end{proposition}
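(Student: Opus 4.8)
\emph{Plan.} The plan is to reduce the whole statement to a single identity between \emph{ordinary} tableaux: namely $P(\sqread(T)^\plainr) = T^\plainr$, where $P$ denotes the ordinary RSK insertion tableau and $T^\plainr$ is as defined above. Granting this, the proposition is immediate. Indeed, by the definition of colored Yamanouchi words, $\sqread(T)$ is Yamanouchi if and only if the ordinary word $\sqread(T)^\plainr$ is Yamanouchi; and an ordinary word is Yamanouchi precisely when its RSK insertion tableau is the superstandard tableau of its content (the SSYT whose $i$th row consists entirely of $i$'s) — this is the same classical fact already used implicitly in \textsection\ref{ss kronecker} (``Knuth transformations preserve whether an ordinary word is Yamanouchi,'' together with $\Gamma_\lambda$ being the words inserting to $Z_\lambda^\stand$). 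Hence $\sqread(T)$ is Yamanouchi iff $P(\sqread(T)^\plainr) = T^\plainr$ is superstandard. So everything reduces to the identity $P(\sqread(T)^\plainr) = T^\plainr$.

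To prove that identity I would first exploit that $T$ is a $\prec$-colored tableau. Along every diagonal of $T$ the entries weakly increase towards the southeast (this follows from the row and column conditions), so, since all unbarred letters are $\prec$ all barred letters, the unbarred entries of each diagonal form its northwest segment and the barred entries its southeast segment. In particular, applying $\plainr$ to $\sqread(T)$ — which shuffles all barred letters to the right, reverses that subword, and erases bars — yields a word of the form $\e u\, \e v$, where $\e u$ is the sequence of unbarred entries of $T$ listed in the order $\sqread$ visits them, and $\e v$ is the reverse of the sequence of (debarred) barred entries of $T$ in that order. A short bookkeeping argument using the definition of $\sqread$ and of $T_\crcempty^*$ (transpose of the barred part, bars removed) then identifies $\e u$ with the reading word of the straight-shape SSYT $T_\varnothing$ obtained by scanning its diagonals from southwest to northeast, each diagonal from bottom to top, and identifies $\e v$ with the analogous diagonal reading word of the skew SSYT $T_\crcempty^*$.

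Finally I would invoke that such a diagonal reading word of a column-strict (skew) tableau is Knuth-equivalent to its row reading word — in the straight-shape case this is classical, and in general it is exactly the kind of statement established by the arrow-respecting-reading-word theory of \textsection\ref{ss Arrow respecting reading words} — to conclude $\e u \equiv \reading(T_\varnothing)$ and $\e v \equiv \reading(T_\crcempty^*)$, hence $\sqread(T)^\plainr = \e u\, \e v \equiv \reading(T_\varnothing)\, \reading(T_\crcempty^*)$. On the other hand, $T^\plainr$ is by construction the jeu de taquin rectification of the skew tableau formed by placing $T_\crcempty^*$ entirely above and to the right of $T_\varnothing$, whose row reading word is $\reading(T_\varnothing)\, \reading(T_\crcempty^*)$; since rectification agrees with RSK insertion of the reading word, $T^\plainr = P(\reading(T_\varnothing)\, \reading(T_\crcempty^*)) = P(\sqread(T)^\plainr)$, as needed. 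The \emph{main obstacle} is the bookkeeping of the second paragraph: correctly tracking how the reversal in $\plainr$ and the transposition in the definition of $T_\crcempty^*$ interact with the southwest-to-northeast diagonal order used by $\sqread$, so that $\e u$ and $\e v$ really are valid reading words of $T_\varnothing$ and $T_\crcempty^*$ respectively; everything else is a routine application of standard RSK and jeu de taquin facts.
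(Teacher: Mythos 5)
Your proposal is correct and follows essentially the same route as the paper's proof: the paper likewise observes that $\sqread(T)^\plainr$ is a reading word of $T_\varnothing$ followed by a reading word of $T_\crcempty^*$, concludes that its insertion tableau is $T^\plainr$, and invokes the fact that an ordinary word is Yamanouchi iff its insertion tableau is superstandard. You have simply spelled out the diagonal-reading bookkeeping and the rectification step that the paper leaves implicit.
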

\begin{proof}
The ordinary word  $\sqread(T)^\plainr$ consists of a reading word of  $T_\varnothing$ followed by a reading word of  $T_\crcempty^*$.
Hence the insertion tableau of $\sqread(T)^\plainr$ is  $T^\plainr$.  Since an ordinary word is Yamanouchi if and only if it has superstandard insertion tableau, the
result follows.
\end{proof}

Let  $\CYT_{\lambda,d}^\prec(\nu)$ denote the set of  $\prec$-colored Yamanouchi tableau of shape  $\nu$
having  $d$ barred letters and such that  $T^\plainr$ has content  $\lambda$.

\begin{proof}[Proof of Proposition \ref{p intro sum of kron}]
By an easy modification of \cite[Proposition 3.1]{BHook} to the conventions of this paper, 
we obtain the first equality below:
\[g_{\lambda\, \mu(d)\, \nu} + g_{\lambda\, \mu(d-1)\, \nu} = \big|\CYT_{\lambda,d}^\prec(\nu)\big| = \big|\big\{T \in \CT^\prec_\nu \mid \sqread(T) \in \CYW_{\lambda,d} \big\}\big|. \]
The second equality is by Proposition \ref{p CYT}.
Combined with Corollary~\ref{c main plac}, this proves
\[ g_{\lambda\, \mu(d)\, \nu} + g_{\lambda\, \mu(d-1)\, \nu} = \Big(\text{the coefficient of  $s_\nu(\mathbf{x})$ in  $F^\prec_{\CYW_{\lambda, d}}(\mathbf{x})$}\Big), \]
as desired.
\end{proof}


\subsection{Word conversion}
\label{ss proof2}
Here we prove Proposition \ref{p intro word conversion QDes} using an operator called word conversion.

\begin{definition}[Word conversion]
\label{d word conversion}
Let $\lessdot$ and $\lessdot'$ be two shuffle orders on $\A$ that are identical except for the order of $\crc{a}$ and $b$, say $b \lessdot \crc{a}$ and $\crc{a} \lessdot' b$ for some  $a, b \in [N]$.
For a colored word $\e{w = w_1\cdots w_t}$ consisting of $\e{\crc{a}}$'s and $\e{b}$'s, define
$\convert{\lessdot \to \lessdot'}{\e{w}}$ to be the result of cyclically rotating $\e{w}$ once to the right, i.e.
\[
\convert{\lessdot \to \lessdot'}{\e{w}} := \e{w_tw_1w_2\cdots w_{t-1}}.
\]
In general, for any colored word $\e{w}$, $\convert{\lessdot \to \lessdot'}{\e{w}}$ fixes the subword consisting of letters that are not $\e{\crc{a}}$ or $\e{b}$ and rotates each subword consisting of $\e{\crc{a}}$'s and $\e{b}$'s once to the right.
\end{definition}

\begin{example}
Let  $N=3$ and let $\prec$ and  $\prec'$  be the following two shuffle orders on $\A$
\[\arraycolsep=1.7pt\def\arraystretch{1.3}
\begin{array}{llllllllllll}
1 &\prec &2 &\prec &3 &\prec &\crc{1} &\prec &\crc{2} &\prec &\crc{3} \\
1 &\prec' &2 &\prec' &\crc{1} &\prec' &3  &\prec' &\crc{2} &\prec' &\crc{3}.
\end{array}\]
Below are two examples of colored words and the result obtained by applying the word conversion operator.
\[\begin{array}{ll}
\e{w} & \e{\crc{1}\crc{1}33 \crc{1}\crc{1}\crc{1}3\crc{1}} \\
\convert{\prec \to \prec'}{\e{w}} & \e{\crc{1}\crc{1}\crc{1}33 \crc{1}\crc{1}\crc{1}3}\\[2mm]
\e{v} & \e{33\crc{1}\crc{1}3 \crc{1}32\crc{2}1\crc{1}3} \\
\convert{\prec \to \prec'}{\e{v}} & \e{333\crc{1}\crc{1}3 \crc{1} 2\crc{2}13\crc{1}}
\end{array}\]
\end{example}

Word conversion is defined essentially to make the following property true.  Its proof is straightforward.
\begin{lemma}
\label{l word conversion}
Word conversion respects descent sets in the following sense:
\begin{align}
  \Des_{\lessdot'}( \convert{\lessdot \to \lessdot'}{\e{w}}) = \Des_{\lessdot}(\e{w}),
\end{align}
for shuffle orders $\lessdot$ and $\lessdot'$ on $\A$ as in Definition~\ref{d word conversion}.
\end{lemma}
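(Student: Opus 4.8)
The plan is to verify, position by position, that a given $i \in [t-1]$ lies in $\Des_\lessdot(\e{w})$ precisely when it lies in $\Des_{\lessdot'}(\e{v})$, where $\e{v} := \convert{\lessdot \to \lessdot'}{\e{w}}$. The first thing I would record is an elementary observation about the two orders: since $\lessdot$ and $\lessdot'$ are shuffle orders agreeing except for the relative position of $b$ and $\crc{a}$, these two letters must be \emph{adjacent} in both (a letter wedged strictly between $b$ and $\crc{a}$ in $\lessdot$ would force a contradiction in $\lessdot'$). Hence for every $y \notin \{b, \crc{a}\}$ the statements $y \lessdot b$, $y \lessdot \crc{a}$, $y \lessdot' b$, $y \lessdot' \crc{a}$ all have the same truth value, and likewise with $\gtrdot$ in place of $\lessdot$: comparing a letter outside $\{b, \crc{a}\}$ to one inside is insensitive both to which of $b, \crc{a}$ it is and to which order is used. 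Together with the fact that the ``equal barred letters'' clause of $\Des$ does not see the order at all, this disposes of every position $i$ at which at most one of $\e{w_i}, \e{w_{i+1}}$ lies in $\{b, \crc{a}\}$: there word conversion fixes the entry (or entries) outside $\{b, \crc{a}\}$ and at worst replaces the one inside $\{b, \crc{a}\}$ by the other letter of $\{b, \crc{a}\}$, so the $\lessdot$-descent test on $\e{w}$ and the $\lessdot'$-descent test on $\e{v}$ return the same answer (the ``equal barred'' alternative being vacuous when one letter lies outside $\{b, \crc{a}\}$).

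The substantive case is when both $\e{w_i}$ and $\e{w_{i+1}}$ lie in $\{b, \crc{a}\}$. Here I would set up notation: let $p_1 < \cdots < p_m$ be the positions of $\e{w}$ holding a letter of $\{b, \crc{a}\}$ and put $c_k := \e{w_{p_k}}$, so that, by the definition of word conversion, $\e{v_{p_k}} = c_{k-1}$ for $k \ge 2$ and $\e{v_{p_1}} = c_m$, while $\e{v}$ agrees with $\e{w}$ elsewhere. Such an $i$ must have $i = p_k$ and $i+1 = p_{k+1}$ for some $1 \le k \le m-1$. Inspecting the four patterns $bb$, $b\crc{a}$, $\crc{a}b$, $\crc{a}\crc{a}$ against the definition of $\Des$ shows that under $\lessdot$ (where $b \lessdot \crc{a}$) a length-two word in $\{b, \crc{a}\}$ is a descent exactly when its \emph{first} letter is $\crc{a}$, whereas under $\lessdot'$ (where $\crc{a} \lessdot' b$) it is a descent exactly when its \emph{second} letter is $\crc{a}$. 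Therefore $p_k \in \Des_\lessdot(\e{w})$ iff $c_k = \crc{a}$, while the pair $\e{v_{p_k}}\,\e{v_{p_{k+1}}} = c_{k-1}\,c_k$ (with $c_0 := c_m$) makes $p_k \in \Des_{\lessdot'}(\e{v})$ iff $c_k = \crc{a}$; the two conditions coincide.

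Assembling the cases gives $\Des_{\lessdot'}(\e{v}) = \Des_\lessdot(\e{w})$, which is the claim. I do not anticipate any genuine difficulty here. The one point that needs care is lining up, \emph{via} the single rightward rotation, the ``descent depends on the first letter'' behaviour under $\lessdot$ with the ``descent depends on the second letter'' behaviour under $\lessdot'$ --- the rotation is exactly what makes these match --- and beyond that the argument is just the adjacency observation about $b$ and $\crc{a}$. In effect the lemma records that word conversion was defined so as to make this identity hold.
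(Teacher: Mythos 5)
Your proof is correct; the paper omits the argument entirely (``Its proof is straightforward''), and your position-by-position check --- letters outside $\{b,\crc{a}\}$ compare the same way to $b$ as to $\crc{a}$ and the same way under $\lessdot$ as under $\lessdot'$, while for an adjacent pair inside $\{b,\crc{a}\}$ the descent test reads the \emph{first} letter under $\lessdot$ and the \emph{second} under $\lessdot'$, which the rightward rotation exactly aligns --- is precisely the intended verification. One small caveat: the paper's operator rotates each \emph{maximal contiguous run} of $\{b,\crc{a}\}$-letters separately rather than the full scattered subsequence $c_1\cdots c_m$ as your indexing $\e{v_{p_k}}=c_{k-1}$, $\e{v_{p_1}}=c_m$ suggests, but this does not affect your argument, since the only facts you use --- positions outside $\{b,\crc{a}\}$ are fixed, positions inside $\{b,\crc{a}\}$ stay inside, and $\e{v_{i+1}}=\e{w_i}$ whenever $i$ and $i+1$ are both $\{b,\crc{a}\}$-positions (such positions lie in the same run) --- hold under either reading.
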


Recall that a set of colored words  $W$ is shuffle closed
 if for every $\e{w} \in W$, any word obtained from  $\e{w}$ by swapping a barred letter with an adjacent unbarred letter also lies in  $W$.
%
Proposition \ref{p intro word conversion QDes} is a consequence of the following more general result.
\begin{proposition} \label{p word conversion QDes}
For any  shuffle closed set of colored words  $W$,  $F^\prec_W(\mathbf{x}) = F^<_W(\mathbf{x})$.
\end{proposition}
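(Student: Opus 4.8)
The plan is to reduce Proposition~\ref{p word conversion QDes} to a one-step statement and then invoke Lemma~\ref{l word conversion}. By Definition~\ref{d conversion}, the shuffle orders $\prec$ and $<$ are connected by a finite chain $\prec = \lessdot_0, \lessdot_1, \dots, \lessdot_m = {<}$ in which each $\lessdot_r$ differs from $\lessdot_{r-1}$ only by swapping the relative order of one barred letter $\crc{a}$ and one consecutive unbarred letter $b$. So it suffices to prove: \emph{if $\lessdot$ and $\lessdot'$ are shuffle orders differing only in that $b \lessdot \crc{a}$ while $\crc{a} \lessdot' b$, then $F^\lessdot_W(\mathbf{x}) = F^{\lessdot'}_W(\mathbf{x})$ for every shuffle closed set of colored words $W$.} Applying this along the chain gives $F^\prec_W(\mathbf{x}) = F^<_W(\mathbf{x})$.

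Fix such a pair $\lessdot, \lessdot'$ and let $\phi := \convert{\lessdot \to \lessdot'}{\cdot}$ be the word conversion operator of Definition~\ref{d word conversion}. The crux is that $\phi$ restricts to a bijection $W \to W$. Since $\phi$ only rearranges letters within each maximal factor of a word consisting of $\crc{a}$'s and $b$'s, and within such a factor the barred letters form a constant run of $\crc{a}$'s and the unbarred letters a constant run of $b$'s — neither of which is altered by a cyclic rotation — the words $\e{w}$ and $\phi(\e{w})$ have the same subword of barred letters and the same subword of unbarred letters. Any two colored words sharing these two subwords differ by a sequence of swaps of adjacent barred/unbarred pairs; hence $\e{w} \in W$ implies $\phi(\e{w}) \in W$, as $W$ is shuffle closed. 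Since $\phi$ is injective and length-preserving and $W$ has only finitely many words of each length, $\phi$ is a bijection from $W$ onto $W$.

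Lemma~\ref{l word conversion} now gives $\Des_{\lessdot'}(\phi(\e{w})) = \Des_\lessdot(\e{w})$ for every colored word $\e{w}$, so re-indexing the defining sum of $F^{\lessdot'}_W$ by the bijection $\phi$ yields
\[
F^{\lessdot'}_W(\mathbf{x}) = \sum_{\e{w} \in W} Q_{\Des_{\lessdot'}(\e{w})}(\mathbf{x}) = \sum_{\e{w} \in W} Q_{\Des_{\lessdot'}(\phi(\e{w}))}(\mathbf{x}) = \sum_{\e{w} \in W} Q_{\Des_{\lessdot}(\e{w})}(\mathbf{x}) = F^{\lessdot}_W(\mathbf{x}),
\]
which establishes the one-step statement and hence the proposition. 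The substantive point is the second paragraph — that word conversion cannot carry a word out of $W$ — and this is precisely where the shuffle closed hypothesis is used; granting Lemma~\ref{l word conversion}, everything else is routine. (One could instead avoid word conversion altogether: a shuffle closed $W$ is a disjoint union of sets $C$, each consisting of all interleavings of a fixed barred word and a fixed unbarred word, and for a single such $C$ one checks directly — by expanding each $Q_{\Des_\lessdot(\e{w})}$ over weakly increasing integer sequences and grouping equal-valued positions into maximal blocks, each forced to be a $\lessdot$-sorted multiset of letters with no repeated barred letter — that the coefficient of every monomial of $F^\lessdot_C(\mathbf{x})$ is independent of $\lessdot$.)
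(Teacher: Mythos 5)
Your proof is correct and follows essentially the same route as the paper: reduce to a single covering-relation swap, observe that the word conversion operator restricts to a bijection of the shuffle closed set $W$, and re-index the sum using Lemma~\ref{l word conversion}. The only difference is that you spell out in detail why conversion preserves membership in $W$ (the paper merely asserts that conversion is an involution on any shuffle closed set), which is a worthwhile clarification but not a different argument.
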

\begin{proof}
Let $\lessdot$ and $\lessdot'$ be two shuffle orders on $\A$ that differ by swapping a single covering
relation (as in Definition~\ref{d word conversion}). 
The word conversion operator $\e{w} \mapsto \convert{\lessdot \to \lessdot'}{\e{w}}$ is an involution on colored words.
In fact, it is an involution on any shuffle closed set.  Together with Lemma \ref{l word conversion}, this yields
\begin{align*}
F^\prec_W(\mathbf{x}) = \sum_{\e{w} \in W} Q_{\Des_{\lessdot'}(\e{w})}(\mathbf{x}) =
\sum_{\e{w} \in W} Q_{\Des_\lessdot(\e{w})}(\mathbf{x})  = F^<_W(\mathbf{x}).
\end{align*}
Applying this repeatedly, converting from the big bar order  $\prec$ to the natural order  $<$ by swapping one covering relation at a time, gives the result.
\end{proof}

\subsection{Proof of Corollary~\ref{c intro main}}
\label{ss proof3}

Aside from the proof of the main theorem (Theorem \ref{t J intro}),
the following fact is all that remains to prove Corollary~\ref{c intro main}.
\begin{proposition}\label{p CYW in kron perp}
There holds $\sum_{\e{w} \in \CYW_{\lambda, d}} \e{w} \in \Ikron^\perp$.
\end{proposition}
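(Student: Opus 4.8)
The plan is to reuse the device from the proof of Corollary~\ref{c intro main} sketched earlier and packaged in Remark~\ref{r shuffle closed}: replace $\Ikron$ by the larger, combinatorially transparent ideal $J$ of that remark --- for which $\U/J$ is the monoid algebra on colored words modulo relations \eqref{e plac nat rel knuth3}--\eqref{e plac nat rel knuth4b}, \eqref{e kron rel far commute}, and the unrestricted shuffle relation $\e{xz}=\e{zx}$ for $x\in\A_\varnothing$, $z\in\A_\crcempty$ --- and show directly that $\CYW_{\lambda,d}$ is a union of $J$-equivalence classes of colored words.

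First I would verify $J\supseteq\Ikron$, so that $J^\perp\subseteq\Ikron^\perp$. The only generator of $\Ikron$ not already among the generators of $J$ is the rotation relation \eqref{e kron rel rotate12}; but when $x=y\myDownarrow=z\myDownarrow\myDownarrow$ the letters $x$ and $z$ have a common bar type opposite to that of $y$, so the shuffle relation yields $\e{xzy}=\e{yxz}$ and $\e{zxy}=\e{yzx}$ in $\U/J$, hence $\e{(xz-zx)y}=\e{y(xz-zx)}$ there. Since $\U/J$ is a monoid algebra and $J$ is homogeneous, $J^\perp$ has $\ZZ$-basis $\big\{\sum_{\e{w}\in C}\e{w}\big\}$ with $C$ ranging over $J$-classes of colored words of each fixed length; so it remains to show that $\CYW_{\lambda,d}$ is a union of $J$-classes, i.e.\ that applying a single defining relation of $J$ to $\e{w}$ preserves membership in $\CYW_{\lambda,d}$.

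Membership in $\CYW_{\lambda,d}$ depends only on the number of barred letters of $\e{w}$ (preserved by every relation) and on whether $\e{w}^\plainr$ is Yamanouchi of content $\lambda$; since the content of $\e{w}^\plainr$ is determined by the multiset of letters of $\e{w}$, which is also preserved, it is enough to show that if $\e{v},\e{v'}$ differ by one relation of $J$ then $\e{v}^\plainr$ is Yamanouchi iff $\e{v'}^\plainr$ is. Writing $\e{w}^\plainr$ as the unbarred subword of $\e{w}$ followed by the reverse of the de-barred barred subword, I would split into cases. If the relation involves letters not all of one bar type --- this is \eqref{e rel shuffle closed 0}, or \eqref{e kron rel far commute} with one letter of each type, or \eqref{e plac nat rel knuth3}--\eqref{e plac nat rel knuth4b} when the repeated letter differs in type from the other letter --- then the move changes neither the unbarred nor the barred subword of $\e{w}$, so $\e{v}^\plainr=\e{v'}^\plainr$. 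If instead the relation involves three letters all of one type, it acts within the unbarred subword, which is a prefix of $\e{w}^\plainr$, or within the barred subword, which after reversal and de-barring becomes a suffix of $\e{w}^\plainr$; a short RSK check shows that \eqref{e plac nat rel knuth3}, \eqref{e plac nat rel knuth3b} are elementary Knuth transformations, that \eqref{e plac nat rel knuth4}, \eqref{e plac nat rel knuth4b} become elementary Knuth transformations after reversal, and that \eqref{e kron rel far commute} becomes a transposition of two adjacent letters differing by at least $2$. Thus everything reduces to the two facts that the Yamanouchi property of an ordinary word is unchanged by Knuth transformations (standard) and by transposing two adjacent entries that differ by at least $2$.

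The second of these is the only part that needs a genuine argument, and the step I expect to be the main obstacle. For ordinary words $w=u\,ac\,v$ and $w'=u\,ca\,v$ with $a<c$ and $c\ge a+2$, every suffix of $w$ except $cv$ has the same multiset of letters as the suffix of $w'$ occupying the same positions, so the Yamanouchi inequalities for all of those suffixes are satisfied by $w$ and $w'$ simultaneously; writing $\#_k$ for the number of entries equal to $k$, the inequality of the suffix $acv$ indexed by $k=c-1$ reads $\#_{c-1}(v)\ge\#_c(v)+1$ (here $c-1\ne a,c$), and this is exactly the one inequality that $cv$ imposes beyond those automatically satisfied when $v$ is Yamanouchi. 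Hence $w$ is Yamanouchi iff all of its suffixes other than $cv$ are. The mirror computation --- the inequality of the suffix $cav$ indexed by $k=a-1$ --- shows $w'$ is Yamanouchi iff all of its suffixes other than $av$ are, and these two conditions amount to the same set of inequalities. Therefore $w$ is Yamanouchi iff $w'$ is, which completes the proof; the remaining work (the RSK checks reducing the six plactic relations, and the bookkeeping in the mixed-type cases giving $\e{v}^\plainr=\e{v'}^\plainr$) is routine.
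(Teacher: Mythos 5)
Your proposal is correct and follows essentially the same route as the paper: pass to the larger ideal $J$ of Remark~\ref{r shuffle closed} and check, relation by relation, that membership in $\CYW_{\lambda,d}$ is invariant, using shuffle-closedness for the mixed-type relations, Knuth invariance of the Yamanouchi property for the same-type plactic relations, and the adjacent-swap-by-$\ge 2$ fact for the same-type far commutations \eqref{e kron rel far commute}. The only difference is that you supply details the paper asserts without proof, namely the inclusion $\Ikron\subseteq J$ (via the rotation relation \eqref{e kron rel rotate12} following from the shuffle relation) and the suffix-by-suffix verification that transposing adjacent letters differing by at least $2$ preserves the Yamanouchi property; both checks are correct.
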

\begin{proof}
%
Recall Remark \ref{r shuffle closed} and let $J$ be as in the remark.
It suffices to show that  $\CYW_{\lambda,d}$ is a union of equivalence classes modulo  $J$.
This amounts to showing that if  $\e{w}$ and  $\e{w'}$ differ by a single application of one of the relations \eqref{e plac nat rel knuth3}--\eqref{e plac nat rel knuth4b}, \eqref{e kron rel far commute}, \eqref{e rel shuffle closed 0},
then  $\e{w} \in \CYW_{\lambda,d} \iff \e{w'} \in \CYW_{\lambda,d}$.
Since  $\CYW_{\lambda,d}$ is shuffle closed,
the necessary result holds for  $\e{w}$ and  $\e{w'}$ that differ by \eqref{e rel shuffle closed 0} or by \eqref{e plac nat rel knuth3}--\eqref{e plac nat rel knuth4b},
\eqref{e kron rel far commute}
in the case that exactly one of the two letters involved is barred.
Since Knuth transformations preserve whether an ordinary word is Yamanouchi,
the necessary result holds if $\e{w}$ and  $\e{w'}$ differ by \eqref{e plac nat rel knuth3}--\eqref{e plac nat rel knuth4b}
and the two letters are both barred or both unbarred.
Finally, an ordinary  word's being Yamanouchi is unchanged by swapping two adjacent letters that differ by more than 1, which handles
the case that $\e{w}$ and  $\e{w'}$ differ by \eqref{e kron rel far commute} and the two letters involved are both barred or both unbarred.
\end{proof}

\subsection{Standardization}
\label{ss standardization}
We fill in the details of the standardization argument needed to prove Theorem \ref{t J plac}.
This material will also be used in \textsection\ref{ss comparison with results}.

In our discussion of standardization, we will work with the alphabet of unbarred letters and the ordinary plactic algebra.
Let  $\U_\varnothing$ denote the subalgebra of  $\U$ generated by  $u_a$ for  $a \in \A_\varnothing$.
Let  $\U_\varnothing/\Iplacord{\varnothing}$ be the \emph{plactic algebra}, the quotient of  $\U_\varnothing$ by the \emph{plactic relations}
\begin{alignat}{3}
&\e{acb} = \e{cab} \qquad &&\text{for } a,b,c \in \A_\varnothing,\, a < b  < c,\label{e plac1} \\
&\e{bac} = \e{bca} \qquad &&\text{for } a,b,c \in \A_\varnothing,\, a  < b < c,\label{e plac2} \\
&\e{bba} = \e{bab} \qquad &&\text{for } a, b \in \A_{\varnothing},\, a < b, \label{e plac3} \\
&\e{cbb} = \e{bcb} \qquad &&\text{for } b, c \in \A_{\varnothing},\, b < c. \label{e plac4}
\end{alignat}
Equivalence classes of ordinary words modulo  $\Iplacord{\varnothing}$ are known as
\emph{Knuth equivalence classes}.
The  equivalence classes of colored words modulo
 $\Iplacord{\lessdot}$ we call \emph{$\lessdot$-colored plactic equivalence classes}.

For a colored word  $\e{w}$, the \emph{colored content} of $\e{w}$
is the pair of weak compositions $\beta = (\beta^\varnothing, \beta^\crcempty)$
such that  $\beta^\varnothing_a$ is the number of  $\e{a}$'s in $\e{w}$ and
$\beta^\crcempty_a$ is the number of  $\e{\crc{a}}$'s in $\e{w}$ for all \mbox{$a \in [N]$}.
For example, if $N=3$,
then the colored content of $\e{w = 2  \crc{1}  \crc{2}  1  \crc{3}  \crc{1}  2  1}$
is  $((2,2,0), (2,1,1))$.
For a colored word  $\e{w}$ of colored content $\beta = (\beta^\varnothing, \beta^\crcempty)$,
we set  $|\beta| = \sum_{a = 1}^N (\beta^\varnothing_a + \beta^\crcempty_a)$.
For a  $\lessdot$-colored tableau  $T$, the \emph{colored content} of $T$ is the colored content
of  $\creading(T)$. 

The relations of  $\U/\Iplacord{\lessdot}$ are colored content-preserving, so there is a  $\ZZ$-module decomposition
$\U / \Iplacord{\lessdot} \cong \bigoplus_{\beta} (\U/ \Iplacord{\lessdot})_{\beta}$,
where  $(\U/ \Iplacord{\lessdot})_{\beta}$ denotes the  $\ZZ$-span of the colored words of colored content  $\beta$ in the algebra  $\U/ \Iplacord{\lessdot}$.

\begin{definition}[Standardization]\label{d stand}
For a colored word $\e{w}$, the \emph{$\lessdot$-standardization of  $\e{w}$}, denoted $\e{w}^{\stand^\lessdot}$,
is the permutation obtained from $\e{w}$ by first relabeling, from left to right (resp. right to left), the occurrences of the smallest (for  $\lessdot$) letter in $\e{w}$ by  $1,\ldots,k$
if this letter is unbarred (resp. barred),
then relabeling the occurrences of the next smallest letter of $\e{w}$ by $k+1,\ldots,k+k'$,
and so on.

The standardization of a  $\lessdot$-colored tableau $T$, denoted $T^{\stand^\lessdot}$, is defined as for colored words, except that barred letters are relabeled from top to bottom and unbarred letters from left to right.
\end{definition}

For a colored content  $\beta$, define the set of permutations
\begin{align}\label{e def S beta}
\S(\beta) := \big\{\e{w}^{\stand^\lessdot} \mid \text{$\e{w}$ has colored content  $\beta$}\big\}.
\end{align}
There is a bijection
\begin{align}\label{e stand bij}
\big\{\text{colored words of colored content  $\beta$}\big\} \xrightarrow{\cong} \S(\beta), \quad \e{w} \mapsto \e{w}^{\stand^\lessdot}.
\end{align}
Also, it is well known that
\begin{align}\label{e KE class S beta}
\text{the set  $\S(\beta)$ is a union of Knuth equivalence classes}.
\end{align}

The colored plactic relations 
and the plactic relations 
 are compatible with standardization in the following sense:
\begin{proposition}\label{p plac stand}
For every colored content  $\beta$, the standardization map defines a $\ZZ$-module isomorphism
\begin{align}\label{e stand map}
(\U/\Iplacord{\lessdot})_\beta \xrightarrow{\cong} (\U_\varnothing /\Iplacord{\varnothing})[\beta], \quad \e{w} \mapsto \e{w}^{\stand^\lessdot},
\end{align}
where $(\U_\varnothing /\Iplacord{\varnothing})[\beta]$ denotes the  $\ZZ$-submodule of
$\U_\varnothing /\Iplacord{\varnothing}$ spanned by the words  $\S(\beta)$.
\end{proposition}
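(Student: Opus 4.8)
The plan is to show that the monomial bijection \eqref{e stand bij} between colored words of colored content $\beta$ and $\S(\beta)$ descends to the quotients on both sides and is an isomorphism there. Since $\U/\Iplacord{\lessdot}$ and $\U_\varnothing/\Iplacord{\varnothing}$ are monoid algebras with homogeneous defining relations, $(\U/\Iplacord{\lessdot})_\beta$ has $\ZZ$-basis $\{\sum_{\e{v}\in C}\e{v}\}$ as $C$ ranges over $\lessdot$-colored plactic equivalence classes of content $\beta$, and $(\U_\varnothing/\Iplacord{\varnothing})[\beta]$ has $\ZZ$-basis $\{\sum_{\sigma\in D}\sigma\}$ as $D$ ranges over Knuth equivalence classes contained in $\S(\beta)$ (using \eqref{e KE class S beta}). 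So it suffices to prove
\[ \e{v}\equiv\e{w} \bmod \Iplacord{\lessdot} \iff \e{v}^{\stand^\lessdot}\equiv\e{w}^{\stand^\lessdot} \bmod \Iplacord{\varnothing} \]
for colored words $\e{v},\e{w}$ of content $\beta$; this exhibits a bijection between the two families of equivalence classes and hence the desired $\ZZ$-module isomorphism.

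For the forward implication, reduce to the case where $\e{v}$ and $\e{w}$ differ by a single application of one of the colored plactic relations \eqref{e plac rel knuth1}--\eqref{e plac rel knuth4b}, involving three consecutive positions. The key locality observation is that $\lessdot$-standardization is unaffected outside those positions and preserves the relative order of all labels there, because permuting the letters within the relation changes, for no occurrence of any value, the number of earlier occurrences of that value (if it is unbarred) or of later occurrences (if it is barred). A short case check then shows each colored plactic relation standardizes to exactly one plactic relation among \eqref{e plac1}--\eqref{e plac4}: if the three letters have distinct values the relation is preserved verbatim with $\lessdot$ replaced by the usual order on the labels, and if two of the letters share a value, as in \eqref{e plac rel knuth3}--\eqref{e plac rel knuth4b}, those two occurrences receive consecutive labels $k+1, k+2$ and the relation becomes one of \eqref{e plac1} or \eqref{e plac2} with $k+1, k+2$ in place of the repeated-looking letter. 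In particular $\e{v}^{\stand^\lessdot}\equiv\e{w}^{\stand^\lessdot}\bmod\Iplacord{\varnothing}$.

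For the reverse implication, assume $\e{v}^{\stand^\lessdot}\equiv\e{w}^{\stand^\lessdot}$ and pick a chain of elementary plactic moves joining them. By \eqref{e KE class S beta} every word in the chain lies in $\S(\beta)$, so it is the $\lessdot$-standardization of a unique colored word of content $\beta$, and it suffices to treat one plactic move between two distinct elements $\sigma,\sigma'$ of $\S(\beta)$. Such a move has type \eqref{e plac1} or \eqref{e plac2} (permutations have no repeated letters), involving three labels $a<b<c$. De-standardizing, the colored values owning $a$ and $c$ cannot coincide unless the value owning $b$ coincides with them too; the ``all three equal'' case is impossible here because it would force $\sigma$ and $\sigma'$ to have the same de-standardization, contradicting injectivity of \eqref{e stand bij}. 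In the remaining cases one de-standardizes to either three distinct colored letters $x\lessdot y\lessdot z$ (giving \eqref{e plac rel knuth1} or \eqref{e plac rel knuth2}) or a repeated colored letter whose bar-status is forced --- by whether its smaller label sits to the left or the right --- to be exactly the one making one of \eqref{e plac rel knuth3}--\eqref{e plac rel knuth4b} apply. Hence $\e{v}\equiv\e{w}\bmod\Iplacord{\lessdot}$ by transitivity, completing the equivalence.

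The deep content is minimal; the main obstacle is simply organizing the two locality case analyses carefully, especially handling the right-to-left relabeling convention for barred letters symmetrically with the left-to-right convention for unbarred ones, and pinning down which of \eqref{e plac1}--\eqref{e plac4} each colored relation produces in the sub-cases with a repeated colored letter.
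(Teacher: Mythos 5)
Your proof is correct and follows essentially the same route as the paper: both arguments reduce to checking that a single application of a colored plactic relation standardizes to a single plactic relation, and conversely that a plactic move between two permutations lying in $\S(\beta)$ de-standardizes to a colored plactic relation, so that $\lessdot$-colored plactic classes of content $\beta$ biject with the Knuth classes partitioning $\S(\beta)$. One cosmetic slip: the sums $\sum_{\e{v}\in C}\e{v}$ form a $\ZZ$-basis of the degree-$\beta$ part of $(\Iplacord{\lessdot})^\perp$, not of the quotient $(\U/\Iplacord{\lessdot})_\beta$, whose $\ZZ$-basis is given by one representative monomial per equivalence class; this does not affect your reduction to the equivalence of congruences.
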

\begin{proof}
Fix a colored content $\beta$.
It is straightforward to check that if two colored words differ by a single application of one of the relations
\eqref{e plac rel knuth1}--\eqref{e plac rel knuth4b},
then their  $\lessdot$-standardizations differ by a single application of one of the relations \eqref{e plac1}, \eqref{e plac2};
also, if two permutations differ by a single application of one of the relations \eqref{e plac1}, \eqref{e plac2},
then either they both do not belong to  $\S(\beta)$, or they both belong to  $\S(\beta)$ and the inverse of the bijection
\eqref{e stand bij} takes these words to two colored words that differ by a single application of one of the relations \eqref{e plac rel knuth1}--\eqref{e plac rel knuth4b}.
Hence the  $\lessdot$-standardization of a $\lessdot$-colored plactic equivalence class is an (ordinary) Knuth equivalence class,
and under this correspondence,
the $\lessdot$-colored plactic equivalence classes of content  $\beta$ are in bijection with the Knuth equivalence classes
that partition $\S(\beta)$.
The result follows.
%
\end{proof}

%


Let  $\SSYT_\nu$ denote the set of semistandard Young tableaux (SSYT) of shape  $\nu$ (entries in the alphabet  $\A_\varnothing$).
We need the following fact relating standardization to colored tableaux.
\begin{proposition}\label{p stand tab}
The standardization map  $\e{w} \mapsto \e{w}^{\stand^\lessdot}$ defines a bijection
\begin{align*}
&\big\{\creading(T) \mid T \in \CT^\lessdot_\nu, \text{ $T$ has colored content $\beta$}\big\} \\
\xrightarrow{\cong} & \big\{\creading(T) \mid T \in \SSYT_\nu, \, \creading(T) \in \S(\beta)\big\}.
\end{align*}
\end{proposition}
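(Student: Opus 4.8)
The plan is to realize the map in the statement through standardization \emph{of tableaux}, and to invert it through destandardization.

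The first step is a lemma on a single $\lessdot$-colored tableau $T$: the standardization $T^{\stand^\lessdot}$ (Definition~\ref{d stand}) is a standard Young tableau of the same shape, and
\[
\creading\big(T^{\stand^\lessdot}\big)=\creading(T)^{\stand^\lessdot}.
\]
That $T^{\stand^\lessdot}$ is standard of the same shape is immediate: repeated entries in a row must be unbarred (barred letters are row-strict), occupy consecutive cells, and get relabeled increasingly left to right; repeated entries in a column must be barred and get relabeled increasingly top to bottom; so all rows and columns become strictly increasing. For the displayed identity, recall $\creading$ visits the columns left to right, each bottom to top. An unbarred letter occurs at most once per column, so its occurrences appear in $\creading(T)$ in the same left-to-right order in which Definition~\ref{d stand} relabels them in $T$. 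A barred letter occurs at most once per row, and in a $\lessdot$-colored tableau a barred letter cannot occur both strictly above and strictly to the left of another of its occurrences (otherwise the intervening row and column entries would force $\crc a \gtrdot \crc a$); using this, a short check shows that for any two occurrences of a fixed barred letter the one receiving the smaller label under the top-to-bottom relabeling of $T$ is the one appearing further right in $\creading(T)$, which is exactly the one receiving the smaller label under the right-to-left relabeling of barred letters in the colored word $\creading(T)$. Hence the two standardizations assign matching labels. I expect this bookkeeping to be the main obstacle, precisely because the convention for barred letters is reversed relative to unbarred letters.

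Granting the lemma, the map in the statement is $\creading(T)\mapsto\creading\big(T^{\stand^\lessdot}\big)$. It is injective because $T\mapsto\creading(T)$ is injective and $\e{w}\mapsto\e{w}^{\stand^\lessdot}$ is injective on colored words of content $\beta$ by the bijection~\eqref{e stand bij}. Its image lies in the target: if $T\in\CT^\lessdot_\nu$ has content $\beta$, then $T^{\stand^\lessdot}\in\SSYT_\nu$ and $\creading\big(T^{\stand^\lessdot}\big)=\creading(T)^{\stand^\lessdot}\in\S(\beta)$ since $\creading(T)$ has content $\beta$.

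For surjectivity I would destandardize. Recall that $\{1,\dots,|\beta|\}$ splits into consecutive intervals $B_\ell$, one per letter $\ell\in\A$ of multiplicity $|B_\ell|$ in $\beta$, with $B_\ell$ below $B_{\ell'}$ when $\ell\lessdot\ell'$, and that a permutation belongs to $\S(\beta)$ exactly when, for each $\ell$, the values of $B_\ell$ occur in it from left to right in increasing order if $\ell$ is unbarred and in decreasing order if $\ell$ is barred. Fix $T'\in\SSYT_\nu$ with $\creading(T')\in\S(\beta)$; since $\creading(T')$ is a permutation, $T'$ is in fact standard. Let $T$ be obtained from $T'$ by replacing each entry in $B_\ell$ by $\ell$; it has shape $\nu$ and content $\beta$, and the interval structure makes every row and column weakly $\lessdot$-increasing. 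If some unbarred $\ell$ had two $B_\ell$-entries of $T'$ in one column, then---since $\creading$ reads each column bottom to top and concatenates columns left to right---the larger of these two values would precede the smaller among the $B_\ell$-values of $\creading(T')$, contradicting $\creading(T')\in\S(\beta)$; hence $\ell$ is column-strict in $T$, and symmetrically barred letters are row-strict, so $T\in\CT^\lessdot_\nu$. Since destandardization relabels entries in place, $\creading(T)$ is the destandardization of $\creading(T')$, i.e. the unique content-$\beta$ word with $\creading(T)^{\stand^\lessdot}=\creading(T')$; thus $\creading\big(T^{\stand^\lessdot}\big)=\creading(T)^{\stand^\lessdot}=\creading(T')$, so $\creading(T')$ is in the image. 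This also exhibits destandardization as a two-sided inverse, so the map is a bijection. The one genuinely delicate point in all of this remains the reading-word compatibility of the first paragraph, whose only subtlety is aligning the reversed orientation used for barred letters.
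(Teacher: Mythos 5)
Your proof is correct and takes essentially the same route as the paper's: the forward direction rests on the compatibility $\creading(T)^{\stand^\lessdot}=\creading\big(T^{\stand^\lessdot}\big)$ (which the paper declares ``clear'' and you verify carefully, including the reversed right-to-left convention for barred letters), and surjectivity is obtained by destandardizing a standard tableau whose reading word lies in $\S(\beta)$, with the strictness conditions recovered from the increasing/decreasing pattern of each block $B_\ell$. No gaps.
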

\begin{proof}
It is clear that $\creading(T)^{\stand^\lessdot} = \creading(T^{\stand^\lessdot})$ and
 $T^{\stand^\lessdot} \in \SSYT_\nu$
 for any  $T \in \CT^\lessdot_\nu$, hence the bijection \eqref{e stand bij} restricts to a map between the given sets.
The inverse of \eqref{e stand bij} is computed on some  $\creading(T) \in \S(\beta)$  
by relabeling some of its decreasing subsequences with barred letters and some of its increasing subsequences with unbarred letters
to obtain the colored word  $\e{w}$ ($\creading(T) = \e{w}^{\stand^\lessdot}$);
 since the columns (resp. rows) of  $T$ correspond to decreasing (resp. increasing) subsequences of  $\creading(T)$,
it follows that $\e{w}$ is the column reading word of a $\lessdot$-colored tableau.
The result follows.
\end{proof}

%
%
%

We need to recall  the following definitions from \cite{FG}.
\begin{definition}\label{d normal noncom J}
The \emph{noncommutative elementary symmetric functions} are given by
\[
e^\varnothing_k=\sum_{\substack{a_1 \ge a_2 \ge \cdots \ge a_k \\ a_1,\dots,a_k \in \A_\varnothing}} u_{a_1} u_{a_2} \cdots u_{a_k} \ \in \U_\varnothing
\]
for any positive integer $k$; set $e^\varnothing_0=1$ and $e^\varnothing_k = 0$ for $k<0$.
Now define the \emph{noncommutative Schur functions}  $\mathfrak{J}^\varnothing_\nu$ exactly as in Definition~\ref{d noncommutative Schur functions}, except
with  $e^\varnothing_k$ in place of  $e_k(\mathbf{u})$.
\end{definition}

Our proof Theorem \ref{t J plac} is based on the following consequence of \cite[Lemma 3.2]{FG}.
\begin{theorem}[\cite{FG}]\label{t J FG}
For any partition $\nu$,
\begin{align*}
\mathfrak{J}^\varnothing_{\nu} =
\sum_{ T \in \SSYT_\nu} \creading(T) \qquad \text{in \, $\U_\varnothing/\Iplacord{\varnothing}$}.
\end{align*}
\end{theorem}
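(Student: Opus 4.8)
The plan is to prove this theorem of Fomin--Greene by transcribing the classical Lindstr\"om--Gessel--Viennot derivation of the dual Jacobi--Trudi identity $s_\nu = \det(e_{\nu'_i-i+j})$ into the plactic monoid $\U_\varnothing/\Iplacord{\varnothing}$. The only ingredient that goes beyond the commutative case is a ``column slide'' lemma (essentially \cite[Lemma 3.2]{FG}) to the effect that certain rearrangements of a product of column words preserve its class modulo $\Iplacord{\varnothing}$; that lemma is where the real content lies, and verifying it is the main obstacle.

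First I would reduce the Jacobi--Trudi expression of Definition~\ref{d normal noncom J} to a signed sum over pairs (permutation, column sequence). Each $e^\varnothing_k$ is the sum of the column-reading words of the strict (single) columns of height $k$, so for any lengths $\ell_1,\dots,\ell_t$ the product $e^\varnothing_{\ell_1}\cdots e^\varnothing_{\ell_t}$ equals $\sum_F \creading(F)$, the sum over column-strict fillings $F$ of the left-justified diagram with column heights $\ell_1,\dots,\ell_t$ (with no condition relating adjacent columns). Substituting $\ell_i=\nu'_i+\pi(i)-i$ into Definition~\ref{d noncommutative Schur functions} then gives
\[
\mathfrak{J}^\varnothing_\nu \;=\; \sum_{\pi\in\S_t}\sgn(\pi)\sum_{F}\creading(F)\qquad\text{in }\U_\varnothing/\Iplacord{\varnothing},
\]
the inner sum over column sequences with heights $\nu'_i+\pi(i)-i$. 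Here one uses that the $e^\varnothing_k$ commute (Proposition~\ref{p es commute plac} specialized to an uncolored alphabet), so the signed sum is well defined and may be manipulated like a determinant.

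Next comes the sign-reversing involution. Scanning the column sequence of a pair $(\pi,F)$ from left to right, one checks whether consecutive columns are ``compatible,'' i.e.\ whether they have the heights forced by $\pi=\mathrm{id}$ and satisfy the semistandard comparison $C_i(r)\le C_{i+1}(r)$ for all valid $r$; the pair is compatible at every step exactly when $\pi=\mathrm{id}$ and $F\in\SSYT_\nu$. At the first incompatible adjacent pair $C_i,C_{i+1}$ one applies the standard local ``tail exchange'': this replaces $(C_i,C_{i+1})$ by a pair whose heights differ from $(\ell_i,\ell_{i+1})$ by $\pm(\pi(i{+}1)-\pi(i))$, hence corresponds to composing $\pi$ with the transposition $(i,i{+}1)$, so it reverses $\sgn(\pi)$, and it is an involution on the incompatible pairs. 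The crucial point --- the main obstacle --- is that this exchange must not change $\creading(F)$ modulo $\Iplacord{\varnothing}$; since only the two columns $C_i,C_{i+1}$ are altered, this reduces to the statement that the plactic class of a two-column product depends only on its (two-column) insertion tableau, whose columns are precisely the ``sorted'' pair produced by the exchange. With plactic-invariance in hand, all incompatible terms cancel in $\U_\varnothing/\Iplacord{\varnothing}$, leaving only the pairs $(\mathrm{id},T)$ with $T\in\SSYT_\nu$, each with coefficient $+1$, which yields $\mathfrak{J}^\varnothing_\nu=\sum_{T\in\SSYT_\nu}\creading(T)$.

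Since this identity is attributed to Fomin--Greene and is used below only as the input to Theorem~\ref{t J plac}, one may of course simply quote \cite[Lemma 3.2]{FG} instead of reproving it; the sketch above is meant to record exactly what that lemma provides and why it delivers the stated equality.
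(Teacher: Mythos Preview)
The paper does not supply its own proof of this statement; it simply records it as a consequence of \cite[Lemma~3.2]{FG} and uses it as a black box in the proof of Theorem~\ref{t J plac}. Your closing remark---that one may just cite Fomin--Greene---is exactly what the paper does, so there is no proof in the paper to compare against.

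Your sketch of the underlying Fomin--Greene argument is accurate in outline: expand the Kostka--Naegelsbach sum over tuples of column words and cancel the non-SSYT terms via a sign-reversing involution whose well-definedness rests on a plactic ``column exchange'' lemma. One small correction: the appeal to commutativity of the $e^\varnothing_k$ is not what drives the cancellation---$\mathfrak{J}^\varnothing_\nu$ is by definition a fixed signed sum of words, and the involution works term by term. What you actually need is the finer statement that the specific two-column exchange you perform preserves plactic class, which is a different (though related) fact from $e^\varnothing_k e^\varnothing_l \equiv e^\varnothing_l e^\varnothing_k$. You correctly flag exactly this point as the main obstacle, so the sketch is sound as a summary of what \cite{FG} proves.
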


\begin{proof}[Proof of Theorem \ref{t J plac}]
Consider $\mathfrak{J}^\varnothing_{\nu}$ as an element of $\U_\varnothing$, written as the
signed sum of words obtained by
expanding the sum of products of  $e^\varnothing_k$'s in its definition.
Let $\mathfrak{J}^\varnothing_{\nu}[\beta]$ denote the result of restricting this signed sum to the permutations
$\S(\beta)$ (defined in \eqref{e def S beta}).
It follows from \eqref{e KE class S beta}
that
the image of $\mathfrak{J}^\varnothing_{\nu}[\beta]$ in $\U_\varnothing/\Iplacord{\varnothing}$
depends only on the image of
$\mathfrak{J}^\varnothing_{\nu}$ in  $\U_\varnothing/\Iplacord{\varnothing}$.
Moreover, fact \eqref{e KE class S beta} and Theorem \ref{t J FG} yield
\begin{align}\label{e brak beta}
\mathfrak{J}^\varnothing_{\nu}[\beta] =
\sum_{\substack{T \in \SSYT_\nu \\ \text{$\creading(T) \in \S(\beta)$}}} \creading(T) \qquad \text{in \, $\U_\varnothing/\Iplacord{\varnothing}$}.
\end{align}

By the decomposition $\U / \Iplacord{\lessdot} \cong \bigoplus_{\beta} (\U/ \Iplacord{\lessdot})_{\beta}$, we can write the noncommutative super
Schur function  $\mathfrak{J}^\lessdot_{\nu}(\mathbf{u})$ uniquely as a sum
$\mathfrak{J}^\lessdot_{\nu}(\mathbf{u}) =
\sum_\beta (\mathfrak{J}^\lessdot_{\nu}(\mathbf{u}))_\beta,$ for
$(\mathfrak{J}^\lessdot_{\nu}(\mathbf{u}))_\beta \in (\U/ \Iplacord{\lessdot})_{\beta}.$
Now note that a concatenation of colored words of the form  $\e{z_1\cdots z_t}$,  $z_1 \gedotcol z_2 \gedotcol \cdots \gedotcol z_t$, standardizes to a word that is a
concatenation of strictly decreasing (ordinary) words.
Using this, one checks that
the isomorphism \eqref{e stand map} takes $(\mathfrak{J}^\lessdot_{\nu}(\mathbf{u}))_\beta$
to  $\mathfrak{J}^\varnothing_{\nu}[\beta]$.
Applying the inverse of this isomorphism to both sides of \eqref{e brak beta}
yields (by Proposition \ref{p stand tab})
\begin{align*}
(\mathfrak{J}^\lessdot_{\nu}(\mathbf{u}))_\beta =
\sum_{\substack{T \in \CT_{\nu}^{\lessdot} \\ \text{$T$ has colored content  $\beta$}}} \creading(T) \qquad \text{in \, $\U/\Iplacord{\lessdot}$}.
\end{align*}
Summing over  $\beta$, the theorem follows.
\end{proof}

\section{Reading words for $\mathfrak{J}_\nu(\mathbf{u})$ in $\U/\Ikron$}
\label{s reading}
Here we introduce new kinds of tableaux and reading words that arose naturally in our discovery of a monomial positive expression for
$\mathfrak{J}_\nu(\mathbf{u})$ in $\U/\Ikron$.
The main new feature of these objects is that they involve posets obtained from posets of diagrams by adding a small number of covering relations.
This section has much in common with \cite[\textsection3]{BLamLLT}, but there are also important differences.


\subsection{Diagrams and tableaux}\label{ss diagram}

A \emph{diagram} or \emph{shape} is a finite subset of $\ZZ_{\ge 1}\times\ZZ_{\ge 1}$. A diagram is drawn as a set of boxes in the plane with the English (matrix-style) convention so that row (resp. column) labels start with 1 and increase from north to south (resp. west to east).

A partition $\lambda$ of $n$ is a weakly decreasing sequence $ (\lambda_1, \ldots, \lambda_l)$ of nonnegative integers that sum to $n$.
The \emph{shape} of  $\lambda$ is the subset
$\{(r,c) \mid r \in [l], \ c \in [\lambda_r]\}$
of $\ZZ_{\geq 1} \times \ZZ_{\geq 1}$.
Write $\mu \subseteq \lambda$ if the shape of $\mu$ is contained in the shape of $\lambda$.
If $\mu \subseteq \lambda$, then $\lambda/\mu$ denotes the \emph{skew shape} obtained by removing the boxes of $\mu$ from the shape of $\lambda$.
The \emph{conjugate partition} $\lambda'$ of $\lambda$ is the partition whose shape is the transpose of the shape of $\lambda$.

We will make use of the following partial orders on $\ZZ_{\ge 1}\times\ZZ_{\ge 1}$:
\begin{align*}
\text{$(r,c)\le_{\searrsub}(r',c')$  whenever $r\le r'$ and $c\le c'$,}\\
\text{$(r,c)\le_{\nearrsub}(r',c')$  whenever $r\ge r'$ and $c\le c'$.}
\end{align*}
 It will occasionally be useful to think of diagrams  as posets for the order  $<_\searrsub$ or $<_\nearrsub$.

Let  $\theta$ be a diagram.
A \emph{tableau of shape $\theta$} is the diagram $\theta$ together with a colored letter (an element of $\A$) in each of its boxes.
The \emph{size} of a tableau $T$, denoted  $|T|$, is the number of boxes of $T$, and $\sh(T)$ denotes the shape of $T$.
For a tableau $T$ and a set of boxes $S$ such that $S\subseteq\sh(T)$,
$T_S$ denotes the subtableau of $T$ obtained by restricting $T$ to the diagram $S$.
If $\beta$ is a box of $T$, then $\noe{T_\beta}$ denotes the entry of $T$ in $\beta$.
When it is clear, we will  occasionally identify a tableau entry with the box containing it.

If $T$ is a tableau, $\noe{x} \in \A$ a colored letter, and  $\beta = (r,c)$ is a box not belonging to $\sh(T)$, then $T \sqcup {\tiny \tableau{\noe{x}}}_{\, r,c}$ denotes the result of adding the box $\beta$ to $T$  and filling it with $\noe{x}$.


\subsection{Restricted shapes and restricted colored tableaux}

\begin{definition}
A \emph{restricted shape} is a lower order ideal of a partition diagram for the order $<_\nearrsub$.
We will  typically specify a restricted shape as follows:
for any weak composition  $\alpha= (\alpha_1,\ldots, \alpha_l)$, let  $\alpha'$ denote the diagram  $\{(r,c) \mid c \in [l], \ r \in [\alpha_c] \}$.
Now let $\lambda = (\lambda_1,\dots,\lambda_l)$ be a partition and $\alpha = (\alpha_1,\dots,\alpha_l)$ a weak  composition such that $0 \le \alpha_1 \le \cdots \le \alpha_{j'}$, $\alpha_1 < \lambda_1,\ \alpha_2 < \lambda_2,  \ldots, \alpha_{j'} < \lambda_{j'}$, and
 $\alpha_{j'+1} = \lambda_{j'+1}, \ldots, \alpha_{l} = \lambda_{l}$  for some  $j' \in \{0,1,\dots,l\}$.
Then the set difference of $\lambda'$ by $\alpha'$, denoted $\lambda' \setminus \alpha'$, is a restricted shape and any restricted shape can be written in this way.
\end{definition}

Note that, just as for skew shapes, different pairs $\lambda, \alpha$ may define the same restricted shape $\lambda' \setminus \alpha'$.
An example of a restricted shape is
\[ {\footnotesize (655444221)' \setminus (012223221)'} \ =\  \partition{~\\~&~\\~&~&~&~&~\\~&~&~&~&~&~\\~&~&~\\~}. \qquad\]

\begin{definition}\label{d RCT}
A \emph{restricted tableau} is a tableau whose shape is a restricted shape.
A \emph{restricted colored tableau} (RCT) is a restricted tableau such that
 each row and column is weakly increasing with respect to the natural order $<$, while the unbarred letters in each column and the barred letters in each row are strictly increasing.
\end{definition}
Hence a restricted colored tableau of partition shape  is the same as a  $<$-colored tableau (defined in \textsection\ref{ss main theorem}).

For example,
\begin{align*}
\tiny\tableau{
\crc{1}\\
2& \crc{2}\\
\crc{2}&3&3 & 3&\crc{3}\\
3 & \crc{3} & 4 & 4& 4 &\crc{4}\\
\crc{3} & 4 & 5\\
\crc{3} }
\end{align*}
is an RCT of shape $(655444)' \setminus (012223)'$.

Recall that $\noe{x} \lecol \noe{y}$ means that $\noe{x < y}$ or $\noe x$ and $\noe y$ are equal barred letters.
We write $\noe{x} \lerow \noe{y}$ to mean that $\noe{x < y}$ or $\noe{x}$ and $\noe{y}$ are equal unbarred letters.
Hence the condition for a restricted tableau to be a restricted colored tableau is exactly that if the entry $\noe{x}$ lies immediately west of $\noe{y}$, then $\noe{x \lerow y}$, and if the entry $\noe{x}$ lies immediately north of $\noe{y}$, then $\noe{x \lecol y}$.
Be aware that $\lecol$ and $\lerow$ are not partial orders.

The following basic fact will be used frequently in the proof of Theorem \ref{t J intro}.
\begin{proposition}
\label{e diagonal RCT}
If  $\noe{x} \lecol \noe{y} \lerow \noe{z}$, then  $\noe{x} < \noe{z}$ and
($\noe{x}=\noe{a}$ is unbarred  $\implies$  $\noe{z} \ge \noe{a+1}$) and ($\noe{z} = \noe{\crc{a+1}}$ is barred  $\implies$  $\noe{x} \le \noe{\crc{a}}$).
In particular, if  $\noe{d_1}, \noe{d_2}, \dots, \noe{d_t}$ is the diagonal of an RCT read in the  $\searr$ direction, then  $\noe{d_i} < \noe{d_{i+1}}$ and ($\noe{d_i} = \noe{a}$ is unbarred  $\implies$ $\noe{d_{i+1}} \ge \noe{a+1}$) and ($\noe{d_{i+1}} = \noe{\crc{a+1}}$ is barred  $\implies$  $\noe{d_i} \le \noe{\crc{a}}$).
\end{proposition}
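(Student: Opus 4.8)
The plan is to prove the first sentence by a short case analysis on how the two hypotheses $x \lecol y$ and $y \lerow z$ are realized, and then to deduce the ``in particular'' clause directly from it. The key preliminary observation is that $y$ cannot simultaneously be a barred letter equal to $x$ and an unbarred letter equal to $z$; hence at least one of the relations $x \lecol y$, $y \lerow z$ is a strict inequality in the natural order. This splits the argument into the following three cases, which are exhaustive and mutually exclusive: (a) $x < y < z$; (b) $x = y$ is a barred letter and (necessarily) $y < z$; (c) $y = z$ is an unbarred letter and (necessarily) $x < y$.

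In each case I would verify the three assertions $x < z$, ($x = a$ unbarred $\implies$ $z \geq a+1$), and ($z = \crc{a+1}$ barred $\implies$ $x \leq \crc{a}$) using only the explicit description of the natural order $1 < \crc{1} < 2 < \crc{2} < \cdots < N < \crc{N}$. Concretely, it is convenient to record that $a$ occupies the $(2a-1)$st position and $\crc{a}$ the $(2a)$th position of this chain, so that, for example, in case (a) the chain $a = x < y < z$ forces $z$ into position $\geq 2a+1$, i.e. $z \geq a+1$, and dually $z = \crc{a+1}$ in position $2a+2$ forces $x$ into position $\leq 2a$, i.e. $x \leq \crc{a}$. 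Cases (b) and (c) are even shorter: when $x = \crc{b}$ the first implication is vacuous and $\crc{b} < z$ together with $z = \crc{a+1}$ gives $b \leq a$; when $z = c$ is unbarred the second implication is vacuous and $x < c$ with $x = a$ unbarred gives $a + 1 \leq c$.

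For the ``in particular'' statement, suppose $d_i$ occupies the box $(r,c)$ of the RCT, so that the next diagonal entry $d_{i+1}$ occupies $(r+1, c+1)$. I would use that a restricted shape is a lower order ideal for $<_\nearrsub$ and that $(r+1,c) \le_\nearrsub (r+1, c+1)$ to conclude that the box $(r+1,c)$ lying due south of $d_i$ also belongs to $\sh(T)$; writing $y$ for its entry, the definition of an RCT (weakly increasing columns with strictly increasing unbarred entries, weakly increasing rows with strictly increasing barred entries) gives exactly $d_i \lecol y \lerow d_{i+1}$, and the first part applies with $x = d_i$, $z = d_{i+1}$.

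There is essentially no serious obstacle here: the statement is a bookkeeping fact about the interleaved total order, and the only point that requires a moment's care is the last one — confirming that the intermediate box $(r+1,c)$ lies in the shape even though a restricted shape need not be a straight shape — which is handled by the lower-order-ideal property noted above.
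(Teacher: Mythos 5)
The paper states this proposition without proof (it is introduced only as a ``basic fact''), so there is no argument of the authors' to compare yours against; I can only assess correctness. Your treatment of the first sentence is complete and correct: the three cases (a) $x<y<z$, (b) $x=y$ a barred letter with $y<z$, and (c) $y=z$ an unbarred letter with $x<y$ exhaust all possibilities precisely because $y$ cannot simultaneously be barred and unbarred, and the positional bookkeeping in the chain $1<\crc{1}<2<\crc{2}<\cdots<N<\crc{N}$ checks out in each case.

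One small caveat about the ``in particular'' clause: you assume that consecutive entries of a diagonal occupy adjacent boxes $(r,c)$ and $(r+1,c+1)$. For a general restricted shape this can fail; for instance $(333)'\setminus(022)'$ contains the boxes $(1,1)$ and $(3,3)$ but not $(2,2)$, so its main diagonal is disconnected. The conclusion still holds there, and your argument patches in one line: if $d_i$ sits at $(r,c)$ and $d_{i+1}$ at $(r',c')$ with $r<r'$ and $c<c'$, the lower-order-ideal property (applied inside the ambient partition diagram, exactly as you do for the single box $(r+1,c)$) places the entire hook $(r,c),(r+1,c),\dots,(r',c),(r',c+1),\dots,(r',c')$ inside the shape, and since $\lecol$ and $\lerow$ are each transitive one obtains $d_i\lecol T_{(r',c)}\lerow d_{i+1}$, to which the first part applies. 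This is a cosmetic completion rather than a substantive gap, but the proposition as stated does quantify over arbitrary diagonals of an RCT, so it is worth recording. Your remaining point of care---that $(r+1,c)$ lies in the ambient partition diagram before the ideal property is invoked---is immediate from $r+1\le\lambda_{c+1}\le\lambda_c$, so that step is fine as written.
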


\subsection{Arrow respecting reading words}
\label{ss Arrow respecting reading words}


\begin{definition}\label{d arrows}
An \emph{arrow subtableau} $S$ of an RCT $R$ is a subtableau of $R$ such that $\sh(S)$ is the intersection of  $\sh(R)$ with a rectangular shape,
and  $S$ is of the form
\setlength{\cellsize}{3.4ex}
\[\tiny
\tableau{{\color{black}\put(.8,-1.6){\vector(-1,1){.6}}}\put(-.1,-0.96){$\noe{a}$}&\noe{x}\\\noe{y}&\noe{a\text{+}1}}\qquad
\tableau{{\color{black}\put(.8,-1.6){\vector(-1,1){.6}}}\put(-.1,-0.96){$\noe{a}$}\\\noe{y}&\noe{a\text{+}1}}\qquad
\tableau{{\color{black}\put(5,-7.6){\vector(-4,3){4.8}}}\put(-.2,-3.96){$\noe{a}$}\\ \noe{y_2} \\ \noe{y_3} \\ {\vdots\atop} \\ \noe{y_r} & \noe{x_2} & \noe{x_3} & \ldots &\noe{x_{c\ngb 1}} & \noe{a\text{+}1}}\qquad\qquad\qquad
\tableau{{\color{black}\put(.25,-1.1){\vector(1,-1){.6}}}\put(-.1,-1){$\noe{\crc{a}}$}&\noe{y}\\\noe{x}&\noe{\crc{a\text{+}1}}}\qquad
\tableau{{\color{black}\put(.25,-1.1){\vector(1,-1){.6}}}\put(-.1,-1){$\noe{\crc{a}}$}\\\noe{x}&\noe{\crc{a\text{+}1}}} \qquad
\tableau{{\color{black}\put(.2,-3.9){\vector(4,-3){4.8}}}\put(-.15,-3.92){$\noe{\crc{a}}$}\\ \noe{y_2} \\ \noe{y_3} \\ {\vdots\atop} \\ \noe{y_r} & \noe{x_2} & \noe{x_3} & \ldots &\noe{x_{c\ngb 1}} & \noe{\crc{a\text{+}1}}}
\]
where $\noe{x},\noe{y}$, $\noe{x_2}, \dots, \noe{x_{c-1}}$, $\noe{y_2}, \dots, \noe{y_r} \in \A$ and $a \in [N-1]$; also, $r >2$, $c \ge 2$ in the third tableau and $r \ge 2$, $c > 2$ in the last tableau.
The first three of these are called \emph{$\nwarr$ arrow subtableaux} and the last three are \emph{$\searr$ arrow subtableaux}. Also, \emph{an arrow} of  $R$ is a directed edge between the two boxes of an arrow subtableau,  as indicated in the picture; we think of the arrows of  $R$ as the edges of a directed graph with vertex set the boxes of $R$.
\setlength{\cellsize}{2.2ex}
\end{definition}


\begin{definition}\label{d square reading word}
A \emph{reading word} of a tableau $R$ is a colored word $\e{w}$ consisting of the entries of $R$ such that for any two boxes $\beta$ and $\beta'$ of $R$ such that $\beta <_{\nearrsub} \beta'$, $\e{R_\beta}$ appears to the left of $\e{R_{\beta'}}$ in $\e{w}$.

An \emph{arrow respecting reading word} $\e{w}$ of an RCT $R$ is a reading word of $R$ such that for each arrow of $R$, the tail of the arrow appears to the left of the head of the arrow in $\e{w}$.

There is no reason to prefer one arrow respecting reading word over another (see Theorem \ref{t arrow respecting connected}), but it is useful to have notation for one such word.
The definition of the colored word $\sqread(T)$ (Definition~\ref{d sqread}) extends verbatim to the case   $T$ is an RCT.
Though  $\sqread(T)$ is not an arrow respecting reading word of  $T$ in general, it is in the case
 $T$ has partition shape and in all cases encountered in the proof of Theorem \ref{t J intro} (see Remark~\ref{r main theorem}~(b)).
\end{definition}

\begin{remark}
\label{r definition arrow is good}
Suppose  $S$ is the intersection of an RCT $R$ with a rectangular subtableau such that
(1) the northwesternmost box of $S$ is $\noe{a}$ ($a \in [N-1]$), (2) the southeasternmost box of $S$ is $\noe{a+1}$, and (3) the rectangular subtableau has more than two rows.
Then $S$ has the form of the third tableau in Definition~\ref{d arrows} because if not, there would be a box $\noe{\beta}$ immediately north of the box containing $\noe{x_2}$ and $\noe{\crc{a}} = \noe{y_{r-1}} \lerow R_{\beta} \lecol \noe{x_2} = \noe{a+1}$ yields a contradiction.
Note that this also shows that if an RCT  $R$ contains  an $S$ satisfying (1)--(3), then  $R$ cannot be completed to a  $<$-colored tableau.
(However, such tableaux still need to be considered in intermediate stages in the proofs of Theorems \ref{t arrow respecting connected} and \ref{t J intro}.)

Similarly, suppose $S$ is the intersection of an RCT $R$ with a rectangular subtableau such that
($1'$) the northwesternmost box of $S$ is $\noe{\crc{a}}$, ($2'$) the southeasternmost box of $S$ is $\noe{\crc{a+1}}$, and ($3'$) the rectangular subtableau has more than two columns.
Then $S$ has the form of the last tableau in Definition~\ref{d arrows}.
Just as above, it is also the case that if an RCT contains  an $S$ satisfying ($1'$)--($3'$), then it cannot be completed to a  $<$-colored tableau.

Also note that a given box is the tail of at most one arrow, but a single box can be the head of more than one arrow.
\end{remark}

\begin{example}\label{ex arrow respecting}
Here is an RCT drawn with its arrows:
\setlength{\cellsize}{3.4ex}
\[R = \tiny\tableau{
{\color{black}\put(.25,-1 ){\vector(1,-1){0.6}}}\put(-.1,-0.99){$\crc{1}$}\\
{\color{black}\put(.8,-1.6){\vector(-1,1){0.6}}}\put(-.1,-0.94){$2$}&
{\color{black}\put(.25,-1 ){\vector(3,-1){2.6}}}\put(-.1,-1.06){{$\crc{2}$}}\\
{\color{black}\put(.25,-1 ){\vector(1,-1){0.6}}}\put(-.1,-0.94){$\crc{2}$}&{\color{black}\put(.8,-1.6 ){\vector(-1,1){0.6}}}\put(-.1,-0.94){$3$}&{\color{black}\put(.8,-1.6 ){\vector(-1,1){0.6}}}\put(-.1,-0.94){$3$} & {\color{black}\put(.8,-1.6 ){\vector(-1,1){0.6}}}\put(-.1,-0.94){$3$}&{\color{black}\put(.25,-1 ){\vector(1,-1){0.6}}}\put(-.1,-0.94){$\crc{3}$}\\
{\color{black}\put(.8,-1.6 ){\vector(-1,1){0.6}}}\put(-.1,-0.94){$3$} & \crc{3} & 4 & 4& 4 &\crc{4}\\
\crc{3}&4&5\\
\crc{3}}\]
\setlength{\cellsize}{2.2ex}

Of the following three reading words of $R$, the first two are arrow respecting, but the last is not.
\begin{align*}
\sqread(R) =~&\e{\crc{3}\, \crc{3}\, 4\, 3\, 5\, \crc{2}\, \crc{3}\, 4\, 3\, 2\, 4\, 3\, \crc{1}\, \crc{2}\, 4\, 3\, \crc{3}\, \crc{4}} \quad\text{\footnotesize (arrow respecting)}\\
& \e{\crc{3}\, \crc{3}\, 4\, 3\, 5\, \crc{2}\, \crc{3}\, 4\, 3\, 4\, 3\, 2\, \crc{1}\, \crc{2}\, 4\, 3\, \crc{3}\, \crc{4}}\quad\text{\footnotesize (arrow respecting)} \\
& \e{\crc{3}\, \crc{3}\, 4\, 3\, 5\, \crc{2}\, \crc{3}\, 3\, 2\, \crc{1}\, \crc{2}\, 4\, 4\, 3\, 4\, 3\, \crc{3}\, \crc{4}}\quad\text{\footnotesize (not arrow respecting)}
\end{align*}
\end{example}

A \emph{$\nearr$-maximal box} of a diagram is a box that is maximal for the order  $<_\nearrsub$.
A \emph{nontail removable box} of an RCT $R$ is a $\nearr$-maximal box of $R$ that is not the tail of an arrow of $R$.
The last letter of an arrow respecting reading word of $R$ must lie in a nontail removable box of $R$.
For example, the $\nearr$-maximal boxes of the $R$ in Example~\ref{ex arrow respecting} are $(1,1), (2,2), (3,5), (4,6),$ which contain the entries $\crc{1}, \crc{2}, \crc{3}, \crc{4}$, respectively.
The RCT $R$ has only the single nontail removable box $(4,6)$.


\subsection{Arrow respecting reading words in $\U/\Ikron$}
\label{ss Combinatorics of restricted colored tableaux}

We assemble some basic results about arrow respecting reading words and the images of these words in $\U/\Ikron$.
These are needed for the proof of Theorem \ref{t J intro}.

\begin{lemma}
\label{l exist no tail corner}
Every RCT has a nontail removable box.
\end{lemma}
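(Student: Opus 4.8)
The plan is to argue by contradiction: assume $R$ is an $\RCT$ in which every removable box (that is, every $\nearr$-maximal box) is the tail of an arrow, and derive a contradiction. I would first record elementary structure. If $\beta=(r_0,c_0)$ is $\nearr$-maximal then no box of $R$ is weakly above and weakly right of $\beta$; in particular $(r_0-1,c_0)\notin\sh(R)$ and $(r_0,c_0+1)\notin\sh(R)$. Distinct $\nearr$-maximal boxes are $<_{\nearrsub}$-incomparable, and if one lists them as $\beta_1,\dots,\beta_m$ in order of increasing row then their columns are strictly increasing as well. Since $\sh(R)$ is a lower order ideal for $<_{\nearrsub}$, every box of $R$ lies $\le_{\nearrsub}$ some $\beta_i$, so $\beta_1$ is the rightmost box of the topmost nonempty row of $R$ and $\beta_m$ lies in the rightmost nonempty column of $R$. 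Moreover, for each $i$ the box $(r_{i+1},c_i)$ lies in $\sh(R)$ (it is $\le_{\nearrsub}\beta_{i+1}$) and sits below $\beta_i$ in column $c_i$ and in the row of $\beta_{i+1}$; since columns of $R$ increase weakly downward and rows increase weakly rightward in the natural order, this yields the chain $T_{\beta_1}\le T_{\beta_2}\le\cdots\le T_{\beta_m}$.

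Next I would determine which arrow subtableaux can have a $\nearr$-maximal box as a distinguished corner. The conditions $(r_0,c_0+1)\notin\sh(R)$ and $(r_0-1,c_0)\notin\sh(R)$ exclude the ``full $2\times 2$'' subtableaux (the first and fourth pictures of Definition~\ref{d arrows}), so a $\nearr$-maximal box can only be the SE corner of an $L$-shaped $\nwarr$ subtableau or the NW corner of an $L$-shaped $\searr$ subtableau. In particular $\beta_1$, lying in the top row, cannot be the SE corner of a $\nwarr$ subtableau, so it is the tail of a $\searr$ arrow; dually $\beta_m$, lying in the rightmost column, is the tail of a $\nwarr$ arrow. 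Applying Proposition~\ref{e diagonal RCT} along the legs of these $L$'s also gives: if $\beta_i$ is a $\searr$-tail with entry $\crc{a}$ then the box directly below it has entry $\crc{a}$ or $a+1$, and if $\beta_i$ is a $\nwarr$-tail with entry $a+1$ then the box directly left of it has entry $\crc{a}$ or $a+1$.

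The heart of the argument is a ``pushing'' step. Suppose $\beta_i$ is a $\searr$-tail with entry $\crc{a}$ and arrow head $\gamma$ (entry $\crc{a+1}$, lying strictly SE of $\beta_i$). Because the $\searr$-subtableau at $\beta_i$ is an $L$ and $\sh(R)$ is a lower order ideal, the column of $\gamma$ is empty strictly above $\gamma$; hence, if $\gamma$ is not itself $\nearr$-maximal, the $\nearr$-maximal box $\beta_s$ lying weakly above and weakly right of $\gamma$ must lie in the same row as $\gamma$ and to its right, so $T_{\beta_s}\ge T_{\gamma}=\crc{a+1}>\crc{a}$. Either way we obtain a $\nearr$-maximal box---necessarily again a tail, by hypothesis---with entry $>\crc{a}$; and as long as the process lands on $\searr$-tails the entry stays barred and strictly increases, so since the alphabet is finite, iterating eventually reaches a $\nwarr$-tail with entry $>\crc{a}$. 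Dually, from any $\nwarr$-tail one reaches a $\searr$-tail with strictly smaller entry. I would then conclude as follows: if $\beta$ is the $\nearr$-maximal box of largest entry, the pushing step applied to $\beta$ would produce a $\nearr$-maximal box of strictly larger entry, so $\beta$ is not a $\searr$-tail; and if $T_\beta$ is barred then $\beta$ is also not a $\nwarr$-tail, a contradiction. Dually one handles the $\nearr$-maximal box of smallest entry when its entry is unbarred. By the chain $T_{\beta_1}\le\cdots\le T_{\beta_m}$, the only surviving case is that $\beta_1$ is a $\searr$-tail with barred entry and $\beta_m$ is a $\nwarr$-tail with unbarred entry; in that case I would examine the index at which the tail-type switches along $\beta_1,\dots,\beta_m$ and obtain a contradiction from the way the adjacent $\searr$- and $\nwarr$-shaped $L$'s must fit together inside $R$.

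The main obstacle is precisely this last case. The $\searr$ and $\nwarr$ subtableaux point in opposite directions, so the natural monotone quantities---$\searr$-pushing raises both the entry and the row, while $\nwarr$-pushing lowers both---do not on their own close an alternating argument. What makes it work is the dual extremality of the endpoints: $\beta_1$ is simultaneously the topmost box of $R$ and the box of minimal entry, while $\beta_m$ is simultaneously the bottommost box and the box of maximal entry. Exploiting this, together with Proposition~\ref{e diagonal RCT} applied at the switching index, should force the contradiction and thereby prove Lemma~\ref{l exist no tail corner}, which in turn underlies the existence of arrow respecting reading words (Theorem~\ref{t arrow respecting connected}) used in what follows.
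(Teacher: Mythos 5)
Your proposal is incomplete at exactly the point you flag yourself: the case in which $\beta_1$ is a $\searr$-tail (barred entry) and $\beta_m$ is a $\nwarr$-tail (unbarred entry) is never resolved. You reduce to a switching index $i$ with $\beta_i$ a $\searr$-tail and $\beta_{i+1}$ a $\nwarr$-tail and assert that the adjacent $L$'s ``should force the contradiction,'' but no contradiction is derived. This is a genuine gap, and it is the crux of the lemma: everything you establish beforehand is correct (the chain $T_{\beta_1}\le\cdots\le T_{\beta_m}$, the exclusion of the full $2\times 2$ forms, the fact that $\beta_1$ cannot be a $\nwarr$-tail and $\beta_m$ cannot be a $\searr$-tail, the pushing step), but none of it rules out an alternating pattern of tails.

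Here is how to close the case. Since the rows of a restricted shape are initial intervals $[1,m_r]$ and the $\searr$ arrow subtableau at $\beta_i$ occupies only the first column and last row of its rectangle, every row strictly between $\beta_i$ and its head $\gamma$ ends in column $c_i$; hence the next $\nearr$-maximal box after $\beta_i$ sits in the row of $\gamma$, i.e.\ $\gamma$ lies in row $r_{i+1}$ strictly west of $\beta_{i+1}$, with entry $\crc{a+1}$ where $T_{\beta_i}=\crc{a}$. Dually, the head $\delta$ of $\beta_{i+1}$'s $\nwarr$ arrow lies in column $c_i$ strictly below $\beta_i$, with entry $b-1$ where $T_{\beta_{i+1}}=b$ (so $b\ge a+1$). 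If $b=a+1$, then $\delta$ carries the unbarred letter $a<\crc{a}$ while sitting below the entry $\crc{a}$ in its column, contradicting weak increase down columns. If $b\ge a+2$, consider the box $(r_{i+1},c_i)$: descending column $c_i$ through the unbarred letter $b-1$ forces its entry to exceed $b-1\ge a+1$, hence to be at least $\crc{a+1}$, while moving east along row $r_{i+1}$ into the barred letter $\crc{a+1}$ at $\gamma$ forces its entry to be strictly less than $\crc{a+1}$ (as in Proposition~\ref{e diagonal RCT}). Either way this is impossible, and together with your endpoint observations it completes the proof.

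For comparison, the paper disposes of the lemma in two sentences, asserting that the northernmost $\nearr$-maximal box with unbarred entry and the southernmost one with barred entry are nontail. Your instinct that more is needed is sound: for the RCT with first row $\crc{1}$ and second row $2\,\crc{2}\,3$, the southernmost (indeed only) $\nearr$-maximal box with barred entry is $(1,1)$, and it is the tail of a $\searr$ arrow to $(2,2)$ (the fifth picture of Definition~\ref{d arrows}); so the boxes the paper singles out can in fact be tails, and a complete argument really does have to confront the switching case you isolate.
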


\begin{proof}
Consider the $\nearr$-maximal boxes of an RCT $R$. The northernmost such box containing an unbarred letter (if it exists) is a nontail removable box, and the southernmost such box containing a barred letter (if it exists) is a nontail removable box.
Hence $R$ contains a nontail removable box.
\end{proof}

\begin{corollary}\label{c any end}
If $\beta$ is a nontail removable box of an RCT $R$, then there is an arrow respecting reading word of $R$ that ends in $\e{R_\beta}$.
\end{corollary}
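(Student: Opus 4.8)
The plan is to prove the statement by induction on $|R|$, building the reading word so that the entry of $\beta$ comes last.

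The base case $|R|\le 1$ is trivial. For the inductive step, suppose $\beta$ is a $\nearr$-maximal box of $R$ that is not the tail of any arrow, and let $R'$ be the tableau obtained from $R$ by deleting $\beta$, that is $R' = R_{\,\sh(R)\setminus\{\beta\}}$. First I would check that $R'$ is again an RCT: since $\beta$ is $\nearr$-maximal, no box of $\sh(R)$ is $<_\nearrsub$-above $\beta$, so $\sh(R)\setminus\{\beta\}$ is still a lower order ideal (for $<_\nearrsub$) of a partition diagram, hence a restricted shape, and the entrywise conditions of Definition~\ref{d RCT} are inherited by any subtableau. By Lemma~\ref{l exist no tail corner}, $R'$ has a nontail removable box, so the inductive hypothesis provides an arrow respecting reading word $\e{w}'$ of $R'$. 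The whole argument then comes down to one claim: $\e{w} := \e{w}'\,\e{R_\beta}$ is an arrow respecting reading word of $R$.

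Two parts of this claim are easy. First, $\e{w}$ is a reading word of $R$: $\e{w}'$ already respects $<_\nearrsub$ on $\sh(R)\setminus\{\beta\}$, no box is $<_\nearrsub$-above $\beta$, and every box $<_\nearrsub$-below $\beta$ appears in $\e{w}'$, hence to the left of the final letter. Second, for any arrow of $R$ whose head is $\beta$ the tail precedes $\beta$ automatically, while no arrow has tail $\beta$ by hypothesis. The substantive part is to handle an arrow of $R$ whose tail $t$ and head $h$ both survive in $R'$, by showing that $t\to h$ is already an arrow of $R'$; then $\e{w}'$, and therefore $\e{w}$, respects it. If the arrow subtableau $S$ producing $t\to h$ does not meet $\beta$, then the intersection of $\sh(R')$ with the same rectangle is exactly $S$ and we are done. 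Otherwise $\beta$ lies in $S$ and is therefore $\nearr$-maximal in $S$, and here I would appeal to a direct inspection of the six patterns in Definition~\ref{d arrows}: the $\nearr$-maximal boxes of $S$ are either exactly the two boxes joined by its arrow (the northwest box and the southeast box of $S$) — ruled out because $t,h\ne\beta$ — or, when $S$ is one of the two $2\times 2$ patterns, the extra northeast box, whose removal converts $S$ to the matching ``$2\times 2$ minus a corner'' pattern carrying the very same arrow. In that case the intersection of $\sh(R')$ with the same rectangle is exactly $S\setminus\{\beta\}$, so $t\to h$ is an arrow of $R'$, completing the induction.

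I expect this last inspection to be the only real obstacle: one must be certain that deleting a $\nearr$-maximal box of an arrow subtableau can never sever an arrow both of whose endpoints remain. Reduced to its essentials, this is the assertion that in each of the six patterns of Definition~\ref{d arrows} the only $\nearr$-maximal box other than the two arrow endpoints is the ``extra'' northeast box of the $2\times 2$ patterns, and that dropping it leaves a smaller arrow pattern with the same arrow. It is a finite check once the patterns are laid out, but it is the single place where the exact shapes of arrow subtableaux — not merely the preorder generated by $<_\nearrsub$ together with the arrows — get used.
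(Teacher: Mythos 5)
Your proof is correct and is exactly the argument the paper intends: the paper's proof is the one-line ``induction on $|R|$ using Lemma~\ref{l exist no tail corner}'', i.e.\ delete $\beta$, apply the inductive hypothesis to $R-\beta$, and append $\e{R_\beta}$. You have simply made explicit the (finite, pattern-by-pattern) check that deleting a $\nearr$-maximal box which is not an arrow endpoint never destroys an arrow between two surviving boxes, which is indeed the only point requiring care.
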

\begin{proof}
This follows by induction on $|R|$ using Lemma \ref{l exist no tail corner}.
\end{proof}

The next result gives a natural way to associate an element of \ $\U/\Ikron$ to any RCT.
\begin{theorem}\label{t arrow respecting connected}
Any two arrow respecting reading words of an RCT are equal in \hspace{-.4mm} $\U/\Ikron$.
\end{theorem}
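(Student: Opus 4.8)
The plan is to show that any two arrow respecting reading words of an RCT $R$ can be connected by a sequence of elementary moves, each of which is an instance of one of the defining relations of $\Ikron$ (the Knuth-type relations \eqref{e plac nat rel knuth3}--\eqref{e plac nat rel knuth4b}, the rotation relation \eqref{e kron rel rotate12}, or the far commutation relations \eqref{e kron rel far commute}). Since any permutation of entries that preserves being an arrow respecting reading word is generated by transpositions of adjacent letters $\e{w_i w_{i+1}} \to \e{w_{i+1} w_i}$, the first step is to understand exactly when two adjacent entries of $R$ may be swapped while staying an arrow respecting reading word: this happens precisely when the two boxes are incomparable in $<_\nearrsub$ and neither lies at the tail of an arrow pointing at the other. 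I would then argue that such a swap, together with possibly two neighboring entries, is always realized by one of the $\Ikron$-relations.

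The key case analysis organizes by the relative position of the two boxes $\beta, \beta'$ containing $\e{w_i}, \e{w_{i+1}}$. If $\beta$ and $\beta'$ are not in adjacent diagonals (or are "far apart" in the $\searrsub$ sense), Proposition~\ref{e diagonal RCT} and the column/row monotonicity of an RCT force their entries to differ by more than one in the relevant sense, so the far commutation relation \eqref{e kron rel far commute} directly applies. The more delicate situations are when $\beta$ and $\beta'$ are in consecutive diagonals: here the third entry $\e{w_{i-1}}$ or $\e{w_{i+2}}$ lying "between" them (in the same rectangular window) must be brought in, and one checks that the triple $\{\e{w_{i-1}}, \e{w_i}, \e{w_{i+1}}\}$ (or the shifted triple) has the pattern of either a Knuth relation or the rotation relation \eqref{e kron rel rotate12}. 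The arrow subtableaux of Definition~\ref{d arrows} are exactly the configurations where naively swapping would change the word's class; the point is that in those configurations the swap is \emph{forbidden} (one box is the tail of an arrow into the other), so it never needs to be performed. Thus every \emph{allowed} adjacent swap falls under an $\Ikron$-relation.

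Having established that every allowed adjacent transposition is an $\Ikron$-move, the remaining step is combinatorial/graph-theoretic: show that the set of arrow respecting reading words of $R$ is connected under allowed adjacent transpositions. I would prove this by induction on $|R|$, using Lemma~\ref{l exist no tail corner} and Corollary~\ref{c any end}: given two arrow respecting reading words $\e{w}, \e{w'}$, first connect each to one ending in the \emph{same} nontail removable box's entry (by commuting that entry rightward past everything not blocking it via arrows or $<_\nearrsub$-order), and then strip off the last letter and apply the inductive hypothesis to $R$ with that box removed, which is again an RCT.

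\textbf{Main obstacle.} The hard part will be the case analysis verifying that each allowed adjacent transposition in an RCT actually coincides with one of the $\Ikron$-relations---in particular, confirming that the "consecutive diagonal" swaps land on the rotation relation \eqref{e kron rel rotate12} with the exact condition $x = y\myDownarrow = z\myDownarrow\myDownarrow$, and that the arrow subtableaux of Definition~\ref{d arrows} precisely capture (and thereby rule out) the transpositions that would otherwise change the word's image in $\U/\Ikron$. This requires carefully juggling the natural-order comparisons $\lecol, \lerow$ together with Proposition~\ref{e diagonal RCT}, and checking that no ``escaping'' configuration arises in the intermediate non-completable RCTs flagged in Remark~\ref{r definition arrow is good}.
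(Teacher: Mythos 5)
Your overall architecture (induction on $|R|$, far commutation for ``far apart'' entries, reduction to connecting words that end in different nontail removable boxes) matches the paper's, but there is a genuine gap in the critical case. When two consecutive nontail removable boxes $\beta_i <_\searrsub \beta_{i+1}$ carry entries $a$ and $a+1$ (or $\crc{a}$ and $\crc{a+1}$), the two letters do \emph{not} far-commute, there is no arrow between the boxes (they sit in a $2\times c$ configuration with $c>2$), and yet the swap is an allowed change of reading word. You propose to realize this swap via the rotation relation \eqref{e kron rel rotate12}, but that relation is a four-term identity $\e{xzy}-\e{zxy}-\e{yxz}+\e{yzx}\in\Ikron$; it never equates two monomials, so a single adjacent transposition of two reading words can never be ``an instance of'' it. (Rotation switches pair monomials in the switchboard formalism, which encodes orthogonality to the ideal, not congruence modulo it.) The paper's proof of Theorem~\ref{t arrow respecting connected} in fact never touches \eqref{e kron rel rotate12}: it handles this case by first producing, via Corollary~\ref{c any end}, an arrow respecting reading word of $R-\beta_i-\beta_{i+1}$ that ends in the box $\beta$ immediately west of $\beta_{i+1}$ (whose entry is forced to be $a+1$), after verifying that $\beta$ is not an arrow tail in the smaller tableau, and then applies the repeated-letter Knuth relation $\e{(a+1)\,a\,(a+1)}\equiv\e{(a+1)(a+1)\,a}$ from \eqref{e plac nat rel knuth3} (resp.\ \eqref{e plac nat rel knuth4} in the barred case). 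Your proposal contains no substitute for this step.

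Relatedly, your plan to ``connect each word to one ending in the same nontail removable box's entry by commuting that entry rightward'' is circular in exactly this case: if $\e{w}$ ends in $R_{\beta_i}=a$ and $\e{w'}$ ends in $R_{\beta_{i+1}}=a+1$, moving one entry past the other is precisely the move you have not yet justified. The paper avoids this by contracting the (inductively connected) subgraphs of words ending in each $\beta_i$ and exhibiting an edge between \emph{consecutive} contracted vertices only, using the prefix-rearrangement above. Your proof would go through if you replaced the appeal to \eqref{e kron rel rotate12} with this mechanism; as written, the key case fails.
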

\begin{proof}
Throughout the proof, we write $f\equiv g$ to mean that $f$ and $g$ are congruent  modulo $\Ikron$ (for $f,g \in \U$).

Let  $R$ be an RCT and let $\R_R$ denote the graph with vertex set the arrow respecting reading words of $R$ and an edge between any two such words that differ by a single application of one of the relations \eqref{e kron rel far commute}, \eqref{e plac nat rel knuth3}, \eqref{e plac nat rel knuth4}.
We prove that  $\R_R$ is connected by induction on $|R|$.
Let $\beta_1, \ldots, \beta_t$ denote the nontail removable boxes of  $R$, labeled so that $\beta_1 <_\searrsub \beta_2 <_\searrsub \cdots <_\searrsub \beta_t$
($t \geq 1$ by Lemma \ref{l exist no tail corner}).
By induction, each $\R_{R-\beta_i}$ is connected.  Hence  the induced subgraph of $\R_R$  with  vertex set consisting of those words that end by reading the box $\beta_i$, call it $\R_{R-\beta_i} \e{R_{\beta_i}}$, is connected. Let $H$ be the graph (with $t$ vertices) obtained from  $\R_R$ by contracting the subgraphs  $\R_{R-\beta_i}\e{R_{\beta_i}}$.
We must show that  $H$ is connected.

To prove that  $H$ is connected, we will show that there is an edge between vertices  $i$ and  $i+1$ of  $H$  for each  $i$.
It follows from Proposition \ref{e diagonal RCT} that $R_{\beta_i} <  R_{\beta_{i+1}} \myDownarrow $.
We first consider the case $R_{\beta_i} < R_{\beta_{i+1}} \myDownarrow \myDownarrow$.
Let  $\e{w}$ be an arrow respecting reading word of  $R-\beta_i-\beta_{i+1}$.
Then  $\e{w R_{\beta_i} R_{\beta_{i+1}} \equiv \e{w} R_{\beta_{i+1}} R_{\beta_{i}}}$ by the far commutation relations \eqref{e kron rel far commute}, hence there is an edge between vertices  $i$ and  $i+1$ of  $H$.

Next consider the case $R_{\beta_i}= a, \, R_{\beta_{i+1}}= a+1$ for some $a \in [N-1]$.
By Proposition \ref{e diagonal RCT},  $\beta_i$ and  $\beta_{i+1}$ are consecutive  $\nearr$-maximal boxes of  $R$ i.e. there is no  $\nearr$-maximal box  $\beta$ of  $R$ such  that $\beta_i <_\searrsub \beta <_\searrsub  \beta_{i+1}$.
Let  $S$ be the subtableau of  $R$ consisting of the boxes  $\ge_\searrsub \beta_i$ and  $\le_\searrsub \beta_{i+1}$.
Since there is no arrow from $\beta_{i+1}$ to $\beta_i$ and  $\beta_i$ and  $\beta_{i+1}$ are  $\nearr$-maximal boxes of  $R$, by Definition~\ref{d arrows} the only possibility is that  $S$ has shape
 ${\tiny \tableau{~& & &\\ ~&\cdots&~&~}}$
($S$ has 2 rows and more than 2 columns).
Let $R' = R-\beta_i-\beta_{i+1}$ and let $\beta$ denote the box immediately west of $\beta_{i+1}$.
By the strictness conventions of RCT, we must have $\noe{R_{\beta}} = \noe{a+1}$.
We next claim that $\beta$ is not an arrow tail in  $R'$ (there could be an arrow from  $\beta$ to  $\beta_i$ in  $R$).
By the forms of the first three arrow subtableaux in Definition~\ref{d arrows} and since
$\beta_i \notin R'$, if there is an arrow from $\beta$ to some  $\beta'$ in  $R'$,
then  $\beta'$ must be strictly north of  $\beta_i$; but this implies  $R_{\beta'} \lecol R_{\beta_i} = a$, so there cannot be an arrow from  $\beta$ to  $\beta'$ in  $R'$.
Corollary~\ref{c any end} then implies that there is an arrow respecting reading word of $R'$ that ends in $\e{R_{\beta}}$; let $\e{w'R_\beta}$ be one such word.
Hence the congruence
\[\e{w ' R_{\beta} R_{\beta_i} R_{\beta_{i+1}} = w' (a+1) a (a+1) \equiv w' (a+1) (a+1) a = w' R_{\beta} R_{\beta_{i+1}} R_{\beta_{i}}} \]
obtained by applying the relation \eqref{e plac nat rel knuth3} exhibits an edge between vertices  $i$ and  $i+1$ of  $H$.
The case $R_{\beta_i}= \crc{a}, \, R_{\beta_{i+1}}= \crc{a+1}$ is handled in a similar way.
\end{proof}

\section{Proof of Theorem \ref{t J intro}}
\label{s proof of theorem}
Here we prove Theorem \ref{t J intro} by an inductive computation that involves flagged versions of the
noncommutative super Schur functions.  Lemma \ref{l lambda1 eq lambda2 commute} is the key computation in the algebra  $\U/\Ikron$
that makes the proof possible and is the main cause for the words  $\sqread(T)$ appearing in the statement of the theorem.
This section has much in common with \cite[\textsection4.3]{BLamLLT}, but there are also important differences.

Here is some notation we will use throughout this section:
for  $f,g \in \U$, we write $f\equiv g$ to mean that $f$ and $g$ are congruent modulo $\Ikron$.
We will work with the poset  $\{\crc{0}\} \sqcup \A$ which extends the natural order  $<$ on  $\A$ and where  $\crc{0} < x$ for all  $x \in \A$.
The notation  $x \myDownarrow$ was introduced in \eqref{e myDownarrow}; we recall this and introduce the similar notation  $x \mydownarrow$.  For any  $x \in \A$, define
\begin{align*}
\noe{x} \myDownarrow =
\begin{cases}
\noe{\crc{a-1}} &\text{ if $\noe{x = a}$, \, $a \in [N]$}, \\
\noe{a} &\text{ if $\noe{x = \crc{a}}$, \, $a \in [N]$}.
\end{cases}
\qquad
\noe{x} \mydownarrow =
\begin{cases}
\noe{\crc{a-1}} &\text{ if $\noe{x = a}$, \, $a \in [N]$}, \\
\noe{\crc{a}}   &\text{ if $\noe{x = \crc{a}}$, \, $a \in [N]$}.
\end{cases}
\end{align*}

\subsection{Noncommutative column-flagged super Schur functions}\label{ss noncommutative flagged schur}


Here we introduce a flagged generalization of the
noncommutative super Schur functions. This will be important for our inductive computation of
$\mathfrak{J}_{\nu}(\mathbf{u})$ in the proof of Theorem \ref{t J intro}.

For any subset $S \subseteq \A$ and positive integer $k$, there is a natural generalization of the functions  $e_k(\mathbf{u})$
given by
\[
e_k(S)=\sum_{\substack{z_1 \gecol z_2 \gecol \cdots \gecol z_k \\ z_1,\dots,z_k \in S}} u_{z_1} u_{z_2} \cdots u_{z_k} \, \in \U;
\]
set $e_0(S)=1$ and $e_{k}(S) = 0$ for $k<0$.
By the proof of  Proposition \ref{p es commute kron}, $e_k(S) e_l(S) = e_l(S) e_k(S)$ in  $\U/\Ikronknuth$ (and therefore in $\U/\Ikron$) for all $k$ and $l$.

For $x \in \{\crc{0}\} \sqcup \A$, define $\A_{\le x} = \{y \in \A \mid y \le x\}$ (thus  $\A_{\le \crc{0}} = \{\}$).
Given a weak composition
$\alpha=(\alpha_1,\dots,\alpha_l)$ and elements  $n_1,n_2,\dots, n_{l} \in \{\crc{0}\} \sqcup \A$, define the \emph{noncommutative column-flagged super Schur function} by
\begin{align}
J_{\alpha}(n_1,n_2,\dots, n_l)
:=\sum_{\pi\in \S_{l}}
\sgn(\pi) \, e_{\alpha_1+\pi(1)-1}(\A_{\le n_1}) e_{\alpha_2+\pi(2)-2}(\A_{\le n_2})\cdots e_{\alpha_{l}+\pi(l)-l}(\A_{\le n_l}) \, \in \U. \label{e flag schur}
\end{align}
We also use the  shorthand
\[J_\alpha^\mathbf{n} = J_\alpha(n_1,\ldots,n_l),\]
where $\mathbf{n}$ is the  $l$-tuple $(n_1,\dots,n_l)$.
These functions are related to the  $\mathfrak{J}_\nu(\mathbf{u})$ of Definition~\ref{d noncommutative Schur functions} by
$\mathfrak{J}_\nu(\mathbf{u}) = J_{\nu'}^{\crc{N}\,\crc{N}\,\cdots\,\crc{N}}$.


For colored words $\e{w^1}, \ldots, \e{w^{l-1}} \in \U$, we will also make use of the \emph{augmented noncommutative column-flagged super Schur functions}, given by
\begin{align*}
&J_{\alpha}(n_1\Jnot{w^1}n_2\Jnot{w^2}\cdots\Jnot{w^{l-1}}n_l) \\
&:=\sum_{\pi\in \S_{l}}
\sgn(\pi) \, e_{\alpha_1+\pi(1)-1}(\A_{\le n_1})\e{w^1} e_{\alpha_2+\pi(2)-2}(\A_{\le n_2})\e{w^2}\cdots  \e{w^{l-1}}e_{\alpha_{l}+\pi(l)-l}(\A_{\le n_l}) \, \in \U.
\end{align*}
If $\e{w^1}, \e{w^2}, \dots, \e{w^{j-1}}, \e{w^{j+1}}, \dots, \e{w^{l-1}}$ are empty and  $\e{w = w^j}$, we also use the shorthand
\[ J_\alpha^\mathbf{n}(\Jnotb{j}{w}) = J_{\alpha}(n_1\Jnot{w^1}n_2\Jnot{w^2}\cdots\Jnot{w^{l-1}}n_l).
\]

Note that  because the noncommutative super elementary symmetric functions commute in  $\U/\Ikron$,
\begin{align}\label{e elem sym Jswap}
J_\alpha^\mathbf{n} \equiv -J_{\alpha_1, \ldots, \alpha_{j-1},\alpha_{j+1}-1,\alpha_j+1,\ldots,\alpha_l}^\mathbf{n} \quad \text{whenever  $n_j = n_{j+1}$}.
\end{align}
In particular,
\begin{align}\label{e elem sym J0}
J_\alpha^\mathbf{n} \equiv 0 \quad \quad \quad \ \, \text{whenever  $\alpha_j = \alpha_{j+1}-1$ and  $n_j=n_{j+1}$.}
\end{align}
More generally, \eqref{e elem sym Jswap} and \eqref{e elem sym J0} hold for the augmented case provided $\e{w^j}$ is empty.

We will make frequent use of the following fact (keeping in mind $\A_{\le \crc{0}} = \{\}$):
\begin{align}\label{e ek induction}
e_k(\A_{\le x}) =&~\e{x} e_{k-1}(\A_{\le x \mydownarrow})+e_k(\A_{\le x\myDownarrow}) \qquad ~ \text{for  $x \in  \A$ and any integer $k$}.
\end{align}
Note that
\begin{align*}
e_k(\A_{\le \crc{0}}) =&
\begin{cases}
  1 & \text{ if } k = 0, \\
  0 & \text{otherwise.}
\end{cases} \notag
\end{align*}
We will often apply the identity \eqref{e ek induction} to  $J_\alpha^\mathbf{n}$ and its variants by expanding
$e_{\alpha_j+\pi(j)-j}(\A_{\le n_j})$ in \eqref{e flag schur} using \eqref{e ek induction} (so that \eqref{e ek induction} is applied once to each of the  $l!$ terms in the sum).
We refer to this as a \emph{$j$-expansion of  $J_\alpha^\mathbf{n}$} or simply a  \emph{$j$-expansion}.

\subsection{Proof of Theorem \ref{t J intro}}
After three preliminary results,
we state and prove a more technical version of Theorem~\ref{t J intro}, which involves computing $J_\nu^\mathbf{n}$ inductively by peeling off diagonals from the shape $\nu'$.

For  $\noe{x,y} \in \A$, let $[\e{x},\e{y}]$ denote the commutator $\e{xy - yx} \in \U$.

\begin{lemma}
\label{l decreasing}
Let $\noe{y,z, x_1, x_2,\dots, x_t} \in \A$ be letters satisfying $y \gecol x_1 \gecol x_2 \cdots \gecol x_t$ and
$\noe{y \leq z}$.  Suppose that  neither ($\noe{y = z} \myDownarrow$) nor ($\noe{y} = \noe{z}$,  $\noe{y,z \in \A_\crcempty}$) is true.
Then
\[\e{yx_1x_2\cdots x_t z = yzx_1x_2\cdots x_t} \quad \, \text{ in \hspace{-.4mm} $\U/\Ikron$}.\]
\end{lemma}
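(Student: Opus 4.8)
The plan is to prove the identity by induction on $t$, using the commutation relations available in $\U/\Ikron$ (namely \eqref{e plac nat rel knuth3}--\eqref{e plac nat rel knuth4b}, \eqref{e kron rel rotate12}, and especially the far commutation relations \eqref{e kron rel far commute}) together with the already-established fact (Theorem \ref{t arrow respecting connected}, or directly the plactic-type relations) that lets one move $\e{z}$ leftward past the weakly decreasing chain $\e{x_1 x_2 \cdots x_t}$ one letter at a time. The base case $t=0$ is the trivial identity $\e{yz}=\e{yz}$. For the inductive step, the key point is to commute $\e{z}$ past $\e{x_t}$: one analyzes the relationship between $\e{x_t}$ and $\e{z}$ in the natural order. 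Since $\e{y} \gecol \e{x_1} \gecol \cdots \gecol \e{x_t}$ and $\e{y} \le \e{z}$, we have $\e{x_t} \le \e{z}$ (or $\e{x_t}$ and $\e{z}$ are equal barred letters, which is handled by $\gecol$-type reasoning), and one checks that $\e{x_t} < \e{z}\myDownarrow\myDownarrow$ fails only in a controlled set of cases.

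First I would dispose of the ``far'' case: if $\e{x_t} < \e{z}\myDownarrow\myDownarrow$, then $\e{x_t}$ and $\e{z}$ commute by \eqref{e kron rel far commute}, so $\e{yx_1\cdots x_t z} = \e{yx_1\cdots x_{t-1} z x_t}$, and the chain $\e{y} \gecol \e{x_1} \gecol \cdots \gecol \e{x_{t-1}}$ still satisfies the hypotheses (with the same $\e{z}$), so induction finishes. The remaining ``close'' cases are when $\e{x_t}$, $\e{x_t}\myDownarrow$, or $\e{x_t}\myDownarrow\myDownarrow$ is $\ge \e{z}\myDownarrow\myDownarrow$ combined with $\e{x_t} \le \e{z}$; here I would use the hypothesis on $\e{y}$ and $\e{z}$ to pin down the possible values of $\e{x_t}$ relative to $\e{z}$ more precisely, and then move $\e{z}$ left past $\e{x_t}$ using the rotation relation \eqref{e kron rel rotate12} or the Knuth-type relations \eqref{e plac nat rel knuth3}--\eqref{e plac nat rel knuth4b}, picking up at most a swap that keeps the intermediate word weakly decreasing. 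After moving $\e{z}$ one step left, I reapply the inductive hypothesis to $\e{y x_1 \cdots x_{t-1}}$ (noting that the hypotheses transfer) to bring $\e{z}$ all the way next to $\e{y}$.

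The main obstacle I anticipate is bookkeeping in the ``close'' cases: the relations \eqref{e kron rel rotate12} and the Knuth relations \eqref{e plac nat rel knuth3}--\eqref{e plac nat rel knuth4b} each require a specific local pattern of three consecutive letters, so one must verify that the triple $\e{x_{t-1} x_t z}$ (or $\e{x_t z}$ together with the letter that ends up to its left) actually fits one of these patterns. The hypothesis that neither $\e{y} = \e{z}\myDownarrow$ nor ($\e{y}=\e{z}$ with both barred) holds is exactly what rules out the degenerate configurations where no valid relation applies — so I would use it precisely at the step where $\e{z}$ reaches $\e{y}$, or where the chain is short ($t$ small) and $\e{x_t}$ is forced to equal $\e{y}$ or be close to it. I expect the argument to break into a modest number of subcases according to whether $\e{y}, \e{z}$ and the $\e{x_i}$ are barred or unbarred, but each subcase should reduce to a short sequence of relation applications; the $\gecol$ (barred-ties) subtleties are the fiddliest part.
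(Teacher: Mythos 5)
Your overall strategy (peel $\e{z}$ leftward past the decreasing chain, splitting into a ``far'' case handled by \eqref{e kron rel far commute} and a ``close'' case) matches the paper's proof for the easy half of the argument, but your plan for the close case has a genuine gap. When $z$ is unbarred and some $x_i$ equals $z-1$ (note that $i$ is unique, since an unbarred letter cannot repeat in a $\gecol$-chain), there is no local relation in $\U/\Ikron$ that swaps $\e{x_i}$ and $\e{z}$: the rotation relation \eqref{e kron rel rotate12} is an identity between the \emph{commutators} $\e{(xz-zx)y}$ and $\e{y(xz-zx)}$, not a rewriting of a single monomial, so it does not ``pick up a swap'' as your plan assumes; and the Knuth relation \eqref{e plac nat rel knuth3} would require the letter immediately to the left of $\e{x_i}$ to be exactly $\e{z}$, whereas in general it is $\crc{z-1}$. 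Consequently the swap-one-step-then-induct scheme stalls exactly at this triple, and the inductive hypothesis cannot be reapplied because the intermediate expression is no longer a single monomial.

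What is needed (and what the paper does) is a non-local argument: after bringing $\e{z}$ adjacent to $\e{x_i}=\e{z-1}$ by the far/Knuth relations, write $\e{x_iz} = [\e{x_i},\e{z}] + \e{zx_i}$ and transport the commutator $[\e{z-1},\e{z}]$ leftward through $\e{x_1\cdots x_{i-1}}$ --- all of which are forced to equal $\crc{z-1}$ --- by $i-1$ applications of \eqref{e kron rel rotate12}, until it reaches $\e{y}$. The hypotheses force $\e{y}=\e{z}$ at this point (this is precisely where the exclusion of $\noe{y}=\noe{z}\myDownarrow$ enters), and then $\e{z}[\e{z-1},\e{z}]\equiv 0$ by \eqref{e plac nat rel knuth3}, killing the error term; the remaining monomial is handled by \eqref{e plac nat rel knuth3} and \eqref{e plac nat rel knuth4}. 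You correctly locate where the hypotheses on $y$ and $z$ are used, but the commutator-transport idea is missing, and without it the close case cannot be closed. (The barred-$z$ case is genuinely easy, as you suggest: the hypotheses force $\noe{y}\le\crc{a}$ when $\noe{z}=\crc{a+1}$, so only \eqref{e kron rel far commute} and \eqref{e plac nat rel knuth4} are needed there.)
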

\begin{proof}
First suppose that $\noe{z}$ is unbarred. If no $\noe{x_i}$ is equal to $\noe{z-1}$, then the relations \eqref{e kron rel far commute},
\eqref{e plac nat rel knuth3}, and \eqref{e plac nat rel knuth4} imply
\begin{align*}
\e{yx_1x_2\cdots x_tz \equiv yx_1x_2\cdots x_{t-1}zx_t \equiv yx_1x_2\cdots x_{t-2}zx_{t-1}x_t \equiv \cdots \equiv yzx_1x_2\cdots x_t}.
\end{align*}
If $\noe{x_i = z-1}$, then the argument just given shows
\[\e{yx_1x_2\cdots x_tz \equiv yx_1\cdots x_i z x_{i+1}\cdots x_t}.\]
Since $\noe{z \geq y \geq x_1 \geq x_{i-1} > x_i}$, we must have $\noe{x_1 = x_2 = \cdots = x_{i-1}  = \crc{z-1}}$ and $\noe{y} \in \{\noe{\crc{z-1}, z}\}$. The case $\noe{y = \crc{z-1} = z\myDownarrow}$ is excluded, so $\noe{y=z}$. Now we compute
\begin{align*}
&\e{yx_1\cdots x_i z x_{i+1}\cdots x_t} \\
&=~~\e{yx_1\cdots x_{i-1}[x_i, z] x_{i+1}\cdots x_t} + \e{yx_1\cdots x_{i-1}z x_i x_{i+1}\cdots x_t} \\
&\equiv~~\e{y[x_i,z]x_1\cdots x_{i-1}x_{i+1}\cdots x_t} + \e{yx_1\cdots x_{i-1}z x_i x_{i+1}\cdots x_t} \\
&\equiv~~\e{yx_1\cdots x_{i-1}z x_i x_{i+1}\cdots x_t} \\
&\equiv~~\e{ yzx_1x_2\cdots x_{i-1}x_ix_{i+1}\cdots x_t },
\end{align*}
where the first congruence is by $i-1$ applications of the relation \eqref{e kron rel rotate12}, the second is by the relation \eqref{e plac nat rel knuth3} (which yields $\e{y[x_i,z]} = \e{z[z-1,z]} \equiv 0$), and the third is by the relations \eqref{e plac nat rel knuth3} and \eqref{e plac nat rel knuth4}.

If $\noe{z = \crc{a+1}}$ is a barred letter, then by the assumptions of the lemma, $\noe{y \leq \crc{a}}$, hence the relations \eqref{e kron rel far commute} and \eqref{e plac nat rel knuth4} imply
\[\e{yx_1x_2\cdots x_tz \equiv yx_1x_2\cdots x_{t-1}zx_t \equiv yx_1x_2\cdots x_{t-2}zx_{t-1}x_t \equiv \cdots \equiv yzx_1x_2\cdots x_t},\]
as desired.
\end{proof}

\begin{example}
In the case $\e{y = z = 4}$, $\e{x_1x_2\cdots x_t = \crc{3}\crc{3}3}$, Lemma \ref{l decreasing} and its proof yield
\[\e{4\crc{3}\crc{3}34 = 4\crc{3}\crc{3}[3,4] + 4\crc{3}\crc{3}43 \equiv 4[3,4]\crc{3}\crc{3} + 4\crc{3}\crc{3}43 \equiv 4\crc{3}\crc{3}43 \equiv 4\crc{3}4\crc{3}3 \equiv 44\crc{3}\crc{3}3},\]
where the congruences are by \eqref{e kron rel rotate12}, \eqref{e plac nat rel knuth3}, \eqref{e plac nat rel knuth4}, and \eqref{e plac nat rel knuth3}, respectively.
\end{example}

\begin{remark}
The assumptions on  $y$ and  $z$ in Lemma \ref{l decreasing} are necessary:
the lemma does not extend to the case $\noe{y = a, z = \crc{a}}$ since $\e{2\crc{1}\crc{2} \not\equiv 2\crc{2}\crc{1}}$,
nor to the case $\noe{y = \crc{a}, z = a+1}$ since $\e{\crc{2}23 \not\equiv \crc{2}32}$,
nor to the case $\noe{y = \crc{a}, z = \crc{a}}$ since $\e{\crc{2}2\crc{2} \not\equiv \crc{2}\crc{2}2}$.
\end{remark}

The following lemma  is key to the proof of Theorem \ref{t J intro}.
\begin{lemma}\label{l lambda1 eq lambda2 commute}
Let  $m \in [N-1]$ and set $x = m+1$ (an unbarred letter). Then
\[
J_{(a,a)}(\crc{m} \Jnot{\e{x}} \crc{m}) = \e{x} J_{(a,a)}(\crc{m}, \crc{m}) \quad \, \text{ in \hspace{-.4mm} $\U/\Ikron$}.
\]
More generally,
if  $\alpha$ is a weak composition satisfying  $\alpha_j = \alpha_{j+1}$ and  $\mathbf{n} = (n_1,\ldots, n_l)$ with  $n_j = n_{j+1} = \crc{m}$, then
\[
J_\alpha^\mathbf{n}(\Jnotb{j}{x}) = J_\alpha^\mathbf{n}(\Jnotb{j-1}{x}) \quad \, \text{ in \hspace{-.4mm} $\U/\Ikron$}.
\]
\end{lemma}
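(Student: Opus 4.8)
The plan is to first prove the displayed identity (the case $l=2$, $\alpha=(a,a)$) and then deduce the general statement from it by a pairing argument. For the general statement I would expand
\[
J_\alpha^{\mathbf n}(\Jnotb{j}{x}) = \sum_{\pi\in\S_l}\sgn(\pi)\,A_\pi\, e_{\alpha_j+\pi(j)-j}(\A_{\le\crc{m}})\,\e{x}\, e_{\alpha_{j+1}+\pi(j+1)-(j+1)}(\A_{\le\crc{m}})\,B_\pi,
\]
where $A_\pi$ (resp. $B_\pi$) is the product of the elementary factors indexed by $1,\dots,j-1$ (resp. $j+2,\dots,l$), and pair $\pi$ with $\bar\pi:=\pi\circ(j,j+1)$. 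Since $\alpha_j=\alpha_{j+1}$ and $n_j=n_{j+1}=\crc{m}$ we have $A_{\bar\pi}=A_\pi$, $B_{\bar\pi}=B_\pi$, $\sgn(\bar\pi)=-\sgn(\pi)$, and with $p:=\alpha_j+\pi(j)-j$, $q:=\alpha_j+\pi(j+1)-j-1$ the permutation $\bar\pi$ contributes the middle factors $e_{q+1}(\A_{\le\crc{m}})\,\e{x}\,e_{p-1}(\A_{\le\crc{m}})$. So the general statement follows once one establishes, for all integers $p,q$, the shifted identity
\[
e_p(\A_{\le\crc{m}})\,\e{x}\, e_q(\A_{\le\crc{m}}) - e_{q+1}(\A_{\le\crc{m}})\,\e{x}\, e_{p-1}(\A_{\le\crc{m}}) \equiv \e{x}\bigl(e_p(\A_{\le\crc{m}})\,e_q(\A_{\le\crc{m}}) - e_{q+1}(\A_{\le\crc{m}})\,e_{p-1}(\A_{\le\crc{m}})\bigr),
\]
the case $p=q=a$ being exactly the displayed identity.

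To prove the shifted identity, set $E_k:=e_k(\A_{\le\crc{m}})$ and $D_k:=E_k\e{x}-\e{x}E_k$; the identity is equivalent to $D_pE_q\equiv D_{q+1}E_{p-1}$. Applying \eqref{e ek induction} twice gives $E_k=\e{\crc{m}}E_{k-1}+e_k(\A_{\le m})$ and $e_k(\A_{\le m})=\e{m}\,g_{k-1}+g_k$ with $g_k:=e_k(\A_{\le\crc{m-1}})$; since $\e{x}=u_{m+1}$ far-commutes with every letter $\le\crc{m-1}$ (so $\e{x}g_k=g_k\e{x}$), a short computation yields the recursion $D_k\equiv[\e{\crc{m}},\e{x}]E_{k-1}+\e{\crc{m}}D_{k-1}+[\e{m},\e{x}]g_{k-1}$, with $D_0=0$. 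Three facts in $\U/\Ikron$ then do the work: (i) $\e{\crc{m}}^i[\e{\crc{m}},\e{x}]\equiv 0$ for $i\ge 1$, from $\e{\crc{m}\crc{m}x}\equiv\e{\crc{m}x\crc{m}}$, which is \eqref{e plac nat rel knuth4}; (ii) $[\e{m},\e{x}]$ commutes with $\e{\crc{m}}$, from \eqref{e kron rel rotate12} taken with its $x,y,z$ equal to $m,\crc{m},m+1$; and (iii) $[\e{m},\e{x}]\,\e{m}\equiv 0$, from $\e{(m+1)mm}\equiv\e{m(m+1)m}$, which is \eqref{e plac nat rel knuth3b}. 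Unrolling the recursion and applying (i), (ii) collapses all the $[\e{\crc{m}},\e{x}]$-terms except the leading one and rewrites the $[\e{m},\e{x}]$-terms using $\widetilde{E}_k:=\sum_{i\ge 0}\e{\crc{m}}^i g_{k-i}=e_k(\A_{\le\crc{m}}\setminus\{m\})$, giving $D_pE_q\equiv[\e{\crc{m}},\e{x}]E_{p-1}E_q+[\e{m},\e{x}]\widetilde{E}_{p-1}E_q$ and likewise for $D_{q+1}E_{p-1}$. As the $E_k$ commute among themselves and the $\widetilde{E}_k$ among themselves in $\U/\Ikron$ (Proposition \ref{p es commute kron}), writing $U_k:=E_k-\widetilde{E}_k$ (the sum of decreasing runs in $\A_{\le\crc{m}}$ that use the letter $m$) reduces the whole problem to
\[
[\e{m},\e{x}]\bigl(\widetilde{E}_{p-1}U_q - \widetilde{E}_q U_{p-1}\bigr) \equiv 0 .
\]

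The hard part will be this last step, which I would isolate as a sublemma: $[\e{m},\e{x}]\cdot\e{v}\cdot\e{w}\equiv 0$ whenever $\e{v}$ is the monomial of a decreasing run in $\A_{\le\crc{m}}$ and $\e{w}$ is a decreasing run in $\A_{\le\crc{m}}$ using the letter $m$. Such a $\e{w}$ has the form $\e{\crc{m}}^i\e{m}\,\e{w'}$ with $\e{w'}$ a run in $\A_{\le\crc{m-1}}$, and $\e{v}=\e{\crc{m}}^{i'}\e{v'}$ similarly. Using (ii) to carry $[\e{m},\e{x}]$ past the $\e{\crc{m}}$'s, far-commutation of $\e{m+1}$ past everything $\le\crc{m-1}$, and Lemma \ref{l decreasing} together with \eqref{e kron rel far commute} to transport $[\e{m},\e{x}]$ (and a block of $\e{\crc{m}}$'s) across the decreasing run $\e{v'}$ without disturbing the configuration $\e{\crc{m}}^i\e{m}$, one brings $[\e{m},\e{x}]$ immediately to the left of an $\e{m}$ and concludes by (iii). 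The obstacle here is genuine: $[\e{m},\e{x}]$ does \emph{not} commute past $\e{\crc{m-1}}$ or $\e{m-1}$, so it cannot simply be pushed through $\widetilde{E}_{p-1}$; the rearrangement must exploit the special shape of the runs in $U_k$ (a single $\e{m}$ adjacent to a $\e{\crc{m}}$-block) and the vanishing $[\e{m},\e{x}]\e{m}\equiv 0$. Everything else is formal once this sublemma is in place.
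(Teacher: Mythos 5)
Your reduction is sound up to the last step: the transposition pairing, the recursion $D_k\equiv[\e{\crc{m}},\e{x}]E_{k-1}+\e{\crc{m}}D_{k-1}+[\e{m},\e{x}]g_{k-1}$, the three facts (i)--(iii), and the collapse of the problem to $[\e{m},\e{x}]\bigl(\widetilde{E}_{p-1}U_q-\widetilde{E}_qU_{p-1}\bigr)\equiv 0$ all check out. But the sublemma you isolate to finish---that $[\e{m},\e{x}]\,\e{v}\,\e{w}\equiv 0$ for each individual pair of runs---is false, so the final step cannot be carried out as described. Take $N\ge 3$, $m=2$, $x=3$, $\e{v}=\e{\crc{1}}$ (a term of $\widetilde{E}_1$) and $\e{w}=\e{2}$ (the unique term of $U_1$). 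Then $[\e{2},\e{3}]\,\e{\crc{1}}\,\e{2}=\e{23\crc{1}2}-\e{32\crc{1}2}$, and these two monomials are \emph{not} congruent modulo $\Ikron$: on words of content $\{2,2,3,\crc{1}\}$ the rotation relation \eqref{e kron rel rotate12} never applies (it would require the letter $\crc{2}$ or $1$), so in this graded component $\Ikron$ is spanned by differences coming from \eqref{e plac nat rel knuth3}, \eqref{e plac nat rel knuth3b} and \eqref{e kron rel far commute}, and a direct check of the resulting equivalence classes puts $\e{23\crc{1}2}$ in $\{\e{23\crc{1}2},\e{2\crc{1}32}\}$ while $\e{32\crc{1}2}$ lies in $\{\e{32\crc{1}2},\e{322\crc{1}},\e{232\crc{1}}\}$. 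This confirms that the obstruction you yourself flagged ($[\e{m},\e{x}]$ does not pass $\e{\crc{m-1}}$ or $\e{m-1}$) is fatal for a term-by-term argument; no amount of rearranging via Lemma \ref{l decreasing} can kill a single term that is genuinely nonzero in $\U/\Ikron$. What is true is only the alternating combination: for instance, when $p=q=2$ the term $[\e{2},\e{3}]\,\e{\crc{1}}\cdot\e{2\crc{1}}$ from $\widetilde{E}_1U_2$ cancels against $-[\e{2},\e{3}]\,\e{\crc{1}\crc{1}}\cdot\e{2}$ from $-\widetilde{E}_2U_1$ via \eqref{e plac nat rel knuth4}. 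So you would need a genuinely cancellative argument matching the monomials of $\widetilde{E}_{p-1}U_q$ against those of $\widetilde{E}_qU_{p-1}$, and that is a substantial piece of work your proposal does not supply.

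For comparison, the paper avoids this entire computation by an indirect argument: it starts from $J_{(a,a+1)}(m+1,m+1)\equiv 0$ (an instance of \eqref{e elem sym J0}), performs a $1$-expansion and a $2$-expansion via \eqref{e ek induction}, invokes Lemma \ref{l decreasing} once to replace $\e{x}\,e_k(\A_{\le\crc{m}})\,\e{x}$ by $\e{x}\e{x}\,e_k(\A_{\le\crc{m}})$, and then removes or straightens the remaining terms with \eqref{e elem sym J0} and \eqref{e elem sym Jswap}. You may want to pursue a similarly indirect route rather than attempting the direct cancellation.
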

\begin{proof}
Since the proofs of both statements are essentially the same, we prove only the first to avoid extra notation. We compute
\begin{align*}
0& \equiv J_{(a, a+1)}(m+1, m+1)\\
&=\e{x}J_{(a-1, a+1)}(\crc{m},m+1)+
J_{(a, a+1)}(\crc{m},m+1)\\
&=\e{x}J_{(a-1, a)}(\crc{m}  \Jnot{x}  \crc{m})+
\e{x}J_{(a-1, a+1)}(\crc{m},\crc{m})+
J_{(a, a)}(\crc{m}   \Jnot{x}  \crc{m})+
J_{(a, a+1)}(\crc{m}, \crc{m})\\
& \equiv
\e{x}J_{(a-1, a)}(\crc{m}  \Jnot{x}  \crc{m}) +
\e{x}J_{(a-1, a+1)}(\crc{m},\crc{m})+
J_{(a, a)}(\crc{m}   \Jnot{x}  \crc{m}) \\
& \equiv
\e{x}\e{x}J_{(a-1, a)}(\crc{m}, \crc{m}) +
\e{x}J_{(a-1, a+1)}(\crc{m},\crc{m})+
J_{(a, a)}(\crc{m}   \Jnot{x}  \crc{m}) \\
& \equiv
\e{x}J_{(a-1, a+1)}(\crc{m},\crc{m})+
J_{(a, a)}(\crc{m}   \Jnot{x}  \crc{m}) \\
&\equiv
-\e{x}J_{(a, a)}(\crc{m},\crc{m})+
J_{(a, a)}(\crc{m}   \Jnot{x}  \crc{m}).
\end{align*}
The first, second, and fourth congruences are by \eqref{e elem sym J0}.  The equalities are a 1-expansion and a 2-expansion (see \textsection\ref{ss noncommutative flagged schur} for notation). Lemma \ref{l decreasing} implies that  $\e{x} e_k(\A_{\leq \crc{m}}) \e{x} \equiv \e{x}\e{x}e_k(\A_{\leq \crc{m}})$ for any  $k$, hence the third congruence.
The last congruence is by \eqref{e elem sym Jswap}.
\end{proof}

\begin{corollary}\label{c aaa+1 commute lam}
Maintain the notation of Lemma~\ref{l lambda1 eq lambda2 commute} and assume in addition that
\begin{align}\label{e word condition}
\text{$\e{w}$ is a colored  word such that $\e{w} = \e{x w'}$ and the letters of the word $\e{w'}$ are $\ge m+2$.}
\end{align}
Then
\[
J_{\alpha}(n_1,\dots, n_j  \Jnot{w}   n_{j+1},\dots) = J_{\alpha}(n_1,\dots,  n_{j-1}   \Jnot{x}   n_j  \Jnot{w'}   n_{j+1},\dots) \quad \, \text{ in \hspace{-.4mm} $\U/\Ikron$}.
\]
\end{corollary}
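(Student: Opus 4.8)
The plan is to deduce the Corollary from Lemma~\ref{l lambda1 eq lambda2 commute} (the case $\e{w'}$ empty) by exhibiting $\e{w'}$ as \emph{inert} relative to the elementary factors attached to the flags $n_j=n_{j+1}=\crc m$, and then re-running the computation of Lemma~\ref{l lambda1 eq lambda2 commute} with $\e{w'}$ carried along as a spectator. The key commutation fact is the following: every letter $\ell$ occurring in $\e{w'}$ satisfies $\ell\ge m+2$, hence $\ell\myDownarrow\myDownarrow\ge(m+2)\myDownarrow\myDownarrow=m+1>\crc m$, so by the far commutation relations~\eqref{e kron rel far commute} the letter $\ell$ commutes in $\U/\Ikron$ with every letter of $\A_{\le\crc m}$ (and with $\e{\crc m}$). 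Consequently $\e{w'}$ commutes in $\U/\Ikron$ with $e_k(\A_{\le\crc m})$ for every $k$.

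Next I would use this to move $\e{w'}$ out of the way. Applying it to each summand of the expansion~\eqref{e flag schur} of the two augmented functions in the statement -- in each summand the factor immediately to the right of $\e{w'}$ is $e_{\alpha_{j+1}+\pi(j+1)-(j+1)}(\A_{\le n_{j+1}})=e_{\cdots}(\A_{\le\crc m})$, past which $\e{w'}$ slides -- one obtains, when $j\le l-2$,
\begin{align*}
J_{\alpha}(n_1,\dots, n_j \Jnot{w} n_{j+1},\dots)&\equiv J_{\alpha}(n_1,\dots, n_j \Jnot{x} n_{j+1}\Jnot{w'}n_{j+2},\dots),\\
J_{\alpha}(n_1,\dots, n_{j-1}\Jnot{x} n_j \Jnot{w'} n_{j+1},\dots)&\equiv J_{\alpha}(n_1,\dots, n_{j-1}\Jnot{x} n_j,\, n_{j+1}\Jnot{w'}n_{j+2},\dots),
\end{align*}
while for $j=l-1$ the same move pushes $\e{w'}$ off the right end, turning the two sides into $J^{\mathbf n}_\alpha(\Jnotb{j}{x})\,\e{w'}$ and $J^{\mathbf n}_\alpha(\Jnotb{j-1}{x})\,\e{w'}$, whence the Corollary follows at once from Lemma~\ref{l lambda1 eq lambda2 commute}. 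So it remains to treat $j\le l-2$ and prove
\[ J_{\alpha}(n_1,\dots, n_j \Jnot{x} n_{j+1}\Jnot{w'}n_{j+2},\dots)\equiv J_{\alpha}(n_1,\dots, n_{j-1}\Jnot{x} n_j,\, n_{j+1}\Jnot{w'}n_{j+2},\dots); \]
that is, $\e x$ may be slid from slot $j$ to slot $j-1$ while $\e{w'}$ sits inertly in slot $j+1$.

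This last congruence is the general form of Lemma~\ref{l lambda1 eq lambda2 commute} (whose hypotheses $\alpha_j=\alpha_{j+1}$ and $n_j=n_{j+1}=\crc m$ hold here) but with the extra word $\e{w'}$ parked in slot $j+1$, and I would prove it by re-running the proof of that Lemma verbatim, keeping $\e{w'}$ in slot $j+1$ throughout. The argument starts from the vanishing $0\equiv J_{\tilde\alpha}(\dots)$ -- the same starting element as in the proof of Lemma~\ref{l lambda1 eq lambda2 commute}, with flags $m+1$ in positions $j,j+1$ and with $\e{w'}$ inserted into slot $j+1$ -- which is still $\equiv 0$ by~\eqref{e elem sym J0} since the augmenting word in slot $j$ is empty; it then performs a $j$-expansion and a $(j+1)$-expansion via~\eqref{e ek induction}, several applications of~\eqref{e elem sym J0} and~\eqref{e elem sym Jswap} in slots $j-1,j,j+1$, and the single identity $\e x\,e_k(\A_{\le\crc m})\,\e x\equiv\e x\,\e x\,e_k(\A_{\le\crc m})$ supplied by Lemma~\ref{l decreasing}. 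Each of these steps acts only on the $\e x$'s and on the elementary factors in positions $\le j+1$, all of which lie to the left of the parked $\e{w'}$, and by the commutation fact above $\e{w'}$ commutes with whatever new factors appear at position $j+1$; so $\e{w'}$ never has to move, the chain of congruences remains valid after inserting $\e{w'}$ into slot $j+1$ of every term, and its endpoints are exactly the two sides of the displayed congruence.

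The step I expect to be the main obstacle is precisely this last verification: one must check, term by term, that after the $(j+1)$-expansion the word $\e{w'}$ still lands in slot $j+1$ of every resulting term, and that each appeal to~\eqref{e elem sym J0} or~\eqref{e elem sym Jswap} is made in a slot whose augmenting word is empty (these identities require an empty augmenting word), so that the whole computation of Lemma~\ref{l lambda1 eq lambda2 commute} is legally repeatable in the presence of the spectator $\e{w'}$. Everything else is a routine transcription of that proof.
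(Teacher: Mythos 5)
Your proposal is correct and follows the paper's own argument: the paper likewise uses the far commutation relations \eqref{e kron rel far commute} (valid since every letter of $\e{w'}$ is $\ge m+2$ while the letters of $\A_{\le\crc m}$ are $\le\crc m$) to slide $\e{w'}$ past the elementary factor in position $j+1$, applies Lemma~\ref{l lambda1 eq lambda2 commute} with $\e{w'}$ present as a spectator, and slides $\e{w'}$ back. Your more detailed check that each step of the lemma's proof remains valid with $\e{w'}$ parked in slot $j+1$ is exactly the content of the paper's parenthetical remark that ``the proof of the lemma still works with the word $\e{w'}$ present.''
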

\begin{proof}
We compute (modulo  $\Ikron$)
\begin{align*}
& J_{\alpha}(n_1,\dots, n_j  \Jnot{w}   n_{j+1},\dots) \\
&\equiv J_{\alpha}(n_1,\dots,  n_{j-1}, n_j   \Jnot{x}   n_{j+1}   \Jnot{w'}   \dots) \\
&\equiv J_{\alpha}(n_1,\dots,  n_{j-1}   \Jnot{x}   n_j, n_{j+1}   \Jnot{w'}   \dots) \\
&\equiv J_{\alpha}(n_1,\dots,  n_{j-1}   \Jnot{x}   n_j   \Jnot{w'}   n_{j+1},\dots).
\end{align*}
The first and third congruences are by the far commutation relations \eqref{e kron rel far commute} and the second is by Lemma~\ref{l lambda1 eq lambda2 commute} (the proof of the lemma still works with the word  $\e{w'}$ present).
\end{proof}

For a diagram  $\theta$ contained in columns  $1,\ldots,l$ and a tuple  $\mathbf{n} = (n_1,n_2,\dots, n_l)$ with $n_1,\ldots,n_l \in \{\crc{0}\} \sqcup \A$,
let  $\Tab_\theta^\mathbf{n}$ denote the set of tableaux of shape  $\theta$ such that the entries in column  $c$ lie in $\A_{\le n_c}$.

For a weak composition or partition $\alpha=(\alpha_1,\dots,\alpha_l)$, define $\alpha_{l+1} = 0$.
\begin{theorem}\label{t flag Schur k3 technical}
Let  $\nu' \setminus \alpha'$ be a restricted shape and let  $l$ be the number of parts of  $\nu$.
Set $j = \min(\{i \mid \alpha_i > 0, \ \alpha_i \ge \alpha_{i+1}\} \cup \{l+1\} )$ and $j' = \max(\{i \mid \alpha_i < \nu_i\} \cup \{0\})$  (see Figure~\ref{f main proof} and the discussion following Remark \ref{r main theorem}).
Suppose
\begin{list}{\emph{(\roman{ctr})}}{\usecounter{ctr} \setlength{\itemsep}{2pt} \setlength{\topsep}{3pt}}
\item  $\alpha$ is of the form
\begin{align*}
&(0,\dots,0,1,2,\dots,a-1,a,a,a+1,a+2,\dots,\alpha_{j'}, \nu_{j'+1},\nu_{j'+2},\dots), \quad \text{ or } \\
&(0,\dots,0,1,2,\dots,\alpha_{j'}-1,\alpha_{j'}, \nu_{j'+1},\nu_{j'+2},\dots),
\end{align*}
where $\alpha_{j'} \ge \alpha_{j'+1}-1 = \nu_{j'+1}-1$,
the initial run of 0's may be empty, and
on the top line (resp. bottom line) $j$ is  $< j'$ and is the position of the first  $a$ (resp.  $j$ is  $j'$ or  $j'+1$);
\item $R$ is an RCT of shape  $\nu' \setminus \alpha'$ with entries $r_1 = R_{\alpha_1+1,1}, \ldots, r_{j'} = R_{\alpha_{j'}+1,j'}$ along its northern border;
\item $\e{vw}$ is an arrow respecting reading word of $R$ such that $\e{w}$ is a subsequence of $\e{r_{j+1} \cdots r_{j'}}$ which contains $\e{r_{j+1}}$ if $\e{r_{j+1}}$ is barred;
\item $\mathbf{n} =(n_1,\ldots, n_l)$, where $n_i \in \{\crc{0}\}\sqcup \A$ and  $\crc{0} \leq n_1 \leq n_2 \leq \cdots \leq n_l$;
\item $n_c = r_c \mydownarrow$ for  $c \in [j'] \setminus \{j\}$, and $n_j \le r_j \mydownarrow$;
\item it is not the case that  $w_1= r_{j+1} = a+1$ and  $n_j = a$ (for some  $a \in [N-1]$).
\end{list}
Then
\begin{equation}\label{e new statement}
\e{v}J_\alpha^{\mathbf{n}}(\Jnotb{j}{w})
 = \displaystyle \sum_{\substack{T \in \RCT_{\nu'},
\ T_{\nu' \setminus \alpha'} = R \\ T_{\alpha'} \in \Tab_{\alpha'}^{\mathbf{n}}} } \sqread(T)   \quad \, \text{ in \hspace{-.4mm} $\U/\Ikron$}.
\end{equation}
\end{theorem}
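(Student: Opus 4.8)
The plan is to prove Theorem~\ref{t flag Schur k3 technical} by induction on the number of boxes in $\alpha'$, i.e.\ by peeling diagonals off the shape $\nu'$ starting from the restricted shape $\nu' \setminus \alpha'$ and working inward. The base case is $\alpha' = \varnothing$, where $\alpha$ is the zero composition, $R = T_{\nu'}$ is a full $<$-colored tableau of shape $\nu'$, and $\e{v}$ is an arrow respecting reading word of $R$; here $J_\alpha^\mathbf{n}(\Jnotb{j}{w})$ collapses to $1$ (or $\e{w}$), and the identity follows from Theorem~\ref{t arrow respecting connected}, which says all arrow respecting reading words of an RCT are equal in $\U/\Ikron$. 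For the inductive step, the idea is to perform a $j$-expansion of $\e{v} J_\alpha^\mathbf{n}(\Jnotb{j}{w})$ using \eqref{e ek induction}: expanding $e_{\alpha_j + \pi(j) - j}(\A_{\le n_j})$ splits each term into a piece beginning with $u_{n_j}$ (when $n_j \in \A$) and a piece over the smaller alphabet $\A_{\le n_j \myDownarrow}$. The first kind of piece corresponds to placing the letter $n_j$ in the box $(\alpha_j + 1, j)$ of $\nu'$, thereby enlarging $\alpha$ in position $j$; the second kind corresponds to either a different (smaller) flag value or to the situation where column $j$ has been filled and we must move on. Summing the resulting contributions and reindexing will match the right-hand side of \eqref{e new statement} for the shape with one more box.

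The key algebraic input that makes the bookkeeping work is Lemma~\ref{l lambda1 eq lambda2 commute} and its Corollary~\ref{c aaa+1 commute lam}: when the composition $\alpha$ reaches a configuration $\alpha_j = \alpha_{j+1}$ with $n_j = n_{j+1}$, the augmented function $J_\alpha^\mathbf{n}(\Jnotb{j}{w})$ can have its inserted word $\e{w} = \e{xw'}$ shifted one slot to the left (peeling off the first letter $x = m+1$), at the cost of nothing in $\U/\Ikron$. This is exactly the move that produces the diagonal-reading order of $\sqread(T)$: the letter $x$ that gets shifted left is the next unbarred entry to be read on the current diagonal, and the condition \eqref{e word condition} (letters of $\e{w'}$ are $\ge m+2$) together with hypothesis (iii)–(vi) ensures the shift is legal. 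One must also invoke \eqref{e elem sym Jswap}, \eqref{e elem sym J0} to kill the ``forbidden'' terms (where two adjacent flag values and composition entries coincide in a way that forces a vanishing), and Lemma~\ref{l decreasing} to commute a freshly inserted unbarred letter $x$ past a decreasing run $e_k(\A_{\le \crc{m}})$. The hypothesis (vi), that we never have $w_1 = r_{j+1} = a+1$ with $n_j = a$, is precisely the bad case excluded by the remark after Lemma~\ref{l decreasing}, so maintaining it through the induction is part of the argument.

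Concretely, the inductive step will break into cases according to the shape of $\alpha$ in hypothesis (i): whether $j < j'$ (the ``staircase with a repeated $a$'' case, top line) or $j \in \{j', j'+1\}$ (the ``pure staircase'' case, bottom line), and within each, whether the $j$-expansion creates a new repeated value triggering Lemma~\ref{l lambda1 eq lambda2 commute}, or simply advances the flag $n_j$ down by one step. In each case I would: (1) write the $j$-expansion, (2) discard vanishing terms via \eqref{e elem sym J0}, (3) if applicable, apply Corollary~\ref{c aaa+1 commute lam} to move a letter from the flag word into $\e{v}$ (this is where a box of $R$ gets absorbed into the reading word $\sqread(T)$), (4) recognize the remaining terms as instances of the theorem for strictly smaller $\alpha'$, apply the inductive hypothesis, and (5) reassemble. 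Checking that hypotheses (ii)–(vi) are preserved under each case — in particular that the new $\e{v}$ is still an arrow respecting reading word of the new $R$, using the arrow-subtableau analysis of Definition~\ref{d arrows} and Remark~\ref{r definition arrow is good} — is routine but delicate.

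The main obstacle I anticipate is exactly this last verification: controlling the interaction between the arrow structure of the RCT and the letters being shifted by Lemma~\ref{l lambda1 eq lambda2 commute}. When we move the letter $x = r_c$ from the flag position into $\e{v}$, we must know that in the partially-built tableau $R$ this letter is not the tail of an arrow pointing to a box that has not yet been read (otherwise $\e{vx\cdots}$ would fail to be arrow respecting), and conversely that no unread box has an arrow into the position where $x$ lands. This is controlled by hypotheses (iii) (the constraint that $\e{w}$ is a subsequence of $\e{r_{j+1}\cdots r_{j'}}$ containing $\e{r_{j+1}}$ when barred) and (vi), but tracking these conditions precisely through a multi-case induction, and handling the barred-letter analogue of Lemma~\ref{l lambda1 eq lambda2 commute} (which requires the mirror-image version using $x \mydownarrow$ and the last arrow subtableau in Definition~\ref{d arrows}), is where the real work lies. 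Everything else reduces to the commutation identities already established in \textsection\ref{ss noncommutative flagged schur} and Lemmas~\ref{l decreasing} and \ref{l lambda1 eq lambda2 commute}.
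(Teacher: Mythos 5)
Your plan is essentially the paper's proof: induction on $|\alpha|$ with base case handled by Theorem~\ref{t arrow respecting connected}, a $j$-expansion via \eqref{e ek induction} whose two terms correspond to filling the box $(\alpha_j,j)$ with $n_j$ or lowering the flag to $n_j\myDownarrow$, vanishing terms killed by \eqref{e elem sym J0}, Lemma~\ref{l decreasing} and Corollary~\ref{c aaa+1 commute lam} as the key commutation inputs (with hypothesis (vi) excluding exactly the bad case of Lemma~\ref{l decreasing}), and the delicate bookkeeping concentrated in verifying that (ii)--(vi) persist. One small simplification you will find: no barred analogue of Lemma~\ref{l lambda1 eq lambda2 commute} is needed, since Proposition~\ref{e diagonal RCT} shows the exceptional case $w_1=r_{j+1}$, $n_j=r_{j+1}\myDownarrow$ only arises when $r_{j+1}$ is unbarred.
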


To parse this statement, it is  instructive to first understand the case when $j\ge j'$
(which implies $\e{w}$ is empty) and $\alpha_2 > 0$.  In this case the theorem becomes
\[
\begin{array}{c}
\parbox{14.6cm}{
Suppose $\nu' \setminus \alpha'$ is a restricted shape with $\alpha_1 = \alpha_2-1 = \cdots = \alpha_{j}-j+1$ and $\alpha_j \geq \alpha_{j+1} \geq \cdots \geq \alpha_{l} > \alpha_{l+1} = 0$.
Let  $R$ be an  RCT of shape  $\nu' \setminus \alpha'$ with entries $r_1, \ldots, r_{j'}$ along its northern border.
Suppose $\mathbf{n} = (n_1,\ldots, n_l)$ satisfies $\crc{0} \leq n_1 \leq n_2 \leq \cdots \leq n_l$,
$n_c = r_c\mydownarrow$ for  $c \in [j-1]$, and  $n_j \le r_j \mydownarrow$.
Then
}
\\ \\[-2mm]
\sqread(R) J_\alpha^{\mathbf{n}}
 = \displaystyle \sum_{\substack{T \in \RCT_{\nu'},
\ T_{\nu' \setminus \alpha'} = R \\ T_{\alpha'} \in \Tab_{\alpha'}^{\mathbf{n}}} } \sqread(T)  \quad \, \text{ in \hspace{-.4mm} $\U/\Ikron$}.
\end{array}
\]

\begin{remark}
\label{r main theorem}
\
\begin{list}{\rm{(\alph{ctr})}}{\usecounter{ctr} \setlength{\itemsep}{2pt} \setlength{\topsep}{3pt}}
\item Theorem~\ref{t J intro} is the special case of Theorem~\ref{t flag Schur k3 technical} when $\alpha = \nu$ and  $\mathbf{n} = (\crc{N}, \crc{N},\dots,\crc{N})$
(the  $\nu$ and  $\nu'$ of Theorem \ref{t J intro} must be interchanged to match the notation here).
\item
For any RCT  $R$ as in (ii), all arrow subtableaux of  $R$ have two rows and two columns, 
assuming  $n_{j-1} < n_j$ (this is an innocent assumption since $n_{j-1} = n_j$ implies both sides of \eqref{e new statement} are 0,
by the proof of Theorem \ref{t flag Schur k3 technical} below).  To see this, it suffices to show that there is no arrow between the boxes containing $r_{j-1}$ and  $r_{j+1}$.
Such an arrow would imply $r_{j-1} = \crc{a}, \, r_{j+1} = \crc{a+1}$,
which in turn would imply by (v)
that $n_{j-1} = \crc{a}, \, n_j \le  r_j \mydownarrow = (a+1) \mydownarrow = \crc{a}$, contradicting  $n_{j-1} < n_j$.
\item A reading word  $\e{vw}$ as in (iii) always exists---for instance take  $\e{vw} = \sqread(R)$ with  $\e{w}$ equal to the subsequence of barred letters of
$\e{r_{j+1} \cdots r_{j'}}$.
\item It follows from the theorem that  $\e{v}J_\alpha^{\mathbf{n}}(\Jnotb{j}{w}) \equiv 0$ if  $R$ cannot be completed to an RCT of shape $\nu'$, however we purposely do not make this assumption so that it does not have to be verified at the inductive step.
\end{list}
\end{remark}

The assumptions on $j$, $j'$, and  $\alpha$ look complicated, but their purpose is simply to peel off the entries of a tableau of shape  $\nu'$ one diagonal at a time, starting with the southwesternmost diagonal, and reading each diagonal in the $\nwarr$ direction (see Figure~\ref{f main proof}).
The proof goes by induction, peeling off one letter at a time from $J_\alpha^\mathbf{n}$, in the order just specified, and incorporating them into  $R$.  The index  $j$ indicates the column of the next letter to be removed.

The reader is encouraged to follow along the proof below with Example~\ref{ex inductive tree}.

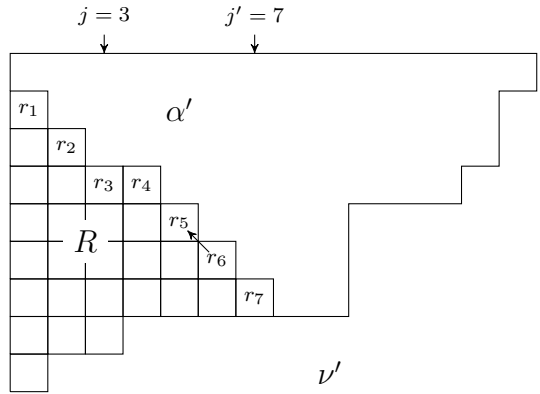
\begin{figure}
\centerfloat
\begin{tikzpicture}[xscale = 0.5,yscale = 0.5,>=stealth']
\tikzstyle{vertex}=[inner sep=3pt, outer sep=0pt]
\tikzstyle{aedge} = [draw, thin, ->,black]
\tikzstyle{edge} = [draw, thick, -,black]
\tikzstyle{dashededge} = [draw, very thick, dashed, black]
\tikzstyle{LabelStyleH} = [text=black, anchor=south, near start]
\tikzstyle{LabelStyleV} = [text=black, anchor=east, near start]

\drawcell{0}{-2}
\drawcell{0}{-1}
\drawcell{1}{-1}
\drawcell{2}{-1}
\drawcell{0}{0}
\drawcell{1}{0}
\drawcell{2}{0}
\drawcell{3}{0}
\drawcell{4}{0}
\drawcell{5}{0}
\drawcell{6}{0}
\drawcell{0}{1}
\drawcell{1}{1}
\drawcell{2}{1}
\drawcell{3}{1}
\drawcell{4}{1}
\drawcell{5}{1}
\drawcell{0}{2}
\drawcell{1}{2}
\drawcell{2}{2}
\drawcell{3}{2}
\drawcell{4}{2}
\drawcell{0}{3}
\drawcell{1}{3}
\drawcell{2}{3}
\drawcell{3}{3}
\drawcell{0}{4}
\drawcell{1}{4}
\drawcell{0}{5}
\draw (7,-0)--(9,0)--(9,3)--(12,3)--(12,4)--(13,4)--(13,6)--(14,6)--(14,7)--(0,7)--(0,6);
\draw[->] (2.5,7.5) node at (2.5,8) {\tiny$j=3$}--(2.5,7);
\draw[->] (6.5,7.5) node at (6.5,8) {\tiny$j'=7$}--(6.5,7);
\node[vertex] (a1) at (5.5,1.5) {};
\node[vertex] (a2) at (4.5,2.5) {};
\path[draw] (a1) edge[->,draw] (a2);
\node at (4.5,5.5) {$\alpha'$};
\node at (0.5,5.5) {\tiny$\noe{r_1}$};
\node at (1.5,4.5) {\tiny$\noe{r_2}$};
\node at (2.5,3.5) {\tiny$\noe{r_3}$};
\node at (3.5,3.5) {\tiny$\noe{r_4}$};
\node at (4.5,2.5) {\tiny$\noe{r_5}$};
\node at (5.5,1.5) {\tiny$\noe{r_6}$};
\node at (6.5,0.5) {\tiny$\noe{r_7}$};
\node at (8.5,-1.5) {$\nu'$};
\node[fill=white] at (2,2) {$R$};


\end{tikzpicture}
\vspace{.3in}
\caption{\label{f main proof} The setup of the proof of Theorem \ref{t flag Schur k3 technical} for
{\footnotesize$\nu = (9,8,8,7,7,7,7,7,7,4,4,4,3,1)$, $\alpha = (1,2,3,3,4,5,6,7,7,4,4,4,3,1)$}.
A possibility for the arrows of $R$ is shown. A possibility for $\e{w}$ is $\e{r_4r_5r_7}$.}
 \end{figure}

\begin{proof}[Proof of Theorem \ref{t flag Schur k3 technical}]
The proof is by induction on $|\alpha|$ and the $n_i$.
Throughout the proof,  $a$ denotes an element of  $[N-1]$, and  $a,a+1$ will often be regarded as unbarred letters and
 $\crc{a}, \crc{a+1}$ as the corresponding barred letters.

If  $|\alpha|=0$,  $J_\alpha^\mathbf{n}$ is a noncommutative version of the determinant of an upper unitriangular matrix, hence  $J_{\alpha}^{\mathbf{n}}(\Jnotb{j}{w}) = \e{w}$. The theorem then
states that  $\e{vw} \equiv \sqread(R)$, which holds by Theorem \ref{t arrow respecting connected}.

Next consider the case $n_1= \crc{0}$ and  $\alpha_1>0$.  This implies that  $J_\alpha^\mathbf{n}$ is a noncommutative version of
the determinant of a matrix whose first row is all 0's, hence $\e{v}J_{\alpha}^{\mathbf{n}}(\Jnotb{j}{w})=0$.
The right side of \eqref{e new statement} is also 0 because $\Tab_{\alpha'}^\mathbf{n}$ is empty for $n_1= \crc{0}$ and  $\alpha_1>0$.

If $n_i=n_{i+1}$ for any $i \ne j$ such that  $\alpha_i = \alpha_{i+1}-1$, then $\e{v}J_{\alpha}^{\mathbf{n}}(\Jnotb{j}{w})=0$ by \eqref{e elem sym J0}. To see that the right side of \eqref{e new statement} is also 0 in this case, observe that if  $T $ is an RCT from this sum, then
\[n_i < T_{\alpha_{i+1},i+1} \leq n_{i+1} = n_i,\]
hence the sum is empty.
Here, the inequality $n_i < T_{\alpha_{i+1},i+1}$ follows from Proposition \ref{e diagonal RCT} with  $x = n_i, \, y=r_i, \, z = T_{\alpha_{i+1}, i+1}$ (note that this argument works in the case  $\alpha_i = 0$).

We proceed to the main body of the proof.
By what has been said so far, we may assume  $|\alpha| > 0$, $\alpha_j > 0$, and $\crc{0} \le n_{j-1}<n_j$ if  $j > 1$ and  $\crc{0} < n_j$ if  $j =1$.
Since  $n_j \lecol r_j \lerow r_{j+1}$, if  $r_{j+1}$ is barred, then $n_j < r_{j+1} \myDownarrow$ by Proposition \ref{e diagonal RCT}.
If  $r_{j+1}$ is unbarred then either  ($r_{j+1} = a+1$,  $n_j = \crc{a}$) or ($n_j < r_{j+1}\myDownarrow$).
Hence exactly one of the following occurs: ($n_j < r_{j+1} \myDownarrow$) or ($r_{j+1}= a+1$, $n_j = \crc{a}$).
It follows that  exactly one of the following occurs: ($n_j < r_{j+1} \myDownarrow$ or $\e{w}$ is empty or  $w_1 \ne r_{j+1}$) or
($w_1 = r_{j+1}= a+1$, $n_j = \crc{a}$), which we consider as
 two separate cases in remainder of the proof.
Set $\mathbf{n}_\mydownarrow = (n_1, n_2, \cdots, n_{j-1}, n_j\mydownarrow,n_{j+1},\cdots,n_l)$,
$\mathbf{n}_\myDownarrow = (n_1, n_2, \cdots, n_{j-1}, n_j\myDownarrow,n_{j+1},\cdots,n_l)$,
 and  $\alpha_- = (\alpha_1,\dots, \alpha_{j-1}, \alpha_j-1, \alpha_{j+1},\dots)$.

\textbf{Case  $n_j < r_{j+1} \myDownarrow$ or $\e{w}$ is empty or  $w_1 \ne r_{j+1}$.}

There are three subcases depending on whether $j=1$,  ($j>1$ and $\alpha_j = 1$), or  ($j > 1$ and  $\alpha_j > 1$).  We argue only the last, as the first two are similar and easier.
A $j$-expansion yields
\begin{align}
\e{v}J_{\alpha}^{\mathbf{n}}(\Jnotb{j}{w})=~&
\e{v}J_{\alpha_-}(n_1,\dots, n_{j-1}   \Jnot{n_j}   n_j\mydownarrow \Jnot{w} n_{j+1},\dots)
+\e{v}J_{\alpha}^{\mathbf{n}_\myDownarrow}(\Jnotb{j}{w})\label{e case no square-1}\\
\equiv~&
\e{v}J_{\alpha_-}(n_1,\dots, n_{j-1} \Jnot{n_jw} n_j\mydownarrow,n_{j+1},\dots)
+\e{v}J_{\alpha}^{\mathbf{n}_\myDownarrow}(\Jnotb{j}{w}) \label{e case no square0} \\
\equiv~&
 \sum_{\substack{ T \in \RCT_{\nu'}, \ T_{\nu' \setminus \alpha'_-} = R\, \sqcup \,{\tiny\tableau{\noe{n_j}}}_{\alpha_j, j}\\ T_{\alpha'_-} \in \Tab_{\alpha'_-}^{\mathbf{n}_\mydownarrow}}} \sqread(T)
+  \sum_{\substack{T \in \RCT_{\nu'}, \ T_{\nu' \setminus \alpha'} = R \\ T_\alpha' \in \Tab_{\alpha'}^{\mathbf{n}_\myDownarrow}}} \sqread(T). \label{e case no square}
\end{align}
The first congruence is by Lemma \ref{l decreasing};
the conditions of the lemma are satisfied because
$n_j < r_{j+1} \myDownarrow$ if  $n_j$ is unbarred (by Proposition \ref{e diagonal RCT}),
and $w_1 \ge r_{j+1}$;
if  $n_j = \crc{a}$ is barred, then  $w_1 > a+1$ because we are not in the
case ($w_1 = r_{j+1}= a+1$, $n_j = \crc{a}$).
The second term of \eqref{e case no square0} is congruent (modulo $\Ikron$) to  the
second sum of \eqref{e case no square} by induction.
The conditions of the theorem are satisfied here:
(i)--(iii) are clear,  (iv) holds since  $n_{j-1} < n_j$,
(v) holds since $(n_\myDownarrow)_{j} = n_{j} \myDownarrow < n_j \le r_j\mydownarrow$,
and (vi) holds since we are not in the case $w_1 =r_{j+1}= a+1$, $n_j = \crc{a}$.

We next claim that the first term of \eqref{e case no square0} satisfies conditions (i)--(vi) of the theorem (with   $\alpha_-$ in place of  $\alpha$, $\mathbf{n}_\mydownarrow$ in place of  $\mathbf{n}$, $R \sqcup {\tiny\tableau{\noe{n_j}}}_{\alpha_j, j}$ in place of  $R$,  $\e{n_jw}$ in place of  
$\e{w}$,  $j-1$ in place of   $j$), hence the first term of \eqref{e case no square0} is congruent to the first sum of \eqref{e case no square} by induction.
Conditions (i) and (v) are clear, (iv) holds since  $n_{j-1} < n_j$, and (vi) follows from the fact that  $n_{j-1} = r_{j-1} \mydownarrow$ is barred.
Since $n_{j-1} < n_j$ and $n_{j-1} = r_{j-1}\mydownarrow$, it follows that $r_{j-1} \lerow n_j$, hence $R \sqcup {\tiny\tableau{\noe{n_j}}}_{\alpha_j, j}$ is an RCT; this verifies condition (ii).

Finally we check (iii), which requires showing that $\e{n_j w}$ is the end of an arrow respecting reading word of  $R \sqcup {\tiny\tableau{\noe{n_j}}}_{\alpha_j, j}$ satisfying an extra
condition.
There are two ways this can fail: either (I) $R \sqcup {\tiny\tableau{\noe{n_j}}}_{\alpha_j, j}$ has a $\searr$ arrow from  $n_j$ to  $r_{j+1}$ and  $w_1\ne r_{j+1}$, or (II) $R \sqcup {\tiny\tableau{\noe{n_j}}}_{\alpha_j, j}$ has a $\nwarr$ arrow from  $r_{j+1}$ to  $n_j$ and  $r_{j+1} = w_1$;
(I) implies  $r_{j+1}$ is barred by the definition of a  $\searr$ arrow, so by assumption (iii) of the theorem $w_1 = r_{j+1}$,  thus (I) cannot occur;
(II) implies  $n_j = a$, $r_{j+1} = a+1$ by the definition of a $\nwarr$ arrow, which combined with  $r_{j+1} = w_1$ contradicts assumption (vi) of the theorem, thus (II) cannot occur.
(By Remark \ref{r main theorem} (b),
we can assume $R \sqcup {\tiny\tableau{\noe{n_j}}}_{\alpha_j, j}$ does not have an arrow between $r_{j-2}$ and  $n_j$; in any case,
this is not an issue because such an arrow must go from  $r_{j-2}$ to  $n_j$ and
$r_{j-2}$ is a letter in  $\e{v}$.)

Now \eqref{e case no square} is equal to the right side of \eqref{e new statement}
because \eqref{e case no square} is simply the result of partitioning the set  $\{T \in \RCT_{\nu'} \mid T_{\nu' \setminus \alpha'} = R, \ T_{\alpha'} \in \Tab_{\alpha'}^{\mathbf{n}}\}$ into two, depending on whether or not  $T_{\alpha_j,j}$ is equal to or less than  ${n_j}$.  This completes the case  $n_j < r_{j+1} \myDownarrow$ or $\e{w}$ is empty or  $w_1 \ne r_{j+1}$.

\textbf{Case $w_1 =r_{j+1}= a+1$, $n_j = \crc{a}$.}

Corollary~\ref{c aaa+1 commute lam} yields the first congruence below;
the hypotheses of the corollary are satisfied since  $n_{j+1} = r_{j+1}\mydownarrow$ implies $n_{j+1}=\crc{a} = n_j$, and Proposition \ref{e diagonal RCT} implies
 $r_{j+2} \ge a+2$.
\begin{align}
\e{v}J_{\alpha}^{\mathbf{n}}(\Jnotb{j}{w}) \notag
&\equiv \e{v}J_{\alpha}(n_1,\dots,  n_{j-1}   \Jnot{w_1}   n_j \Jnot{w_2 \cdots w_t} n_{j+1},\dots) \\
&\equiv \e{vw_1} J_{\alpha}^{\mathbf{n}}(\Jnotb{j}{w_2 \cdots w_t}). \label{e case square}
\end{align}
Here, $t$ denotes the length of $\e{w}$.
The last congruence is by the far commutation relations if  $j > 1$ since $n_{j-1} < a$ by the facts that $n_{j-1} = r_{j-1} \mydownarrow$
is a barred letter and $n_{j-1} < n_j = \crc{a}$ (if $j=1$ there is nothing to prove).

Finally, observe that the case  ($n_j < r_{j+1} \myDownarrow$ or $\e{w}$ is empty or  $w_1 \ne r_{j+1}$) applies to  \eqref{e case square} (with  $\e{vw_1}$ in place of  $\e{v}$,  $\e{w_2} \cdots \e{w_t}$ in place of $\e{w}$).
Since the right side of \eqref{e new statement} depends only on $R$ and not directly on  $\e{v}$ and  $\e{w}$, \eqref{e case square} is congruent to this right side,  and this gives the desired statement in the present case  $w_1 =r_{j+1}= a+1$, $n_j = \crc{a}$.
\end{proof}

\begin{example}\label{ex inductive tree}
\setlength{\cellsize}{2.04ex}
Let  $\nu=(4,4,4,4)$.
Let $R={\tiny\tableau{2 \\ \crc{2} & 3 \\ 3 & \crc{3} & \crc{4}}}$; then $\sqread(R)=\e{3\crc{2}\crc{3}32\crc{4}}$.
We illustrate several steps of the inductive computation of $\e{3\crc{2}\crc{3}32\crc{4}} J_{(1,2,3,4)}^{\crc{1} \crc{2} \crc{4} 5}$ from the proof of Theorem~\ref{t flag Schur k3 technical}. After each step in which we add an entry to $R$, we record the new values of $R$, $j$, and $j'$.
We first apply the proof of the theorem to $\e{3\crc{2}\crc{3}32\crc{4}} J_{(1,2,3,4)}^{\crc{1} \crc{2} \crc{4} 5}$  ($\e{v}=\e{3\crc{2}\crc{3}32\crc{4}}$,  $\e{w}$ empty,  $R$ as above, $j = 4$,  $j'=3$) and expand as in \eqref{e case no square-1}:
\begin{align}
& \e{3\crc{2}\crc{3}32\crc{4}} J_{(1,2,3,4)}(\crc{1},\crc{2},\crc{4},5) \notag \\
=~& \e{3\crc{2}\crc{3}32\crc{4}} \Big( J_{(1,2,3,3)}(\crc{1},\crc{2},\crc{4}   \Jnot{5}   \crc{4}) + J_{(1,2,3,4)}(\crc{1},\crc{2},\crc{4},\crc{4}) \, \Big) \label{e ex inductive comp}
\end{align}
Next, apply the theorem to the first term  of \eqref{e ex inductive comp}
$\big( \e{v}= \e{3\crc{2}\crc{3}32\crc{4}}$,  $\e{w}=\e{5}$, $R={\tiny\tableau{2 \\ \crc{2} & 3 \\ 3 & \crc{3} & \crc{4} & 5}}$, $j=3$, $j'=4 \big)$.
This is handled by the case  $w_1 =r_{j+1}= a+1$, $n_j = \crc{a}$, hence the computation proceeds by applying \eqref{e case square} and then a  $3$-expansion:
\begin{align*}
& \e{3\crc{2}\crc{3}32\crc{4}} J_{(1,2,3,3)}(\crc{1},\crc{2}, \crc{4}  \Jnot{5}   \crc{4}) \\
\equiv ~& \e{3\crc{2}\crc{3}32\crc{4}5} J_{(1,2,3,3)}(\crc{1}, \crc{2}, \crc{4}, \crc{4})\\
=~& \e{3\crc{2}\crc{3}32\crc{4}5} \Big( J_{(1,2,2,3)}(\crc{1}, \crc{2}   \Jnot{\crc{4}}   \crc{4}, \crc{4}) + J_{(1,2,3,3)}(\crc{1}, \crc{2}, 4, \crc{4}) \, \Big).
\end{align*}
The first term is  $\equiv$ 0 by \eqref{e elem sym J0}. We expand the second term as in \eqref{e case no square-1} ($j=3$):
\begin{align*}
& \e{3\crc{2}\crc{3}32\crc{4}5} J_{(1,2,3,3)}(\crc{1}, \crc{2}, 4, \crc{4}) \\
=~& \e{3\crc{2}\crc{3}32\crc{4}5} \Big( J_{(1,2,2,3)}(\crc{1},\crc{2}  \Jnot{4}  \crc{3},\crc{4}) + J_{(1,2,3,3)}(\crc{1},\crc{2},\crc{3},\crc{4}) \, \Big).
\end{align*}

Next, we proceed with the inductive computation of the first term above $ \big( R={\tiny\tableau{2 \\ \crc{2} & 3 & 4 \\ 3 & \crc{3} & \crc{4} & 5}}$, $j=2$, $j'=4 \big)$
\begin{align*}
& \e{3\crc{2}\crc{3}32\crc{4}5} J_{(1,2,2,3)}(\crc{1},\crc{2}  \Jnot{4}  \crc{3},\crc{4}) \\
=~& \e{3\crc{2}\crc{3}32\crc{4}5} \Big( J_{(1,1,2,3)}(\crc{1}   \Jnot{\crc{2}}  \crc{2}   \Jnot{4}   \crc{3}, \crc{4}) +  J_{(1,2,2,3)}(\crc{1}, 2   \Jnot{4}   \crc{3}, \crc{4}) \, \Big).
\end{align*}
We continue the computation with the second term by applying a 2-expansion (as in \eqref{e case no square-1}) and then applying \eqref{e case no square0}:
\begin{align}
& \e{3\crc{2}\crc{3}32\crc{4}5}  J_{(1,2,2,3)}(\crc{1}, 2   \Jnot{4}   \crc{3}, \crc{4}) \notag \\
=~& \e{3\crc{2}\crc{3}32\crc{4}5} \Big( J_{(1,1,2,3)}(\crc{1}   \Jnot{2}   \crc{1}   \Jnot{4}   \crc{3}, \crc{4}) +
  J_{(1,2,2,3)}(\crc{1}, \crc{1}   \Jnot{4}   \crc{3}, \crc{4}) \, \Big) \notag \\
\equiv~& \e{3\crc{2}\crc{3}32\crc{4}5} J_{(1,1,2,3)}(\crc{1} \Jnot{24}   \crc{1} , \crc{3}, \crc{4}). \label{e Jcomp 1123}
\end{align}
Note that the second term of the second line is $\equiv 0$ by \eqref{e elem sym J0}.
The input data to the theorem for the term \eqref{e Jcomp 1123} is
$\e{v}= \e{3\crc{2}\crc{3}32\crc{4}5}$,  $\e{w} = \e{24}$, $R={\tiny\tableau{2 & 2\\ \crc{2} & 3 & 4\\ 3 & \crc{3} & \crc{4} & 5}}$, $j=1$, $j'=4$.
This is handled by the case  $w_1 =r_{j+1}= a+1$, $n_j = \crc{a}$, hence the computation proceeds by applying \eqref{e case square} and then a  $1$-expansion:
\begin{align}
& \e{3\crc{2}\crc{3}32\crc{4}5} J_{(1,1,2,3)}(\crc{1} \Jnot{24}   \crc{1} , \crc{3}, \crc{4}) \notag\\
\equiv~& \e{3\crc{2}\crc{3}32\crc{4}52} J_{(1,1,2,3)}(\crc{1} \Jnot{4}   \crc{1} , \crc{3}, \crc{4})\notag\\
=~& \e{3\crc{2}\crc{3}32\crc{4}52} \Big(\e{\crc{1}} J_{(0,1,2,3)}(\crc{1} \Jnot{4}   \crc{1} , \crc{3}, \crc{4})
+ J_{(1,1,2,3)}(1 \Jnot{4}   \crc{1} , \crc{3}, \crc{4})\Big)\notag\\
\equiv~& \e{3\crc{2}\crc{3}32\crc{4}52} \Big(\e{\crc{1}4} J_{(0,1,2,3)}(\crc{1}, \crc{1} , \crc{3}, \crc{4})
+ J_{(1,1,2,3)}(1 \Jnot{4}   \crc{1} , \crc{3}, \crc{4})\Big) \label{e example J comp}
\end{align}
The last congruence is by \eqref{e case no square0}.
The first term of \eqref{e example J comp} is $\equiv 0$ by \eqref{e elem sym J0}.
We continue with the inductive computation of the second term of \eqref{e example J comp} $\big( \e{v} = \e{3\crc{2}\crc{3}32\crc{4}52}$, $\e{w} = \e{4}$, $R={\tiny\tableau{2 & 2 \\ \crc{2} & 3 & 4 \\ 3 & \crc{3} & \crc{4} & 5}}$, $j=1$, $j'=4 \big)$:
\begin{align*}
~& \e{3\crc{2}\crc{3}32\crc{4}52} J_{(1,1,2,3)}(1 \Jnot{4}   \crc{1} , \crc{3}, \crc{4}) \\
=~&\e{3\crc{2}\crc{3}32\crc{4}52} \Big(\e{1} J_{(0,1,2,3)}(\crc{0} \Jnot{4}   \crc{1} , \crc{3}, \crc{4}) + J_{(1,1,2,3)}(\crc{0} \Jnot{4}   \crc{1} , \crc{3}, \crc{4}) \Big)\\
\equiv~&\e{3\crc{2}\crc{3}32\crc{4}52} \Big(\e{14} J_{(0,1,2,3)}(\crc{0}, \crc{1} , \crc{3}, \crc{4}) + J_{(1,1,2,3)}(\crc{0} \Jnot{4}   \crc{1} , \crc{3}, \crc{4}) \Big).
\end{align*}
The input data to the theorem for this first term is $\e{v}= \e{3\crc{2}\crc{3}32\crc{4}5214}$,  $\e{w}$ empty, $R={\tiny\tableau{1 \\ 2 & 2\\ \crc{2} & 3 & 4\\ 3 & \crc{3} & \crc{4} & 5}}$, $j= 4$, $j'=4$, $\alpha = (0, 1, 2, 3)$, $\mathbf{n} = (\crc{0}, \crc{1}, \crc{3}, \crc{4} )$.
\end{example}

\section{A strengthening of the main theorem and comparison to \cite{BLamLLT}}
\label{s generalizations}
We conjecture a strengthening of the main theorem and compare it to a similar conjecture in \cite{BLamLLT}.

\subsection{A conjectured strengthening}\label{ss a further conjectured strengthening}

We conjecture that Theorem \ref{t J intro} can be strengthened to hold in  $\U/\Ikronknuth$.
\begin{conjecture}\label{cj kronknuth}
For any partition  $\nu$,
\begin{align*}
\mathfrak{J}_{\nu}(\mathbf{u}) =
\sum_{ T \in \CT_{\nu}^< }\sqread(T)
 \qquad \text{in \hspace{-.7mm} $\U/\Ikronknuth$}.
\end{align*}
\end{conjecture}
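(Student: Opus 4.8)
The plan is to reduce Conjecture~\ref{cj kronknuth} to a purely algebraic statement. Working inside the free algebra, set
\[
\delta_\nu := \mathfrak{J}_\nu(\mathbf{u}) - \sum_{T \in \CT_\nu^<} \sqread(T) \ \in \U .
\]
Theorem~\ref{t J intro} says $\delta_\nu \in \Ikron$. Since $e_k(\mathbf{u}) = e^<_k(\mathbf{u})$ and hence $\mathfrak{J}_\nu(\mathbf{u}) = \mathfrak{J}^<_\nu(\mathbf{u})$, Theorem~\ref{t J plac} together with Remark~\ref{r insertion words equivalent} gives $\delta_\nu \in \Iplacord{<}$. One checks directly that $\Ikronknuth \subseteq \Ikron \cap \Iplacord{<}$: the generators \eqref{e plac nat rel knuth3}--\eqref{e plac nat rel knuth4b} lie in both ideals, \eqref{e kron rel rotate12} is a generator of $\Ikron$ and holds modulo $\Iplacord{<}$, and \eqref{e kronknuth rel knuth1}--\eqref{e kronknuth rel knuth2} are both instances of the plactic relations \eqref{e plac nat rel knuth1}--\eqref{e plac nat rel knuth2} and consequences of the far commutation relations \eqref{e kron rel far commute}. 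So Conjecture~\ref{cj kronknuth} is equivalent to $\delta_\nu \in \Ikronknuth$, and it would follow at once from the ideal identity $\Ikronknuth = \Ikron \cap \Iplacord{<}$.

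The first route I would try is to prove this ideal identity, which is of independent interest and, as far as I know, also open. Both sides are homogeneous and respect colored content, and $\U/\Iplacord{<}$ is a monoid algebra, so $\Iplacord{<}$ — and with it $\Ikron \cap \Iplacord{<}$ — has a concrete description in terms of $<$-colored plactic equivalence classes. The goal is then to show that every element of $\Ikron$ that happens to lie in $\Iplacord{<}$ can be produced using only \eqref{e plac nat rel knuth3}--\eqref{e plac nat rel knuth4b}, \eqref{e kron rel rotate12}, \eqref{e kronknuth rel knuth1}, \eqref{e kronknuth rel knuth2}. The mechanism to aim for: whenever one applies a far commutation $\e{xz} \equiv \e{zx}$ (with $x < z\myDownarrow\myDownarrow$) inside a word of length $> 2$, a neighbouring letter of the word supplies a value strictly between $x$ and $z$, so the swap factors through \eqref{e kronknuth rel knuth1}--\eqref{e kronknuth rel knuth2} (or through the rotation \eqref{e kron rel rotate12} when the relevant three-letter window has the special shape $x = y\myDownarrow = z\myDownarrow\myDownarrow$). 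The main obstacle is exactly the ``boundary'' cases where no such mediator is available, which is where the two ideals genuinely part ways at the level of generators; handling these seems to require a global device such as a confluent rewriting system compatible with both ideals, or a direct analysis of the two-sided congruence generated by the rotation relations.

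Should the full ideal identity resist, the fallback is to re-run the inductive proof of Theorem~\ref{t flag Schur k3 technical} with $\Ikronknuth$ in place of $\Ikron$. All of the algebraic input survives: by Proposition~\ref{p es commute kron} the noncommutative super elementary symmetric functions commute already in $\U/\Ikronknuth$, so the column-flagged functions $J_\alpha^\mathbf{n}$ and the identities \eqref{e elem sym Jswap}, \eqref{e elem sym J0} hold modulo $\Ikronknuth$. The only steps that invoke a relation outside $\Ikronknuth$ are the appeals to the far commutation relations \eqref{e kron rel far commute}: in Lemma~\ref{l decreasing} (sliding $z$ leftward past a decreasing run), in Corollary~\ref{c aaa+1 commute lam}, in the congruence \eqref{e case square}, and in the case $R_{\beta_i} < R_{\beta_{i+1}}\myDownarrow\myDownarrow$ of Theorem~\ref{t arrow respecting connected}. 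In each of these the far commutation is applied inside a configuration — a diagonal of an RCT, a flag tuple $\mathbf{n}$, or an arrow-respecting reading word — whose entries are forced to be spread out by Proposition~\ref{e diagonal RCT}, so one expects a mediator to be present and each far commutation to be replaceable by a chain of \eqref{e kronknuth rel knuth1}--\eqref{e kronknuth rel knuth2} (and \eqref{e kron rel rotate12}) moves. Carrying this out — in particular proving the refinement of Theorem~\ref{t arrow respecting connected} that the graph of $\Ikronknuth$-admissible reading words of an RCT is connected by such mediated switches — is the technical heart, and is the step I expect to require a genuinely new combinatorial idea; an analogous strengthening is also left open in \cite{BLamLLT}.
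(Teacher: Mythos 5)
You are addressing Conjecture~\ref{cj kronknuth}, which the paper does not prove: it is left open, supported only by computer checks and the remark that the main obstacle is an $\Ikronknuth$-analog of Corollary~\ref{c aaa+1 commute lam}. Your submission is, by its own account, a plan rather than a proof, so the verdict has to be that the proof is missing. The preliminary reductions you make are sound: $\delta_\nu:=\mathfrak{J}_\nu(\mathbf{u})-\sum_{T\in\CT_\nu^<}\sqread(T)$ lies in $\Ikron$ by Theorem~\ref{t J intro}; it lies in $\Iplacord{<}$ by Theorem~\ref{t J plac} and Remark~\ref{r insertion words equivalent}, since $e_k(\mathbf{u})=e^<_k(\mathbf{u})$; and $\Ikronknuth\subseteq\Ikron\cap\Iplacord{<}$, because \eqref{e kronknuth rel knuth1}--\eqref{e kronknuth rel knuth2} follow from \eqref{e kron rel far commute} and are instances of \eqref{e plac nat rel knuth1}--\eqref{e plac nat rel knuth2}, while each of the two sides of \eqref{e kron rel rotate12} vanishes termwise modulo $\Iplacord{<}$. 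So the conjecture would indeed follow from the inclusion $\Ikron\cap\Iplacord{<}\subseteq\Ikronknuth$.

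Both of your routes, however, terminate exactly at the open problem. Route (a): the ideal identity $\Ikronknuth=\Ikron\cap\Iplacord{<}$ is itself unproved and at least as strong as what is needed, and the mechanism you propose (``a neighbouring letter supplies a mediator'') is precisely what fails in the boundary cases you yourself flag; no argument is offered for them. Route (b): your catalogue of where the proof of Theorem~\ref{t flag Schur k3 technical} leaves $\U/\Ikronknuth$ --- Lemma~\ref{l decreasing}, Corollary~\ref{c aaa+1 commute lam}, the congruence \eqref{e case square}, and one case of Theorem~\ref{t arrow respecting connected} --- matches the authors' own diagnosis (they note that Lemmas~\ref{l decreasing} and \ref{l lambda1 eq lambda2 commute} do carry over to $\U/\Ikronknuth$ and single out Corollary~\ref{c aaa+1 commute lam} as the main difficulty). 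But ``one expects a mediator to be present'' is not an argument: in \eqref{e case square}, for instance, the letter $w_1=a+1$ is commuted past entire flagged elementary functions $e_k(\A_{\le n_i})$ with $n_i\le\crc{a-1}$, whose monomials need not contain any letter strictly between the two being exchanged in the required adjacent position, so replacing these far commutations by chains of \eqref{e kronknuth rel knuth1}--\eqref{e kronknuth rel knuth2} and \eqref{e kron rel rotate12} moves is exactly the unsolved combinatorial problem, not a routine verification. As it stands, your proposal is a correct and useful framing of the conjecture, but not a proof of it.
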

In fact, we conjecture that Theorem \ref{t flag Schur k3 technical} holds exactly as stated  with  $\U/\Ikronknuth$ in place of  $\U/\Ikron$,
and we believe that the same inductive structure of the proof works in this setting, except some of the steps are much more difficult to justify.
Specifically, 
Lemmas~\ref{l decreasing} and \ref{l lambda1 eq lambda2 commute} are easily shown to hold with  $\U/\Ikronknuth$ in place of  $\U/\Ikron$,
whereas proving the correct analog of Corollary~\ref{c aaa+1 commute lam} for  $\U/\Ikronknuth$ seems to be the
main difficulty to overcome to adapt the proof.

The conjectured strengthening of Theorem \ref{t flag Schur k3 technical} to  $\U/\Ikronknuth$ was checked by computer in the following cases:
\begin{itemize}
\item $\nu = \alpha$ is a partition with at most 4 rows and at most 4 columns and size at most 10, and  $\crc{0} \le n_1 \le \cdots \le n_l \le \crc{3}$,
\item $\nu = \alpha$ is a partition with at most 6 rows and at most 6 columns and size at most 12, and  $\crc{0} \le n_1 \le \cdots \le n_l \le \crc{2}$.
\end{itemize}

\subsection{Comparison with results of \cite{BLamLLT}}
\label{ss comparison with results}
The statement and proof of Theorem \ref{t flag Schur k3 technical} have much in common with that of \cite[Theorem 4.8]{BLamLLT},
but there  are also important differences.
It is reasonable to expect that one result can be obtained from the other by a standardization argument,
but we believe this cannot be done.
It is quite possible that there is a more general monomial positivity result for noncommutative Schur functions
that yields both results as a special case.

Instead of comparing Theorem \ref{t flag Schur k3 technical} and \cite[Theorem 4.8]{BLamLLT}, we believe it
more natural to compare Conjecture \ref{cj kronknuth} with a conjecture in \cite{BLamLLT} similar to \cite[Theorem 4.8]{BLamLLT}.
We state this conjecture below, after introducing some notation.

Recall that $\U_\varnothing$ denotes the subalgebra of  $\U$ generated by  $u_a$ for  $a \in \A_\varnothing$.
The noncommutative Schur function  $\mathfrak{J}^\varnothing_\nu \in \U_\varnothing$ is defined in Definition~\ref{d normal noncom J}.


Let $\Irkstp{\le k}$ be the two-sided ideal of $\U_\varnothing$ corresponding to the relations
\begin{alignat}{2}
&\e{acb = cab} \qquad &&\text{for }\text{$c-a > k$ and $a < b < c$, $a, b, c \in \A_\varnothing$,} \label{e rel2} \\
&\e{bac = bca} \qquad &&\text{for }\text{$c-a > k$ and $a < b < c$, $a, b, c \in \A_\varnothing$,} \\
&\e{(ac-ca)b = b(ac-ca)} \qquad &&\text{for }\text{$c-a \le k$ and $a < b < c$, $a, b, c \in \A_\varnothing$,} \label{e rel4} \\
&{\e{v}} \qquad &&\text{for ordinary words $\e{v}$ with a repeated letter.} \label{e repeated letter}
\end{alignat}

\begin{conjecture}[{\cite[\textsection 5.2]{BLamLLT}}] \label{cj sqread}
For any partition  $\nu$,
\begin{align}
\mathfrak{J}^\varnothing_\nu  = \sum_{T \in \SYT'_\nu} \sqreadLLT(T) \quad \text{in }\, \U_\varnothing/\Irkstp{\le 3}. \label{e cj sqread}
\end{align}
\end{conjecture}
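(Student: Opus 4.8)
The plan is to prove this statement by transporting the entire inductive machinery of Section~\ref{s proof of theorem} from the super alphabet $\A$ and the ideal $\Ikron$ to the ordinary alphabet $\A_\varnothing$ and the ideal $\Irkstp{\le 3}$. In this dictionary the far commutation relations are replaced by the Knuth relations \eqref{e rel2} (and its mirror) holding when $c-a>3$, while the rotation relations \eqref{e kron rel rotate12} are replaced by \eqref{e rel4} holding when $c-a\le 3$; the relation \eqref{e repeated letter} plays the combined role of the strictness relations \eqref{e plac nat rel knuth3} and \eqref{e plac nat rel knuth4}, and the width-three window of \eqref{e rel4} is precisely what confines the result to $3$-column shapes. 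First I would introduce ordinary noncommutative column-flagged Schur functions $J^\varnothing_\alpha(n_1,\dots,n_l)$ and their augmented variants $J^\varnothing_\alpha(n_1\Jnot{w^1}\cdots\Jnot{w^{l-1}}n_l)$, defined exactly as in Section~\ref{ss noncommutative flagged schur} but with $e^\varnothing_k(\{1,\dots,n_c\})$ in place of $e_k(\A_{\le n_c})$, and record the $\Irkstp{\le 3}$-analogues of \eqref{e elem sym Jswap}--\eqref{e ek induction}; these rest only on the commutativity of the $e^\varnothing_k$ modulo $\Irkstp{\le 3}$, which should follow from a short direct computation as in Proposition~\ref{p es commute kron}.

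The core of the argument is a chain of three preliminary results, each the $\Irkstp{\le 3}$-counterpart of one in Section~\ref{s proof of theorem}. First, a straightening lemma analogous to Lemma~\ref{l decreasing}: for $a<b<c$ in $\A_\varnothing$ and letters $y\ge x_1\ge\cdots\ge x_t$ with $y\le z$ and $y,z$ not too close (the exclusions being the $\Irkstp{\le 3}$-shadows of those in Lemma~\ref{l decreasing}), one should have $\e{yx_1x_2\cdots x_tz}\equiv\e{yzx_1x_2\cdots x_t}$ modulo $\Irkstp{\le 3}$. Second, the key commutation lemma analogous to Lemma~\ref{l lambda1 eq lambda2 commute}, sliding a letter $\e{x}$ across a block $J_{(a,a)}$ with equal flags, deduced from a $1$-expansion, a $2$-expansion, the straightening lemma, and \eqref{e elem sym Jswap}--\eqref{e elem sym J0}. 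Third, the analogue of Corollary~\ref{c aaa+1 commute lam}, sliding a word $\e{w}=\e{xw'}$ with the letters of $\e{w'}$ sufficiently large across a flag. Granting these, the $\Irkstp{\le 3}$-version of Theorem~\ref{t flag Schur k3 technical} should follow by induction on $|\alpha|$ and the flags: peel the $3$-column shape $\nu'$ one diagonal at a time in the $\nwarr$ direction, use a $j$-expansion to split off the next letter, apply the straightening lemma to move it into position and enlarge the tableau $R$, absorb the degenerate case $w_1=r_{j+1}$ using the commutation corollary, and check that the running hypotheses persist at each step. The base case $|\alpha|=0$ reduces to the statement that any two arrow-respecting reading words of $R$ agree modulo $\Irkstp{\le 3}$, which requires adapting Section~\ref{ss Combinatorics of restricted colored tableaux} (in particular Theorem~\ref{t arrow respecting connected}) to this setting; one also uses here that $\sqreadLLT(T)$ is an arrow-respecting reading word for $T\in\SYT'_\nu$. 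Conjecture~\ref{cj sqread} is then the specialization $\alpha=\nu$ with all flags maximal.

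The main obstacle is exactly the one the authors flag in Section~\ref{ss a further conjectured strengthening} for the parallel Kronecker conjecture: the analogue of Corollary~\ref{c aaa+1 commute lam}, whose proof hinges on the straightening lemma in the troublesome case where some $x_i$ equals $z-1$. In the proof of Lemma~\ref{l decreasing} this case is cleared by using \eqref{e plac nat rel knuth3} and \eqref{e plac nat rel knuth4} to interchange a letter with its own repeated copy; in $\U_\varnothing/\Irkstp{\le 3}$ the only mechanism annihilating a word is \eqref{e repeated letter}, and the only local moves among close letters are the width-three rotations \eqref{e rel4}. One therefore needs a genuinely new argument to show that $\e{y[x_i,z]}$ is congruent to zero, or can otherwise be absorbed, modulo $\Irkstp{\le 3}$ under the relevant constraints, and it is not clear that the required identity holds uniformly, which is precisely why the statement is still a conjecture. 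A cleaner route, if it exists, would be to isolate a common generalization: a monomial-positivity theorem for the column-flagged $J_\alpha$ valid in a family of ideals interpolating between $\Ikronknuth$ and $\Irkstp{\le 3}$, from which both Conjecture~\ref{cj kronknuth} and Conjecture~\ref{cj sqread} would follow. I would pursue this only after the direct approach has made clear where exactly the width-three rotations do and do not suffice to replace the super-plactic relations.
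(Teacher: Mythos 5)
The statement you were asked to prove is not proved in the paper: it appears there as Conjecture~\ref{cj sqread}, attributed to \cite{BLamLLT}, and is stated only for comparison with Conjecture~\ref{cj kronknuth}. There is therefore no proof of the authors' to measure your argument against, and your submission should be judged on its own terms as an attempted proof of an open problem.

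On those terms, what you have written is a research plan rather than a proof. The strategy you outline --- introducing ordinary column-flagged functions $J^\varnothing_\alpha$, proving $\Irkstp{\le 3}$-analogues of \eqref{e elem sym Jswap}--\eqref{e ek induction}, Lemma~\ref{l decreasing}, Lemma~\ref{l lambda1 eq lambda2 commute}, Corollary~\ref{c aaa+1 commute lam} and Theorem~\ref{t arrow respecting connected}, and then running the diagonal-peeling induction of Theorem~\ref{t flag Schur k3 technical} --- is exactly the natural line of attack, and it parallels what the authors themselves say in \textsection\ref{ss a further conjectured strengthening} about the companion Conjecture~\ref{cj kronknuth}. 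But you concede the decisive point yourself: in $\U_\varnothing/\Irkstp{\le 3}$ the only annihilation mechanism is \eqref{e repeated letter} and the only local moves among close letters are the width-three rotations \eqref{e rel4}, so the case of the straightening lemma where some $x_i$ is adjacent to $z$ (the case handled in Lemma~\ref{l decreasing} via \eqref{e plac nat rel knuth3} and \eqref{e plac nat rel knuth4}) has no proof, and consequently neither does the analogue of Corollary~\ref{c aaa+1 commute lam}. Without that corollary the inductive step in the case $w_1 = r_{j+1}$ cannot be carried out, and the induction does not close. A proposal that explicitly leaves its key lemma unproved, however well it diagnoses where the difficulty lies, is not a proof; the statement remains a conjecture.
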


Here, $\SYT'_\nu$ denotes the set  of semistandard Young tableaux of shape  $\nu$ with entries in $\A_\varnothing$,
having no repeated letter.
To define $\sqreadLLT(T)$, first
draw arrows as shown between entries of $T$ for each of its $2\times 2$ subtableaux of the following forms:
\setlength{\cellsize}{3.4ex}
\[{\tiny
\tableau{{\color{black}\put(.8,-1.6){\vector(-1,1){.6}}}\put(-.1,-0.96){$a$}&a\text{+}1\\a\text{+}2&a\text{+}3}\qquad \qquad
\tableau{{\color{black}\put(.25,-1.1){\vector(1,-1){.6}}}\put(-.1,-1){$a$}&a\text{+}2\\a\text{+}1&a\text{+}3}.}
\]
\setlength{\cellsize}{2.2ex}
The word $\sqreadLLT(T)$ is now defined as follows: let $D^1,D^2,\dots,D^t$ be the diagonals of $T$, starting from the southwest. Let $\e{v^i}$ be the result of reading, in the $\nwarr$ direction, the entries of $D^i$ that are $\nwarr$ arrow tails followed by the remaining entries of $D^i$, read in the $\searr$ direction. Set $\sqreadLLT(T)=\e{v^1}\e{v^2}\cdots \e{v^t}$.
For example,
\[
\sqreadLLT\bigg(\,
{ \footnotesize \tableau{
1&2&5&7\\3&4&6&8\\9\\}}
\, \bigg) = \e{934126587}.
\]

It is natural to wonder whether Conjecture \ref{cj kronknuth} can be obtained from Conjecture \ref{cj sqread}
by a standardization argument or vice versa.  We now argue informally that this is not possible in either direction.
We will freely use the notation from \textsection\ref{ss standardization}.
Also, by \emph{the standardization map} we will mean the $\ZZ$-linear map  $\U \to \U_\varnothing$ defined by  $\e{w} \mapsto \e{w}^{\stand^<}$.

Since the relations of  $\U/\Ikronknuth$ preserve colored content,
there is a  $\ZZ$-module decomposition
$\U / \Ikronknuth \cong \bigoplus_{\beta} (\U/ \Ikronknuth)_{\beta}$,
where  $(\U/ \Ikronknuth)_{\beta}$ denotes the  $\ZZ$-span of the colored words of colored content  $\beta$ in the algebra  $\U/ \Ikronknuth$.
Hence we can write
$\mathfrak{J}_{\nu}(\mathbf{u})$ uniquely as a sum
$\mathfrak{J}_{\nu}(\mathbf{u}) =
\sum_\beta (\mathfrak{J}_{\nu}(\mathbf{u}))_\beta,$ for
$(\mathfrak{J}_{\nu}(\mathbf{u}))_\beta \in (\U/ \Ikronknuth)_{\beta}$.
Conjecture \ref{cj sqread} then implies that for any colored content  $\beta$,
\begin{align}\label{ecj colored cont}
(\mathfrak{J}_{\nu}(\mathbf{u}))_{\beta} =
\sum_{\substack{ T \in \CT_{\nu}^< \\ T \text{ has colored content } \beta}} \sqread(T)
 \qquad \text{in \, $\U/\Ikronknuth$}.
\end{align}


We can consider $\mathfrak{J}^\varnothing_{\nu}$ as an element of  $\U_\varnothing$, written as the signed sum of words obtained by directly
expanding
its definition as a sum of products of  $e^\varnothing_k$'s.  Similarly,  $(\mathfrak{J}_{\nu}(\mathbf{u}))_{\beta}$ can be considered as an element of  $\U$ obtained by restricting
 the expression for $\mathfrak{J}_\nu(\mathbf{u}) \in \U$ to the colored words of colored content  $\beta$.
Let $\mathfrak{J}^\varnothing_{\nu}[\beta]$ denote the restriction of the signed sum $\mathfrak{J}^\varnothing_{\nu}$ to the permutations
$\S(\beta)$. 
It is true, then, that the image of  $(\mathfrak{J}_\nu(\mathbf{u}))_\beta$ under the standardization map
is $\mathfrak{J}^\varnothing_{\nu}[\beta]$.
However, as the examples below show,
the relations of  $\U/\Ikronknuth$ and those of  $\U_\varnothing/\Irkstp{\le 3}$ are not compatible in a way required
to relate the images of $(\mathfrak{J}_\nu(\mathbf{u}))_\beta$ and  $\mathfrak{J}^\varnothing_{\nu}[\beta]$ in these algebras.

\begin{example}
For $\nu = (2,2)$ and $\beta = ((1,1),(2,0))$, there holds
\[ (\mathfrak{J}_{\nu}(\mathbf{u}))_{\beta} = \e{\crc{1}21\crc{1}}  \qquad \text{in \, $\U/\Ikronknuth$}. \]
On the other hand,
\[
\mathfrak{J}^\varnothing_\nu[\beta]  = \e{3412 + 4132 - 4312} \quad \text{in }\, \U_\varnothing/\Irkstp{\le 3}
\]
and this quantity is not equal to a positive sum of monomials in  $\U_\varnothing/\Irkstp{\le 3}$.
Thus applying the standardization map to both sides of the identity \eqref{ecj colored cont}
cannot be used to deduce the identity
\eqref{e cj sqread}, at least not without writing  $\mathfrak{J}^\varnothing_\nu$ as a sum over $\mathfrak{J}^\varnothing_\nu[\beta]$ for some carefully selected colored contents  $\beta$.
And even if these carefully selected  $\mathfrak{J}^\varnothing_\nu[\beta]$ are monomial positive, this may not be enough
to prove \eqref{e cj sqread} from \eqref{ecj colored cont}
since elements of  $\Ikronknuth$ often standardize to elements which do not belong to $\Irkstp{\le 3}$; for example, $\e{\crc{2}1\crc{2}\crc{1} - \crc{2}\crc{2}1\crc{1}} \in \Ikronknuth$
but  $\e{4132 - 4312} \notin \Irkstp{\le 3}$.
\end{example}

\begin{example}
We now argue that inverse standardization cannot be used to deduce \eqref{ecj colored cont} from \eqref{e cj sqread}.
Let $\nu = (2,2,1)$ and $\beta = ((1,1),(3,0))$. There holds
\begin{align}
(\mathfrak{J}_{\nu}(\mathbf{u}))_{\beta} &= \e{\crc{1}\crc{1}21\crc{1}}  \quad \text{in } \, \U/\Ikronknuth, \label{eex 1}\\
\mathfrak{J}^\varnothing_\nu[\beta]  &= \e{43512} \quad \text{in }\, \U_\varnothing/\Irkstp{\le 3}.  \label{eex 2}
\end{align}
Even though  $(\e{\crc{1}\crc{1}21\crc{1}})^{\stand^<} = \e{43512}$, we cannot deduce \eqref{eex 1} (directly) from \eqref{eex 2}
because the element
\[\e{43215 - 43251} \in \Irkstp{\le 3},\]
but its inverse image under the standardization map, restricted to colored content  $\beta$, is 
\[\e{\crc{1}\crc{1}\crc{1}12 - \crc{1}\crc{1}\crc{1}21} \notin \Ikronknuth.\]
\end{example}

\section{Commuting super elementary symmetric functions}
\label{s commuting super elementary symmetric functions}
We adapt the theory of
noncommutative Schur functions to the super setting and thereby prove Theorems \ref{t intro basics} and \ref{t basics 2}.

\subsection{Elementary functions commute in $\U/\Ikronknuth$ and  $\U/\Iplacord{\lessdot}$}

\begin{proposition}\label{p es commute kron}
The noncommutative super elementary symmetric functions commute in $\U/\Ikronknuth$, i.e.,
 $e_k(\mathbf{u})e_l(\mathbf{u}) = e_l(\mathbf{u})e_k(\mathbf{u})$ in  $\U/\Ikronknuth$ for all  $k, l$.
\end{proposition}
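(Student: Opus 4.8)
The plan is to prove $e_k(\mathbf{u})e_l(\mathbf{u}) = e_l(\mathbf{u})e_k(\mathbf{u})$ in $\U/\Ikronknuth$ by the standard Fomin--Greene style argument, reducing the full commutation to a small number of ``local'' identities involving only a bounded window of consecutive letters. First I would recall that $e_k(\mathbf{u})e_l(\mathbf{u})$ is a sum of monomials $u_{z_1}\cdots u_{z_{k+l}}$ where $z_1 \gecol \cdots \gecol z_k$ and $z_{k+1} \gecol \cdots \gecol z_{k+l}$, i.e.\ two weakly decreasing (in the $\gecol$ sense) runs concatenated. The combinatorial heart of the matter is a sign-reversing involution, or equivalently a ``straightening'' argument, matching monomials of $e_k e_l$ with those of $e_l e_k$ modulo the relations defining $\Ikronknuth$. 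Following \cite{FG} (and the analogous treatments in \cite{LS,LamRibbon,BD0graph}), it suffices to verify the identity in the case of small alphabets --- indeed, by homogeneity and the local nature of the relations, one reduces to checking that for every pair of letters (or short words) the relevant three-term and four-term moves are available.

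The key steps, in order, would be: (1) reduce to showing that $e_2(\mathbf u)$ commutes with $e_k(\mathbf u)$ for all $k$ --- this is the Fomin--Greene reduction, since the $e_k$ are generated under the relevant operations by $e_1$ and $e_2$, and $e_1$ is central modulo nothing (it commutes with everything trivially only after we know $e_2$ does; more precisely one uses the recursion $e_1 e_k - e_k e_1 = \ldots$ together with $e_2 e_{k-1} = e_{k-1}e_2$). (2) Express $e_2(\mathbf u)e_k(\mathbf u) - e_k(\mathbf u)e_2(\mathbf u)$ as an explicit alternating sum of monomials and exhibit a pairing that cancels terms in pairs using the rotation relations \eqref{e kron rel rotate12}, the far-commutation relations \eqref{e kron rel far commute}, the Knuth relations \eqref{e kronknuth rel knuth1}--\eqref{e kronknuth rel knuth2}, and the ``collision'' relations \eqref{e plac nat rel knuth3}--\eqref{e plac nat rel knuth4b}. (3) Organize the case analysis according to how the two letters $x,z$ from the $e_2$-factor interleave with a weakly $\gecol$-decreasing run $z_1 \gecol \cdots \gecol z_k$: the cases are governed by whether $z_1 \lessdot z \myDownarrow\myDownarrow$ (far commutation applies), whether $x = z_i\myDownarrow$ for some $i$ (rotation applies), or whether a letter in the run equals $x$ or $z$ in a way that triggers a collision relation. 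Because the window involved is bounded (at most the three letters appearing in any single relation), this is a finite check once the bookkeeping is set up.

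I expect the main obstacle to be step (2)--(3): setting up the sign-reversing involution so that it is genuinely well-defined and genuinely cancels, given that $\Ikronknuth$ mixes honest commutations (far commutation), ``rotation'' moves that are commutators rather than commutations, and the super/barred collision relations which behave asymmetrically for barred versus unbarred $y$. In particular one must be careful that the definition of $\gecol$ (allowing equal barred letters in a run but not equal unbarred letters) interacts correctly with the relations; the delicate point is precisely when two equal barred letters or an $\crc{a}$ next to an $a{+}1$ appear, since these are the configurations where \eqref{e kron rel rotate12} and the collision relations are needed rather than a plain swap. A clean way to handle this, which I would pursue, is to first prove the proposition in $\U/\Iplacord{<}$ or even in the ordinary plactic algebra via standardization (as done for Proposition~\ref{p es commute plac} and Theorem~\ref{t J plac}), and then argue that the extra relations of $\Ikronknuth$ --- being consequences, after standardization, of the plactic relations on the standardized words --- do not obstruct; but since $\Ikronknuth$ is \emph{not} contained in $\Iplacord{<}$, one cannot simply quote that, and the residual work is exactly the rotation-relation bookkeeping described above. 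I would therefore carry out the direct involution argument, using the plactic case only as a guide for the shape of the pairing.

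Finally, once $e_k(\mathbf u)e_l(\mathbf u)=e_l(\mathbf u)e_k(\mathbf u)$ holds in $\U/\Ikronknuth$, the same identity in $\U/\Ikron$ is immediate since $\Ikronknuth \subseteq \Ikron$ (the relations \eqref{e kronknuth rel knuth1}--\eqref{e kronknuth rel knuth2} are special cases of \eqref{e plac nat rel knuth1}--\eqref{e plac nat rel knuth2} together with the far-commutation relation, so $\Ikronknuth$ maps onto a subideal of $\Ikron$), giving Proposition~\ref{p es commute kron} as stated in \textsection\ref{ss main theorem} as a corollary; this is the content of the parenthetical remark in Proposition~\ref{p es commute kron}'s statement and needs no separate argument.
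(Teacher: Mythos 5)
Your proposal is a plan rather than a proof: the step you yourself flag as ``the main obstacle'' --- actually exhibiting the cancellation of $e_k(\mathbf{u})e_l(\mathbf{u})-e_l(\mathbf{u})e_k(\mathbf{u})$ modulo $\Ikronknuth$ --- is precisely the content of the proposition, and it is nowhere carried out. Worse, the reduction you lean on in step (1) is unjustified: the $e_k(\mathbf{u})$ are not ``generated'' by $e_1(\mathbf{u})$ and $e_2(\mathbf{u})$ in any sense that would make $[e_2,e_k]=0$ imply $[e_j,e_k]=0$ for all $j$, and the recursion you allude to does not exist. The paper's proof uses neither a sign-reversing involution nor such a reduction. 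It works with the generating products $f_z=1+u_zs$ (unbarred $z$) or $(1-u_zs)^{-1}$ (barred $z$), and their analogues $g_z$ in a second variable $t$; commutativity of all the $e_k(\mathbf{u})$ is equivalent to $f_{z_{2N}}\cdots f_{z_1}$ commuting with $g_{z_{2N}}\cdots g_{z_1}$. All the local work is packaged into Lemma~\ref{l ab 3vars trick c}: for $x<y<z$, the letter $\e{y}$ commutes with $g_z^{-1}f_xg_zf_x^{-1}$ modulo $\Ikronknuth$, which after computing $[f_x,g_z]$ reduces to $\e{z}[\e{x},\e{z}]\equiv 0$ and $[\e{x},\e{z}]\e{x}\equiv 0$ (from the collision relations \eqref{e plac nat rel knuth3}--\eqref{e plac nat rel knuth4b}) together with $\e{y}[\e{x},\e{z}]\equiv[\e{x},\e{z}]\e{y}$ (the rotation relation \eqref{e kron rel rotate12} when $x=z\myDownarrow\myDownarrow$, the Knuth relations \eqref{e kronknuth rel knuth1}--\eqref{e kronknuth rel knuth2} when $x<z\myDownarrow\myDownarrow$); the proposition then follows from $f_xg_x=g_xf_x$, the two-letter identity $f_yf_xg_yg_x\equiv g_yg_xf_yf_x$, and an induction on the length of the interval $\{z_i,\dots,z_j\}$. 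A monomial-pairing argument would have to encode all of these three-letter identities simultaneously; nothing in your write-up supplies that involution, so the gap is exactly where the theorem lives.

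Two smaller corrections. First, $\Ikronknuth$ \emph{is} contained in $\Iplacord{<}$: the paper states that $\U/\Iplacord{<}$ is a quotient of $\U/\Ikronknuth$, and indeed \eqref{e kronknuth rel knuth1}--\eqref{e kronknuth rel knuth2} are special cases of \eqref{e plac nat rel knuth1}--\eqref{e plac nat rel knuth2}, while \eqref{e kron rel rotate12} follows from them since both $\e{(xz-zx)y}$ and $\e{y(xz-zx)}$ already vanish modulo $\Iplacord{<}$. The reason you cannot deduce the proposition from commutativity in $\U/\Iplacord{<}$ is not non-containment but that the containment points the wrong way: knowing the commutator lies in the larger ideal $\Iplacord{<}$ says nothing about its lying in the smaller ideal $\Ikronknuth$. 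Second, your closing observation --- that commutativity in $\U/\Ikronknuth$ immediately yields commutativity in $\U/\Ikron$ because $\Ikronknuth\subseteq\Ikron$ --- is correct and is exactly how the paper uses the proposition.
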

\begin{proof}
Let $s, t$ be formal variables that commute with each other and with all
the~$u_z$, $z \in \A$.
Throughout the proof, we write $f \equiv g$ to mean that $f$ and $g$ are congruent modulo $\Ikronknuth[[s,t]]$, for any $f,g \in \U[[s,t]]$.
We also let $[f,g]$ denote the commutator $fg - gf$ for any  $f,g \in \U[[s,t]]$.

Define
\[
f_z =
\begin{cases}
1+u_zs & \text{ if  $z$ is unbarred}, \\
(1-u_zs)^{-1}  & \text{ if  $z$ is barred}.
\end{cases}
\qquad
g_z =
\begin{cases}
1+u_zt & \text{ if  $z$ is unbarred}, \\
(1-u_zt)^{-1} & \text{ if  $z$ is barred}.
\end{cases}
\]

We require the following key lemma.
\begin{lemma}
\label{l ab 3vars trick c}
For any $x,y,z \in \A$ such that $x < y < z$, $\e{y} = u_y$ commutes with $g_z^{-1}f_xg_zf_x^{-1}$ in the algebra $(\U/\Ikronknuth)[[s,t]]$.
\end{lemma}
\begin{proof}[Proof of Lemma \ref{l ab 3vars trick c}]

There are four cases depending on whether or not $x$ and $z$ are barred or unbarred. We first handle the case that $x$ and $z$ are unbarred.
First note that $\e{y}$ commutes with $g_z^{-1}f_xg_zf_x^{-1}$ in $(\U/\Ikronknuth)[[s,t]]$ if and only if $\e{y}$ commutes with $g_z^{-1}[f_x,g_z]f_x^{-1}$ in $(\U/\Ikronknuth)[[s,t]]$.
Further, $[f_x, g_z] = [\e{x}, \e{z}]st$.
By relations \eqref{e plac nat rel knuth3} and \eqref{e plac nat rel knuth3b}, $\e{z}[\e{x}, \e{z}] \equiv 0$ and $[\e{x}, \e{z}]\e{x} \equiv 0$.
Hence
\[g_z^{-1}[f_x,g_z]f_x^{-1} \equiv [\e{x},\e{z}]st.\]
Thus it suffices  to show that $\e{y}[\e{x},\e{z}] \equiv \e{[x,z]y}$; this holds by \eqref{e kron rel rotate12} if $x = z \myDownarrow \myDownarrow $ and by \eqref{e kronknuth rel knuth1} and \eqref{e kronknuth rel knuth2} if $x < z \myDownarrow \myDownarrow$.

If $x$ and $z$ are barred, then we instead show that $\e{y}$ commutes with $f_xg_z^{-1}f_x^{-1}g_z$ in $(\U/\Ikronknuth)[[s,t]]$. This is equivalent to showing that $\e{y}$ commutes with $f_x[g_z^{-1}, f_x^{-1}]g_z$ in $(\U/\Ikronknuth)[[s,t]]$.
There holds $[g_z^{-1}, f_x^{-1}] = [\e{z}, \e{x}]st$.
By relations \eqref{e plac nat rel knuth4} and \eqref{e plac nat rel knuth4b}, $\e{x}[\e{z}, \e{x}] \equiv 0$ and $[\e{z}, \e{x}]\e{z} \equiv 0$.
Hence
\[f_x[g_z^{-1}, f_x^{-1}]g_z \equiv [\e{z},\e{x}]st.\]
Thus it suffices  to show that $\e{y}[\e{z},\e{x}] \equiv \e{[z,x]y}$; this holds by \eqref{e kron rel rotate12} if $x = z \myDownarrow \myDownarrow $ and by \eqref{e kronknuth rel knuth1} and \eqref{e kronknuth rel knuth2} if $x < z \myDownarrow \myDownarrow$.

We now handle the case $x$ is barred and $z$ is unbarred. We compute
\[[f_x,g_z]f_x^{-1} = \sum_{d=1}^\infty [\e{x}^ds^d,\e{z}t](1-\e{x}s) = [\e{x},\e{z}]st+\sum_{d=2}^\infty ([\e{x}^d,\e{z}] - [\e{x}^{d-1},\e{z}]\e{x})s^dt \equiv [\e{x},\e{z}]st,\]
where the congruence follows from \eqref{e plac nat rel knuth4}.
Combining this with the fact $\e{z}[\e{x},\e{z}] \equiv 0$ yields
\begin{align}
g_z^{-1}[f_x,g_z]f_x^{-1} \equiv g_z^{-1}[\e{x},\e{z}]st \equiv [\e{x},\e{z}]st. \label{e zxzx}
\end{align}
Since $\e{y}[\e{x},\e{z}] \equiv \e{[x,z]y}$ by the relations \eqref{e kronknuth rel knuth1} and \eqref{e kronknuth rel knuth2}, this case of the lemma is proved.
The case $x$ is unbarred and $z$ is barred is similar.
\end{proof}

We continue with the main thread of the proof of Proposition \ref{p es commute kron}.
It is clear that $f_xg_x = g_xf_x$ for all $x \in \A$. We next claim that for any  $x,y \in \A$,  $x < y$,
\begin{align}
f_yf_xg_yg_x \equiv g_yg_xf_yf_x. \label{e ayaxbybx}
\end{align}
Note that this is equivalent to
\begin{align}
g_y^{-1}f_xg_yf_x^{-1} \equiv f_y^{-1}g_xf_yg_x^{-1}. \label{e ayaxbybx 2}
\end{align}
Further, the congruence \eqref{e ayaxbybx 2} is equivalent to $g_y^{-1}[f_x,g_y]f_x^{-1} \equiv f_y^{-1}[g_x,f_y]g_x^{-1}$. This follows from computations similar to but easier than those used to prove Lemma \ref{l ab 3vars trick c}.
For example, in the case $x$ is barred and $y$ is unbarred, \eqref{e zxzx} yields
\[g_y^{-1}[f_x,g_y]f_x^{-1} \equiv [\e{x},\e{y}]st \equiv f_y^{-1}[g_x,f_y]g_x^{-1}.\]

Now let $z_1 < z_2 < \cdots < z_{2N}$ denote the elements of $\A$ in natural order, and set $F_{j,i} = f_{z_j}f_{z_{j-1}}\cdots f_{z_i}$ and $G_{j,i} = g_{z_j}g_{z_{j-1}}\cdots g_{z_i}$.
The proposition is equivalent to the statement $F_{2N,1}G_{2N,1} \equiv G_{2N,1}F_{2N,1}$.
We will prove that $F_{j,i}G_{j,i} \equiv G_{j,i}F_{j,i}$ by induction on $j-i$.
By the previous paragraph, this holds for the base cases $j-i=0$ and $j-i=1$.
Now, for $j-i>1$, we obtain
\begin{align*}
 & F_{j,i}G_{j,i} \\
=~& F_{j,i+1}g_{z_j}g_{z_j}^{-1}f_{z_i}g_{z_j}f_{z_i}^{-1}f_{z_i}G_{j-1,i}\\
=~& F_{j,i+1}G_{j,i+1}G_{j-1,i+1}^{-1}\big( g_{z_j}^{-1}f_{z_i}g_{z_j}f_{z_i}^{-1}\big)F_{j-1,i+1}^{-1}F_{j-1,i}G_{j-1,i}\\
\equiv~& G_{j,i+1}F_{j,i+1}G_{j-1,i+1}^{-1}\big( g_{z_j}^{-1}f_{z_i}g_{z_j}f_{z_i}^{-1}\big)F_{j-1,i+1}^{-1}G_{j-1,i}F_{j-1,i} &&\text{by induction}\\
\equiv~& G_{j,i+1}F_{j,i+1}F_{j-1,i+1}^{-1}\big( g_{z_j}^{-1}f_{z_i}g_{z_j}f_{z_i}^{-1}\big)G_{j-1,i+1}^{-1}G_{j-1,i}F_{j-1,i} &&\text{by Lemma \ref{l ab 3vars trick c} and induction}\\
=~& G_{j,i+1}f_{z_j}\big( g_{z_j}^{-1}f_{z_i}g_{z_j}f_{z_i}^{-1}\big)g_{z_i}F_{j-1,i} &&\text{}\\
\equiv~& G_{j,i+1}f_{z_j}\big( f_{z_j}^{-1}g_{z_i}f_{z_j}g_{z_i}^{-1}\big)g_{z_i}F_{j-1,i} &&\text{by \eqref{e ayaxbybx 2}}\\
=~& G_{j,i}F_{j,i},
\end{align*}
as desired.
\end{proof}

\begin{proposition}\label{p es commute plac}
In the algebra $\U/\Iplacord{\lessdot}$, there holds
$e^\lessdot_k(\mathbf{u})e^\lessdot_l(\mathbf{u}) = e^\lessdot_l(\mathbf{u})e^\lessdot_k(\mathbf{u})$ for all  $k, l$.
\end{proposition}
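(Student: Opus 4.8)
\textbf{Proof proposal for Proposition~\ref{p es commute plac}.}

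The plan is to mimic the proof of Proposition~\ref{p es commute kron}, adapting the generating-function argument to the shuffle order $\lessdot$ and the plactic relations \eqref{e plac rel knuth1}--\eqref{e plac rel knuth4b}. First I would introduce commuting formal variables $s,t$ (commuting with each other and with all $u_z$), and set
\[
f_z = \begin{cases} 1+u_z s & \text{if $z$ is unbarred},\\ (1-u_z s)^{-1} & \text{if $z$ is barred},\end{cases}
\qquad
g_z = \begin{cases} 1+u_z t & \text{if $z$ is unbarred},\\ (1-u_z t)^{-1} & \text{if $z$ is barred},\end{cases}
\]
so that $\prod_{z}^{\lessdot\text{-decreasing}} f_z = \sum_k e^\lessdot_k(\mathbf u)s^k$ (and similarly for $g_z$ with $t$), the product taken in $\lessdot$-decreasing order of $z$. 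Here I must be slightly careful about the sign/order conventions so that the geometric-series expansion of $(1-u_z s)^{-1}$ for barred $z$ produces exactly the weakly-decreasing runs $z_1 \gedotcol \cdots \gedotcol z_k$ allowed in $e^\lessdot_k(\mathbf u)$; this is the same bookkeeping as in Proposition~\ref{p es commute kron}. As there, the statement $e^\lessdot_k e^\lessdot_l = e^\lessdot_l e^\lessdot_k$ for all $k,l$ is equivalent to $F_{2N,1}G_{2N,1} \equiv G_{2N,1}F_{2N,1}$ modulo $\Iplacord{\lessdot}[[s,t]]$, where $z_1 \lessdot z_2 \lessdot \cdots \lessdot z_{2N}$ enumerates $\A$ and $F_{j,i} = f_{z_j}\cdots f_{z_i}$, $G_{j,i} = g_{z_j}\cdots g_{z_i}$.

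The heart is the analog of Lemma~\ref{l ab 3vars trick c}: for $x \lessdot y \lessdot z$ in $\A$, the variable $u_y$ commutes with $g_z^{-1} f_x g_z f_x^{-1}$ in $(\U/\Iplacord{\lessdot})[[s,t]]$. This is in fact \emph{easier} here than in the $\Ikronknuth$ case, because the plactic relations \eqref{e plac rel knuth1}, \eqref{e plac rel knuth2} directly give $\e{xzy} = \e{zxy}$ and $\e{yxz} = \e{yzx}$ for $x\lessdot y\lessdot z$ (there is no split into the ``far commutation'' vs.\ ``rotation'' subcases that complicated Lemma~\ref{l ab 3vars trick c}). Concretely: one reduces, exactly as in the four cases of Lemma~\ref{l ab 3vars trick c} (according to whether $x,z$ are barred or unbarred), the expression $g_z^{-1}[f_x,g_z]f_x^{-1}$ to $[\e{x},\e{z}]st$ modulo $\Iplacord{\lessdot}$, using \eqref{e plac rel knuth3}, \eqref{e plac rel knuth3b} (resp. \eqref{e plac rel knuth4}, \eqref{e plac rel knuth4b}) to kill $\e{z}[\e{x},\e{z}]$ and $[\e{x},\e{z}]\e{x}$. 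Then $\e{y}[\e{x},\e{z}] \equiv [\e{x},\e{z}]\e{y}$ is immediate from \eqref{e plac rel knuth1} and \eqref{e plac rel knuth2}. One also needs, just as in Proposition~\ref{p es commute kron}, the relations $f_x g_x = g_x f_x$ for all $x$ (clear) and $f_y f_x g_y g_x \equiv g_y g_x f_y f_x$ for $x \lessdot y$ (equivalently $g_y^{-1}f_x g_y f_x^{-1} \equiv f_y^{-1} g_x f_y g_x^{-1}$), which follows from the same easy computations specialized to $y=z$.

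With these lemmas in hand, the induction on $j-i$ establishing $F_{j,i}G_{j,i} \equiv G_{j,i}F_{j,i}$ goes through verbatim as in the last display of the proof of Proposition~\ref{p es commute kron}: base cases $j-i\in\{0,1\}$ come from the previous paragraph, and the inductive step interleaves one application of Lemma (the plactic analog of~\ref{l ab 3vars trick c}) with two uses of the inductive hypothesis, plus $f_{z_j}g_{z_j}=g_{z_j}f_{z_j}$ and the $f_yf_xg_yg_x\equiv g_yg_xf_yf_x$ identity. I expect no real obstacle; the only point demanding genuine care is the very first one---pinning down the exact conventions (order of the product, signs in the barred factors) so that $\prod f_z$ really is the generating function $\sum_k e^\lessdot_k(\mathbf u)s^k$ with the $\gedotcol$-weakly-decreasing runs, since that is where the ``super'' (barred-letters-repeat, unbarred-letters-don't) asymmetry enters. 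Once that is fixed, everything else is a routine transcription of Proposition~\ref{p es commute kron} with $\lessdot$ in place of $<$ and the plactic relations in place of the Kronecker--Knuth relations. (Indeed, since $\Iplacord{\lessdot}$ is the image of $\Iplacord{\varnothing}$ under inverse standardization in each colored content, one could alternatively deduce this from the classical commutativity of the $e^\varnothing_k$ via the standardization isomorphism of Proposition~\ref{p plac stand}; I would mention this as a remark but carry out the direct argument above.)
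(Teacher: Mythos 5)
Your argument is correct, but it is not the route the paper takes. The paper's entire proof of Proposition~\ref{p es commute plac} is a two-line reduction: by the standardization isomorphism of Proposition~\ref{p plac stand}, commutativity of the $e^\lessdot_k(\mathbf{u})$ in $\U/\Iplacord{\lessdot}$ follows from the classical fact \cite{LS} that the noncommutative elementary symmetric functions commute in the ordinary plactic algebra (a concatenation of $\gedotcol$-weakly-decreasing runs standardizes to a concatenation of strictly decreasing ordinary words, so each graded piece $(e^\lessdot_k e^\lessdot_l - e^\lessdot_l e^\lessdot_k)_\beta$ maps into $\Iplacord{\varnothing}$). You instead transcribe the Fomin--Greene generating-function proof of Proposition~\ref{p es commute kron}, and your transcription goes through: the reduction of $g_z^{-1}[f_x,g_z]f_x^{-1}$ to $[\e{x},\e{z}]st$ uses only \eqref{e plac rel knuth3}--\eqref{e plac rel knuth4b}, and the final step is even easier than in the $\Ikronknuth$ case, since for $x\lessdot y\lessdot z$ the relations \eqref{e plac rel knuth1}--\eqref{e plac rel knuth2} give $\e{y}[\e{x},\e{z}]\equiv 0\equiv[\e{x},\e{z}]\e{y}$ outright. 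Your bookkeeping of the generating function ($1+u_zs$ for unbarred, $(1-u_zs)^{-1}$ for barred, product in $\lessdot$-decreasing order) is also right. What your route buys is self-containedness and slightly more: the induction actually proves $F_{j,i}G_{j,i}\equiv G_{j,i}F_{j,i}$ for every interval, i.e., commutativity of the truncated $e_k(S)$ for lower order ideals $S$, parallel to what the paper needs from Proposition~\ref{p es commute kron} in \textsection\ref{ss noncommutative flagged schur}. What the paper's route buys is brevity and reuse of machinery it must set up anyway for Theorem~\ref{t J plac}; you correctly identify this alternative in your closing parenthetical, but the paper makes it the proof rather than the remark.
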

\begin{proof}
This follows from a similar result for the ordinary plactic algebra \cite{LS} and a standardization argument using Proposition \ref{p plac stand}.
\end{proof}

\subsection{Noncommutative super Schur function basics}

Here we adapt the basic setup of \cite{FG, BF} to the super setting.
Throughout this section, fix a shuffle order $\lessdot$ on $\A$.

Let $x_1, x_2, \ldots$ be formal variables that commute with each other and with all
the~$u_z$, $z \in \A$. Define
\[
c_z(x_j) =
\begin{cases}
(1-u_zx_j)^{-1} & \text{ if  $z$ is unbarred}, \\
1+u_zx_j  & \text{ if  $z$ is barred}.
\end{cases}
\]
Define the ``noncommutative Cauchy product''
\begin{equation}
\label{eq:cauchy-product}
\Omega^\lessdot(\mathbf{x}, \mathbf{u}) =
\prod_{j=1}^\infty \bigl(
c_{z_1}(x_j) c_{z_2}(x_j) \cdots c_{z_{2N}}(x_j)
\bigr),
\end{equation}
where $z_1 \lessdot z_2 \lessdot \cdots \lessdot z_{2N}$ are the elements of $\A$.


The noncommutative Cauchy product $\Omega^\lessdot(\mathbf{x},\mathbf{u})$ is naturally expressed in terms of the \emph{noncommutative super homogeneous symmetric functions}, which are given by
\[
h^\lessdot_k(\mathbf{u})=\sum_{\substack{z_1 \ledotrow z_2 \ledotrow \cdots \ledotrow z_k \\ z_1,\dots,z_k \in \A}} u_{z_1} u_{z_2} \cdots u_{z_k} \ \in \U
\]
for any positive integer $k$; set $h^\lessdot_0(\mathbf{u})=1$ and $h^\lessdot_k(\mathbf{u}) = 0$ for $k<0$.
Here, for $ y,z \in \A$, the notation $y \ledotrow z$ means that either $y \lessdot z$ or $y$ and $z$ are equal unbarred letters.

Recall that the noncommutative super Schur functions for the order $\lessdot$ are denoted $\mathfrak{J}^\lessdot_\nu(\mathbf{u})$
(Definition~\ref{d noncommutative Schur order}). Also recall that $\langle\cdot ,\cdot  \rangle$ denotes the symmetric bilinear form on $\U$ for which the monomials (colored words) form an orthonormal basis.
We have the following generalization of the setup of \cite{FG, BF} to the super setting.

\begin{theorem} \label{t basics full}
Let  $I$ be   a two-sided ideal of \hspace{-.7mm} $\U$.
\begin{list}{\emph{(\roman{ctr})}}{\usecounter{ctr} \setlength{\itemsep}{3pt} \setlength{\topsep}{3pt}}
\item The noncommutative Cauchy product can be written in the following two ways (in \hspace{-.7mm} $\U$)
\begin{align}\label{et basics full}
\Omega^\lessdot(\mathbf{x}, \mathbf{u}) = \sum_{\text{colored words $\e{w}$}} Q_{\Des_\lessdot(\e{w})}(\mathbf{x})\e{w}
  =  \sum_{\text{weak compositions $\alpha$}} \mathbf{x}^\alpha h^\lessdot_{\alpha}(\mathbf{u}).
\end{align}
Hence for any $\gamma \in \U$, $F^\lessdot_\gamma(\mathbf{x}) = \big\langle \Omega^\lessdot(\mathbf{x}, \mathbf{u}) , \gamma \big\rangle.$
Here, $h^\lessdot_\alpha(\mathbf{u}) :=h^\lessdot_{\alpha_1}(\mathbf{u})h^\lessdot_{\alpha_2}(\mathbf{u}) \cdots $.
\item There holds $e^\lessdot_k(\mathbf{u})e^\lessdot_l(\mathbf{u})=e^\lessdot_l(\mathbf{u})e^\lessdot_k(\mathbf{u})$ in \hspace{-.7mm} $\U/I$ for all $k$, $l$ if and only if \newline
    \mbox{$h^\lessdot_k(\mathbf{u})h^\lessdot_l(\mathbf{u})=h^\lessdot_l(\mathbf{u})h^\lessdot_k(\mathbf{u})$}
    in \hspace{-.7mm} $\U/I$ for all $k$, $l$.
\item If $h^\lessdot_k(\mathbf{u})h^\lessdot_l(\mathbf{u})=h^\lessdot_l(\mathbf{u})h^\lessdot_k(\mathbf{u})$ in \hspace{-.7mm} $\U/I$ for all $k$, $l$, then
\begin{align} \label{e mh sJ}
 \Omega^\lessdot(\mathbf{x}, \mathbf{u}) = \sum_{\nu} m_{\nu}(\mathbf{x}) h^\lessdot_{\nu}(\mathbf{u}) = \sum_{\nu} s_\nu(\mathbf x)\mathfrak{J}^\lessdot_{\nu}(\mathbf{u})\quad \text{in \hspace{-.7mm} $\U/I$},
\end{align}
where the sums are over all partitions $\nu$.
Hence for any $\gamma\in I^\perp$,
\[F^\lessdot_\gamma(\mathbf{x})
= \big\langle \Omega^\lessdot(\mathbf{x}, \mathbf{u}), \gamma \big\rangle = \sum_{\nu} s_\nu(\mathbf x) \langle \mathfrak{J}^\lessdot_{\nu}(\mathbf{u}), \gamma \rangle. \]
\end{list}
\end{theorem}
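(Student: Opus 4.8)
\textbf{Proof proposal for Theorem \ref{t basics full}.}

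The plan is to prove the three parts in order, since (iii) builds on (i) and (ii). For part (i), I would expand the noncommutative Cauchy product $\Omega^\lessdot(\mathbf{x},\mathbf{u})$ in two different ways. Reading the product $\prod_j (c_{z_1}(x_j)\cdots c_{z_{2N}}(x_j))$ factor-by-factor in $j$ and tracking which $u_z$ are selected, each variable $x_j$ contributes a weakly $\ledotrow$-increasing run of letters when $z$ is unbarred (because $c_z(x_j) = (1-u_zx_j)^{-1} = \sum_{d\ge 0} u_z^d x_j^d$ contributes powers of a single unbarred letter) and a strictly increasing run among the barred letters (because $c_z(x_j) = 1+u_zx_j$ contributes each barred letter at most once); assembling these over all $j$ gives exactly the monomials $\e{w}$ weighted by $\mathbf{x}^\alpha$ where $\alpha$ records how many letters come from each $x_j$, and summing the $x$-weights over all words with a fixed sequence of ``break points'' dictated by $\Des_\lessdot(\e{w})$ recovers $Q_{\Des_\lessdot(\e{w})}(\mathbf{x})$ by Gessel's definition. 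Grouping instead by the composition $\alpha$ first gives $\sum_\alpha \mathbf{x}^\alpha h^\lessdot_{\alpha}(\mathbf{u})$ directly from the definition of $h^\lessdot_k(\mathbf u)$. The formula $F^\lessdot_\gamma(\mathbf x) = \langle \Omega^\lessdot(\mathbf x,\mathbf u),\gamma\rangle$ is then immediate from the first expansion and Definition~\ref{d F gamma}.

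For part (ii), I would use the standard generating-function identity relating $e$ and $h$. Introduce a formal variable $t$ and form $E^\lessdot(t) = \sum_k e^\lessdot_k(\mathbf u) t^k$ and $H^\lessdot(t) = \sum_k h^\lessdot_k(\mathbf u) t^k$; as elements of $\U[[t]]$ one checks $E^\lessdot(-t) = \prod_{z_{2N} \succdot \cdots \succdot z_1} (\cdots)$ and $H^\lessdot(t)$ is the product in the opposite order of the corresponding geometric/linear factors, so that $H^\lessdot(t)E^\lessdot(-t) = 1$ in $\U[[t]]$ (this is the noncommutative Newton identity; it holds in $\U$ itself, no quotient needed, because the factors telescope). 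Given this, in any quotient $\U/I$ the $e^\lessdot_k$ all commute with one another iff $E^\lessdot(s)$ and $E^\lessdot(t)$ commute for all $s,t$, iff (inverting) $H^\lessdot(s)$ and $H^\lessdot(t)$ commute for all $s,t$, iff the $h^\lessdot_k$ all commute. This is a purely formal manipulation once $HE = 1$ is in hand.

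For part (iii), assume the $h^\lessdot_k(\mathbf u)$ pairwise commute in $\U/I$. Then in $\U/I$ the $h^\lessdot_\nu(\mathbf u)$ depend only on the partition $\nu$, so the expansion $\Omega^\lessdot = \sum_\alpha \mathbf x^\alpha h^\lessdot_\alpha(\mathbf u)$ from (i) can be reorganized by collecting the $x$-monomials $\mathbf x^\alpha$ with $\alpha$ a rearrangement of a fixed partition $\nu$, giving $\sum_\nu m_\nu(\mathbf x) h^\lessdot_\nu(\mathbf u)$. To get the Schur side, I would invoke the classical (commutative) change of basis: since the $h^\lessdot_k$ generate a commutative subalgebra of $\U/I$ and satisfy the same relations defining them as polynomials in the abstract generators, the Jacobi--Trudi formula in Definition~\ref{d noncommutative Schur order}, together with the dual Kostka identities expressing $m_\nu$ in terms of $s_\nu$ and the $h$-to-$s$ transition, yields $\sum_\nu m_\nu(\mathbf x) h^\lessdot_\nu(\mathbf u) = \sum_\nu s_\nu(\mathbf x)\mathfrak{J}^\lessdot_\nu(\mathbf u)$ in $\U/I$; concretely this is the statement that $\sum_\nu s_\nu \otimes \mathfrak{J}^\lessdot_\nu = \sum_\nu m_\nu \otimes h^\lessdot_\nu$ under the ring map sending the commuting generators $h_k \mapsto h^\lessdot_k(\mathbf u)$, which is a known symmetric-function identity (used identically in \cite{FG,BF}). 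Pairing with $\gamma \in I^\perp$ and using part (i) gives the final displayed formula. The main obstacle is being careful in part (iii) that the reorganization of $\Omega^\lessdot$ into Schur functions genuinely only uses commutativity of the $h^\lessdot_k$ and not any further relations — i.e., that one may legitimately ``apply commutative symmetric function identities'' inside the (possibly noncommutative) algebra $\U/I$ once the relevant elements are known to commute — but this is handled exactly as in \cite{FG} by working in the commutative subalgebra they generate; parts (i) and (ii) are routine formal power series bookkeeping.
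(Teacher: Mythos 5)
Your proposal is correct and follows essentially the same route as the paper: part (i) by the same double expansion of $\Omega^\lessdot$ (by colored word versus by exponent composition), part (ii) via the generating-function identity $H^\lessdot(t)E^\lessdot(-t)=1$ in $\U[[t]]$ (the paper phrases this as the $h$-product being the inverse of the reversed $e$-product of the factors $f_z$, $g_z$, so commutativity transfers under inversion), and part (iii) by observing that the commuting $h^\lessdot_k(\mathbf u)$ generate a quotient of the ring of symmetric functions, so the classical Cauchy/Jacobi--Trudi identities apply verbatim. No gaps.
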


\begin{proof}
Formula \eqref{et basics full} is seen by fixing a colored word  $\e{w}$ and collecting all monomials in the $x_i$ that appear with it in  the noncommutative Cauchy product,
or by fixing a monomial  $\mathbf{x}^\alpha$ and collecting all colored words that appear with it in the noncommutative Cauchy product.
The second part of (i) is  immediate from \eqref{et basics full} and Definition~\ref{d F gamma}.

We next prove (ii).
Let $f_z, g_z$ be as in the proof of Proposition \ref{p es commute kron}, and let \mbox{$z_1 \lessdot z_2 \lessdot \cdots \lessdot z_{2N}$} be the elements of $\A$.
Then the $e^\lessdot_k(\mathbf{u})$ commuting is equivalent to  $f_{z_{2N}}f_{z_{2N-1}}\cdots f_{z_1}$ commuting with  $g_{z_{2N}}g_{z_{2N-1}}\cdots g_{z_1}$
and the $h^\lessdot_k(\mathbf{u})$ commuting is equivalent to  $f_{z_1}^{-1} f_{z_2}^{-1} \cdots f_{z_{2N}}^{-1}$ commuting with
$g_{z_1}^{-1} g_{z_2}^{-1} \cdots g_{z_{2N}}^{-1}$ (in the algebra $(\U/I)[[s,t]]$).
Statement (ii) follows.


For (iii), the first equality of \eqref{e mh sJ} is immediate from (i) and the hypothesis that the $h^\lessdot_k(\mathbf{u})$ commute.
For the second equality of \eqref{e mh sJ}, simply note that when the $h^\lessdot_k(\mathbf{u})$ commute,
the subalgebra they generate is the surjective image of the ring of symmetric functions in commuting variables and hence all the usual identities hold.
\end{proof}

\section*{Acknowledgments}

We thank Sergey Fomin for helpful conversations and Elaine So and Emily Walters for help typing and typesetting figures.

\bibliographystyle{plain}
\bibliography{mycitations}

\end{document}